\tikzset{
    dsiml/.style={rotate=90, anchor=south},
    dsimr/.style={rotate=-90, anchor=south}
}
\begin{document}

\selectlanguage{english}
\title{%
	A comparison of categories of Nori motivic sheaves
	\\
	{\small Comparaison entre catégories de faisceaux motiviques de Nori}%
}
\author{Emil Jacobsen, Luca Terenzi}
\date{}
\maketitle

\begin{abstract}
	We show that two different possible theories of Nori motivic sheaves, introduced by Ivorra--Morel and by Ayoub, respectively, are canonically equivalent.
	The proof of this result, which exploits the six functor formalism systematically, is based on the Tannakian theory of motivic local systems.
	As a consequence, we obtain a system of realization functors of Voevodsky motivic sheaves into Nori motivic sheaves compatible with the six operations, previously constructed by Tubach using different methods.
\end{abstract}

\selectlanguage{french}
\begin{abstract}
	Nous montrons que deux différentes théories possibles de faisceaux motiviques de Nori, respectivement introduites par Ivorra--Morel et par Ayoub, sont canoniquement équivalentes.
	La preuve de ce résultat, qui exploite systématiquement le formalisme des six opérations, repose sur la théorie Tannakienne des systèmes locaux motiviques.
	Comme conséquence, nous obtenons un système de foncteurs de réalisation des faisceaux motiviques de Voevodsky vers ceux de Nori compatible aux six opérations, précédemment construit par Tubach en utilisant des méthodes différentes.
\end{abstract}

\selectlanguage{english}

\tableofcontents

\section*{Introduction}
\addcontentsline{toc}{section}{Introduction}

\subsection*{Motivation and goal of the paper}

Let $k$ be a field of characteristic $0$ endowed with a complex embedding $\sigma \colon k \hookrightarrow \C$.
In the 1990s, Nori constructed an unconditional candidate $\MNori(k)$ for the conjectural $\Q$-linear abelian category of mixed motives over $k$:
by design, it is the finest possible abelian category of coefficients computing singular cohomology groups of $k$-varieties.
As shown by Nori, the abelian category $\MNori(k)$ carries a canonical tensor product making it neutral Tannakian over $\Q$;
its Tannaka dual is Nori's motivic Galois group.

The conjectural theory of mixed motives over $k$ should fit into a theory of mixed motivic sheaves over $k$-varieties giving rise to a universal six functor formalism.
Recently, two distinct approaches to promote Nori's theory of motives to a theory of motivic sheaves have been developed.
The first construction, initiated by Ivorra--Morel in~\cite{IM19} and completed by the second-named author in~\cite{TerenziNori}, generalizes the universal property of $\MNori(k)$ as an abelian category.
The second construction, described by Ayoub in~\cite{AyoAnab}, generalizes the presentation of Nori motives as representations of the motivic Galois group.
Both constructions build on the pre-existing theory of Voevodsky motivic sheaves over $k$-varieties:
in fact, they could be both regarded as a way to ``force'' the conjectural motivic perverse $t$-structure on Voevodsky's categories to exist.

Since the two approaches differ both at a conceptual and at a technical level, at first sight it is not clear at all whether they should define equivalent theories of Nori motivic sheaves. 
The main goal of the present paper is to prove that this is indeed the case.
The proof, which sheds new light on both constructions, is based on the Tannakian study of motivic local systems due to the first-named author in~\cite{Jacobsen}.

This comparison result allows us to combine the advantages of both approaches to Nori motivic sheaves.
An important application is the construction of realization functors of Voevodsky motivic sheaves into Ivorra--Morel's categories:
this result was originally obtained by Tubach in~\cite{Tub23};
using the features of Ayoub's construction, we offer an alternative proof which highlights the role of the motivic Galois group. 

\subsection*{History and previous work}

Before stating our results in a more precise form, we need to review the two proposed constructions of Nori motivic sheaves.
As mentioned above, both of them build on the theory of Voevodsky motivic sheaves and are motivated by conjectural properties of the latter.
In order to clarify this point, it is convenient to start by recalling Voevodsky's and Nori's approaches to mixed motives in parallel, stressing their interrelation.

For the purposes of the present paper, we are only interested in $\Q$-linear categories of motives.
The conjectural theory of \textit{Mixed Motives} over $k$ is expected to be characterized by two distinctive properties of different nature:
\begin{enumerate}
	\item[(i)] It should arise from the geometry of algebraic cycles on $k$-varieties.
	\item[(ii)] It should take the shape of an abelian category of coefficients defining the universal cohomology theory on $k$-varieties.
\end{enumerate}
More precisely, categorical invariants such as morphisms and extensions between mixed motives should be computable in terms of $K$-theory groups.
The abelian category of mixed motives is expected to carry a monoidal structure making it neutral Tannakian over $\Q$, and also a theory of weights refining the classical Hodge-theoretic notion.
From a conceptual viewpoint, it is reasonable to require the two properties (i) and (ii) together:
the only known procedure to construct universal cohomology classes uniformly within different cohomology theories is by taking the image of algebraic cycles under the various cycle class maps.
From a practical viewpoint, however, it is not clear how to impose (i) and (ii) simultaneously.
In fact, the expectation that algebraic cycles should give rise to an abelian category of coefficients led Grothendieck to formulate the so-called Standard Conjectures, which remain widely open to date.

Anyway, there exist two unconditional approaches to construct categories of mixed motives over $k$.
Roughly speaking, each of them is only known to satisfy one of the two key properties stated above:
\begin{enumerate}
	\item[(i)] Voevodsky's approach (pursued independently also by Hanamura and by Levine) yields a triangulated category $\DAct(k)$ having the correct relation to algebraic cycles, but not known to arise from an abelian category.
	\item[(ii)] Nori's approach yields an abelian category $\MNori(k)$ enjoying a suitable universal property, but not known to be computable in terms of algebraic cycles.
\end{enumerate}
In lack of a complete theory of mixed motives, the two candidate theories represent its closest approximations from the geometric side of algebraic cycles and from the linear-algebraic side of cohomology, respectively.
It is worth stressing that Voevodsky's and Nori's constructions look quite different:
while the first is purely cycle-theoretic and does not depend on the existence of any classical cohomology theory, the second requires the choice of one classical cohomology theory as a model for the category of motives.
However, they are believed to be essentially equivalent:
Nori's category $\MNori(k)$ should be the correct abelian category of mixed motives, while Voevodsky's category $\DAct(k)$ should be its bounded derived category.
In particular, the triangulated category $\DAct(k)$ should carry a \textit{motivic $t$-structure} with heart equivalent to $\MNori(k)$.

The existence of the motivic $t$-structure is an extremely deep conjecture: 
it would imply a large part of the Standard Conjectures, as well as the Beilinson--Soulé Vanishing Conjecture in Algebraic $K$-theory.
In any case, by work of Nori, there exists a canonical realization functor
\begin{equation}\label{formula:Nri_k^*-intro}
	\Nri_k^* \colon \DAct(k) \rightarrow \Db(\MNori(k))
\end{equation}
compatible with the monoidal structures.
This leads to a strong comparison result on the Tannakian level:
as proved by Choudhury--Gallauer in~\cite{CG17}, Nori's motivic Galois group $\GmotNo(k)$ is canonically isomorphic to Ayoub's motivic Galois group $\GmotAy(k)$, constructed directly out of Voevodsky motives.
So far, this result is the stronger piece of evidence towards the conjectural equivalence.

Classical cohomology theories arise from suitable theories of sheaves over $k$-varieties.
Typically, these give rise to a system of triangulated categories of coefficients related by Grothendieck's six functor formalism.
By analogy, the conjectural theory of Mixed Motives should extend to a theory of \textit{Mixed Motivic Sheaves} defining the universal six functor formalism on $k$-varieties.
The precise formulation of this expectation can be found in Beilinson's article~\cite{Bei87H}, where the abelian categories of mixed motivic sheaves are modelled on perverse sheaves rather than on ordinary sheaves. 
The insight that perverse sheaves should provide the natural model for refined abelian categories of coefficients, justified by the formalism of cohomological weights, lies at the heart of Saito's theory of mixed Hodge modules.
A crucial foundational result is Beilinson's equivalence~\cite[Thm.~1.3]{BeilinsonEquivalence}, which allows one to regard the usual constructible derived category as the derived category of perverse sheaves. 

From a technical viewpoint, the construction of the six operations in the motivic world turns out to be easiest in the setting of Voevodsky motives.
Based on seminal ideas of Voevodsky, this subject has been studied thoroughly by Ayoub in~\cite{AyoThesis} and by Cisinski--Déglise in~\cite{CD19}:
the outcome is a system of triangulated categories $X \mapsto \DAct(X)$ related by the six operations.
As shown by Drew--Gallauer in~\cite{DG22}, this really defines the universal six functor formalism on $k$-varieties, provided such a property is stated in the language of $\infty$-categories.
By analogy with the case of mixed motives, it is natural to conjecture the existence of a \textit{motivic perverse $t$-structure} on $\DAct(X)$.
In order to construct a candidate for the conjectural perverse heart unconditionally, one can try to extend Nori's approach to mixed motives over general bases.
In fact, this can be done in two different ways, according to the feature of Nori's category that one wants to generalize:
either its universal property as an abelian category or its presentation as representations of the motivic Galois group.
In both cases, the construction requires the choice of a complex embedding $\sigma \colon k \hookrightarrow \C$, as for Nori's original theory.
Using the associated analytification of $k$-varieties, one defines a system of Betti realization functors
\begin{equation*}
	\Bti_X^* \colon \DAct(X) \to \Dbct(X)
\end{equation*}
into the usual constructible derived categories.
By stabilizing the image of $\Bti_X^*$ inside $\Dbct(X)$ appropriately, one obtains a well-behaved triangulated category  $\Dbgeo(X)$ of \textit{constructible complexes of geometric origin}.
As in~\cite{BBDG}, one can define a perverse $t$-structure on the latter:
its heart $\Pervgeo(X)$ is the category of \textit{perverse sheaves of geometric origin}, and the abelian category of motivic perverse sheaves should be a motivic enhancement of the latter.

Ivorra--Morel's approach to Nori motivic sheaves puts the accent on the universal property:
their category $\MNori(X)$ is defined as the universal abelian category factoring the homological functor
\begin{equation*}
	\DAct(X) \xrightarrow{\Bti_X^*} \Dbgeo(X) \xrightarrow{\pH^0} \Pervgeo(X).
\end{equation*}
Using Nori's realization functor \eqref{formula:Nri_k^*-intro}, one sees that the definition recovers $\MNori(k)$ when $X = \Spec(k)$.
By~\cite{IM19} and~\cite{TerenziNori}, as $X$ varies among quasi-projective $k$-varieties, the derived categories $\Db(\MNori(X))$ are endowed with the six operations, compatibly with those on the underlying constructible derived categories.
The construction of this six functor formalism is long and technical, since one has to reconstruct the six operations starting from $t$-exact functors and natural transformations thereof.
Ayoub's approach to Nori motivic sheaves puts the accent on the motivic Galois group:
the group $\GmotAy(k)$ acts naturally on the triangulated categories $\Dbgeo(X)$, and one is led to consider the associated categories of homotopy-fixed points
\begin{equation*}
	\Dbgeo(X)^{\GmotAy(k)}.
\end{equation*}
Using Choudhury--Gallauer's isomorphism, one sees that the definition recovers $\Db(\MNori(k))$ when $X = \Spec(k)$.
Over a general $k$-variety $X$, one can interpret $\Dbgeo(X)^{\GmotAy(k)}$ informally as a category of $\MNori(k)$-valued constructible complexes.
Since the $\GmotAy(k)$-action is compatible with the six operations, these pass automatically to the homotopy-fixed points, and they even exist $\infty$-categorically.
In conclusion, there are two natural ways to extend Nori's theory of motives to a six functor formalism, and they enjoy distinct advantages:
the universal property is only visible in Ivorra--Morel's setting, whereas the functoriality is much cleaner in Ayoub's setting.

\subsection*{Main results}

Our main result is the following comparison theorem:
\begin{thm*}[Thm.~\ref{thm:comp}, Prop.~\ref{prop:o_S-6ff}]
	For every $k$-variety $X$, there exists a canonical equivalence of triangulated categories
	\begin{equation*}
		\Db(\MNori(X)) \xrightarrow{\sim} \Dbgeo(X)^{\GmotAy(k)}.
	\end{equation*}
	As $X$ varies, these are compatible with the six operations.
\end{thm*}

Let us give an overview of the proof strategy.
This is an intentionally simplified account:
it does not touch certain technical questions about perverse sheaves of geometric origin that we need to solve along the way.

If the above comparison result is to hold, there should be a way to carve out the abelian category $\MNori(X)$ inside the triangulated category $\Dbgeo(X)^{\GmotAy(k)}$ as the heart of a suitable perverse $t$-structure, and the corresponding realization functor should identify $\Dbgeo(X)^{\GmotAy(k)}$ with the derived category of its perverse heart.
We check that this is indeed the case:
\begin{prop*}[Prop.~\ref{prop:Perv^Gmot}, Thm.~\ref{thm:Bei-equiv}]
	For every $k$-variety $X$, the stable $\infty$-category $\Dbgeo(X)^{\GmotAy(k)}$ carries a $t$-structure compatible with the perverse $t$-structure on $\Dbgeo(X)$.
	Its heart is equivalent to $\Pervgeo(X)^{\GmotAy(k)}$, and the associated realization functor
	\begin{equation*}
		\Db(\Pervgeo(X)^{\GmotAy(k)}) \rightarrow \Dbgeo(X)^{\GmotAy(k)}
	\end{equation*}
	is an equivalence. 
\end{prop*}
The existence of the perverse $t$-structure on $\Dbgeo(X)^{\GmotAy(k)}$ essentially follows from the $t$-exactness of the $\GmotAy(k)$-action on $\Dbgeo(X)$.
The final equivalence is a $\GmotAy(k)$-equivariant version of Beilinson's classical result, and the proof is based on Beilinson's argument~\cite[\S~2]{BeilinsonEquivalence}.

Once this preliminary task is completed, we construct a canonical system of exact functors
\begin{equation*}
	o_X \colon \MNori(X) \rightarrow \Pervgeo(X)^{\GmotAy(k)}
\end{equation*}
by exploiting the universal property of Ivorra--Morel's abelian categories.
Passing to the derived categories, we obtain the triangulated comparison functors
\begin{equation*}
	\Db(\MNori(X)) \xrightarrow{o_X} \Db(\Pervgeo(X)^{\GmotAy(k)}) \xrightarrow{\sim} \Dbgeo(X)^{\GmotAy(k)}.
\end{equation*}
Their compatibility with the six operations is a formal consequence of the way these operations are constructed in Ivorra--Morel's setting.
In particular, one does not need to know in advance that the comparison functors are equivalences in order to know their compatibility with the six operations;
in fact, it is the latter result which plays a key role in the proof of the former.

Arguing by Noetherian induction via localization triangles, the comparison statement for Nori motivic sheaves reduces its generic variant:
\begin{prop*}[Prop.~\ref{prop:comp_generic}]
	For every $k$-variety $X$, the functor
	\begin{equation*}
		\twocolim_{U \in \Open_X^{op}} \Db(\MNori(U)) \to \twocolim_{U \in \Open_X^{op}} \Dbgeo(U)^{\GmotAy(k)}
	\end{equation*}
	induced in the colimit over the dense open subsets $U \subset X$ is an equivalence.
\end{prop*}
The main advantage of this generic statement is that it is amenable to Tannakian methods.
Indeed, if $X$ is a smooth $k$-variety, ordinary local systems over $X$ embed into perverse sheaves after shifting appropriately inside $\Dbct(X)$;
we have a similar inclusion $\Locgeo(X) \subset \Pervgeo(X)$ inside $\Dbgeo(X)$.
We are led to consider the full abelian subcategory $\NMLoc(X) \subset \MNori(X)$ spanned by the object whose underlying perverse sheaf belongs to $\Locgeo(X)$.
Our comparison functor restricts to an exact monoidal functor
\begin{equation*}
	o_X \colon \NMLoc(X) \to \Locgeo(X)^{\GmotAy(k)}.
\end{equation*}
Since every perverse sheaf over $X$ restricts to a shifted local system over some dense open subset, it suffices to show that these functors induce an equivalence in the colimit.
If $X$ is geometrically connected over $k$, the two categories involved are neutral Tannakian over $\Q$, and the question can be translated in terms of their Tannaka dual groups. 
In the case when $X = \Spec(k)$, everything comes down to Choudhury--Gallauer's isomorphism between $\GmotNo(k)$ and $\GmotAy(k)$.
In general, one needs to measure the difference between the motivic Galois group of $X$ and that of $k$.
We do this as follows:
\begin{thm*}[Thm.~\ref{thm:fund_ses_general}, Thm.~\ref{thm:ayoub_fund_seq}]
	Let $X$ be a smooth, geometrically connected $k$-variety.
	Then, for every closed point $x \in X(\bar{k})$, the following statements hold:
	\begin{enumerate}
		\item The Tannaka dual $\GmotNo(X,x)$ of $\NMLoc(X)$ fits into the fundamental exact sequence
		\begin{equation*}
			1 \to \pi_1^{\geo}(X,x) \to \GmotNo(X,x) \to \GmotNo(k) \to 1.
		\end{equation*}
		\item The Tannaka dual $\GmotAy(X,x)$ of $\Locgeo(X)^{\GmotAy(k)}$ fits into the fundamental exact sequence
		\begin{equation*}
			1 \to \pi_1^{\geo}(X,x) \to \GmotAy(X,x) \to \GmotAy(k) \to 1.
		\end{equation*}
	\end{enumerate}
\end{thm*}
Here, the group $\pi_1^{\geo}(X,x)$ is Tannaka dual to $\Locgeo(X)$:
it should be regarded as the closest motivic approximation to the usual topological fundamental group.
The proof of exactness of the fundamental sequences consists of two steps:
first we treat the case when $x$ is a $k$-rational point (in which case the two sequences are even split), then we deduce the general case by a Galois-descent argument.
In the $k$-rational case, the two exact sequences are established in different ways:
the first one was obtained by the first-named author in~\cite{Jacobsen} using Hodge-theoretic inputs, while the second one is obtained in the present paper using abstract properties of equivariant Tannakian categories.

At this point, the reader might observe that the exactness of the two fundamental sequences would already imply that the comparison functor $o_X \colon \MNori(X) \to \Locgeo(X)^{\GmotAy(k)}$ is an equivalence, at least when $X$ is geometrically connected over $k$.
The issue is that, in the main body of the paper, we have to consider two distinct notions of local system of geometric origin:
while Ayoub's category $\Locgeo(X)$ is tailored to the second fundamental sequence, the natural category of local systems to use for the first one is a priori smaller.
In any case, we can show that the possible difference between these two categories of local systems disappears in the colimit, which allows us to prove the generic equivalence.
Once the full comparison theorem is known, we deduce that the two notions of local system of geometric origin must in fact coincide.
We obtain a similar result for perverse sheaves of geometric origin.

Part of the technical difficulty of the paper comes from the necessity to understand how the various categories involved behave under finite extensions of the base field $k$:
this is crucial, for example, to apply Galois-descent ideas in the study of the fundamental sequences.
In particular, we have to prove some invariance properties of Ayoub's categories of motivic local systems with respect to the chosen base field $k$, which are already known to hold for Ivorra--Morel's categories.
To this end, we establish some auxiliary short exact sequences of Tannaka groups involving Artin motivic sheaves, generalizing results by Nori and Ayoub.
The reader may safely skip these additional passages in a first reading, by assuming $k$ algebraically closed throughout the entire paper. 

As an application of our comparison theorem, we recover one of the main results of Tubach's paper~\cite{Tub23}, in which Nori's realization functor~\eqref{formula:Nri_k^*-intro} is extended to motivic sheaves:  
\begin{thm*}[Thm.~\ref{thm:infty}, Thm.~\ref{thm:real}, Prop.~\ref{prop:Nri-weights}]
	The six functor formalism $X \mapsto \Db(\MNori(X))$ admits a canonical $\infty$-categorical enhancement.
	Consequently, for every $k$-variety $X$ there exists a canonical realization functor
	\begin{equation}\label{formula:Nri_X^*-intro}
		\Nri_X^* \colon \DAct(X) \rightarrow \Db(\MNori(X))
	\end{equation}
	such that the Betti realization over $X$ factors as
	\begin{equation*}
		\Bti_X^* \colon \DAct(X) \xrightarrow{\Nri_X^*} \Db(\MNori(X)) \xrightarrow{\iota_X} \Db(\Pervgeo(X)) \xrightarrow{\sim} \Dbgeo(X).
	\end{equation*}
	As $X$ varies, the functors~\eqref{formula:Nri_X^*-intro} are compatible with the six operations and respect weights.
\end{thm*}

The existence and essential uniqueness of the realization functors~\eqref{formula:Nri_X^*-intro}, as well as their compatibility with the six operations, follow at once from the $\infty$-categorical universal property of the six functor formalism $X \mapsto \DAct(X)$.
The same universal property implies that all self-equivalences of Voevodsky motivic sheaves as a six functor formalism are trivial.
Exploiting the realization functors~\eqref{formula:Nri_X^*-intro}, we obtain a partial analogue of this property for Nori motivic sheaves:
\begin{thm*}[Thm.~\ref{thm:exact-Auteq}]
	All self-equivalences of the six functor formalism $X \mapsto \Db(\MNori(X))$ respecting the perverse $t$-structures are trivial.
\end{thm*}
It is natural to expect that every self-equivalence of Nori motivic sheaves be $t$-exact.
Unfortunately, this looks like a deeper question, related to the existence of the motivic perverse $t$-structure on $\DAct(X)$.
In fact, as proved by Tubach in~\cite[Thm.~4.11]{Tub23}, the existence of the motivic perverse $t$-structures over all $k$-varieties would imply that all realization functors~\eqref{formula:Nri_X^*-intro} are equivalences.
This would allow one to rephrase our comparison theorem in terms of Voevodsky motivic sheaves.
It would be interesting to have some unconditional analogue of our result purely in the setting of Voevodsky motives.

One last by-product of our comparison theorem is the following independence property of Ayoub's construction of Nori motivic sheaves:
\begin{thm*}[Thm.~\ref{thm:Dbgeo^Gmot_sigma1=Dbgeo^Gmot_sigma2}]
	Fix two distinct complex embeddings $\sigma_1, \sigma_2 \colon k \hookrightarrow \C$.
	Then, for every $k$-variety $X$, the $\infty$-categories of homotopy-fixed points $\Dbgeo(X)^{\GmotAy(k)}$ constructed via $\sigma_1$ and $\sigma_2$ are canonically equivalent, compatibly with the six operations.
\end{thm*}
This result is non-trivial because the six functor formalism $X \mapsto \Dbgeo(X)$ does depend on the chosen complex embedding.
The key point is that Ivorra--Morel's categories are already known to be independent of all auxiliary choices, as it should be the case for categories of motivic sheaves.

\subsection*{Related and future work}

The $\infty$-categorical enhancement of Ivorra--Morel's theory of Nori motivic sheaves, and the consequent realization from Voevodsky motivic sheaves, have been obtained previously by Tubach in~\cite{Tub23} using different methods.
Tubach's methods also apply to other six functor formalisms endowed with ordinary $t$-structures, such as Saito's mixed Hodge modules.
In fact, the Hodge-theoretic version of Tubach's result plays a role in the construction of the fundamental short exact sequence by the first-named author.

Compared to Tubach's approach to Nori motivic sheaves, ours has two main advantages:
it does not resort to the ordinary $t$-structure, and it yields a more explicit description of the realization functors from Voevodsky motivic sheaves.
In a forthcoming project, we plan to study the Hodge-theoretic variant of Ayoub's construction of Nori motivic sheaves as homotopy-fixed points:
this will yield a clean Tannakian description of the six functor formalism on mixed Hodge modules of geometric origin.
We will then recover the Hodge realization functor of Nori motivic sheaves constructed in~\cite{Tub23} as a natural restriction functor on homotopy-fixed points.

\subsection*{Structure of the paper}

In Section~\ref{sect:Dbgeo}, after reviewing the formalism of Voevodsky motivic sheaves, we explain how they relate to sheaves of geometric origin through the Betti realization.
Along the way, we study the compatibility of the theory with finite extensions of the base field;
since these technical results play a purely auxiliary role in the paper, the proofs can be safely skipped in a first reading.

In Section~\ref{sect:IM_cats}, we review Ivorra--Morel's theory of Nori motivic sheaves in some detail.
After explaining the definition of these categories in terms of Voevodsky motivic sheaves, we summarize the construction of the six operations.
In the final part, we focus on motivic local systems, and we discuss their relation to local systems of geometric origin through the fundamental exact sequence.

The goal of Section~\ref{sect:fund_ses_Voe} is to derive the same fundamental sequence in the setting of Voevodsky motivic sheaves.
The main constructions rely on the formalism of equivariant objects under pro-algebraic groups, discussed in Appendix~\ref{sect:App}.

In Section~\ref{sect:ayoub_cat}, following Ayoub, we study categories of equivariant complexes of geometric origin under the motivic Galois group.
The main result is the construction of a perverse $t$-structure satisfying the equivariant version of Beilinson's equivalence.

Section~\ref{sect:comp-thm} is devoted to our main result, in which we compare Ivorra--Morel's categories of Nori motives with Ayoub's equivariant categories.
The proof combines the main results of the previous three sections.  

In the final Section~\ref{sect:appli_compl}, we apply our comparison theorem to the construction of realization functors from Voevodsky motivic sheaves to Nori motivic sheaves.
Other interesting consequences of our main result are discussed as well.

\subsection*{Acknowledgments}

We are deeply grateful to Joseph Ayoub for his interest on this project and for his illuminating answers to our technical questions.
In particular, we thank him for explaining to us that the natural way to prove Lemma~\ref{lem::AMLoc_X'/X} is through the formalism of induced representations.
We would also like to thank Marco D'Addezio, Sophie Morel, Dan Petersen, and Swann Tubach for useful conversations.

The second-named author was funded by the LabEx MILYON at ENS de Lyon.
The first-named author was funded by
Dan Petersen's Wallenberg Scholar fellowship.

\section*{Notation and conventions}
\addcontentsline{toc}{section}{Notation and conventions}

\subsection*{Algebraic Geometry}

\begin{itemize}
	\item We work over a base field $k$ of characteristic $0$ endowed with a complex embedding $\sigma\colon k \hookrightarrow \C$.
	\item By a \textit{$k$-variety} we mean a reduced, separated $k$-scheme of finite type.
	\item By a \textit{morphism of $k$-varieties} we mean a morphism of $k$-schemes.
	\item Given a $k$-variety $X$ and an open subvariety $U \subset X$, by the \textit{closed complement} of $U$ in $X$ we mean the closed subspace $X \setminus U$ endowed with the reduced scheme structure.
	\item Given a $k$-variety $X$, we let $\Open_X$ denote the poset of Zariski dense open subsets of $X$, ordered by inclusion.
\end{itemize}

\subsection*{Analytic sheaves}

Let $X$ be a $k$-variety.
\begin{itemize}
	\item We let $X^{\sigma}$ denote the complex-analytic space underlying the complex variety $X \times_{k,\sigma} \C$, and we call it the \textit{$\sigma$-analytification} of $X$.
	\item We let $D(X)$ denote the derived category of sheaves of $\Q$-vector spaces for the complex-analytic topology on $X^{\sigma}$.
	\item We let $\Dbct(X) \subset D(X)$ denote the $k$-algebraic constructible derived category (see Definition~\ref{defn:Dbgeo}).
	\item We let $\Perv(X) \subset \Dbct(X)$ denote the heart of the perverse $t$-structure.
	\item If $X$ is smooth over $k$, we let $\Loc(X) \subset \Perv(X)$ denote the abelian subcategory of shifted local systems (see Notation~\ref{nota:Loc(X)}).
\end{itemize} 

\subsection*{Motivic sheaves and motivic Galois groups}

\begin{itemize}
	\item We let $\GmotNo(k)$ denote Nori's motivic Galois group;
	we write it as $\GmotNo(k,\sigma)$ if we want to stress its dependence on $\sigma$.
	\item We let $\Gmot(k)$ denote Ayoub's spectral motivic Galois group;
	we write it as $\Gmot(k,\sigma)$ if we want to stress its dependence on $\sigma$.
	\item We let $\GmotAy(k)$ denote the classical pro-algebraic group underlying $\Gmot(k)$;
	we write it as $\GmotAy(k,\sigma)$ if we want to stress its dependence on $\sigma$.
	\item We let $\DA(X)$ denote the stable $\infty$-category of $\Q$-linear étale Voevodsky motivic sheaves on $X$.
	\item We let $\DAct(X) \subset \DA(X)$ denote the sub-$\infty$-category of constructible motivic sheaves.
	\item We let $\MNori(X)$ denote the abelian category of perverse Nori motives over $X$;
	we write it as $\MNori_{\sigma}(X)$ if we want to stress its dependence on $\sigma$ (and, implicitly, on the base field $k$).
\end{itemize}

\section{Voevodsky motives and sheaves of geometric origin}
\label{sect:Dbgeo}

This first section focuses on the theory of Voevodsky motivic sheaves, which lies at the background of the recent approaches to Nori motivic sheaves.
After recalling well-known facts about Voevodsky motives and the Betti realization, we introduce the associated Betti algebras and summarize their properties.
Then we review the theory of constructible sheaves of geometric origin, following Ayoub's treatment in~\cite[\S~1.6]{AyoAnab}.
We are particularly interested in the abelian categories of perverse sheaves of geometric origin and in their subcategories of shifted local systems.

Throughout, we work over a base field $k$ of characteristic $0$.
For future reference, in the course of the section we study the behaviour of the theory with respect to finite extensions of $k$;
the details of this technical discussion can be safely skipped in a first reading.

\subsection{Voevodsky motives and the Betti realization}\label{subsect:Voe-Bti}

For every $k$-variety $X$
there exists a $\Q$-linear triangulated category of
\textit{Voevodsky motivic sheaves} on $X$,
which underlies a symmetric monoidal stable $\infty$-category.
Several constructions of these motivic categories are available in the literature, notably:
Voevodsky's original construction
of Nisnevich motives with transfers,
étale motives without transfers by Ayoub (see~\cite{AyoThesis}),
and Beilinson motives by Cisinski--Déglise (see~\cite{CD19}).
All these approaches are known to yield equivalent categories (see~\cite[Thm.~B.1]{AyoEtaleReal} and~\cite[Thm.~16.2.18]{CD19}).
Throughout this paper, we work with Ayoub's version of the theory.

We let $\DA(X)$ denote the category of
$\Q$-linear étale Voevodsky motivic sheaves over $X$:
it is constructed in~\cite[\S~3]{AyoEtaleReal}
starting from the category $\Sm/X$ of smooth $X$-schemes,
by taking the $\infty$-category of $\Q$-linear presheaves on it
and imposing
étale descent,
$\mathbb{A}^1$-invariance,
and $\mathbb{P}^1$-stability.
By construction, there is a canonical functor
\begin{equation*}
	M_X \colon \Sm/X \to \DA(X)
\end{equation*} 
associating to every smooth $X$-scheme $W$ its relative motive.
The object $\Q_X := M_X(X) \in \DA(X)$,
called the \textit{unit motive} over $X$,
is the unit for the monoidal structure on $\DA(X)$.
By~\cite[Thm.~4.5.67]{AyoThesis}, the triangulated category $\DA(X)$ is compactly generated by the Tate twists of the motives $M_X(W)$ with $W \in \Sm/X$.

As $X$ varies among all $k$-varieties, the categories $\DA(X)$ are endowed with a canonical six functor formalism:
Ayoub's work~\cite{AyoThesis} describes the construction of the six operations over quasi-projective bases, and Cisinski--Déglise's work~\cite{CD19} extends it beyond the quasi-projective setting.
The inverse image functor
\begin{equation*}
f^* \colon \DA(Y) \to \DA(X)
\end{equation*}
associated to a morphism $f \colon X \to Y$ is defined by extension of the base-change functor
\begin{equation*}
	\Sm/Y \to \Sm/X, \quad W \mapsto W \times_Y X.
\end{equation*}
Its right adjoint is the direct image functor
\begin{equation*}
	f_* \colon \DA(X) \to \DA(Y).
\end{equation*}
The tensor product functor
\begin{equation*}
	- \otimes - \colon \DA(X) \times \DA(X) \to \DA(X)
\end{equation*}
is defined by extension of the fibre product
\begin{equation*}
	\Sm/X \times \Sm/X \to \Sm/X, \quad (W_1, W_2) \mapsto W_1 \times_X W_2.
\end{equation*}
These rules allow one to compare certain combinations of the six operations by looking at their effect on the generating sites.

\begin{rem}\label{rem:f_finét}
	In general, the direct image functor $f_* \colon \DA(X) \to \DA(Y)$ does not extend a functor $\Sm/X \to \Sm/Y$.
	There is, however, one important exception:
	given a finite étale morphism $q \colon X' \to X$, the functor $q_*$ coincides with the left adjoint to $q^*$ (see \cite[Sch.~1.4.2]{AyoThesis}), which extends the forgetful functor $\Sm/{X'} \to \Sm/X$.
	Moreover, the natural transformation between functors $\DA(X) \to \DA(X)$
	\begin{equation*}
		\id_{\DA(X)} \xrightarrow{\eta} q_* q^* \xrightarrow{\epsilon} \id_{\DA(X)}
	\end{equation*}
	and the natural transformation between functors $\DA(X') \to \DA(X')$
	\begin{equation*}
		\id_{\DA(X')} \xrightarrow{\eta} q^* q_* \xrightarrow{\epsilon} \id_{\DA(X')}
	\end{equation*}
	are both multiplication by the degree of $q$ (see~\cite[Prop.~13.7.6]{CD19}).
	It follows that every object $M \in \DA(X)$ is a direct summand of $q_* q^* M$, while every object $M' \in \DA(X')$ is a direct summand of $q^* q_* M'$.
\end{rem}

An object of $\DA(X)$ is \emph{constructible} if it belongs to the smallest stable sub-$\infty$-category containing the motives $M_X(W)$ with $W \in \Sm/X$ and closed under retracts.
Since $\DA(X)$ is compactly generated by such motives,
an object is constructible
if and only if
it is compact~\cite[Prop.~15.1.4]{CD19}.
By~\cite[Thm.~6.3.26]{CD16},
this notion of constructibility matches with the topological intuition:
an object $A \in \DA(X)$ is constructible
if and only if
there is a stratification of $X$
into locally closed subvarieties $S$
such that
the inverse image of $A$ on each stratum $S$
is dualizable in the symmetric monoidal category $\DA(S)$.
We let $\DAct(X)$ denote
the full subcategory of
constructible objects in $\DA(X)$.
By~\cite[Thm.~8.10, Thm.~8.12]{AyoEtaleReal},
the subcategories $\DAct(X)$ are stable under the six operations.
In conclusion,
$\DAct(X)$ is the smallest stable sub-$\infty$-category of $\DA(X)$ containing the unit motive $\Q_X$ and closed under any functor $\DA(X) \to \DA(X)$ obtained out of the six operations.

Let us now assume that we are given a complex embedding
$\sigma \colon k \hookrightarrow \C$.
This allows us to compare the theory of Voevodsky motives with the classical theory of sheaves for the analytic topology.
 
\begin{nota}
	Let $X$ be a $k$-variety.
	\begin{itemize}

		\item We let $X^{\sigma}$ denote the complex-analytic space attached to $X$ via $\sigma$:
		it is the complex-analytic space associated to
		the $\C$-variety $X \times_{k,\sigma} \C$.

		\item We let $D(X)$ denote the derived category of sheaves of $\Q$-vector spaces over the complex-analytic space $X^{\sigma}$;
		we write it as $D_{\sigma}(X)$ if we want to stress its dependence on the chosen embedding $\sigma$ (and, implicitly, on the base field $k$).
	\end{itemize}
\end{nota}

In~\cite{AyoBettiReal},
Ayoub constructs adjoint functors
\begin{equation*}
	\Bti_X^* \colon \DA(X) \leftrightarrows D(X) \colon \Bti_{X,*}.
\end{equation*}
The functor $\Bti_X^*$,
called the \textit{Betti realization functor},
extends the change-of-site functor
\begin{equation*}
	\Sm/X \to \Sm/{X^{\sigma}}, \quad W \mapsto W^{\sigma}.
\end{equation*}
The functors $\Bti_X^*$ are compatible with inverse images and with the tensor product.
Consequently, the right adjoint functors $\Bti_{X,*}$ are compatible with direct images.
The combination of~\cite[Thm.~3.4, Thm.~3.7, Thm.~3.8]{AyoBettiReal} shows that the functors
\begin{equation*}
	\Bti_X^* \colon \DAct(X) \to D(X)
\end{equation*}
obtained by restriction to constructible objects
commute with all the six operations.

The Betti realization is controlled by the Betti algebra:

\begin{lem}[{\cite[Defn.~1.4.1]{AyoAnab}}]
\label{lem:BtiAlg_defn}\ 
\begin{enumerate}

\item For every $k$-variety $X$, the object
\begin{equation*}
\mathcal{B}_X := \Bti_{X,*} \Q_X
\in \DA(X)
\end{equation*}
is naturally a commutative algebra object, called the \textit{Betti algebra} of $X$.

\item For every morphism of $k$-varieties $f \colon X \to Y$, there is a canonical commutative algebra isomorphism in $\DA(X)$
\begin{equation*}
f^* \mathcal{B}_Y \xrightarrow{\sim} \mathcal{B}_X.
\end{equation*}
In particular, for every $k$-variety $X$, there is a canonical isomorphism in $\DA(X)$
\begin{equation*}
a_X^* \mathcal{B}_k \xrightarrow{\sim} \mathcal{B}_X,
\end{equation*}
where $a_X \colon X \to \Spec(k)$ denotes the structural morphism.
\end{enumerate}
\end{lem}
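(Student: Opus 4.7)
The plan is to deduce part~(1) from abstract nonsense about monoidal adjunctions, and then bootstrap part~(2) from the $2$-functoriality of the Betti adjunction.

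For~(1), I would use that $\Bti_X^*$ is symmetric monoidal: this follows from the stated compatibility with the tensor product together with $\Bti_X^* \Q_X \simeq \Q_{X^\sigma}$. By the general principle that the right adjoint of a symmetric monoidal functor inherits a canonical lax symmetric monoidal structure, $\Bti_{X,*}$ is lax symmetric monoidal and therefore sends commutative algebras to commutative algebras. Since $\Q_{X^\sigma}$ is the unit of $D(X)$ and hence trivially a commutative algebra, its image $\mathcal{B}_X = \Bti_{X,*}\Q_{X^\sigma}$ is canonically a commutative algebra in $\DA(X)$.

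For~(2), I would construct the desired map as a Beck--Chevalley exchange. The compatibility of the change-of-site functors with base change along $f$ (both composites extend $W \mapsto (W\times_Y X)^\sigma$) upgrades, via the universal property of $\DA(-)$, to a natural isomorphism of symmetric monoidal functors $\Bti_X^* \circ f^* \simeq f^* \circ \Bti_Y^* \colon \DA(Y) \to D(X)$. Passing to right adjoints yields a lax monoidal exchange transformation
\[
	\mathrm{Ex} \colon f^* \Bti_{Y,*} \to \Bti_{X,*}\, f^*,
\]
and evaluating at $\Q_{Y^\sigma}$, together with $f^* \Q_{Y^\sigma} \simeq \Q_{X^\sigma}$, gives the desired commutative algebra morphism $f^* \mathcal{B}_Y \to \mathcal{B}_X$.

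The main obstacle is showing that $\mathrm{Ex}(\Q_{Y^\sigma})$ is an isomorphism; the formalism only provides a map. To verify this, I would test against the compact generators $M_X(W)$ of $\DA(X)$. Using the Betti adjunction and the identification $\Bti_X^* M_X(W) \simeq p_{\#} \Q_{W^\sigma}$ for the structure map $p \colon W^\sigma \to X^\sigma$, one computes $\mathrm{Hom}_{\DA(X)}(M_X(W), \mathcal{B}_X[n]) \simeq H^n(W^\sigma, \Q)$. The same computation for $f^*\mathcal{B}_Y$ is immediate when $f$ is smooth, via the left adjoint $f_\#$ to $f^*$; the general case can be reduced by factoring $f$ into a smooth morphism followed by a closed immersion and tracking $\Bti_{*}$ along each piece, in the spirit of Ayoub's analysis in~\cite{AyoBettiReal}. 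Once the isomorphism is established in full generality, the final assertion of~(2) follows by specializing to the structural morphism $f = a_X$.
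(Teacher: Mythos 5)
Your argument follows essentially the same route as the paper. For part~(1), both proofs invoke that $\Bti_{X,*}$ is lax symmetric monoidal as the right adjoint of a symmetric monoidal functor; the paper merely spells out the resulting unit and multiplication maps (and refers to~\cite[Cor.~1.15]{AyoHopf1} for details), whereas you state the abstract principle directly. For part~(2), your construction of the comparison map is the Beck--Chevalley exchange built from the $(\Bti^*,\Bti_*)$-adjunctions, while the paper writes down the mate using the $(f^*,f_*)$-adjunctions together with $\Bti_{Y,*}\, f_* \simeq f_*\, \Bti_{X,*}$; these two expressions of the exchange transformation agree by the standard compatibility of mates, so this is the same morphism. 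Both proofs then reduce the invertibility claim to a check on compact generators; the paper defers the actual computation to~\cite[Prop.~1.6.6, Rem.~1.6.7]{AyoAnab}, while you sketch the computation for $\mathcal{B}_X$ and observe that the smooth case of $f$ is immediate via $f_\#$.

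One small slip worth flagging: to handle a general morphism of $k$-varieties you should factor $f$ as a (locally closed) immersion \emph{followed by} a smooth morphism (e.g.\ $X \hookrightarrow \mathbb{P}^N_Y \to Y$), not the other way around; a smooth morphism followed by a closed immersion does not exist in general. The closed-immersion step is where the real work lies (there is no $f_\#$), and it requires the localization formalism for both $\DA$ and $D$ together with the compatibility of $\Bti_*$ with the gluing triangles --- this is precisely the content of the references the paper cites, so your ``in the spirit of Ayoub's analysis'' is pointing in the right direction but is the part that would need to be filled in.
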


\begin{proof}
	The first statement is a formal consequence of
	the fact that $\Bti_X^*$ is symmetric monoidal.
	Let us just mention that the unit of $\mathcal{B}_X$ is the unit morphism
	\begin{equation*}
		\Q_X \xrightarrow{\eta} \Bti_{X,*} \Bti_X^* \Q_X = \clg{B}_X,
	\end{equation*}
	while the multiplication is defined as the composite
	\begin{equation*}
		\begin{aligned}
			\Bti_{X,*} \Q_X \otimes \Bti_{X,*} \Q_X \xrightarrow{\eta} &\Bti_{X,*} \Bti_X^* (\Bti_{X,*} \Q_X \otimes \Bti_{X,*} \Q_X) \\
			= &\Bti_{X,*}(\Bti_X^* \Bti_{X,*} \Q_X \otimes \Bti_X^* \Bti_{X,*} \Q_X) \\
			\xrightarrow{\epsilon} &\Bti_{X,*}(\Q_X \otimes \Q_X) = \Bti_{X,*} \Q_X.
		\end{aligned}
	\end{equation*}
	See~\cite[Cor.~1.15]{AyoHopf1} for more details.
	
	The morphism in the second statement is defined as the composite
	\begin{equation*}
		f^* \Bti_{Y,*} \Q_Y \xrightarrow{\eta} f^* \Bti_{Y,*} f_* f^* \Q_Y = f^* f_* \Bti_{X,*} f^* \Q_Y \xrightarrow{\epsilon} \Bti_{X,*} f^* \Q_Y = \Bti_{X,*} \Q_X.
	\end{equation*}
	Compatibility with multiplication and units follows formally from
	the lax monoidality of the direct image functors $f_*$ and of the functors $\Bti_{X,*}$ and $\Bti_{Y,*}$.
	In order to show that it is an isomorphism,
	it suffices to check that the induced map
	\begin{equation*}
		\Hom_{\DA(X)}(A,f^* \mathcal{B}_Y) \to \Hom_{\DA(X)}(A,\mathcal{B}_X)
	\end{equation*}
	is bijective when $A$ runs over a well-chosen family
	of compact generators of $\DA(X)$.
	This can be done as in the proof of~\cite[Prop.~1.6.6]{AyoAnab}
	(see also~\cite[Rem.~1.6.7]{AyoAnab}).
\end{proof}

Let us insist that
$\mathcal{B}_X$ is a commutative algebra object
in $\DA(X)$,
in the $\infty$-categorical sense.
This gives access to a well-behaved theory of modules over the Betti algebra,
discussed in \S~\ref{subsect:Dbgeo}. 

In the final part of this section,
we spell out the behaviour of Betti algebras
under finite extensions of the base field $k$.
This is the prototype for
several auxiliary results collected in the present section.
To avoid possible confusion, we need to use more explicit notation:

\begin{nota}
For every $k$-variety $X$, we write
$\Bti_{\sigma,X}^*$,
$\Bti_{\sigma,X,*}$,
and $\mathcal{B}_{\sigma,X}$ in place of
$\Bti_X^*$,
$\Bti_{X,*}$,
and $\mathcal{B}_X$,
respectively,
when we want to stress their dependence on $\sigma$
(and, implicitly, on the base field $k$).
\end{nota}

Let $k'/k$ be a finite extension.
Given a $k$-variety $X$, set $X' := X \times_k k'$, and let $e_X \colon X' \to X$ denote the corresponding finite étale morphism.
Choose a complex embedding $\sigma' \colon k' \hookrightarrow \C$ extending $\sigma$.

\begin{prop}\label{prop:e_*BtiAlg'}
	The object $e_{X,*} \mathcal{B}_{\sigma',X'} \in \DA(X)$ is naturally a commutative algebra object, and there is a canonical commutative algebra isomorphism in $\DA(X)$
	\begin{equation*}
		\mathcal{B}_{\sigma,X} = e_{X,*} \mathcal{B}_{\sigma',X'}.
	\end{equation*}
\end{prop}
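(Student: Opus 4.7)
The plan is to mirror the argument in Lemma~\ref{lem:BtiAlg_defn}(2): I would construct the algebra morphism as the specialization of a natural isomorphism between symmetric monoidal functors, obtained by comparing generating subsites, and then read off the algebra statement by applying it to the unit.

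First I would unwind the analytifications. Since $\sigma'|_k = \sigma$ and $X' = X \times_k k'$, the canonical projection yields an isomorphism of complex-analytic spaces $(X')^{\sigma'} = X' \times_{k',\sigma'} \C = X \times_{k,\sigma} \C = X^\sigma$; more generally, for every smooth $X$-scheme $W$ the natural map $(W \times_X X')^{\sigma'} \to W^\sigma$ is an isomorphism over $(X')^{\sigma'} = X^\sigma$. Via these identifications, $D_{\sigma'}(X')$ and $D_\sigma(X)$ coincide and the unit sheaves $\Q_{X^\sigma}$ on both sides agree.

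Next I would produce a canonical isomorphism of symmetric monoidal functors $\Bti_{\sigma',X'}^* \circ e_X^* \simeq \Bti_{\sigma,X}^*$ from $\DA(X)$ to $D_\sigma(X)$. By construction both composites extend functors $\Sm/X \to \Sm/X^\sigma$ sending $W$ to $(W \times_X X')^{\sigma'}$ and to $W^\sigma$, respectively, and the preceding paragraph identifies these site-level functors. The universal property of $\DA(X)$ then promotes the site-level identification to the sought natural isomorphism. Passing to right adjoints yields a natural isomorphism of lax symmetric monoidal functors $\Bti_{\sigma,X,*} \simeq e_{X,*} \circ \Bti_{\sigma',X',*}$; evaluating at the unit commutative algebra $\Q_{X^\sigma}$ gives the commutative algebra isomorphism $\mathcal{B}_{\sigma,X} \simeq e_{X,*} \mathcal{B}_{\sigma',X'}$. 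The algebra structure on the right-hand side is the one induced by the lax monoidality of the right adjoint $e_{X,*}$ applied to the commutative algebra $\mathcal{B}_{\sigma',X'}$ supplied by Lemma~\ref{lem:BtiAlg_defn}(1).

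The main point that requires care is the bookkeeping of symmetric monoidal structures. The comparison of inverse images is straightforward to see on underlying functors, but to extract an algebra isomorphism one needs to verify that the natural transformation refines to a comparison of symmetric monoidal $\infty$-functors, so that passing to right adjoints produces genuinely lax monoidal $\infty$-functors rather than merely triangulated ones. In Ayoub's $\infty$-categorical set-up for $\Bti^*$ and $\DA$ this is formal, but the simultaneous presence of the two base fields $k$ and $k'$, together with the identifications of analytic spaces discussed in the first paragraph, introduces a layer of bookkeeping where mistakes could creep in.
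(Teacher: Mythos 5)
Your proof is correct and takes essentially the same route as the paper: the paper likewise reduces the statement to the monoidal natural isomorphism $\Bti_{\sigma',X'}^* \circ e_X^* \simeq \Bti_{\sigma,X}^*$ (isolated there as Lemma~\ref{lem:Bti_k=Bti_k-e^*}, which you re-derive from the site-level identification $W^\sigma = (W\times_X X')^{\sigma'}$), passes to right adjoints, and evaluates at the unit. The only cosmetic difference is that the paper writes out the unit and multiplication of $e_{X,*}\mathcal{B}_{\sigma',X'}$ explicitly via the unit/counit of $e_X^* \dashv e_{X,*}$, which is exactly the lax monoidal structure on $e_{X,*}$ that you invoke.
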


The starting point is the identification of complex-analytic spaces
\begin{equation}\label{formula:X^sigma=X'^sigma'}
	X^{\sigma} = (X')^{\sigma'},
\end{equation}
induced by the identification of $\C$-varieties
\begin{equation}\label{eq:X^sigma=X'^sigma'_varieties}
	X \times_{k,\sigma} \C = (X \times_k k') \times_{k',\sigma'} \C.
\end{equation}
This determines a canonical equivalence of monoidal stable $\infty$-categories
\begin{equation*}
	D_{\sigma}(X) \xrightarrow{\sim} D_{\sigma'}(X'),
\end{equation*}
which is compatible with the Betti realization functors in the expected way:

\begin{lem}\label{lem:Bti_k=Bti_k-e^*}
	The diagram of monoidal functors
	\begin{equation*}
		\begin{tikzcd}
			\DA(X) \arrow{rr}{e_X^*} \arrow{d}{\Bti_{\sigma,X}^*} && \DA(X') \arrow{d}{\Bti_{\sigma',X'}^*} \\
			D_{\sigma}(X) \arrow{rr}{\sim} && D_{\sigma'}(X')
		\end{tikzcd}
	\end{equation*}
	commutes up to monoidal natural isomorphism.
\end{lem}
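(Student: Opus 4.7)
The plan is to reduce the comparison to the level of generating sites, invoking the universal property of $\DA(X)$. Recall that $\Bti_{\sigma,X}^*$ is defined by extending the change-of-site functor $\Sm/X \to \Sm/X^\sigma$, $W \mapsto W^\sigma$, and that $e_X^*$ is defined by extending the base-change functor $\Sm/X \to \Sm/X'$, $W \mapsto W \times_X X' = W \times_k k'$. Tracing these rules, the composite $\Bti_{\sigma',X'}^* \circ e_X^*$ extends the functor $W \mapsto (W \times_k k')^{\sigma'}$, while the composite along the other side of the diagram extends $W \mapsto W^\sigma$, viewed as a smooth $(X')^{\sigma'}$-scheme via the identification~\eqref{formula:X^sigma=X'^sigma'}.

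First I would produce the required natural isomorphism between these two functors $\Sm/X \to \Sm/(X')^{\sigma'}$. This is the geometric content of the lemma and amounts to the identification
\begin{equation*}
W \times_{k,\sigma} \C = (W \times_k k') \times_{k',\sigma'} \C,
\end{equation*}
which is an instance of~\eqref{eq:X^sigma=X'^sigma'_varieties} applied to $W$ in place of $X$, and is functorial in $W \in \Sm/X$. Passing to the associated analytic spaces yields the desired natural isomorphism at the presheaf level.

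Next I would extend this natural isomorphism to the full $\infty$-category $\DA(X)$. Both composites are colimit-preserving functors of stable presentable $\infty$-categories (each is built as a composition of left adjoints: the change-of-site extensions and the base-change functor $e_X^*$ are left adjoints), and $\DA(X)$ is generated under colimits and Tate twists by the motives $M_X(W)$ with $W \in \Sm/X$. Since both composites factor through the same localizations (étale descent, $\mathbb{A}^1$-invariance, $\mathbb{P}^1$-stability) on the target $D_{\sigma'}(X')$, the universal property upgrades the natural isomorphism on generators to a natural isomorphism of functors. Compatibility with Tate twists is automatic, as all functors involved are monoidal and send the Tate motive to its analytic counterpart.

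The monoidal structure of the natural isomorphism is then a formal matter: both $\Bti^*$ functors and $e_X^*$ are symmetric monoidal by construction, the tensor product on $\DA(-)$ extends the fibre product on $\Sm/-$, and the identification $W \times_{k,\sigma} \C = (W \times_k k') \times_{k',\sigma'} \C$ is compatible with fibre products over the base. I do not expect a serious obstacle here; the main subtlety is merely bookkeeping the various universal properties in the $\infty$-categorical setting, so that the promotion from a comparison on generators to a monoidal natural isomorphism of $\infty$-functors is legitimate.
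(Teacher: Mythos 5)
Your proposal is correct and follows essentially the same route as the paper: identify both composites as extensions of functors on the generating site $\Sm/X$, observe that the identification $W \times_{k,\sigma} \C = (W \times_k k') \times_{k',\sigma'} \C$ gives the required natural isomorphism there compatibly with the Cartesian monoidal structure, and then pass through the localization machinery. The paper leaves the final extension step implicit while you spell it out; the substance is identical.
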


\begin{proof}
	Indeed, the identification~\eqref{formula:X^sigma=X'^sigma'}
	extends to a natural isomorphism between functors on $\Sm/X$
	\begin{equation*}
		W^{\sigma} = (W \times_X X')^{\sigma'},
	\end{equation*}
	compatibly with the Cartesian monoidal structure.
\end{proof}

\begin{proof}[Proof of Proposition~\ref{prop:e_*BtiAlg'}]
	The commutative algebra structure of $e_{X,*} \mathcal{B}_{\sigma',X'}$ is induced by that of $\mathcal{B}_{\sigma',X'}$:
	the unit is defined as
	\begin{equation*}
		\Q_X \xrightarrow{\eta} e_{X,*} e_X^* \Q_X = e_{X,*} \Q_{X'} \rightarrow e_{X,*} \mathcal{B}_{\sigma',X'},
	\end{equation*}
	while the multiplication is defined as
	\begin{equation*}
		\begin{aligned}
			e_{X,*} \mathcal{B}_{\sigma',X'}
				\otimes e_{X,*} \mathcal{B}_{\sigma',X'}
			& \xrightarrow{\eta} e_{X,*} e_X^* (e_{X,*} \mathcal{B}_{\sigma',X'}
				\otimes e_{X,*} \mathcal{B}_{\sigma',X'}) \\
			& = e_{X,*} (e_X^* e_{X,*} \mathcal{B}_{\sigma',X'}
				\otimes e_X^* e_{X,*} \mathcal{B}_{\sigma',X'}) \\
			& \xrightarrow{\epsilon} e_{X,*} (\mathcal{B}_{\sigma',X'}
				\otimes \mathcal{B}_{\sigma',X'}) \\
			& \rightarrow e_{X,*} \mathcal{B}_{\sigma',X'}.
		\end{aligned}
	\end{equation*}
	The isomorphism in the statement
	comes from the commutative diagram of
	Lemma~\ref{lem:Bti_k=Bti_k-e^*}.
	Its compatibility with the units and with the multiplication maps comes down to the compatibility of units and counits of adjunctions with respect to composition of functors;
	we leave the details to the interested reader.
\end{proof}

Since the scheme $X'$ can be regarded either as a $k'$-variety or as a $k$-variety, one can attach to it two distinct Betti algebras, namely $\mathcal{B}_{\sigma',X'}$ and $\mathcal{B}_{\sigma,X'}$.
In order to understand their relation, it is convenient to consider all possible choices of $\sigma'$ at once.
So let $\Hom_{\sigma}(k',\C)$ denote the set of complex embeddings of $k'$ extending $\sigma$, and let us temporarily forget about the previous choice of an element $\sigma' \in \Hom_{\sigma}(k',\C)$.

\begin{prop}\label{prop:BtiAlg_directsum_embeddings}
	There is a canonical commutative algebra isomorphism in $\DA(X')$
	\begin{equation*}
		\clg{B}_{\sigma,X'} = \bigoplus_{\sigma' \in \Hom_{\sigma}(k',\C)} \clg{B}_{\sigma',X'}.
	\end{equation*}
\end{prop}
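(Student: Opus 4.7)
The plan is to mimic the argument of Proposition~\ref{prop:e_*BtiAlg'}, replacing the single identification of analytic spaces~\eqref{formula:X^sigma=X'^sigma'} by a finite disjoint union decomposition indexed by $\Hom_\sigma(k',\C)$. Since $k'/k$ is finite separable (recall $\mathrm{char}(k) = 0$), there is a canonical isomorphism of $\C$-algebras $k' \otimes_{k,\sigma} \C \cong \prod_{\sigma' \in \Hom_\sigma(k',\C)} \C$. Base changing $X' = X \times_k k'$ along $\sigma$ and analytifying, this yields a canonical identification of complex-analytic spaces
\begin{equation*}
	(X')^\sigma = \coprod_{\sigma' \in \Hom_\sigma(k',\C)} (X')^{\sigma'},
\end{equation*}
where the $\sigma'$-component is identified with $(X')^{\sigma'}$ via~\eqref{eq:X^sigma=X'^sigma'_varieties}.

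First I would promote this decomposition to a canonical equivalence of symmetric monoidal stable $\infty$-categories $D_\sigma(X') \xrightarrow{\sim} \prod_{\sigma' \in \Hom_\sigma(k',\C)} D_{\sigma'}(X')$, under which the constant sheaf $\Q_{(X')^\sigma}$ corresponds to the tuple $(\Q_{(X')^{\sigma'}})_{\sigma'}$. Then, in the spirit of Lemma~\ref{lem:Bti_k=Bti_k-e^*}, I would check that $\Bti^*_{\sigma,X'}$ corresponds to the componentwise collection $(\Bti^*_{\sigma',X'})_{\sigma'}$ under this equivalence; it suffices to verify this on the generating site $\Sm/X'$, where both sides send $W$ to the constant sheaf on $W^\sigma = \coprod_{\sigma'} W^{\sigma'}$.

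Passing to right adjoints yields a canonical identification
\begin{equation*}
	\Bti_{\sigma,X',*}\bigl((F_{\sigma'})_{\sigma'}\bigr) = \bigoplus_{\sigma' \in \Hom_\sigma(k',\C)} \Bti_{\sigma',X',*}(F_{\sigma'}),
\end{equation*}
since the right adjoint of a product of functors is computed componentwise and finite products coincide with direct sums in the stable setting. Evaluating at $(\Q_{(X')^{\sigma'}})_{\sigma'}$ produces the desired isomorphism $\mathcal{B}_{\sigma,X'} = \bigoplus_{\sigma'} \mathcal{B}_{\sigma',X'}$ in $\DA(X')$.

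The main, essentially formal, obstacle is keeping track of the commutative algebra structure. The equivalence above is symmetric monoidal and sends the monoidal unit to the tuple of monoidal units, so the decomposition arises as a splitting of $\mathcal{B}_{\sigma,X'}$ into a product of commutative algebras along mutually orthogonal central idempotents. As in the proof of Proposition~\ref{prop:e_*BtiAlg'}, the remaining compatibilities of units and multiplication with the direct sum decomposition unwind to standard identities between units and co-units of monoidal adjunctions, which I would leave to the reader.
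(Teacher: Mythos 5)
Your proposal is correct and follows essentially the same route as the paper: decompose $(X')^\sigma$ into a disjoint union indexed by $\Hom_\sigma(k',\C)$ (the paper's~\eqref{eq:X'^sigma-decomp}), promote this to a monoidal equivalence $D_\sigma(X') \simeq \prod_{\sigma'} D_{\sigma'}(X')$ identifying $\Bti_{\sigma,X'}^*$ with the componentwise Betti realizations (the paper's Lemma~\ref{lem:Bti^*-decomp}), and then pass to right adjoints and evaluate at the unit. The observation that the algebra structure splits along the orthogonal idempotents attached to the components parallels the paper's Remark~\ref{rem:unit-decomp_X'}, so your treatment of the commutative algebra structure is aligned with the intended argument.
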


To begin with, note that we have a decomposition of complex-analytic spaces
\begin{equation}\label{eq:X'^sigma-decomp}
	(X')^{\sigma} = \coprod_{\sigma' \in \Hom_{\sigma}(k',\C)} (X')^{\sigma'}.
\end{equation}
It is induced by the decomposition of $\C$-varieties
\begin{equation*}
	X' \times_{k,\sigma} \C = (X \times_k k') \times_{k,\sigma} \C = X \times_k (k' \times_{k,\sigma} \C) = \coprod_{\sigma' \in \Hom_{\sigma}(k',\C)} X \times_{k,\sigma} \C = \coprod_{\sigma' \in \Hom_{\sigma}(k',\C)} X' \times_{k',\sigma'} \C,
\end{equation*}
where the last identification is the one
in~\eqref{eq:X^sigma=X'^sigma'_varieties},
as $\sigma'$ varies.
We deduce a canonical equivalence of monoidal stable $\infty$-categories
\begin{equation}\label{eq:D_sigma(X')-decomp}
	D_{\sigma}(X') = \prod_{\sigma' \in \Hom_{\sigma}(k',\C)} D_{\sigma'}(X').
\end{equation}
This allows us to identify each factor $D_{\sigma'}(X')$ with the full sub-$\infty$-category of $D_{\sigma}(X')$ spanned by the objects with support on $(X')^{\sigma'}$.

\begin{rem}\label{rem:unit-decomp_X'}
	Under the equivalence~\eqref{eq:D_sigma(X')-decomp}, the unit object $\Q_{\sigma,X'} \in D_{\sigma}(X')$ corresponds to the unit tuple $(\Q_{\sigma',X'})_{\sigma'}$.
	The sheaves $\Q_{\sigma',X'}$, extended by zero to $(X')^{\sigma}$, form a family of pairwise orthogonal idempotents with respect to the monoidal structure of $D_{\sigma}(X')$, and the subcategory $D_{\sigma'}(X') \subset D_{\sigma}(X')$ can be identified with the essential image of the endofunctor $\Q_{\sigma',X'} \otimes -$.
\end{rem}

The decomposition~\eqref{eq:D_sigma(X')-decomp} is compatible with the Betti realization functors in the expected way:

\begin{lem}\label{lem:Bti^*-decomp}
	The diagram of monoidal functors
	\begin{equation*}
		\begin{tikzcd}
			& \DA(X') \arrow{dl}{\Bti_{\sigma,X'}^*} \arrow{dr}{(\Bti_{\sigma',X'}^*)_{\sigma'}} \\
			D_{\sigma}(X') \arrow[equal]{rr} && \prod_{\sigma' \in \Hom_{\sigma}(k',\C)} D_{\sigma'}(X')
		\end{tikzcd}
	\end{equation*}
	commutes up to monoidal natural isomorphism.
\end{lem}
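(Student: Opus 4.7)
The plan is to reduce the diagram's commutativity to an identity on the generating site $\Sm/{X'}$, from which the universal property of $\DA(X')$ automatically produces the desired monoidal natural isomorphism.

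Recall from the excerpt that $\DA(X')$ is obtained from $\Q$-linear presheaves on $\Sm/{X'}$ by imposing étale descent, $\mathbb{A}^1$-invariance and $\mathbb{P}^1$-stability, and that both $\Bti_{\sigma,X'}^*$ and each $\Bti_{\sigma',X'}^*$ are the symmetric monoidal, colimit-preserving extensions of the respective change-of-site functors $W \mapsto W^{\sigma}$ and $W \mapsto W^{\sigma'}$. The decomposition~\eqref{eq:X'^sigma-decomp} of the base extends termwise: for any smooth $X'$-scheme $W$, the same argument that led to~\eqref{eq:D_sigma(X')-decomp}, applied to $W$ in place of $X'$, yields
\[
	W^{\sigma} = \coprod_{\sigma' \in \Hom_{\sigma}(k',\C)} W^{\sigma'}
\]
as smooth complex-analytic spaces over $(X')^{\sigma}$. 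Under the corresponding decomposition of the sites $\Sm/{(X')^{\sigma}}$ into the product of the sites $\Sm/{(X')^{\sigma'}}$, the analytic $(X')^{\sigma}$-space $W^{\sigma}$ is identified with the tuple $(W^{\sigma'})_{\sigma'}$. This identification respects the Cartesian monoidal structures, since fibre products commute with base change and distribute over disjoint unions in the base.

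Composing $\Bti_{\sigma,X'}^*$ with the monoidal equivalence~\eqref{eq:D_sigma(X')-decomp}, the two functors in the diagram thus agree, as strong symmetric monoidal functors, on the generating subcategory $\Sm/{X'}$. Since both are presentable symmetric monoidal and factor through the same localisations, the universal property of $\DA(X')$ promotes this agreement to a monoidal natural isomorphism between them. The main subtlety is that the site-level identification must be lifted to a strong symmetric monoidal identification in the $\infty$-categorical sense; this is however automatic, because Ayoub's construction is universal in the $\infty$-category of presentable symmetric monoidal stable $\infty$-categories, so that the symmetric monoidal extension of a strong symmetric monoidal functor of sites is determined up to contractible ambiguity.
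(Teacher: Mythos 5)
Your proof is correct and matches the paper's argument: both reduce to the functorial decomposition $W^{\sigma} = \coprod_{\sigma'} W^{\sigma'}$ over the base decomposition~\eqref{eq:X'^sigma-decomp}, note compatibility with the Cartesian monoidal structure, and invoke the universal property of $\DA(X')$ to extend from the generating site. The paper makes the compatibility with the base decomposition explicit by writing $W \times_k \C = W \times_{X'}(X' \times_k \C)$, which your phrase ``as smooth complex-analytic spaces over $(X')^{\sigma}$'' captures in spirit.
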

\begin{proof}
	Indeed, the decomposition \eqref{eq:X'^sigma-decomp} extends to a functorial decomposition on $\Sm/{X'}$
	\begin{equation*}
		W^{\sigma} = \coprod_{\sigma' \in \Hom_{\sigma}(k',\C)} W^{\sigma'}
	\end{equation*}
	compatible with the Cartesian monoidal structure.
	This is induced by the functorial decomposition of $\C$-varieties
	\begin{equation*}
		W \times_k \C = W \times_{X'} (X' \times_k \C) = \coprod_{\sigma' \in \Hom_{\sigma}(k',\C)} W \times_{X'} (X' \times_{k',\sigma'} \C) = \coprod_{\sigma' \in \Hom_{\sigma}(k',\C)} W \times_{k',\sigma'} \C,
	\end{equation*}
	where the second identification 
	comes from~\eqref{eq:X'^sigma-decomp}.
\end{proof}

\begin{proof}[Proof of Proposition \ref{prop:BtiAlg_directsum_embeddings}]
	Note that the right adjoint to the functor
	\begin{equation*}
		\DA(X') \to \prod_{\sigma' \in \Hom_{\sigma}(k',\C)} D_{\sigma'}(X'), \quad A \mapsto (\Bti_{\sigma',X'}^* A)_{\sigma'} 
	\end{equation*}
	is given by
	\begin{equation*}
		\prod_{\sigma' \in \Hom_{\sigma}(k',\C)} D_{\sigma'}(X') \to \DA(X'), \quad (K_{\sigma'})_{\sigma'} \mapsto \bigoplus_{\sigma' \in \Hom_{\sigma}(k',\C)} \Bti_{\sigma',X',*} K_{\sigma'}.
	\end{equation*}
	We deduce the commutative algebra isomorphism in $\DA(X')$
	\begin{align*}
		\clg{B}_{\sigma,X'} := \Bti_{\sigma,X',*} \Q_{\sigma,X'}  
		&= \Bti_{\sigma,X',*} \Bti_{\sigma,X'}^* \Q_{X'} \\
		&= \bigoplus_{\sigma' \in \Hom_{\sigma}(k',\C)} \Bti_{\sigma',X',*} \Bti_{\sigma',X'}^* \Q_{X'} && \textup{(by Lemma \ref{lem:Bti^*-decomp})} \\
		&= \bigoplus_{\sigma' \in \Hom_{\sigma}(k',\C)} \Bti_{\sigma',X',*} \Q_{\sigma',X'} \\
		&=: \bigoplus_{\sigma' \in \Hom_{\sigma}(k',\C)} \clg{B}_{\sigma',X'},
	\end{align*}
	as wanted.
\end{proof}

\subsection{Constructible complexes of geometric origin}\label{subsect:Dbgeo}

We need to study
the Betti realization $\Bti_X^* \colon \DAct(X) \to D(X)$
more closely.
This leads us to review the theory of constructible sheaves.

\begin{nota}\label{nota:Strat_X/k}
	Let $X$ be a $k$-variety.
	\begin{itemize}
		\item By a \textit{$k$-stratification} $\Sigma$ of $X$ we mean a finite collection of locally-closed $k$-subvarieties $S$ of $X$, called the \textit{strata} of $\Sigma$, with the following properties:
		\begin{enumerate}
			\item[(i)] Each stratum $S \in \Sigma$ is smooth over $k$ and connected.
			\item[(ii)] The strata of $\Sigma$ define a partition of the underlying topological space of $X$.
			\item[(iii)] For each stratum $S \in \Sigma$, the Zariski closure $\bar{S}$ of $S$ inside $X$ is a union of strata.
		\end{enumerate}
		\item Given two $k$-stratifications $\Sigma$ and $\Sigma'$ of $X$, we say that $\Sigma'$ is a \textit{refinement} of $\Sigma$ if each stratum of $\Sigma$ is a union of strata of $\Sigma'$.
		\item We let $\Strat_{X/k}$ denote the poset of $k$-stratifications of $X$, with the order defined by saying that $\Sigma \leq \Sigma'$ if and only if $\Sigma'$ is a refinement of $\Sigma$.
	\end{itemize}
\end{nota}

Given two $k$-stratifications $\Sigma_1, \Sigma_2 \in \Strat_{X/k}$, the collection of all intersections $S_1 \cap S_2$ with $S_1 \in \Sigma_1$ and $S_2 \in \Sigma_2$ satisfies conditions (ii) and (iii) of a $k$-stratification, but not necessarily condition (i).
Nevertheless, any collection of locally closed $k$-subvarieties of $X$ satisfying (ii) and (iii) admits a refinement which also satisfies (i).
This implies that the poset $\Strat_{X/k}$ is filtered.

\begin{defn}
	Let $X$ be a $k$-variety, and let $\Sigma \in \Strat_{X/k}$.
	\begin{itemize}
		\item An object $K \in D(X)$ is \textit{$\Sigma$-constructible} if, for every stratum $S \in \Sigma$, with inclusion $s \colon S \hookrightarrow X$, the object $s^* K \in D(S)$ is dualizable.

		\item We let $\Dbct(X,\Sigma)$ denote the full sub-$\infty$-category of $D(X)$ spanned by the $\Sigma$-constructible objects. 

		\item We define the stable $\infty$-category of \textit{constructible complexes} as the filtered union
		\begin{equation*}
			\Dbct(X) := \bigcup_{\Sigma \in \Strat_{X/k}} \Dbct(X,\Sigma) \subset D(X).
		\end{equation*}
	\end{itemize}
\end{defn}

As the notation suggests, every complex $K \in \Dbct(X)$ is automatically bounded (as this is true for dualizable complexes on the strata).
Let us note the following result:
\begin{prop}[{\cite[Prop.~1.2.13]{AyoAnab}}]\label{prop:Dbct-6ff}
	As $X$ varies among all $k$-varieties, the subcategories $\Dbct(X) \subset D(X)$ are stable under the six operations.
\end{prop}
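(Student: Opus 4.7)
The plan is to verify stability under each of the six operations individually, using three general facts: the poset $\Strat_{X/k}$ is filtered, dualizability in a symmetric monoidal stable $\infty$-category is preserved under symmetric monoidal pullbacks as well as under tensor products and internal Homs of dualizable objects, and the six operations on $D(-)$ are compatible with the open and closed embeddings of the strata. Stability under the tensor product and the internal Hom on a fixed $X$ is immediate: given inputs $K_1, K_2$ constructible for $\Sigma_1, \Sigma_2 \in \Strat_{X/k}$, a common refinement $\Sigma$ exists by filteredness, and over each stratum $s \colon S \hookrightarrow X$ the restrictions $s^* K_i$ are dualizable, so $s^*(K_1 \otimes K_2)$ and $s^* \underline{\Hom}(K_1, K_2)$ are dualizable as well. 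For $f^* \colon D(Y) \to D(X)$, given $K \in \Dbct(Y, \Sigma)$, the preimages $\{f^{-1}(S)\}_{S \in \Sigma}$ satisfy conditions (ii)--(iii) of a $k$-stratification on $X$ and therefore, by the refinement principle recalled just before the definition, admit a refinement $\Sigma' \in \Strat_{X/k}$ satisfying (i); each stratum of $\Sigma'$ maps into a unique stratum of $\Sigma$, so the verification reduces to base change along the induced morphisms between smooth $k$-varieties.

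The substantive cases are $f_*$ and $f_!$. I would apply Nagata's compactification theorem in the $k$-algebraic setting to factor $f$ as an open immersion $j \colon X \hookrightarrow \bar{X}$ followed by a proper morphism $p \colon \bar{X} \to Y$, reducing the problem to these two cases. For $p$ proper, the classical constructibility theorem for proper pushforwards of analytic sheaves applies: starting from a $k$-stratification $\Sigma$ of $\bar{X}$ adapted to the input, one constructs a $k$-stratification of $Y$ adapted to $p_* K$ by stratifying the image $p(S)$ of each stratum generically over $k$ and then refining so that condition (i) holds. For an open immersion $j$, the functor $j_!$ is extension by zero and is handled by adjoining to the given $k$-stratification of $X$ any $k$-stratification of the closed complement $Z := \bar{X} \setminus X$.

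The principal obstacle is $j_*$ for an open immersion: one must construct a $k$-stratification of $\bar{X}$ along which $j_* K$ has dualizable restrictions, including along the boundary $Z$. My strategy would be to invoke resolution of singularities over $k$ in order to reduce to the case where $Z$ is a simple normal crossings divisor in $\bar{X}$; in that situation the restriction of $j_* K$ to $Z$ can be described via iterated nearby cycle functors along the incidence strata of the components of $Z$, and the resulting stratification is $k$-algebraic because the components of $Z$ themselves are. Once $j_*$ is handled, stability under $f^!$ follows either via Verdier duality---first checking that the dualizing complex of any $k$-variety lies in $\Dbct(X)$ by reduction to the smooth case, and then noting that duality preserves $\Dbct$ thanks to the internal Hom case---or by combining the $f^*$, $j_*$ and $i^!$ cases via the localization triangle $i_* i^! \to \id \to j_* j^*$ associated to an open--closed decomposition of the source of $f$.
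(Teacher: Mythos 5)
The paper does not reproduce a proof of this result; it is quoted directly from~\cite[Prop.~1.2.13]{AyoAnab}, so there is no in-text argument to compare against. Your overall architecture --- common refinements for the fiberwise operations, Nagata to reduce pushforwards to the proper case plus an open immersion, $j_*$ as the genuinely hard case via resolution of singularities, and $f^!$ via the localization triangle --- is the standard route and is sound in outline.

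There is, however, a real gap in the first paragraph: the treatment of the internal Hom. You conclude that $s^* \underline{\Hom}(K_1,K_2)$ is dualizable from the dualizability of $s^*K_1$ and $s^*K_2$, but this inference tacitly uses the identification $s^*\underline{\Hom}(K_1,K_2) \cong \underline{\Hom}(s^*K_1,s^*K_2)$, which fails for a non-open locally closed embedding $s$. (For a closed point $i \colon \{x\} \hookrightarrow X$ inside a smooth $X$, one has $i^*\underline{\Hom}(i_*\Q,\Q_X) \cong \Q[-2\dim X]$, whereas $\underline{\Hom}(i^*i_*\Q,i^*\Q_X) \cong \Q$.) In fact $\underline{\Hom}(K_1,K_2)$ need not be $\Sigma$-constructible for the given $\Sigma$ at all; one must pass to a finer stratification, and showing that some $k$-stratification works is exactly of the same difficulty as the $j_*$ case. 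The internal Hom should therefore be moved to the end of the argument, after $j_*$, $i^!$, and the constructibility of the dualizing complex are secured, and obtained via $\underline{\Hom}(K_1,K_2) = \mathbb{D}(K_1\otimes\mathbb{D}K_2)$ together with constructibility of $\mathbb{D}$ proved by d\'evissage (reducing $\mathbb{D}$ of a constructible complex, via the exchange isomorphisms with $f_!$ and $f^*$, to duality of local systems on smooth strata) rather than as an instance of internal Hom. Two smaller imprecisions: the collection $\{f^{-1}(S)\}_S$ need not satisfy condition (iii) of a $k$-stratification as stated (a component of $X$ may map entirely into a lower stratum of $Y$), so one needs a slightly stronger refinement lemma than the one recalled before the definition; and the construction of a $k$-stratification of $Y$ adapted to $p_*K$ for proper $p$ needs to be carried out by noetherian induction using generic local topological triviality of $p$ over each stratum, not merely by ``stratifying the image,'' since it is the local constancy of $p_*K$ on strata, not just algebraicity of images, that must be arranged.
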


\begin{cor}
	For every $k$-variety $X$, the Betti realization $\Bti_X^* \colon \DA(X) \to D(X)$ restricts to
	\begin{equation*}
		\Bti_X^* \colon \DAct(X) \to \Dbct(X).
	\end{equation*}
\end{cor}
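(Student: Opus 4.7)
The plan is to exploit the topological characterization of motivic constructibility cited earlier from~\cite[Thm.~6.3.26]{CD16}: an object $A \in \DA(X)$ lies in $\DAct(X)$ if and only if there exists a stratification of $X$ into locally closed subvarieties such that the pullback of $A$ to each stratum is dualizable in the corresponding category of motives. The goal is then to transfer such a stratification through $\Bti_X^*$, relying on symmetric monoidality of the Betti realization to preserve dualizability.

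Starting from $A \in \DAct(X)$, the first step is to pick such a stratification of $X$ and refine it to a bona fide element $\Sigma \in \Strat_{X/k}$ in the sense of Notation~\ref{nota:Strat_X/k}: smooth connected strata, forming a partition, with closures that are unions of strata. This is achieved by a standard Noetherian induction using that in characteristic $0$ every $k$-variety admits a dense smooth open subvariety and that one can further split into connected components. Since dualizability is preserved under pullback (it is a purely monoidal property), the restriction of $A$ to each refined stratum remains dualizable in the corresponding $\DA(S)$.

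The second step uses the two key properties of $\Bti_X^*$ recorded in Subsection~\ref{subsect:Voe-Bti}, namely symmetric monoidality and compatibility with inverse images. For each stratum $s \colon S \hookrightarrow X$ of $\Sigma$, these yield a natural isomorphism
\begin{equation*}
s^* \Bti_X^* A \simeq \Bti_S^* s^* A.
\end{equation*}
Since $\Bti_S^*$ is symmetric monoidal and $s^* A$ is dualizable in $\DA(S)$, the object $s^* \Bti_X^* A$ is dualizable in $D(S)$. By definition, $\Bti_X^* A$ is then $\Sigma$-constructible, and hence belongs to $\Dbct(X)$.

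I do not expect any genuine obstacle here. The only mildly delicate moment is the refinement to a $k$-stratification in the strict sense of Notation~\ref{nota:Strat_X/k}, but it decouples entirely from the sheaf: the combinatorial conditions (i)--(iii) concern only the geometry of the strata, while further pullback of a dualizable object remains dualizable.
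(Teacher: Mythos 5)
Your proof is correct and rests on the same two ingredients as the paper's: the characterization of motivic constructibility via stratifications from \cite[Thm.~6.3.26]{CD16}, and symmetric monoidality of $\Bti_X^*$ (hence preservation of dualizable objects). The paper, after stratifying, reconstructs $A$ via localization triangles and then invokes Proposition~\ref{prop:Dbct-6ff} (stability of $\Dbct$ under the six operations) to land in $\Dbct(X)$; you instead refine the stratification to an honest $\Sigma \in \Strat_{X/k}$ and verify $\Sigma$-constructibility of $\Bti_X^* A$ directly from the definition, using compatibility of $\Bti^*$ with inverse images. Your route is a clean shortcut: it checks the defining condition of $\Dbct(X,\Sigma)$ rather than going through the stability-under-six-operations machinery, at the modest cost of making the refinement step explicit -- which you handle correctly, observing that dualizability passes to pullbacks along finer strata.
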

\begin{proof}
	By~\cite[Thm.~6.3.26]{CD16}, for every object $A \in \DAct(X)$ there is a $k$-algebraic stratification of $X$ such that the restriction of $A$ to each stratum is dualizable;
	using localization sequences, one can reconstruct $A$ from such restrictions in a finite number of steps.
	Hence, in view of Proposition~\ref{prop:Dbct-6ff}, it suffices to observe that the Betti realization functors take dualizable constructible motives to dualizable sheaves (being symmetric monoidal).
\end{proof}

Note that, however,
the right adjoint
$\Bti_{X,*} \colon D(X) \to \DA(X)$
does not take $\Dbct(X)$ to $\DAct(X)$:
for instance,
the Betti algebra $\mathcal{B}_X \in \DA(X)$ is not constructible.%
\footnote{%
It is, however, ind-constructible.
}

An interesting part of $\Dbct(X)$ is obtained from
the relative cohomology of $k$-varieties:

\begin{defn}[{\cite[Def.~1.6.1, Rmk.~1.6.2]{AyoAnab}}]\label{defn:Dbgeo}
	For every $k$-variety $X$, we define the stable $\infty$-category $\Dbgeo(X)$ of \textit{constructible complexes of geometric origin} as the smallest sub-$\infty$-category of $\Dbct(X)$ containing all objects of the form $p_* \Q_{W^{\sigma}} = \Bti_X^*(p_* \Q_W)$, with $p \colon W \to X$ a proper morphism of $k$-varieties, and stable under finite limits, finite colimits, and retracts:
	in other words, the full sub-$\infty$-category spanned by the thick triangulated subcategory generated by the objects $p_* \Q_{W^{\sigma}}$ above;
	we write it as $\Dbgeoemb{\sigma}(X)$ if we want to stress its dependence on $\sigma$.
\end{defn}

Setting $\Dbgeo(X,\Sigma) := \Dbgeo(X) \cap \Dbct(X,\Sigma)$ for every $k$-stratification $\Sigma$ of $X$, we get the filtered union
\begin{equation*}
	\Dbgeo(X) = \bigcup_{\Sigma \in \Strat_{X/k}} \Dbgeo(X,\Sigma).
\end{equation*}

In the rest of the present subsection, we discuss the close relation between sheaves of geometric origin and Voevodsky motives.

\begin{prop}[{\cite[Rem.~1.6.4]{AyoAnab}}]\label{prop:Dbgeo-6ff}
	As $X$ varies among all $k$-varietes, the subcategories $\Dbgeo(X) \subset \Dbct(X)$ are stable under the six operations, as well as under Beilinson's gluing functors.
\end{prop}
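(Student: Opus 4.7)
The plan is to verify stability on a convenient generating class and extend by thickness. The first step is to replace the defining generators by the apparently smaller class $\{p_*\Q_{W^\sigma} : p \colon W \to X \text{ proper}, W \text{ smooth}\}$, using cohomological descent for smooth proper hypercoverings (available over $k$ of characteristic zero by Hironaka--Deligne): for any $k$-variety $W$ one picks a smooth proper hypercover $W_\bullet \to W$, and truncation at bounded homological degree presents $\Q_{W^\sigma}$ as a finite iterated cone of pushforwards $\pi_{n,*}\Q_{W_n^\sigma}$ from smooth $W_n$, which after application of $p_*$ realizes $p_*\Q_{W^\sigma}$ inside the thick subcategory generated by the smooth-source generators.

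Next, I would verify stability under $f^*$, $\otimes$ and $f_!$ by direct computation on these generators. Proper base change gives $f^*(p_*\Q_{W^\sigma}) = p'_*\Q_{(W\times_Y X)^\sigma}$ with $p'$ still proper. The projection formula for proper $p$ combined with proper base change identifies $p_*\Q_{W^\sigma} \otimes q_*\Q_{V^\sigma}$ with $(pq')_*\Q_{(W\times_X V)^\sigma}$, again of the form of a generator. For $f_!$ applied to $p_*\Q_{W^\sigma} = p_!\Q_{W^\sigma}$, factor $f\circ p$ via Nagata as $h\circ\iota$ with $\iota\colon W \hookrightarrow \bar W$ an open immersion into a compactification and $h\colon \bar W \to Y$ proper: then $f_!p_*\Q_{W^\sigma} = h_*\iota_!\Q_{W^\sigma}$, and the localization triangle $\iota_!\Q_{W^\sigma} \to \Q_{\bar W^\sigma} \to i_*\Q_{Z^\sigma}$ (with $i\colon Z \hookrightarrow \bar W$ the closed complement) places $\iota_!\Q_{W^\sigma}$ in $\Dbgeo(\bar W)$, whence $h_*\iota_!\Q_{W^\sigma} \in \Dbgeo(Y)$ by the proper case just treated.

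It remains to handle $f_*$, $f^!$, $\underline{\Hom}$ and Beilinson's gluing functors. I would appeal to Verdier duality $\mathbb{D}_X$ on $\Dbct$: for a generator $p_*\Q_{W^\sigma}$ with $W$ smooth equidimensional of dimension $d$ and $p$ proper, one has $\mathbb{D}_X(p_*\Q_{W^\sigma}) = p_*\mathbb{D}_W\Q_{W^\sigma} = p_*\Q_{W^\sigma}[2d]$, a shift of a generator; thus $\mathbb{D}_X$ preserves $\Dbgeo(X)$ by the first step and thickness. The identities $f_* = \mathbb{D}_Y f_!\mathbb{D}_X$, $f^! = \mathbb{D}_X f^*\mathbb{D}_Y$ and $\underline{\Hom}(A,B) = \mathbb{D}_X(A \otimes \mathbb{D}_X B)$ then yield the remaining operations. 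Finally, Beilinson's gluing functors $\Xi^u$, $\Psi^u$, $\Phi^u$ attached to a function $X \to \mathbb{A}^1$ are built from the six operations applied to tensor products with the unipotent logarithm sheaves on $\mathbb{G}_m$, which are themselves of geometric origin (obtained from finite étale Kummer covers of $\mathbb{G}_m$), and the defining filtered colimits are eventually constant on constructible objects; stability under the gluing functors is thus inherited from the six-operation stability just established. The main obstacle I anticipate is the very first reduction to smooth generators: without it, controlling the dualizing complex $\mathbb{D}_W\Q_{W^\sigma}$ on the singular generators is awkward and genuinely needs resolution of singularities.
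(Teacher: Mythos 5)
Your overall approach is correct and the argument would go through, but it takes a genuinely different and more explicit route than the paper. The paper checks stability under $f^*$ (proper base change) and $\otimes$ (projection formula) directly on the generators $p_*\Q_{W^\sigma}$, and then black-boxes the propagation to the remaining operations by citing the general machinery of Ayoub's thesis (§§~2.2.2, 2.3.10), which bootstraps stability under all six operations from stability under $f^*$ and $\otimes$ without any reduction to smooth sources; stability under the gluing functors is likewise deduced by citation. You instead build the propagation by hand: reduce to proper generators with smooth source (using proper hyperdescent, or equivalently resolution of singularities and cdh-style dévissage, both legitimate over a characteristic-zero field), treat $f_!$ via Nagata compactification and the localization triangle, then observe that Verdier duality sends a smooth proper generator $p_*\Q_{W^\sigma}$ to its shift $p_*\Q_{W^\sigma}[2d]$, so $\mathbb{D}_X$ preserves $\Dbgeo(X)$, and finally derive $f_*$, $f^!$, $\underline{\Hom}$ from the duality identities. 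This is longer but entirely self-contained, and the smooth-source reduction you flag as the "main obstacle" is indeed the extra step your method requires; the paper's abstract route avoids it.

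One small factual slip: the unipotent logarithm sheaves on $\mathbb{G}_m$ are \emph{not} obtained from the finite étale Kummer covers $z\mapsto z^m$. The pushforward $\pi_{m,*}\Q$ under such a cover carries the regular representation of $\Z/m$, which over $\Q$ is semisimple (a sum of Galois-orbits of characters), so it has no unipotent subquotients. The logarithm sheaves arise instead from genuinely non-finite geometric families (e.g. relative cohomology of families degenerating at $0$ and $\infty$, or from the motivic logarithm/polylogarithm construction); these are still of geometric origin, so the conclusion you draw is correct, but the parenthetical justification should be replaced. Apart from this, your treatment of the gluing functors — viewing $\Xi^u$, $\Psi^u$, $\Phi^u$ as built from the six operations together with tensoring by geometric-origin local systems, with the filtered colimits eventually constant on constructible inputs — matches the paper's reliance on the description of these functors in terms of the six operations.
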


\begin{proof}
	Stability under inverse images follows from proper base-change, and stability under the tensor product follows from the projection formula.
	The method of~\cite[\S\S~2.2.2, 2.3.10]{AyoThesis} allows one to deduce stability under the six operations from this.
	Stability under the gluing functors follows from their description in terms of the six operations (see~\cite[\S\S~3-7]{Mor18} or~\cite[\S~3]{IM19}).
\end{proof}

\begin{prop}[{\cite[Rem.~1.6.4]{AyoAnab}}]\label{prop:Bti-through-Dbgeo}
	For every $k$-variety $X$, the Betti realization $\Bti_X^* \colon \DAct(X) \to \Dbct(X)$ refines to
	\begin{equation*}
		\Bti_X^* \colon \DAct(X) \rightarrow \Dbgeo(X).
	\end{equation*}
\end{prop}

\begin{proof}
	By~\cite[Lem.~2.2.23]{AyoThesis}, the stable $\infty$-category $\DA(X)$ is generated under colimits, negative Tate twists, and negative shifts by the objects of the form $p_* \Q_W$  with $p \colon W \to X$ a proper morphism.
	Hence, the result follows from Proposition \ref{prop:Dbgeo-6ff}.
\end{proof}

This leads to a more conceptual description of sheaves of geometric origin as a six functor formalism.

\begin{nota}[{\cite[Rmk.~1.1.21, Constr.~1.6.5]{AyoAnab}}]
	Let $X$ be a $k$-variety.
	\begin{itemize}
		\item We let $\DA(X;\mathcal{B}_X)$ denote the stable $\infty$-category of $\mathcal{B}_X$-modules in $\DA(X)$, and we write
		\begin{equation*}
			\mathcal{B}_X \otimes - \colon \DA(X) \to \DA(X;\mathcal{B}_X)
		\end{equation*}
		for the canonical functor.
		\item We let $\DAct(X;\mathcal{B}_X)$ denote the smallest sub-$\infty$-category of $\DA(X;\mathcal{B}_X)$ containing the image of $\DAct(X)$ and stable under finite limits, finite colimits, and retracts.
	\end{itemize} 
\end{nota}

As explained in \cite[Rmk.~1.1.21]{AyoAnab}, the stable $\infty$-categories $\DA(X;\mathcal{B}_X)$ naturally assemble into a six functor formalism, and the canonical functors $\mathcal{B}_X \otimes -$ are compatible with the six operations.

\begin{prop}[{\cite[Prop.~1.6.6]{AyoAnab}}, {\cite[Ex.~17.1.7]{CD19}}]
\label{prop:Dbgeo=Mod_DA}
	For every $k$-variety $X$, the Betti realization $\Bti_X^* \colon \DA(X) \to D(X)$ admits a canonical factorization of the form
	\begin{equation*}
		\Bti_X^* \colon \DA(X) \xrightarrow{\mathcal{B}_X \otimes -} \DA(X;\mathcal{B}_X) \to D(X),
	\end{equation*}
	compatibly with the six operations.
	Moreover, the functor
	\begin{equation*}
		\DAct(X;\mathcal{B}_X) \to D(X)
	\end{equation*} 
	obtained by restriction is fully faithful.
\end{prop}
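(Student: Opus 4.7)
The proof separates into two parts: constructing the factorization (with its six-functor compatibility), which follows formally from the symmetric monoidal structure, and proving the full faithfulness assertion, which is the main content. For the factorization, the plan is to apply the general $\infty$-categorical mechanism whereby a symmetric monoidal adjunction $L \dashv R$ between presentable symmetric monoidal $\infty$-categories yields a canonical factorization $L = V \circ (- \otimes R(\mathbf{1}))$, where the first step is free extension of scalars along the algebra $R(\mathbf{1})$ and $V$ is base change along the augmentation $LR(\mathbf{1}) \to \mathbf{1}$ provided by the counit; applied to $L = \Bti_X^*$, $R = \Bti_{X,*}$ with $R(\mathbf{1}) = \mathcal{B}_X$ (Lemma~\ref{lem:BtiAlg_defn}), this yields the desired factorization through $\DA(X;\mathcal{B}_X)$. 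Compatibility with the six operations is then a bookkeeping exercise that combines the known compatibilities of $\Bti_X^*$ on constructible objects with the base-change isomorphism $f^*\mathcal{B}_Y \simeq \mathcal{B}_X$ from Lemma~\ref{lem:BtiAlg_defn}(2), all expressed in the language of algebras and modules.

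For the full faithfulness of $\DAct(X;\mathcal{B}_X) \to D(X)$, the strategy is to test the comparison map
\[
	\Hom_{\DA(X;\mathcal{B}_X)}(M, N) \longrightarrow \Hom_{D(X)}(\Bti_X^* M, \Bti_X^* N)
\]
on a convenient family of generators. By construction, $\DAct(X;\mathcal{B}_X)$ is the thick subcategory of $\DA(X;\mathcal{B}_X)$ generated by the free modules $\mathcal{B}_X \otimes A$ with $A \in \DAct(X)$; since both sides of the map are exact in each variable, it suffices to treat the case $M = \mathcal{B}_X \otimes A$, $N = \mathcal{B}_X \otimes A'$ with $A, A' \in \DAct(X)$. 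The free--forgetful adjunction then rewrites the left-hand side as $\Hom_{\DA(X)}(A, \mathcal{B}_X \otimes A')$, so the whole problem reduces to establishing the identification
\[
	\mathcal{B}_X \otimes A' \xrightarrow{\sim} \Bti_{X,*} \Bti_X^* A'
	\qquad \text{for every } A' \in \DAct(X),
\]
from which the $\Bti_X^* \dashv \Bti_{X,*}$ adjunction delivers the right-hand side.

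The main obstacle is this last identification: it is the projection formula for $\Bti_X^* \dashv \Bti_{X,*}$ applied to $A'$ and the unit object, and it fails in general outside the constructible range, as witnessed by the fact that $\mathcal{B}_X$ itself is not constructible. I would prove it by \emph{d\'evissage}: the class of $A' \in \DAct(X)$ for which the formula holds forms a thick subcategory, so by~\cite[Lem.~2.2.23]{AyoThesis} it is enough to verify the formula for generators of the form $A' = p_*\Q_W(n)$ with $p \colon W \to X$ a proper morphism and $n \in \mathbb{Z}$. For such $A'$, both sides can be rewritten as $p_*\mathcal{B}_W(n)$: the left-hand side by combining the projection formula for the proper morphism $p$ with Lemma~\ref{lem:BtiAlg_defn}(2), and the right-hand side by using the compatibility of both $\Bti_X^*$ and $\Bti_{X,*}$ with proper direct image (Proposition~\ref{prop:Dbct-6ff} and its corollary). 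Once this base case is handled, everything else is formal.
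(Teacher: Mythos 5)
The paper does not reproduce a proof for this proposition — it defers to Ayoub~\cite[Prop.~1.6.6]{AyoAnab} and Cisinski--D\'eglise~\cite[Ex.~17.1.7]{CD19} — so your proposal should be judged against the arguments found there, and it does match them in structure. Your factorization via the general symmetric-monoidal-adjunction mechanism is the standard one, and the reduction of full faithfulness to the projection-formula isomorphism $\mathcal{B}_X \otimes A' \xrightarrow{\sim} \Bti_{X,*}\Bti_X^* A'$ for constructible $A'$, checked by d\'evissage on the generators $p_*\Q_W(n)$ using the projection formula for $p$, the compatibility of $\Bti^*$ and $\Bti_*$ with proper direct image, and Lemma~\ref{lem:BtiAlg_defn}(2), is precisely how this is done in the references. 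The only minor blemish is the parenthetical remark that the failure of the projection formula outside the constructible range is ``witnessed by the fact that $\mathcal{B}_X$ itself is not constructible'': that observation is about the image of $\Bti_{X,*}$, not directly about the projection formula, which in fact holds trivially for $A' = \Q_X$; the actual obstruction for unbounded objects is that $\Bti_{X,*}$ does not commute with filtered colimits. This does not affect the logic of your argument, which is sound.
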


\begin{cor}\label{cor:Dbgeo=DAct(Btialg)}
	For every $k$-variety $X$, there is a canonical equivalence of stable $\infty$-categories
	\begin{equation*}
		\DAct(X;\mathcal{B}_X) \xrightarrow{\sim} \Dbgeo(X),
	\end{equation*}
	compatibly with the six operations.
\end{cor}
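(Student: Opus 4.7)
My plan is to deduce the corollary directly from Proposition~\ref{prop:Dbgeo=Mod_DA}. That proposition already provides a fully faithful functor $\Phi_X \colon \DAct(X;\mathcal{B}_X) \hookrightarrow D(X)$, so the only thing left is to identify its essential image with $\Dbgeo(X)$. Since both $\mathrm{Im}(\Phi_X)$ and $\Dbgeo(X)$ are thick sub-$\infty$-categories of $D(X)$, closed under finite limits, finite colimits, and retracts, it is enough to compare them on a generating family, which is a formal exercise.

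For the inclusion $\mathrm{Im}(\Phi_X) \subset \Dbgeo(X)$, I would observe that $\DAct(X;\mathcal{B}_X)$ is by construction generated, under the three operations above, by objects of the form $\mathcal{B}_X \otimes A$ with $A \in \DAct(X)$. Proposition~\ref{prop:Dbgeo=Mod_DA} identifies $\Phi_X(\mathcal{B}_X \otimes A)$ with $\Bti_X^*(A)$, which lies in $\Dbgeo(X)$ by Corollary~\ref{cor:Bti-through-Dbgeo}; closure of $\Dbgeo(X)$ under finite limits, finite colimits, and retracts then yields the inclusion.

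For the converse $\Dbgeo(X) \subset \mathrm{Im}(\Phi_X)$, I would invoke the generators given in Definition~\ref{defn:Dbgeo}: the sheaves $p_* \Q_{W^{\sigma}} = \Bti_X^*(p_*\Q_W)$ for $p \colon W \to X$ proper. Each $p_*\Q_W$ belongs to $\DAct(X)$, so $\mathcal{B}_X \otimes p_*\Q_W \in \DAct(X;\mathcal{B}_X)$ is sent by $\Phi_X$ precisely to the chosen generator. As the essential image is itself thick and closed under finite limits, finite colimits, and retracts, it must contain all of $\Dbgeo(X)$.

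Combining both inclusions with the full faithfulness of $\Phi_X$ furnishes the desired equivalence $\DAct(X;\mathcal{B}_X) \xrightarrow{\sim} \Dbgeo(X)$. Compatibility with the six operations is automatic: on the right they are inherited from $D(X)$ by Proposition~\ref{prop:Dbgeo-6ff}; on the left they are transported from $\DA(X;\mathcal{B}_X)$; and the two structures agree through $\Phi_X$ by the six-functor compatibility of the factorization established in Proposition~\ref{prop:Dbgeo=Mod_DA}. I expect no real obstacle in this argument: all the substantive input (the factorization $\Bti_X^* = \Phi_X \circ (\mathcal{B}_X \otimes -)$ and its full faithfulness on constructible $\mathcal{B}_X$-modules) is already contained in Proposition~\ref{prop:Dbgeo=Mod_DA}, and the corollary is a formal generator-chasing consequence; the only care required is in matching the two generating families.
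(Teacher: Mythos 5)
Your proposal is correct and follows essentially the same route as the paper: reduce to comparing generating families on both sides of the fully faithful functor from Proposition~\ref{prop:Dbgeo=Mod_DA}, using that both essential image and $\Dbgeo(X)$ are thick and closed under finite limits, colimits, and retracts. The paper's own proof is more terse and only records explicitly the inclusion $\Dbgeo(X) \subset \mathrm{Im}(\Phi_X)$, taking the converse as immediate from Corollary~\ref{cor:Bti-through-Dbgeo}; your more symmetric write-up spells out both containments, which is sound.
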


\begin{proof}
	Since the functor $\DAct(X;\mathcal{B}_X) \to D(X)$ is fully faithful (by Proposition~\ref{prop:Dbgeo=Mod_DA}), it remains to show that its essential image coincides with $\Dbgeo(X)$.
	This follows from the fact that all generators of $\Dbgeo(X)$ lie in the essential image of the Betti realization $\Bti_X^* \colon \DAct(X) \to \Dbct(X)$.
\end{proof}

\begin{rem}\label{rem:Dgeo}
	Although the functors $\DA(X;\mathcal{B}_X) \to D(X)$ are not fully faithful, it is still possible to construct by hand a six functor formalism $X \mapsto \Dgeo(X)$ of unbounded complexes of geometric origin such that the previous functors factor through equivalences $\DA(X;\mathcal{B}_X) \xrightarrow{\sim} \Dgeo(X)$ (see~\cite[Defn.~1.6.1]{AyoAnab}).
	Despite the suggestive notation, $\Dgeo(X)$ is not defined as a full subcategory of $D(X)$. 
\end{rem}

Later in the paper, we need to know how complexes of geometric origin change under finite extensions of the base field $k$.
For this reason, we now come back to the situation considered at the end of Subsection~\ref{subsect:Voe-Bti}, and we relate the results obtained there to sheaves of geometric origin.
So let $k'/k$ be a finite extension.
Given a $k$-variety $X$, set $X' := X \times_k k'$, and write $e_X \colon X' \to X$ for the corresponding finite étale morphism.
Choose a complex embedding $\sigma' \colon k' \hookrightarrow \C$
extending $\sigma$.
We have a symmetric monoidal functor
\begin{equation*}
	\DA(X;\mathcal{B}_{\sigma,X}) = \DA(X;e_{X,*} \mathcal{B}_{\sigma',X'}) \xrightarrow{e_X^*} \DA(X';e_X^* e_{X,*} \mathcal{B}_{\sigma',X'}) \xrightarrow{\epsilon} \DA(X';\mathcal{B}_{\sigma',X'}),
\end{equation*}
where the first equivalence is induced by Proposition~\ref{prop:e_*BtiAlg'}.
By restriction to constructible objects,
this induces a symmetric monoidal functor
\begin{equation}\label{eq:DAct(X,B)_to_DAct(X',B')}
	\DAct(X;\mathcal{B}_{\sigma,X}) \to \DAct(X';\mathcal{B}_{\sigma',X'}).
\end{equation}
In view of Corollary~\ref{cor:Dbgeo=DAct(Btialg)},
we look at the corresponding functor on constructible complexes. 

\begin{lem}\label{lem:Dbgeo_X'/X}
The functor
\begin{equation*}
\Dbgeoemb{\sigma}(X) \rightarrow \Dbgeoemb{\sigma'}(X')
\end{equation*}
corresponding to~\eqref{eq:DAct(X,B)_to_DAct(X',B')} under the identifications of Corollary~\ref{cor:Dbgeo=DAct(Btialg)} is an equivalence.
\end{lem}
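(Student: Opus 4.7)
The plan is to trace the functor on generators, identify it with the canonical equivalence $D_\sigma(X) \simeq D_{\sigma'}(X')$ of Lemma~\ref{lem:Bti_k=Bti_k-e^*}, and then use thickness in both directions to exhibit $\Dbgeoemb{\sigma}(X)$ and $\Dbgeoemb{\sigma'}(X')$ as the same subcategory of the common Betti category.

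First I would unpack the functor on a generator $p_* \Q_{W^\sigma}$ of $\Dbgeoemb{\sigma}(X)$, with $p \colon W \to X$ a proper morphism of $k$-varieties. Under Corollary~\ref{cor:Dbgeo=DAct(Btialg)}, this corresponds to the free module $p_* \Q_W \otimes \mathcal{B}_{\sigma,X}$. Rewriting $\mathcal{B}_{\sigma,X} = e_{X,*} \mathcal{B}_{\sigma',X'}$ via Proposition~\ref{prop:e_*BtiAlg'}, applying~\eqref{eq:DAct(X,B)_to_DAct(X',B')}, and invoking proper base change $e_X^* p_* \Q_W \simeq p'_* \Q_{W'}$ (with $W' := W \times_k k'$ and $p' \colon W' \to X'$ the base change of $p$), the image becomes $p'_* \Q_{W'} \otimes \mathcal{B}_{\sigma',X'}$, corresponding back via Corollary~\ref{cor:Dbgeo=DAct(Btialg)} to $p'_* \Q_{(W')^{\sigma'}}$. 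Since~\eqref{formula:X^sigma=X'^sigma'} identifies both $W^\sigma = (W')^{\sigma'}$ and $X^\sigma = (X')^{\sigma'}$, this is precisely $p_* \Q_{W^\sigma}$ viewed through the equivalence $D_\sigma(X) \simeq D_{\sigma'}(X')$ of Lemma~\ref{lem:Bti_k=Bti_k-e^*}. Hence, up to this equivalence, the functor in question is naturally identified with the inclusion of $\Dbgeoemb{\sigma}(X)$ into $D_{\sigma'}(X')$ factoring through $\Dbgeoemb{\sigma'}(X')$; in particular, it is fully faithful.

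For essential surjectivity, it suffices to show that every generator $q_* \Q_{V^{\sigma'}}$ of $\Dbgeoemb{\sigma'}(X')$, with $q \colon V \to X'$ a proper morphism of $k'$-varieties, already lies in $\Dbgeoemb{\sigma}(X)$ under the same identification. My plan is to view $V$ as a $k$-variety and consider the composite $q' := e_X \circ q \colon V \to X$, which is proper of $k$-varieties, so $q'_* \Q_{V^\sigma}$ belongs to $\Dbgeoemb{\sigma}(X)$. A decomposition analogous to~\eqref{eq:X'^sigma-decomp} then gives
\begin{equation*}
V^\sigma = \coprod_{\sigma'' \in \Hom_\sigma(k',\C)} V^{\sigma''},
\end{equation*}
compatibly with $(q')^\sigma$: on the $\sigma''$-component the map factors as $q^{\sigma''} \colon V^{\sigma''} \to (X')^{\sigma''} = X^\sigma$. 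Pushing forward produces a direct sum decomposition of $q'_* \Q_{V^\sigma}$ whose $\sigma'$-summand is precisely $q_* \Q_{V^{\sigma'}}$; thickness of $\Dbgeoemb{\sigma}(X)$ then places the summand in it. Closure of the essential image of a fully faithful exact functor under retracts and triangles promotes this to essential surjectivity on the full subcategory generated.

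The main obstacle I anticipate is the careful bookkeeping of the identifications~\eqref{formula:X^sigma=X'^sigma'} and~\eqref{eq:X'^sigma-decomp} as applied to the various varieties $W$, $V$, $X$, $X'$, in order to check that the abstractly identified direct summand really is the target generator on the nose and that the decomposition of $(q')_* \Q_{V^\sigma}$ is functorial. The rest relies on the six-functor formalism (proper base change in $\DA$) together with the compatibilities already recorded in Lemma~\ref{lem:Bti_k=Bti_k-e^*} and Propositions~\ref{prop:e_*BtiAlg'}, \ref{prop:BtiAlg_directsum_embeddings}.
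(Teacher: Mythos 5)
Your overall strategy — embed both categories in the common Betti category $D_{\sigma}(X) = D_{\sigma'}(X')$, argue full faithfulness by identification with an inclusion, and prove essential surjectivity by checking generators in both directions — is the same as the paper's. The first direction (generators of $\Dbgeoemb{\sigma}(X)$ landing in $\Dbgeoemb{\sigma'}(X')$ via proper base change) is also identical. Where you genuinely diverge is the second direction. The paper works motivically: it invokes Remark~\ref{rem:f_finét} to exhibit $q_* \Q_Z$ as a direct summand of $e_X^* e_{X,*} q_* \Q_Z$ in $\DA(X')$, applies $\Bti_{\sigma',X'}^*$ to this retraction, and then identifies $\Bti_{\sigma',X'}^*(e_X^* e_{X,*} q_* \Q_Z)$ with the generator $\Bti_{\sigma,X}^*(r_* \Q_Z)$ using Lemma~\ref{lem:Bti_k=Bti_k-e^*}. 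You instead construct the direct summand topologically, via the decomposition $V^{\sigma} = \coprod_{\sigma''} V^{\sigma''}$ of analytic spaces (in the style of~\eqref{eq:X'^sigma-decomp} applied to $V$) and then read off the $\sigma'$-component of $(q')^{\sigma}_* \Q_{V^{\sigma}}$. The two arguments produce the same retraction — your analytic decomposition is precisely the Betti realization of the motivic one — but yours avoids the detour through $\DA$ entirely and is arguably more transparent. One could say the paper's version is preferable if one wants the statement to follow from purely motivic inputs plus compatibilities of realization, whereas yours minimizes the formal machinery and makes the geometry of the extension $k'/k$ visible. Both are correct.

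One small bookkeeping remark you already anticipate yourself: you should make sure that the identification $V^{\sigma'} \to (X')^{\sigma'} = X^{\sigma}$ on the $\sigma'$-component of the decomposition really equals the analytification of $q$ under the equivalence $D_{\sigma}(X) \simeq D_{\sigma'}(X')$, and not just an abstract isomorphism; this follows from the fact that both identifications are induced by the same base-change isomorphisms of $\C$-varieties~\eqref{eq:X^sigma=X'^sigma'_varieties}, but it is worth stating once explicitly. Beyond that, the proposal is sound.
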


\begin{proof}
	The canonical equivalence $D_{\sigma}(X) \xrightarrow{\sim} D_{\sigma'}(X')$ allows us to view both categories in the statement as full subcategories of $D_{\sigma'}(X')$, and we claim that their essential images coincide.
	To this end, it suffices to check that each category contains the generators of the other one.
	
	In one direction, given a proper morphism of $k$-varieties $p \colon Y \rightarrow X$, let $p' \colon Y' \rightarrow X'$ denote the proper morphism of $k'$-varieties obtained by base-change to $k'$:
	it fits into the Cartesian square
	\begin{equation*}
		\begin{tikzcd}
			Y' \arrow{r}{e_Y} \arrow{d}{p'} & Y \arrow{d}{p} \\
			X' \arrow{r}{e_X} & X.
		\end{tikzcd}
	\end{equation*}
	We have a chain of natural isomorphisms
	\begin{align*}
		\Bti_{\sigma,X}^* (p_* \Q_Y)
		&= \Bti_{\sigma',X'}^* (e_X^* p_* \Q_Y) && \textup{(by Lemma~\ref{lem:Bti_k=Bti_k-e^*})}\\
		&= \Bti_{\sigma',X'}^* (p'_* e_Y^* \Q_Y) && \textup{(by proper base-change)} \\
		&= \Bti_{\sigma',X'}^* (p'_* \Q_{Y'}) && \textup{(as $e_Y^* \Q_Y = \Q_{Y'}$)}.
	\end{align*}
	Thus, each generator of $\Dbgeoemb{\sigma}(X)$ belongs to $\Dbgeoemb{\sigma'}(X')$.
	Note that the inclusion $\Dbgeoemb{\sigma}(X) \subset \Dbgeoemb{\sigma'}(X')$ just obtained is compatible with the functor \eqref{eq:DAct(X,B)_to_DAct(X',B')}:
	essentially, this comes down to Lemma~\ref{lem:Bti_k=Bti_k-e^*}.
	
	In the other direction, given a proper morphism of $k'$-varieties $q \colon Z \rightarrow X'$, consider the composite morphism of $k$-varieties $r := e_X \circ q \colon Z \rightarrow X$.
	Since $e_X$ is finite étale, the object $q_* \Q_Z \in \DA(X')$ is a direct summand of $e_X^* e_{X,*} q_* \Q_Z$ (by Remark~\ref{rem:f_finét}).
	Since $\Dbgeoemb{\sigma}(X)$ is closed under retracts inside $D_{\sigma}(X) = D_{\sigma'}(X')$ (by definition), it suffices to check that $\Dbgeoemb{\sigma}(X)$ contains the complex $\Bti_{\sigma',X'}^*(e_X^* e_{X,*} q_* \Q_Z)$ in order to conclude that it contains $\Bti_{\sigma',X'}^*(q_* \Q_Z)$.
	But we have
	\begin{align*}
		\Bti_{\sigma',X'}^* (e_X^* e_{X,*} q_* \Q_Z) = & \Bti_{\sigma,X}^*(e_{X,*} q_* \Q_Z) && \textup{(by Lemma~\ref{lem:Bti_k=Bti_k-e^*})} \\
		= & \Bti_{\sigma,X}(r_* \Q_Z) && \textup{(as $e_{X} \circ q = r$)}.
	\end{align*}
	Thus, each generator of $\Dbgeoemb{\sigma'}(X')$ belongs to $\Dbgeoemb{\sigma}(X)$.
	This concludes the proof of the claim.
\end{proof}

Recall that the categories of analytic sheaves on $X'$, regarded either as a $k'$-variety or as a $k$-variety, are related by the decomposition~\eqref{eq:D_sigma(X')-decomp}, indexed by the set $\Hom_{\sigma}(k',\C)$ of complex embeddings of $k'$ extending $\sigma$.
In what follows,
we forget again about the previous choice of
an element $\sigma' \in \Hom_{\sigma}(k',\C)$
and rather consider all such extensions at once.
We have a canonical equivalence
\begin{equation*}
	\DA(X';\Btialg_{\sigma,X'}) = \DA(X';\oplus_{\sigma' \in \Hom_{\sigma}(k',\C)} \Btialg_{\sigma',X'}) = \prod_{\sigma' \in \Hom_{\sigma}(k',\C)} \DA(X';\Btialg_{\sigma',X'}),
\end{equation*}
where the first equivalence is induced by Proposition~\ref{prop:BtiAlg_directsum_embeddings}.
By restriction to constructible objects, it induces an equivalence
\begin{equation*}
	\DAct(X';\Btialg_{\sigma,X'}) = \prod_{\sigma' \in \Hom_{\sigma}(k',\C)} \DAct(X';\Btialg_{\sigma',X'}).
\end{equation*}
In view of Corollary~\ref{cor:Dbgeo=DAct(Btialg)}, we look for the same relation on constructible complexes.

\begin{lem}\label{lem:Dbgeo_sigma(X')-decomp}
	The equivalence~\eqref{eq:D_sigma(X')-decomp} restricts
	to a monoidal equivalence
	\begin{equation*}
		\Dbgeoemb{\sigma}(X') = \prod_{\sigma' \in \Hom_{\sigma}(k',\C)} \Dbgeoemb{\sigma'}(X')
	\end{equation*}
	making the diagram
	\begin{equation*}
		\begin{tikzcd}
			\DAct(X';\Btialg_{\sigma,X'}) \arrow[equal]{rr} \arrow{d}{\sim} && \prod_{\sigma' \in \Hom_{\sigma}(k',\C)} \DAct(X';\Btialg_{\sigma',X'}) \arrow{d}{\sim} \\
			\Dbgeoemb{\sigma}(X') \arrow[equal]{rr} && \prod_{\sigma' \in \Hom_{\sigma}(k',\C)} \Dbgeoemb{\sigma'}(X')
		\end{tikzcd}
	\end{equation*}
	commute up to monoidal natural isomorphism.
\end{lem}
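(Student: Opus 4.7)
The plan is to mimic the strategy of Lemma~\ref{lem:Dbgeo_X'/X}. Via the canonical equivalence~\eqref{eq:D_sigma(X')-decomp}, I would regard both $\Dbgeoemb{\sigma}(X')$ and $\prod_{\sigma' \in \Hom_{\sigma}(k',\C)} \Dbgeoemb{\sigma'}(X')$ as full sub-$\infty$-categories of $D_{\sigma}(X')$, and show that their essential images coincide. Since both sides are thick subcategories (closed under finite limits, finite colimits, and retracts), it suffices to match up their generators.

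First, for a proper morphism $p \colon W \to X'$ of $k$-varieties, the factorisation $W \to X' \to \Spec(k')$ endows $W$ with a canonical $k'$-structure making $p$ into a proper morphism of $k'$-varieties as well. The base-field decomposition $W^{\sigma} = \coprod_{\sigma'} W^{\sigma'}$ underlying Lemma~\ref{lem:Bti^*-decomp} is then manifestly compatible with $p$, and one obtains
\begin{equation*}
	p_* \Q_{W^{\sigma}} \;\cong\; \bigoplus_{\sigma' \in \Hom_{\sigma}(k',\C)} p^{\sigma'}_* \Q_{W^{\sigma'}}
\end{equation*}
under~\eqref{eq:D_sigma(X')-decomp}. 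Each summand is a generator of the corresponding $\Dbgeoemb{\sigma'}(X')$, yielding the inclusion $\Dbgeoemb{\sigma}(X') \subseteq \prod_{\sigma'} \Dbgeoemb{\sigma'}(X')$.

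Conversely, given $\sigma' \in \Hom_{\sigma}(k',\C)$ and a generator $q_* \Q_{Z^{\sigma'}}$ of $\Dbgeoemb{\sigma'}(X')$ arising from a proper morphism $q \colon Z \to X'$ of $k'$-varieties, I would regard $q$ as a proper morphism of $k$-varieties by restriction of scalars. Then $q_* \Q_{Z^{\sigma}}$ lies in $\Dbgeoemb{\sigma}(X')$ by construction, and the previous step expresses it as $\bigoplus_{\sigma'' \in \Hom_{\sigma}(k',\C)} q^{\sigma''}_* \Q_{Z^{\sigma''}}$ in $D_{\sigma}(X')$. Stability of $\Dbgeoemb{\sigma}(X')$ under retracts then forces each summand, extended by zero to $(X')^{\sigma}$, to belong to $\Dbgeoemb{\sigma}(X')$, in particular $q_* \Q_{Z^{\sigma'}}$ itself. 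This settles the reverse inclusion.

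The monoidal structure of the resulting equivalence is inherited from that of~\eqref{eq:D_sigma(X')-decomp}. For the commutativity of the diagram, Corollary~\ref{cor:Dbgeo=DAct(Btialg)} reduces matters to the compatibility between the decomposition of $\Btialg_{\sigma,X'}$ as a direct sum of algebras (Proposition~\ref{prop:BtiAlg_directsum_embeddings}) and the decomposition of Betti realizations (Lemma~\ref{lem:Bti^*-decomp}), both sides of the square being obtained by passing to modules. I expect the main bookkeeping obstacle to lie in keeping the extensions-by-zero and the orthogonal idempotents $\Q_{\sigma',X'}$ of Remark~\ref{rem:unit-decomp_X'} correctly aligned through the various adjunctions; no new geometric input beyond the already-cited lemmas should be required.
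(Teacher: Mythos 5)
Your proof is correct but takes a more roundabout route than the paper's. The paper inverts the logic: it enlarges the bottom row of the diagram from $\Dbgeoemb$ to $\Dbctemb$, so that the vertical arrows become the fully faithful functors of Proposition~\ref{prop:Dbgeo=Mod_DA}, and then observes that the commutativity of this larger square follows formally from Lemma~\ref{lem:Bti^*-decomp}, because the algebra structure on $\Btialg_{\sigma,X'}$ is itself induced by $\Bti_{\sigma,X'}^*$. The restriction of~\eqref{eq:D_sigma(X')-decomp} to $\Dbgeoemb$ then falls out automatically, since by Corollary~\ref{cor:Dbgeo=DAct(Btialg)} the two sides are precisely the essential images of those vertical arrows. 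Your generator-by-generator argument --- a proper $k$-morphism $W \to X'$ acquires a canonical $k'$-structure via composition with $X' \to \Spec(k')$; conversely a proper $k'$-morphism is also a proper $k$-morphism, decompose and use retract-stability --- is a valid, self-contained alternative patterned on Lemma~\ref{lem:Dbgeo_X'/X}, but it becomes redundant once the diagram is known to commute, which is where the real content lies and which your proposal only sketches. The bookkeeping with the idempotents $\Q_{\sigma',X'}$ that you flag as a concern is exactly what is absorbed into Lemma~\ref{lem:Bti^*-decomp} once one passes to the $\Dbctemb$-level square, so no extra adjunction gymnastics are needed.
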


\begin{proof}
	It suffices to show that the diagram
	\begin{equation*}
		\begin{tikzcd}
			\DAct(X';\Btialg_{\sigma,X'}) \arrow[equal]{rr} \arrow{d} && \prod_{\sigma' \in \Hom_{\sigma}(k',\C)} \DAct(X';\Btialg_{\sigma',X'}) \arrow{d} \\
			\Dbctemb{\sigma}(X') \arrow[equal]{rr} && \prod_{\sigma' \in \Hom_{\sigma}(k',\C)} \Dbctemb{\sigma'}(X')
		\end{tikzcd}
	\end{equation*}
	commutes up to monoidal natural isomorphism.
	Since the algebra structure of Betti algebras comes from the Betti realization, this follows formally from Lemma~\ref{lem:Bti^*-decomp}.
\end{proof}

\subsection{Perverse sheaves and local systems of geometric origin}\label{subsect:Pervgeo}

Let $X$ be a $k$-variety.
Beilinson, Bernstein, Deligne and Gabber~\cite{BBDG}
define the perverse $t$-structure on $\Dbct(X)$.
Its heart $\Perv(X)$ is the abelian category of
($k$-algebraically constructible)
perverse sheaves on $X^{\sigma}$,
which we simply call \textit{perverse sheaves} on $X$ here.

We want to specialize the theory of perverse sheaves to
the setting of sheaves of geometric origin.

\begin{nota}
	Let $\Pervgeo^{\rm{A}}(X)$ denote
	the intersection $\Dbgeo(X) \cap \Perv(X)$.
	We write it as $\Pervgeoemb{\sigma}^{\rm{A}}(X)$
	if we want to stress its dependence on the embedding $\sigma$.
\end{nota}

This is but one possible definition of perverse sheaves of geometric origin:
the superscript is meant to distinguish it from another definition,
introduced in \S~\ref{subsect:IM_cat}.

In order to make sure that $\Pervgeo^{\rm{A}}(X)$ is a reasonable category,
we rely on the following result:

\begin{prop}[{\cite[Rmk.~1.6.23]{AyoAnab}}]\label{prop:Dbgeo_perv_t-str}
	The perverse $t$-structure on $\Dbct(X)$ restricts to a $t$-structure on $\Dbgeo(X)$, and its heart $\Pervgeo^{\rm{A}}(X)$ is stable under subquotients and extensions inside $\Perv(X)$.
\end{prop}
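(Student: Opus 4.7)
The plan is to reduce all three assertions to the closure properties of Proposition~\ref{prop:Dbgeo-6ff}, with one nontrivial Hodge-theoretic input needed for the subquotient claim.

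For the restriction of the perverse $t$-structure, my approach is to invoke the BBDG construction of the perverse truncations by recollement along a $k$-stratification $\Sigma$ witnessing constructibility of the object at hand. Proceeding by induction on the number of strata, for each open-closed decomposition $j \colon U \hookrightarrow X \hookleftarrow Z \colon i$ compatible with $\Sigma$, the functors $^p\tau_{\leq 0}$ and $^p\tau_{\geq 0}$ on $\Dbct(X,\Sigma)$ are expressed in terms of the six operations $j_!, j_*, j^*, i_*, i^*, i^!$ applied to the perverse truncations on $U$ and $Z$; the base case of a smooth connected stratum is just a shifted standard truncation. Since all these constructions fall within the scope of the six operations and Beilinson's gluing functors, Proposition~\ref{prop:Dbgeo-6ff} implies that they preserve $\Dbgeo$, yielding the restricted $t$-structure with heart $\Dbgeo(X) \cap \Perv(X) = \Pervgeo^{\rm{A}}(X)$.

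Stability under extensions is immediate: a short exact sequence in $\Perv(X)$ is a distinguished triangle in $\Dbct(X)$, and the triangulated subcategory $\Dbgeo(X)$ is closed under extensions in the triangulated sense.

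Stability under subquotients is the main obstacle. Given $B \in \Pervgeo^{\rm{A}}(X)$ and an inclusion $A \hookrightarrow B$ in $\Perv(X)$ with quotient $C$, I would proceed by noetherian induction on the dimension of the support of $A$. Choose a $k$-stratification adapted to $A \subset B$, and let $j \colon U \hookrightarrow X$ be a smooth dense open stratum inside the support of $A$, with complementary closed inclusion $i \colon Z \hookrightarrow X$. Over $U$, the inclusion $A|_U \hookrightarrow B|_U$ takes the form $\mathcal{L}[d_U] \hookrightarrow \mathcal{M}[d_U]$ for sub-local systems $\mathcal{L} \subset \mathcal{M}$ on $U$, with $\mathcal{M}[d_U] = B|_U \in \Dbgeo(U)$. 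The crucial input is that $\mathcal{M}$, being a local system of geometric origin on the smooth complex variety $U^{\sigma}$, underlies a polarizable variation of Hodge structure (Schmid--Deligne), and is therefore semisimple; consequently $\mathcal{L}$ is a direct summand of $\mathcal{M}$, and $A|_U \in \Dbgeo(U)$ by closure under retracts. Applying the inductive hypothesis to $i^*A$ and $i^!A$ on the smaller-dimensional stratum $Z$ and reconstructing $A$ from its restrictions via the recollement functors — which preserve $\Dbgeo$ by Proposition~\ref{prop:Dbgeo-6ff} — concludes the induction for $A$; the statement for $C$ then follows by extension stability applied to $0 \to A \to B \to C \to 0$.
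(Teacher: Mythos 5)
Your extension‑stability observation is correct and indeed immediate. But the other two parts contain real gaps, and both gaps trace to the same missing input, which the paper supplies via the Decomposition Theorem rather than via the route you sketch.

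For the restriction of the perverse $t$-structure, your recollement dévissage reduces, by induction on strata, to the base case of a single smooth connected stratum $S$, where the perverse truncation is the shifted \emph{standard} truncation. The standard truncation $\tau_{\leq 0}$ is not one of the six operations, nor is it one of Beilinson's gluing functors (which are $\psi$, $\phi$, $\Xi$ and their unipotent variants, built from the six operations), so Proposition~\ref{prop:Dbgeo-6ff} gives you no control over it. In fact, "does $\tau_{\leq 0}$ preserve $\Dbgeo(S)$ when $S$ is smooth?" is essentially the proposition itself in the special case of a smooth variety, so the dévissage is circular at its base. The paper avoids this entirely: it invokes the Decomposition Theorem~\cite[Thm.~6.2.5]{BBDG} to split the generators $p_*\Q_{W^{\sigma}}$ as $\bigoplus_n \pH^n(p_*\Q_{W^{\sigma}})[-n]$ with semisimple summands, deduces that $\Dbgeo(X)$ is generated (as a triangulated subcategory, then as a thick one by an induction on perverse length) by perverse sheaves, and then applies a general criterion ([AyoAnab, Lem.~1.6.22]) saying the $t$-structure restricts to such a subcategory with the expected heart.

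For subquotient stability, the key claim that a local system $\mathcal{M}$ with $\mathcal{M}[d_U]\in\Dbgeo(U)$ underlies a polarizable variation of Hodge structure is unsupported as stated. Deligne gives this for $R^i f_*\Q$ with $f$ smooth projective, and Schmid gives semisimplicity of polarizable VHS, but $\Dbgeo(U)$ is by definition the \emph{thick triangulated} subcategory generated by $p_*\Q_W$, so an arbitrary shifted local system in it is a priori only a retract of an iterated extension of shifts of such pushforwards — and extensions of VHS need not be VHS. To pass from "lies in the thick closure" to "underlies a polarizable VHS" one again needs the Decomposition Theorem (or Saito's theory of Hodge modules), at which point you are reproving the paper's lemma. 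Note also a technical slip: your $U$ is a dense open stratum of $\mathrm{Supp}(A)$, hence only locally closed in $X$, so the recollement you set up should be over $\mathrm{Supp}(A)$ with an extra localization step, not directly over $X$.

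In short: the structure of your argument is reasonable, but both nontrivial steps silently assume a semisimplicity statement that is exactly what the Decomposition Theorem provides. Cite it (as the paper does) to fix both gaps, or replace the recollement dévissage with the paper's criterion-plus-generators argument, which is shorter.
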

\begin{proof}
	We want to apply the criterion of~\cite[Lem.~1.6.22]{AyoAnab}.
	To this end, let $\mathcal{S}$ denote the collection of all subquotients in $\Perv(X)$ of the perverse sheaves $\pH^n(p_* \Q_{W^{\sigma}})$ with $p \colon W \to X$ proper and $n \geq 0$.
	Define $\Dbct(X)_{\mathcal{S}}$ as the smallest sub-$\infty$-category  $\Dbct(X)$ containing $\mathcal{S}$ and stable under finite colimits and negative shifts:
	in other words, the full sub-$\infty$-category spanned by the triangulated subcategory generated by $\mathcal{S}$.
	By~\cite[Lem.~1.6.22]{AyoAnab}, the perverse $t$-structure on $\Dbct(X)$ restricts to a $t$-structure on $\Dbct(X)_{\mathcal{S}}$, and the heart $\Perv(X)_{\mathcal{S}}$ of the latter coincides with the smallest abelian subcategory of $\Perv(X)$ containing $\mathcal{S}$ and stable under subquotients and extensions.
	In order to conclude, we claim that $\Dbct(X)_{\mathcal{S}}$ coincides in fact with $\Dbgeo(X)$.
	
	To this end, we start by checking that all objects of $\mathcal{S}$ belong to $\Dbgeo(X)$.
	This is a consequence of the
	Decomposition Theorem~\cite[Thm.~6.2.5]{BBDG}
	(see also~\cite[Thm.~2.1.1(b), Thm.~2.3.3]{cataldo-migliorini},
	for example):
	for every proper morphism $p \colon W \to X$, the complex $p_* \Q_{W^{\sigma}} \in \Dbct(X)$ decomposes as
	\begin{equation*}
		p_* \Q_{W^{\sigma}} = \bigoplus_{n \geq 0} \pH^n(p_* \Q_{W^{\sigma}})[-n],
	\end{equation*}
	and moreover each direct summand $\pH^n(p_* \Q_{W^{\sigma}})$ is semisimple in $\Perv(X)$.
	Since $\Dbgeo(X)$ is stable under retracts (by definition) and positive shifts
	(being stable under finite colimits),
	it contains indeed the whole of $\mathcal{S}$,
	and therefore it contains $\Dbct(X)_{\mathcal{S}}$.
	
	It remains to show that $\Dbct(X)_{\mathcal{S}}$ is stable under retracts inside $\Dbct(X)$, which follows formally from the boundedness of the perverse $t$-structure.
	In detail, fix a non-zero object $K \in \Dbct(X)_{\mathcal{S}}$ which is isomorphic in $\Dbct(X)$ to a direct sum $K' \oplus K''$, and let us show that $K'$ lies in $\Dbct(X)_{\mathcal{S}}$ as well.
	To this end, let $a(K)$ (resp. $b(K)$) denotes the minimal (resp. maximal) index $n$ with $\pH^n(K) \neq 0$, and let $l(K) := b(K) - a(K)$ denote the perverse cohomological length of $K$.
	We prove the claim by induction on $l(K) \geq 0$.
	The base step is when $l(K) = 0$, in which case $K \in \Perv(X)_{\mathcal{S}}[a(K)]$ and the conclusion is already known (as $\Perv(X)_{\mathcal{S}}$ is stable under retracts inside $\Perv(X)$).
	For the inductive step, assume that $l(K) \geq 1$ and that the conclusion is known to hold for all complexes with perverse cohomological length smaller than $l(K)$.
	Form the truncation sequence in $\Dbct(X)$
	\begin{equation*}
		\tau_{< b} K \to K \to \tau_{\geq b} K
	\end{equation*}
	and observe that the direct sum decomposition of $K$
	induces a decomposition of the whole fibre sequence.
	Since $l(\tau_{< b} K) < l(K)$ and $l(\tau_{\geq b} K) = 0$ (by construction), the direct summands $\tau_{< b} K'$ and $\tau_{\geq b} K'$ lie in $\Dbct(X)_{\mathcal{S}}$ by inductive hypothesis.
	Since $\Dbct(X)_{\mathcal{S}}$ is a triangulated subcategory of $\Dbct(X)$, this implies that $K'$ lies in $\Dbct(X)_{\mathcal{S}}$ as well.
	This concludes the proof of the claim.
\end{proof}

\begin{rem}
	It follows that $\Pervgeo^{\rm{A}}(X)$ coincides with the classical abelian category of perverse sheaves of geometric origin, as defined in~\cite[\S~6.2.4]{BBDG}.
\end{rem}

Perverse sheaves of geometric origin have a useful characterization in terms of Voevodsky motives:
\begin{cor}\label{cor:PervgeoAyoub_pH^0Bti}
	The category $\Pervgeo^{\rm{A}}(X)$ coincides with the smallest abelian subcategory of $\Perv(X)$ containing the image of the homological functor
	\begin{equation*}
		\DAct(X) \xrightarrow{\Bti_X^*} \Dbct(X) \xrightarrow{\pH^0} \Perv(X)
	\end{equation*}
	and stable under subquotients and extensions.
\end{cor}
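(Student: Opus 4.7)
The plan is to denote by $\mathcal{A}$ the smallest abelian subcategory of $\Perv(X)$ containing the image of $\pH^0 \circ \Bti_X^*$ and stable under subquotients and extensions, and then to prove the two inclusions $\mathcal{A} \subset \Pervgeo^{\rm{A}}(X)$ and $\Pervgeo^{\rm{A}}(X) \subset \mathcal{A}$.

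For the inclusion $\mathcal{A} \subset \Pervgeo^{\rm{A}}(X)$, I would first observe that for every $M \in \DAct(X)$, Corollary~\ref{cor:Bti-through-Dbgeo} places $\Bti_X^*(M)$ in $\Dbgeo(X)$. Since Proposition~\ref{prop:Dbgeo_perv_t-str} says the perverse $t$-structure on $\Dbct(X)$ restricts to $\Dbgeo(X)$, the object $\pH^0(\Bti_X^*(M))$ lies in $\Pervgeo^{\rm{A}}(X)$. The same proposition guarantees that $\Pervgeo^{\rm{A}}(X)$ is stable under subquotients and extensions inside $\Perv(X)$, so the smallest abelian subcategory containing these $\pH^0$'s and closed under subquotients and extensions is contained in $\Pervgeo^{\rm{A}}(X)$.

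For the reverse inclusion, I would exploit the explicit description of $\Pervgeo^{\rm{A}}(X)$ produced in the proof of Proposition~\ref{prop:Dbgeo_perv_t-str}: namely, $\Pervgeo^{\rm{A}}(X)$ coincides with the smallest abelian subcategory of $\Perv(X)$ containing the family $\mathcal{S}$ of all subquotients of perverse sheaves of the form $\pH^n(p_* \Q_{W^{\sigma}})$, for $p \colon W \to X$ proper and $n \geq 0$, and closed under subquotients and extensions. It therefore suffices to check that $\mathcal{S} \subset \mathcal{A}$. For this, I use that $\DAct(X)$ is stable under the six operations: since $p$ is proper, $p_* \Q_W \in \DAct(X)$, and consequently $p_* \Q_W[n] \in \DAct(X)$ as well. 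Applying $\pH^0 \circ \Bti_X^*$ and using that $\Bti_X^*$ commutes with shifts and with proper direct images, one obtains
\begin{equation*}
	\pH^0\bigl(\Bti_X^*(p_* \Q_W[n])\bigr) = \pH^n(p_* \Q_{W^{\sigma}}),
\end{equation*}
which therefore belongs to the image of the homological functor, hence to $\mathcal{A}$. Subquotients of these objects then lie in $\mathcal{A}$ by construction, so $\mathcal{S} \subset \mathcal{A}$; closure of $\mathcal{A}$ under subquotients and extensions yields $\Pervgeo^{\rm{A}}(X) \subset \mathcal{A}$.

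There is no real obstacle here: the argument is a direct combination of Corollary~\ref{cor:Bti-through-Dbgeo} with the two halves of the proof of Proposition~\ref{prop:Dbgeo_perv_t-str}. The only small point to flag is the correct identification $p_* \Q_{W^{\sigma}} = \Bti_X^*(p_* \Q_W)$, which relies on the compatibility of the Betti realization with proper direct images recalled in Subsection~\ref{subsect:Voe-Bti}, and the fact that $p_* \Q_W$ is constructible, which uses the stability of $\DAct$ under the six operations.
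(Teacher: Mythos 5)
Your proof follows the same strategy as the paper's: both inclusions are reduced to matching generators, with $\mathcal{A} \subset \Pervgeo^{\rm{A}}(X)$ coming from Corollary~\ref{cor:Bti-through-Dbgeo} together with the two conclusions of Proposition~\ref{prop:Dbgeo_perv_t-str}, and $\Pervgeo^{\rm{A}}(X) \subset \mathcal{A}$ coming from the explicit description of $\Pervgeo^{\rm{A}}(X)$ extracted from the proof of that proposition. The only place you differ is in being more explicit about the shift trick $\pH^n(p_*\Q_{W^\sigma}) = \pH^0\Bti_X^*(p_*\Q_W[n])$ needed to see that all the generators $\pH^n$ (not just $\pH^0$) lie in the image of $\pH^0 \circ \Bti_X^*$ restricted to $\DAct(X)$ — the paper leaves this step tacit, so your version is, if anything, slightly more complete.
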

\begin{proof}
	As shown in the proof of Proposition~\ref{prop:Dbgeo_perv_t-str}, $\Pervgeo^{\rm{A}}(X)$ coincides with the smallest abelian subcategory of $\Perv(X)$ containing the perverse sheaves $\pH^n(p_* \Q_{W^{\sigma}})$ with $p: W \to X$ proper and $n \geq 0$ and stable under subquotients and extensions.
	Therefore, in order to conclude that the abelian category in the statement equals $\Pervgeo^{\rm{A}}(X)$, it suffices to show that each of them contains the generators of the other one.
	
	In one direction, it suffices to observe that the above perverse sheaves $\pH^0(p_* \Q_{W^{\sigma}}) = \pH^0 \Bti_X^*(p_* \Q_W)$ obviously lie in the essential image of $\pH^0 \circ \Bti_X^*$.
	
	In the other direction, it suffices to check that the composite functor $\pH^0 \circ \Bti_X^*$ factors through $\Pervgeo^{\rm{A}}(X)$.
	This follows from the fact that $\Bti_X^*$ factors thorough $\Dbgeo(X)$ (by Corollary~\ref{prop:Bti-through-Dbgeo}).
\end{proof}

For future reference, we spell out the behaviour of perverse sheaves of geometric origin with respect to finite extensions of $k$.
As usual, let $k'/k$ be a finite extension, set $X' := X \times_k k'$, and write $e_X \colon X' \to X$ for the corresponding finite étale morphism.
Choose a complex embedding $\sigma' \colon k' \hookrightarrow \C$ extending $\sigma$.

\begin{lem}\label{lem:Perv_X'/X}
	The following hold:
	\begin{enumerate}
		\item There is a canonical equivalence
		\begin{equation}\label{formula:Perv(X)=Perv(X')}
			\Pervgeoemb{\sigma}^{\rm{A}}(X) \xrightarrow{\sim} \Pervgeoemb{\sigma'}^{\rm{A}}(X').
		\end{equation}
		\item If $k'/k$ is Galois, the inverse image functor $e_X^* \colon \Pervgeoemb{\sigma}^{\rm{A}}(X) \rightarrow \Pervgeoemb{\sigma}^{\rm{A}}(X')$ induces a canonical equivalence
		\begin{equation*}
			\Pervgeoemb{\sigma}^{\rm{A}}(X) \xrightarrow{\sim} \Pervgeoemb{\sigma}^{\rm{A}}(X')^{\Gal(k'/k)},
		\end{equation*}
		where the action of $\Gal(k'/k)$ on $\Pervgeoemb{\sigma}^{\rm{A}}(X')$ is induced by its action on $X'$ via $k$-automorphisms.
	\end{enumerate}
\end{lem}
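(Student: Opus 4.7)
The proof will treat the two parts in turn, reducing each to results already established for sheaves of geometric origin and then restricting to perverse hearts.

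For (1), the plan is to observe that Lemma~\ref{lem:Dbgeo_X'/X} already gives an equivalence $\Dbgeoemb{\sigma}(X) \xrightarrow{\sim} \Dbgeoemb{\sigma'}(X')$, and that both sides are realized inside the common derived category $D_\sigma(X) = D_{\sigma'}(X')$ via the identification of analytic spaces $X^\sigma = (X')^{\sigma'}$. The perverse $t$-structure on $\Dbct(X) = \Dbctemb{\sigma'}(X')$ is defined purely in terms of the underlying analytic space, so it is the same on both sides. By Proposition~\ref{prop:Dbgeo_perv_t-str}, this $t$-structure restricts to each of $\Dbgeoemb{\sigma}(X)$ and $\Dbgeoemb{\sigma'}(X')$; passing to hearts yields the desired equivalence $\Pervgeoemb{\sigma}^{\rm A}(X) \xrightarrow{\sim} \Pervgeoemb{\sigma'}^{\rm A}(X')$.

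For (2), the strategy is to combine Lemma~\ref{lem:Dbgeo_sigma(X')-decomp} with a Galois-descent argument on the index set of embeddings. First I would use Lemma~\ref{lem:Dbgeo_sigma(X')-decomp} and the compatibility of the perverse $t$-structure with the disjoint decomposition $(X')^\sigma = \coprod_{\sigma'} (X')^{\sigma'}$ to deduce a product decomposition
\begin{equation*}
\Pervgeoemb{\sigma}^{\rm A}(X') = \prod_{\sigma' \in \Hom_\sigma(k',\C)} \Pervgeoemb{\sigma'}^{\rm A}(X').
\end{equation*}
Next, I would analyze the Galois action: $\Gal(k'/k)$ acts on $X'$ over $X$, and the induced action on $(X')^\sigma$ permutes the connected components $(X')^{\sigma'}$ according to the simply transitive action of $\Gal(k'/k)$ on $\Hom_\sigma(k',\C)$, while intertwining the identifications from part (1). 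Since this action is transitive on the index set, projection onto the $\sigma'$-factor induces an equivalence
\begin{equation*}
\Pervgeoemb{\sigma}^{\rm A}(X')^{\Gal(k'/k)} \xrightarrow{\sim} \Pervgeoemb{\sigma'}^{\rm A}(X'),
\end{equation*}
and composing with the inverse of the equivalence of part (1) yields the desired equivalence with $\Pervgeoemb{\sigma}^{\rm A}(X)$.

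It remains to check that this abstract composite coincides, up to natural isomorphism, with the functor $e_X^*$. This follows from tracking $e_X$ through all the identifications: since $e_X$ is Galois-equivariant (with trivial action on $X$), $e_X^*$ factors through the invariants; and under the decomposition~\eqref{eq:D_sigma(X')-decomp}, the pullback along $e_X$ sends an object $K$ to the tuple whose $\sigma'$-component is $K$ viewed through the part-(1) identification, i.e.\ the diagonal inclusion into the product. The main obstacle I anticipate is bookkeeping this Galois-equivariance data cleanly — in particular, verifying that the action of $g \in \Gal(k'/k)$ on the product indeed permutes the factors by $\sigma' \mapsto \sigma' \circ g^{-1}$ while acting on each factor via the canonical identifications, so that taking invariants really does recover $\Pervgeoemb{\sigma'}^{\rm A}(X')$ rather than a twisted form of it. This is a routine but careful unwinding of the isomorphisms~\eqref{eq:X'^sigma-decomp} and of Lemma~\ref{lem:Bti^*-decomp}.
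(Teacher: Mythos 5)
Your part (1) argument is essentially the paper's: both observe that the equivalence of Lemma~\ref{lem:Dbgeo_X'/X} respects the perverse $t$-structures and pass to hearts.

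For part (2), you take a genuinely different route. The paper invokes the fact that $X \mapsto \Pervgeoemb{\sigma}^{\rm{A}}(X)$ is a stack for the étale topology: since $e_X$ is finite étale, $e_X^*$ identifies $\Pervgeoemb{\sigma}^{\rm{A}}(X)$ with objects of $\Pervgeoemb{\sigma}^{\rm{A}}(X')$ carrying a descent datum over $X' \times_X X'$; then, because $k'/k$ is Galois, $X' \times_X X'$ decomposes as a disjoint union of copies of $X'$ indexed by $\Gal(k'/k)$, so that a descent datum becomes exactly a $\Gal(k'/k)$-equivariant structure. You instead sidestep étale descent entirely: starting from the product decomposition $\Pervgeoemb{\sigma}^{\rm{A}}(X') = \prod_{\sigma' \in \Hom_\sigma(k',\C)} \Pervgeoemb{\sigma'}^{\rm{A}}(X')$ obtained by passing to perverse hearts in Lemma~\ref{lem:Dbgeo_sigma(X')-decomp}, you note that $\Gal(k'/k)$ permutes the factors simply transitively, so invariants of the product recover a single factor, and then apply part (1). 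Both arguments are sound. The paper's is shorter and conceptually cleaner once the stack property of $\Pervgeo^{\rm{A}}$ over the small étale site is granted, but that input is stated rather than proved; your argument is more self-contained within the paper's logic, resting only on the topological decomposition of $(X')^\sigma$ into connected components and on results already established. The equivariance bookkeeping you flag at the end --- checking that the Galois action transports each factor via the canonical identifications, so that invariants really yield a single factor and not a twisted form, and that $e_X^*$ is the diagonal --- is genuine but routine; it is the same verification that, in the paper's formulation, appears as matching the cocycle condition over $X' \times_X X' \times_X X'$ with the group cocycle condition for equivariance.
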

\begin{proof}
	The first statement follows from the fact that the equivalence of Lemma~\ref{lem:Dbgeo_X'/X} clearly respects the perverse $t$-structures. 
	
	The second statement follows from the fact that the assignment $X \mapsto \Pervgeoemb{\sigma}^{\rm{A}}(X)$, regarded as a fibered category over étale morphisms, is a stack.
	
	In detail, since $e_X$ is finite étale, the functor $e_X^*$ identifies objects of $\Pervgeoemb{\sigma}^{\rm{A}}(X)$ with objects of $\Pervgeoemb{\sigma}^{\rm{A}}(X')$ endowed with an isomorphism between their inverse images under the two projections $X' \times_X X' \rightarrow X'$, subject to the usual cocycle condition over the triple fibered product $X' \times_X X' \times_X X'$.
	Similarly, it identifies morphisms in $\Pervgeoemb{\sigma}^{\rm{A}}(X)$ with those morphisms in $\Pervgeoemb{\sigma}^{\rm{A}}(X')$ which are compatible with the additional isomorphisms.
	This holds regardless of whether $k'/k$ is Galois or not.
	The hypothesis that $k'/k$ is Galois implies that the scheme $X' \times_X X' = X' \times_{k'} (k' \times_k k')$ is a disjoint union of $[k'\colon k]$-many copies of $X'$ permuted transitively by $\Gal(k'/k)$.
	Under this identification, equipping an object of $\Pervgeoemb{\sigma}^{\rm{A}}(X')$ with an isomorphism between its two inverse images, subject to the cocycle condition, amounts precisely to turning it into an equivariant object under the $\Gal(k'/k)$-action.
\end{proof}

From now until the end of the present section, let $X$ be a smooth $k$-variety.
Recall from~\cite[\S~4]{BBDG} that, if $X$ is connected of dimension $d \geq 0$, the shift functor $[d] \colon \Dbct(X) \to \Dbct(X)$ identifies the abelian category of local systems on $X$ with a full abelian subcategory of $\Perv(X)$.
One naturally generalizes this to arbitrary smooth $k$-varieties,
by considering each connected component separately.
Throughout this paper, we always view local systems as perverse sheaves:
\begin{nota}\label{nota:Loc(X)}
	We let $\Loc(X)$ denote the full abelian subcategory of shifted local systems inside $\Perv(X)$.
\end{nota}

Let us record the following important property:
\begin{lem}\label{lem:Locp_stable_inside_Perv}
	The abelian subcategory $\Loc(X) \subset \Perv(X)$ is stable under subquotients and extensions.
\end{lem}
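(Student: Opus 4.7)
The plan is to first reduce to the case where $X$ is smooth, connected, and of pure dimension $d$ by treating each connected component separately; then $\Loc(X)$ identifies with the category of finite-rank local systems of $\Q$-vector spaces on $X^{\sigma}$, shifted by $[d]$. Since Verdier duality preserves $\Loc(X)$ and exchanges sub-objects with quotients in $\Perv(X)$, it suffices to prove stability under extensions and under sub-objects.

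For the extension case, I would take a short exact sequence $0 \to L_1[d] \to B \to L_2[d] \to 0$ in $\Perv(X)$ and inspect the long exact sequence of ordinary cohomology sheaves of the underlying distinguished triangle: a direct chase forces $\mathcal{H}^i(B) = 0$ for $i \neq -d$, while $\mathcal{H}^{-d}(B)$ fits into a short exact sequence $0 \to L_1 \to \mathcal{H}^{-d}(B) \to L_2 \to 0$ of sheaves of $\Q$-vector spaces on $X^{\sigma}$. Because $X^{\sigma}$ is locally contractible, any sheaf-theoretic extension of local systems is again a local system---locally one reduces to a contractible open on which the $H^{1}$ obstruction vanishes---so $\mathcal{H}^{-d}(B)$ is a local system and $B \in \Loc(X)$.

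For the sub-object case, the crux is to prove that $L[d]$ admits no non-zero sub-object or quotient in $\Perv(X)$ supported on a proper closed subvariety $Z \subsetneq X$. Granting this, let $A \hookrightarrow L[d]$ be a monomorphism in $\Perv(X)$. The category $\Perv(X)$ is Artinian and Noetherian, so $A$ admits a finite Jordan--H\"older filtration whose simple subquotients are in particular simple subquotients of $L[d]$. Since every simple perverse sheaf on $X$ has the form $(j_S)_{!*}(M[\dim S])$ for an irreducible local system $M$ on a smooth locally closed subvariety $S \subseteq X$---and such an object has support contained in $\overline{S}$---the preceding property forces $S = X$ in every factor. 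Hence every simple subquotient of $A$ is of the form $M[d]$ with $M$ an irreducible local system on $X$, so it lies in $\Loc(X)$, and closure of $\Loc(X)$ under extensions (already proved) propagates along the filtration to yield $A \in \Loc(X)$.

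The main obstacle is the no-sub-object-or-quotient-on-proper-closed property of $L[d]$. By adjunction this amounts to showing $\Hom_{\Dbct(Z)}(N, i^{!}L[d]) = 0$ (for sub-objects) and $\Hom_{\Dbct(Z)}(i^{*}L[d], N) = 0$ (for quotients) for every closed immersion $i \colon Z \hookrightarrow X$ with $\dim Z < d$ and every $N \in \Perv(Z)$. Stratifying $Z$ and proceeding by d\'evissage reduces to the case $Z$ smooth of pure dimension $d' < d$. In that case the quotient statement is immediate from the standard orthogonality of the $t$-structure, since $i^{*}L[d] = L|_{Z}[d]$ sits in cohomological degree $-d$ whereas $N$ has cohomology in $[-d', 0]$ with $-d' > -d$; the sub-object statement is subtler, because $i^{!}L[d] = L|_{Z}[d - 2(d - d')]$ may lie in non-positive cohomological degree, and one has to combine the perversity support conditions on the cohomology sheaves of $N$ with the Ext-vanishing afforded by Verdier duality on $Z$ in order to kill all contributions in the hyper-Ext spectral sequence converging to $\Hom_{\Dbct(Z)}(N, L|_{Z}[d - 2(d - d')])$.
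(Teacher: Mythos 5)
Your plan is correct in outline, and your extension argument is essentially equivalent to the paper's (the paper instead invokes that extensions in $\Perv(X)$ agree with extensions in $\Dbct(X)$, shifts by $-d$, and uses the classical stability of local systems under extensions; your cohomology-sheaf chase plus local contractibility does the same job). Your use of Verdier duality to pass between subobjects and quotients is also fine.

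However, there is a genuine gap in the subobject case. You reduce to the claim that $L[d]$ admits no nonzero subobject or quotient supported on a proper closed $Z \subsetneq X$, and then assert that "the preceding property forces $S = X$ in every factor" of a Jordan--Hölder filtration of a subobject $A \hookrightarrow L[d]$. This deduction is not valid: a Jordan--Hölder factor of $A$ is a simple \emph{subquotient} of $L[d]$, not a simple subobject or simple quotient of $L[d]$, and a perverse sheaf with no subobject or quotient supported on $Z$ may very well have a composition factor supported on $Z$. For a concrete warning sign, let $j \colon \mathbb{G}_m \hookrightarrow \mathbb{A}^1$ and let $M$ be the rank-$2$ unipotent local system on $\mathbb{G}_m$; then $j_{!*}(M[1])$ has no subobject or quotient supported on $\{0\}$, yet the skyscraper $\delta_0$ occurs as its middle composition factor. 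Your claim about $L[d]$ therefore does not by itself control the Jordan--Hölder factors. The argument can be repaired by induction on the length of $A$: a simple subobject $A_1 \subset A \subset L[d]$ has full support by your claim, hence is of the form $M_1[d]$ with $M_1$ an irreducible local system; one then checks that $M_1$ is a genuine sub-local-system of $L$, so that $L[d]/A_1 = (L/M_1)[d]$ is again a shifted local system, and one repeats with $A/A_1 \subset (L/M_1)[d]$. But as written, the step from "no subobject or quotient on $Z$" to "all Jordan--Hölder factors have full support" is unjustified. Separately, your proof of the no-subobject claim itself is left incomplete (you acknowledge the subobject case is "subtler" and only gesture at the spectral sequence argument), whereas the paper sidesteps all of this by invoking~\cite[Lem.~4.3.3]{BBDG} directly: a shifted irreducible local system on a smooth connected variety is already an irreducible perverse sheaf, so the Jordan--Hölder filtration of $L[d]$ in $\Loc(X)$ is also a Jordan--Hölder filtration in $\Perv(X)$, and the subquotient case follows immediately from finite length plus the (already established) stability under extensions. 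You are essentially trying to re-derive a piece of BBDG~4.3.3 via the six operations, which is harder than necessary here.
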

\begin{proof}
	Without loss of generality, we may assume that $X$ is connected, say of dimension $d$.
	
	In this case, stability under extensions follows from the facts that extensions in the abelian category $\Perv(X)$ coincide with extensions in the triangulated category $\Dbct(X)$ (see~\cite[Thm.~1.3.6]{BBDG}), that shifting by $-d$ induces a bijection on extensions in $\Dbct(X)$ (being an equivalence), and that ordinary local systems are stable under extensions inside ordinary constructible sheaves.
	
Let us now treat stability under subquotients.
Since $\Loc(X)$ is an abelian subcategory of $\Perv(X)$, it suffices to show its stability under subobjects.
To this end, fix a non-zero object $L \in \Loc(X)$, and let $K$ be a subobject of $L$ in $\Perv(X)$.
To prove that $K$ lies in $\Loc(X)$, we argue by induction on the composition length $n \geq 1$ of $L$ in $\Loc(X)$.
The base step is when $n = 1$:
in this case $L$ is irreducible in $\Loc(X)$, and therefore also in $\Perv(X)$ (by~\cite[Lem.~4.3.3]{BBDG}), so the conclusion is clear.
For the induction step, assume that $n \geq 2$ and that the conclusion is known whenever $L$ is replaced by a local system with composition length $n-1$. 
Fix a filtration with irreducible graded pieces in $\Loc(X)$
\begin{equation*}
	0 = L_0 \subset \dots \subset L_n = L.
\end{equation*}
The intersection $K \cap L_{n-1}$, being a subobject of the local system $L_{n-1}$, belongs to $\Loc(X)$ (by inductive hypothesis).
And the quotient $K/(K \cap L_{n-1})$, being a subobject of the irreducible local system $L/L_{n-1}$, belongs to $\Loc(X)$ (by the base step $n = 1$).
Since $\Loc(X)$ is stable under extensions inside $\Perv(X)$ (as explained above), we conclude that $K$ lies in $\Loc(X)$ as well.
\end{proof}

Note that, with our conventions, the abelian category $\Loc(X)$ carries a shifted tensor product:
it is defined by the formula
\begin{equation*}
	L_1 \otimes^{\dagger} L_2 := (L_1[-d] \otimes L_2[-d])[d]
\end{equation*}
when $X$ is connected of dimension $d$, and using the same formula separately on each connected component of $X$ in the general case.

We are mostly interested in the case when $X$ is geometrically connected over $k$, so that the complex-analytic space $X^{\sigma}$ is connected.
In this case, the abelian category $\Loc(X)$, equipped with the shifted tensor product, is a neutral Tannakian category over $\Q$:
the fibre functor at a point $x \in X^{\sigma}$
is the shifted inverse image functor
\begin{equation*}
	x^*[-d] \colon \Loc(X) \to \vect_{\Q},
\end{equation*}
where $d$ denotes again the dimension of $X$.

\begin{rem}\label{rem:constantLoc}
	Assume $X$ geometrically connected of dimension $d$, and let $a_X \colon X \to \Spec(k)$ denote the structural morphism. 
	The shifted inverse image functor
	\begin{equation*}
		a_X^*[d] \colon \vect_{\Q} = \Perv(\Spec(k)) \to \Perv(X)
	\end{equation*}
	takes values in $\Loc(X)$ and admits a right adjoint, given by the composite
	\begin{equation*}
		\pH^0 \circ a_{X,*}[-d] \colon \Perv(X) \to \Perv(\Spec(k)) = \vect_{\Q}.
	\end{equation*}
	It is a well-known fact from ordinary sheaf theory
	that $a_X^*[d]$ is fully faithful,
	with essential image closed under subquotients.
	This can be expressed purely in terms of the six operations:
	for every $V \in \Perv(\Spec(k))$, the unit morphism
	\begin{equation*}
		\eta \colon V \to \pH^0 a_{X,*}[-d](a_X^*[d] V)
	\end{equation*}
	is invertible and, for every subobject $L \subset a_X^*[d] V$ in $\Perv(X)$, the counit morphism
	\begin{equation*}
		\epsilon \colon a_X^*[d](\pH^0 a_{X,*}[-d] L) \to L
	\end{equation*}
	is invertible.
	The result generalizes naturally to arbitrary smooth $k$-varieties.
\end{rem}

The theory of perverse sheaves of geometric origin gives access to a good notion of local system of geometric origin:  

\begin{nota}
	We let $\Locgeo^{\rm{A}}(X)$ denote the intersection $\Pervgeo^{\rm{A}}(X) \cap \Loc(X)$;
	we write it as $\Locgeoemb{\sigma}^{\rm{A}}(X)$ if we want to stress its dependence on the embedding $\sigma$.
\end{nota}

Note that $\Locgeo^{\rm{A}}$ is an abelian subcategory of $\Pervgeo^{\rm{A}}(X)$, stable under subquotients and extensions (by Lemma \ref{lem:Locp_stable_inside_Perv}).
Moreover, since $\Pervgeo^{\rm{A}}(X)$ is stable under subquotients and extensions inside $\Perv(X)$ (by Proposition~\ref{prop:Dbgeo_perv_t-str}), $\Locgeo^{\rm{A}}(X)$ enjoys the same stability properties inside $\Loc(X)$.
In fact, since constructible complexes of geometric origin are stable under the tensor product (by Proposition~\ref{prop:Dbgeo-6ff}), in the geometrically connected case $\Locgeo^{\rm{A}}(X)$ is even a Tannakian subcategory of $\Loc(X)$.
We defer the study of $\Locgeo^{\rm{A}}(X)$ from a Tannakian viewpoint to Section~\ref{sect:fund_ses_Voe}. 

\begin{ex}\label{ex:Locgeo(k)=vect}
	When $X = \Spec(k)$, there is no difference between perverse sheaves, ordinary constructible sheaves, or local systems.
	In fact, we have canonical monoidal equivalences
	\begin{equation*}
		\Locgeo(\Spec(k)) = \Pervgeo(\Spec(k)) = \vect_{\Q},
	\end{equation*}
	induced by the global sections functor.
	\qed
\end{ex}

The behaviour of local systems of geometric origin with respect to finite extensions of the base field $k$ is analogous to that of perverse sheaves: 

\begin{lem}\label{lem:Locgeo_X'/X}
	With the same notation as in Lemma~\ref{lem:Perv_X'/X}, the following statements hold:
	\begin{enumerate}
		\item
		There is a canonical equivalence
		\begin{equation*}
			\Locgeoemb{\sigma}^{\rm{A}}(X) \xrightarrow{\sim} \Locgeoemb{\sigma'}^{\rm{A}}(X).
		\end{equation*}

		\item
		If $k'/k$ is Galois, the inverse image functor $e_X^* \colon \Locgeoemb{\sigma}^{\rm{A}}(X) \rightarrow \Locgeoemb{\sigma}^{\rm{A}}(X')$ induces a canonical equivalence
		\begin{equation*}
			\Locgeoemb{\sigma}^{\rm{A}}(X) \xrightarrow{\sim} \Locgeoemb{\sigma}^{\rm{A}}(X')^{\Gal(k'/k)},
		\end{equation*}
		where the action of $\Gal(k'/k)$ on $\Locgeoemb{\sigma}^{\rm{A}}(X')$ is induced by its action on $X'$ via $k$-automorphisms.
	\end{enumerate}
\end{lem}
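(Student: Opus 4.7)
The plan is to deduce both statements from Lemma~\ref{lem:Perv_X'/X} by verifying that the equivalences of perverse sheaves of geometric origin given there restrict to equivalences of the subcategories of shifted local systems of geometric origin.

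For part~(1), I would invoke the proof of Lemma~\ref{lem:Dbgeo_X'/X}: the equivalence $\Pervgeoemb{\sigma}^{\rm{A}}(X) \xrightarrow{\sim} \Pervgeoemb{\sigma'}^{\rm{A}}(X')$ of Lemma~\ref{lem:Perv_X'/X}(1) realises both sides as full abelian subcategories of the common analytic derived category $D_{\sigma}(X) = D_{\sigma'}(X')$ attached to the identified complex-analytic space $X^{\sigma} = (X')^{\sigma'}$ of~\eqref{formula:X^sigma=X'^sigma'}. The notion of shifted local system on a smooth complex-analytic space depends only on the space (and on its dimension function, which matches on both sides since the spaces are identified), so $\Loc(X)$ and $\Loc(X')$ coincide inside this common derived category. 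Intersecting with the perverse subcategories of geometric origin then gives the desired equivalence.

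For part~(2), I would start with the equivalence $e_X^* \colon \Pervgeoemb{\sigma}^{\rm{A}}(X) \xrightarrow{\sim} \Pervgeoemb{\sigma}^{\rm{A}}(X')^{\Gal(k'/k)}$ of Lemma~\ref{lem:Perv_X'/X}(2) and show that it restricts to an equivalence on shifted local systems. Since the analytification $e_X^{\sigma} \colon (X')^{\sigma} \to X^{\sigma}$ is a finite étale covering, pullback along $e_X^{\sigma}$ carries $\Loc(X)$ into $\Loc(X')$; the compatibility with the $\Gal(k'/k)$-action is automatic, as this action is by $k$-automorphisms of $X'$ and hence preserves the intrinsic subcategory $\Locgeoemb{\sigma}^{\rm{A}}(X')$. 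The remaining assertion, namely that any $M \in \Pervgeoemb{\sigma}^{\rm{A}}(X)$ with $e_X^* M \in \Loc(X')$ already lies in $\Loc(X)$, is a descent statement along a surjective local homeomorphism and will be the main obstacle.

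To handle this descent, I would reduce to a statement about ordinary constructible sheaves. A perverse sheaf $M$ on the smooth complex-analytic space $X^{\sigma}$ lies in $\Loc(X)$ if and only if, on each connected component of dimension $d$, the shifted complex $M[-d]$ is concentrated in cohomological degree zero with lisse cohomology sheaf. Both conditions can be checked stalkwise and are therefore preserved and detected by pullback along the surjective local homeomorphism $e_X^{\sigma}$: lisseness of a constructible sheaf descends from $(X')^{\sigma}$ to $X^{\sigma}$ because $e_X^{\sigma}$ admits local sections, and concentration in a single cohomological degree amounts to the vanishing of stalk cohomology away from that degree, which is obviously detected after pullback along a surjection. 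Combined with the preceding restriction argument, this completes the reduction to Lemma~\ref{lem:Perv_X'/X}(2) and yields the claim.
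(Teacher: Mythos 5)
Your proposal is correct and takes essentially the same approach as the paper: part (1) by observing that $\Loc(X)$ and $\Loc(X')$ are identified inside the common analytic derived category $D_{\sigma}(X) = D_{\sigma'}(X')$, and part (2) by reducing to Lemma~\ref{lem:Perv_X'/X}(2) via the observation that being a shifted local system is detected after pullback along the finite \'etale cover $e_X^{\sigma}$. The paper phrases the part-(1) argument intrinsically (the equivalence respects dualizable objects and ordinary $t$-structures, hence shifted local systems) rather than via the common ambient category, but this is a cosmetic difference.
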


\begin{proof}
	The first statement follows from the fact that the equivalence of Lemma~\ref{lem:Dbgeo_X'/X} respects shifted local systems:
	indeed, it respects dualizable complexes (being symmetric monoidal) as well as the ordinary $t$-structures.
	
	The second statement is a particular case of Lemma~\ref{lem:Perv_X'/X}(2), because a perverse sheaf on $X$ is a shifted local system if and only if this holds for its restriction to any given étale cover of $X$.
\end{proof}

\section{Recollections on perverse Nori motives}\label{sect:IM_cats}

In this section we summarize some highlights of the theory of perverse Nori motives, developed in~\cite{IM19} and~\cite{TerenziNori} building on Nori's theory of mixed motives (see~\cite{HMS17}):
it is a theory of mixed motivic sheaves satisfying the formal properties listed by Beilinson in~\cite[\S~5.10]{Bei87H}.

Throughout, we work over a field $k$ of characteristic $0$
endowed with a complex embedding
$\sigma \colon k \hookrightarrow \C$.
The category of perverse Nori motives over a $k$-variety $X$
is modelled on the abelian category $\Perv(X)$.
We start by reviewing the definition of perverse Nori motives
and their general functoriality.
We then focus on the Tannakian aspects of the theory:
we discuss Nori's motivic Galois group over a field,
and finally, following~\cite{Jacobsen}, we study the motivic Galois group of a general $k$-variety. 

\subsection{Universal property and functoriality}\label{subsect:IM_cat}

For every $k$-variety $X$, the $\Q$-linear abelian category $\MNori(X)$ of \textit{perverse Nori motives} over $X$ is defined in~\cite[\S~2.1]{IM19} as the universal abelian factorization of the homological functor
\begin{equation*}
	\DAct(X) \xrightarrow{\Bti_X^*} \Dbct(X) \xrightarrow{\pH^0} \Perv(X).
\end{equation*}
By construction, it comes equipped with a homological functor
\begin{equation*}
	h_X \colon \DAct(X) \rightarrow \MNori(X)
\end{equation*}
and with a faithful exact functor
\begin{equation*}
	\iota_X \colon \MNori(X) \rightarrow \Perv(X)
\end{equation*}
such that the homological functor $\pH^0 \circ \Bti_X^*$ factors as the composite
\begin{equation*}
	\pH^0 \circ \Bti_X^*\colon \DAct(X)
	\xrightarrow{h_X} \MNori(X)
	\xrightarrow{\iota_X} \Perv(X).
\end{equation*}
By design, $\MNori(X)$ is initial among all abelian categories
through which such a factorization is possible: 
given another abelian category $\Abelian(X)$ endowed with a homological functor $h'_X \colon \DAct(X) \rightarrow \Abelian(X)$,
a faithful exact functor
$\iota'_X \colon \Abelian(X) \rightarrow \Perv(X)$,
and a natural isomorphism between functors $\DAct(X) \rightarrow \Perv(X)$
\begin{equation*}
	\kappa_X \colon \pH^0 \circ \Bti_X^* = \iota_X \circ h_X \xrightarrow{\sim} \iota'_X \circ h'_X,
\end{equation*}
there exist a unique faithful exact functor
\begin{equation*}
	o_X \colon \MNori(X) \rightarrow \Abelian(X)
\end{equation*}
providing a factorization of $h'_X$ of the form
\begin{equation*}
	h'_X \colon \DAct(X) \xrightarrow{h_X} \MNori(X) \xrightarrow{o_X} \Abelian(X),
\end{equation*}
and a unique natural isomorphism between functors $\MNori(X) \rightarrow \Perv(X)$
\begin{equation*}
	\tilde{\kappa}_X \colon \iota_X \xrightarrow{\sim} \iota'_X \circ o_X
\end{equation*}
making the diagram of functors $\DAct(X) \to \Perv(X)$
\begin{equation*}
	\begin{tikzcd}
		\iota_X \circ h_X \arrow{rr}{\tilde{\kappa}_X} \arrow{drr}{\kappa_X} && \iota'_X \circ o_X \circ h_X \arrow[equal]{d} \\
		&& \iota'_X \circ h'_X
	\end{tikzcd}
\end{equation*}
commute.
In fact, in this situation, $\MNori(X)$ can be identified with 
the universal abelian factorization of the homological functor
$h'_X$
(see~\cite[Prop.~1.10, Cor.~2.9]{TerenziFunctoriality}).

For instance, since the Betti realization takes values in complexes of geometric origin (by Corollary~\ref{prop:Bti-through-Dbgeo}), one could take $\Abelian(X) := \Pervgeo^{\rm{A}}(X)$ - that is, one could replace $\Perv(X)$ by $\Pervgeo^{\rm{A}}(X)$ in the definition of $\MNori(X)$.
In fact, this observation suggests considering a possible alternative notion of perverse sheaf of geometric origin:
\begin{nota}\label{nota:PervgeoNori}
	For every $k$-variety $X$, we let $\Pervgeo^{\rm{N}}(X)$ denote the smallest abelian subcategory of $\Perv(X)$ containing the essential image of $\iota_X \colon \MNori(X) \to \Perv(X)$ and stable under subquotients;
	we write it as $\Pervgeoemb{\sigma}^{\rm{N}}(X)$ if we want to stress its dependence on the embedding $\sigma$.
\end{nota}

Since $\Pervgeo^{\rm{A}}(X)$ is stable under subquotients inside $\Perv(X)$ (by Proposition~\ref{prop:Dbgeo_perv_t-str}), we have the inclusion $\Pervgeo^{\rm{N}}(X) \subset \Pervgeo^{\rm{A}}(X)$.
At this stage, it is not clear how far this inclusion is from an equivalence:
the point is that, while we already know that $\Pervgeo^{\rm{A}}(X)$ is stable under extensions inside $\Perv(X)$ (again by Proposition~\ref{prop:Dbgeo_perv_t-str}), no such stability is required in the definition of $\Pervgeo^{\rm{N}}(X)$. 

\begin{rem}\label{rem:DA(X)^heart=MNori(X)}
	The triangulated category $\DAct(X)$ is expected to carry a $t$-structure compatible with the perverse $t$-structure on $\Dbct(X)$ via the Betti realization.
	If this \textit{motivic perverse $t$-structure} on $\DAct(X)$ exists, then its heart $\DAct(X)^{\heartsuit}$ enjoys the same universal property as $\MNori(X)$, and so the exact functor
	\begin{equation*}
		\DAct(X)^{\heartsuit} \subset \DAct(X) \xrightarrow{h_X} \MNori(X)
	\end{equation*}
	is necessarily an equivalence.
	In the lack of a motivic perverse $t$-structure on $\DAct(X)$,
	the homological functor $h_X$ should be regarded as a formal way to define the degree $0$ perverse cohomology object of a motivic complex.
	By the very construction of universal abelian factorizations,
	the abelian category $\MNori(X)$
	is generated by the image of $h_X$ under subquotients
	(see~\cite[\S~1]{IM19}).
\end{rem}

The faithful exact functor $\iota_X \colon \MNori(X) \to \Perv(X)$
extends to a conservative triangulated functor
\begin{equation*}
\iota_X \colon \Db(\MNori(X)) \rightarrow \Db(\Perv(X)) = \Dbct(X),
\end{equation*}
where the last passage witnesses Beilinson's equivalence~\cite[Thm.~1.3]{BeilinsonEquivalence}.
These functors can be used to lift the rich functoriality of the constructible derived categories to the setting of Nori motives.

\begin{thm}[{\cite[Thm.~5.1]{IM19}}, {\cite[Thm.~2.1, Thm.~5.1]{TerenziNori}}]\label{thm:six_ops_MNori}
As $X$ varies among quasi-projective $k$-varieties,
the triangulated categories $\Db(\MNori(X))$
are endowed with a canonical six functor formalism,
and the triangulated functors $\iota_X$
commute with the six operations.
\end{thm}

The proof of this result is based on the method of~\cite{AyoThesis} and~\cite{AyoBettiReal}, which is tailored to the quasi-projective setting. 
The four operations of type $f^*$, $f_*$, $f_!$ and $f^!$ are constructed in~\cite{IM19}, while the closed monoidal structure is constructed in~\cite{TerenziNori}.
In both works, the basic principle is to exploit the universal property of perverse Nori motives in order to lift $t$-exact functors on the constructible derived categories and natural transformations thereof to the motivic level.
For future reference, let us illustrate this principle in a simple case:
\begin{ex}[{\cite[\S~2.5]{IM19}}]\label{ex:j^*}
	Let $j \colon U \hookrightarrow X$ be an open immersion of (quasi-projective) $k$-varieties.
	By~\cite[Prop.~4.2.5]{BBDG}, the inverse image functor
	\begin{equation}\label{formula:j^*-D^b_c}
		j^* \colon \Dbct(X) \rightarrow \Dbct(U)
	\end{equation}
	is $t$-exact for the perverse $t$-structures, and so induces an exact functor
	\begin{equation*}
		j^* \colon \Perv(X) \rightarrow \Perv(U).
	\end{equation*}
	In fact, as a consequence of~\cite[Thm.~1]{Vologodsky}, the trivial derived functor
	\begin{equation*}
		j^* \colon \Db(\Perv(X)) \rightarrow \Db(\Perv(U))
	\end{equation*}
	recovers the original triangulated functor \eqref{formula:j^*-D^b_c} modulo Beilinson's equivalences.
	Now consider the analogous inverse image functor of Voevodsky motives
	\begin{equation}\label{formula:j^*-DA}
		j^* \colon \DAct(X) \rightarrow \DAct(U).
	\end{equation}
	By~\cite[Lem.~4.3]{AyoBettiReal}, the upper half of the diagram
	\begin{equation}\label{dia:j^*-Bti}
		\begin{tikzcd}
			\DAct(X) \arrow{rr}{j^*} \arrow{d}{\Bti_X^*} && \DAct(U) \arrow{d}{\Bti_U^*} \\
			\Dbct(X) \arrow{rr}{j^*} \arrow{d}{\pH^0} && \Dbct(U) \arrow{d}{\pH^0} \\
			\Perv(X) \arrow{rr}{j^*} && \Perv(U)
		\end{tikzcd}
	\end{equation}
	commutes up to canonical natural isomorphism, and thus the same holds for the outer rectangle.
	Applying~\cite[Prop.~2.5]{TerenziFunctoriality}, we get a canonical exact functor
	\begin{equation}\label{formula:j^*-MNori}
		j^* \colon \MNori(X) \rightarrow \MNori(U)
	\end{equation}
	making both halves of the diagram
	\begin{equation}\label{dia:j^*-MNori}
		\begin{tikzcd}
			\DAct(X) \arrow{rr}{j^*} \arrow{d}{h_X} && \DAct(U,\Q) \arrow{d}{h_U} \\
			\MNori(X) \arrow{rr}{j^*} \arrow{d}{\iota_X} && \MNori(U) \arrow{d}{\iota_U} \\
			\Perv(X) \arrow{rr}{j^*} && \Perv(U)
		\end{tikzcd}
	\end{equation}
	commute up to natural isomorphism.
	Hence, the induced triangulated functor
	\begin{equation*}
		j^* \colon \Db(\MNori(X)) \rightarrow \Db(\MNori(U))
	\end{equation*}
	makes the diagram
	\begin{equation}\label{dia:j^*-iota}
		\begin{tikzcd}
			\Db(\MNori(X)) \arrow{rr}{j^*} \arrow{d}{\iota_X} && \Db(\MNori(U)) \arrow{d}{\iota_U} \\
			\Db(\Perv(X)) \arrow{rr}{j^*} && \Db(\Perv(U))
		\end{tikzcd}
	\end{equation}
	commute up to the induced natural isomorphism.
	There is a similar lifting principle for natural transformations, stated in~\cite[Prop.~3.4]{TerenziFunctoriality}. 
	In particular, the usual connection isomorphisms on the constructible derived categories with respect to composition of open immersions lift to analogous natural isomorphisms on perverse Nori motives.
	In this way, the assignment $X \mapsto \Db(\MNori(X))$ becomes a triangulated fibered category over the category of open immersions between $k$-varieties.
	\qed
\end{ex}

\begin{rem}\label{rem:j^*-unique}
	Let us stress that~\eqref{formula:j^*-MNori} is the unique exact functor making the upper half of~\eqref{dia:j^*-MNori} commute on the nose;
	the natural isomorphism filling the lower half of~\eqref{dia:j^*-MNori} is then completely determined by its compatibility with the natural isomorphism filling~\eqref{dia:j^*-Bti}.
	In other words, while the existence of the exact functor~\eqref{formula:j^*-MNori} depends on the existence of~\eqref{formula:j^*-D^b_c} (and on the commutativity of~\eqref{dia:j^*-Bti}), its actual expression only depends on the functor~\eqref{formula:j^*-DA}.
\end{rem}

The same discussion of Example~\ref{ex:j^*} applies to any class of $t$-exact functors on the constructible derived categories which is part of the six functor formalism and is stable under composition:
in particular, it applies to shifted inverse images under smooth morphisms and to direct images under closed immersions.
There is a similar lifting result for multilinear functors which are $t$-exact in each variable, such as the external tensor product (see~\cite[\S~4]{TerenziFunctoriality}).
However, lifting the external tensor product to the motivic level
requires working with an alternative presentation of $\MNori(X)$
(see \S~\ref{subsect:MNori(k)} below
for the case $X = \Spec(k)$,
and~\cite[Thm.~1.12]{TerenziNori} for the general statement).

The key tool to extend the functors obtained out of the universal property to a complete six functor formalism is provided by Beilinson's gluing functors, which include unipotent nearby and vanishing cycles (see~\cite[\S~3]{IM19}).
Although these functors are typically not regarded as building blocks of the six functor formalism but rather as an outcome of the latter, they can be lifted to perverse Nori motives by direct application of the universal property:
\begin{prop}[{\cite[\S~3.5]{IM19}}, {\cite[\S~1]{TerenziNori}}]
The abelian categories $\MNori(X)$ are endowed with Beilinson's gluing functors, and they satisfy Beilinson's perverse gluing formalism.
\end{prop}

The gluing functors play a crucial role in at least three steps of~\cite{IM19}:
showing that inverse images under closed immersions exist,
constructing the connection isomorphisms for composition of inverse image functors, and showing that direct images under general morphisms exist.
The perverse gluing formalism for Nori motives, as stated in~\cite[Prop.~1.7]{TerenziNori}, is used to show that the aforementioned alternative presentation of $\MNori(X)$ really defines the same category.

\subsection{Nori's motivic Galois group}\label{subsect:MNori(k)}

By~\cite[Prop.~2.11]{IM19}, when $X = \Spec(k)$ Ivorra--Morel's definition of perverse Nori motives recovers the $\Q$-linear abelian category $\MNori(k)$ of Nori motives.
This category was originally introduced by Nori in~\cite{Nori}:
by design, it is the finest possible $\Q$-linear abelian category computing the singular cohomology groups of all pairs of $k$-varieties.
Adopting Ivorra--Morel's definition, we obtain a factorization of the homological functor $H^0 \circ \Bti_k^* \colon \DAct(k) \to \vect_{\Q}$ as
\begin{equation*}
	H^0 \circ \Bti_k^* \colon \DAct(k) \xrightarrow{h_k} \MNori(k) \xrightarrow{\iota_k} \vect_{\Q}.
\end{equation*}
Here, we identify $\Perv(\Spec(k))$ with $\vect_{\Q}$ as in Example~\ref{ex:Locgeo(k)=vect}.
Note that the forgetful functor
\begin{equation*}
	\iota_k \colon \MNori(k) \to \vect_{\Q}
\end{equation*} 
sends non-zero objects to non-zero objects, being faithful and exact.

\begin{ex}\label{ex:unit-motive}
	The simplest non-zero Nori motive is the cohomology of the point
	\begin{equation*}
		\Q_k := h_k^0(\Q_k) \in \MNori(k),
	\end{equation*}
	called the \textit{unit motive}.
	Note that the $\Q$-vector space $\End_{\MNori(k)}(\Q_k)$ has dimension $1$:
	it cannot be $0$ (since $\Q_k$ is non-zero) nor bigger than $1$ (since the underlying vector space $\iota_k(\Q_k) = \Q$ is $1$-dimensional).
	\qed
\end{ex}

\begin{ex}\label{ex:Artin-motive}
	More interesting Nori motives arise from the cohomology of zero-dimensional $k$-varieties.
	For instance, let $k'/k$ be a finite field extension, and write $e \colon \Spec(k') \to \Spec(k)$ for the corresponding finite étale morphism.
	Consider the object
	\begin{equation*}
		\Q_{k'/k} := e_* \Q_{k'} = e_* e^* \Q_k,
	\end{equation*}
	usually called an \textit{Artin motive}.
	The underlying $\Q$-vector space $\iota_k(\Q_{k'/k}) = \Bti_k^*(e_* \Q_{k'})$ is the $0$-th cohomology group $H^0(\Spec(k')^{\sigma};\Q)$.
	The complex-analytic space $\Spec(k')^{\sigma}$ consists of $[k' \colon k]$ points, indexed by the set $\Hom_{\sigma}(k',\C)$ of complex embeddings of $k'$ extending $\sigma$.
	Hence, its cohomology can be naturally identified with $\Q^{\Hom_{\sigma}(k',\C)}$.
	
	We claim that, if $k'/k$ is Galois, the $\Q$-algebra $\End_{\MNori(k)}(\Q_{k'/k})$ is canonically isomorphic to the group algebra $\Q[\Gal(k'/k)^\rom{op}]$.
	To see this, consider the group homomorphism
	\begin{equation*}
		\Gal(k'/k)^\rom{op} \to \Aut_{\MNori(k)}(\Q_{k'/k})
	\end{equation*}
	sending an element $t \in \Gal(k'/k)$ to the composite isomorphism in $\MNori(k)$
	\begin{equation*}
		\alpha_{\phi} \colon e_* e^* \Q_k \xrightarrow{\eta} e_* f_* f^* e^* \Q_k = (ef)_* (ef)^* \Q_k = e_* e^* \Q_k,
	\end{equation*}
	where $f$ denotes the automorphism of $\Spec(k')$ induced by $t^{-1}$.
	Under the faithful exact functor $\iota_k$, this matches with the right $\Gal(k'/k)$-action on $H^0(\Spec(k')^{\sigma};\Q)$ induced by the left action
	\begin{equation*}
		\Gal(k'/k) \times \Hom_{\sigma}(k',\C) \to \Hom_{\sigma}(k',\C), \quad (r,\sigma') \mapsto \sigma' \circ r^{-1}.
	\end{equation*}
	In order to check that the induced $\Q$-algebra homomorphism
	\begin{equation*}
		\Q[\Gal(k'/k)^\rom{op}] \to \End_{\MNori(k)}(\Q_{k'/k})
	\end{equation*}
	is an isomorphism, the trick is to extend scalars to $\Q_{\ell}$ and use the factorization of the forgetful functor 
	\begin{equation*}
		\iota_{k,\ell} \colon \MNori(k)_{\Q_{\ell}} \to \vect_{\Q_{\ell}}
	\end{equation*} 
	through the category of $\ell$-adic $\Gal(\bar{k}/k)$-representations
	(see~\cite[Lem.~5.2.9]{ExpMot} for a detailed argument).
	
	Although the choice to work with the opposite group $\Gal(k'/k)^\rom{op}$ in place of $\Gal(k'/k)$ seems quite unnatural here, we adopted this convention is order to get a cleaner result in Example~\ref{ex:ArtinMot_k} below.
	\qed
\end{ex}

The main structural result about $\MNori(k)$ is the existence of a canonical tensor product making it a neutral Tannakian category over $\Q$ with fibre functor the forgetful functor $\iota_k \colon \MNori(k) \to \vect_{\Q}$;
the unit for this monoidal structure is precisely the unit motive $\Q_k$ of Example~\ref{ex:unit-motive}.
The monoidal structure on $\MNori(k)$, constructed by Nori, represented one of the most interesting features of his theory (see~\cite[Thm.~9.3.4, Thm.~9.3.10]{HMS17}).
From the perspective of perverse Nori motives, this result looks like a particular case of Theorem~\ref{thm:six_ops_MNori},
but this is quite deceitful: 
indeed, the geometric insights behind Nori's original proof play a crucial role also in the identification of Nori's category $\MNori(k)$ with Ivorra--Morel's category $\MNori(\Spec(k))$.

For future reference, let us briefly comment on the tensor structure of $\MNori(k)$ in terms of Ivorra--Morel's definition.
The homological functor $H^0 \circ \Bti_k^*$ is not monoidal, since $\Bti_k^*$ is monoidal but $H^0$ is not.
This prevents one from defining the tensor product on $\MNori(k)$ by the general lifting principles of universal abelian factorizations.
As a workaround, one is led to consider the full additive subcategory
\begin{equation*}
	\DAct(k)^0 := \left\{A \in \DAct(k) \; | \; \Bti_k^*(A) \in \vect_{\Q}\right\} \subset \DAct(k),
\end{equation*}
which is nothing but the candidate for the heart of the conjectural motivic $t$-structure.
By design, the restriction of $H^0 \circ \Bti_k^*$ to $\DAct(k)^0$ reduces to $\Bti_k^*$ and so is monoidal.
Hence, the associated universal abelian factorization $\MNori(k)^0$ inherits a well-defined tensor product.
Now the problem becomes to show that the natural faithful exact functor
\begin{equation*}
	\MNori(k)^0 \to \MNori(k)
\end{equation*}
is in fact an equivalence.
Nori's geometric insights play a major role in the proof of this fact.

\begin{nota}
	We let $\GmotNo(k)$ denote the Tannaka dual of $\MNori(k)$ with respect to the fibre functor $\iota_k$, and we call it \textit{Nori's motivic Galois group} of $k$.
\end{nota}

In concrete situations, one is typically interested in the Tannakian subcategory $\langle M \rangle^{\otimes}$ of $\MNori(k)$ generated by a single motive $M$:
as usual, by this we mean the smallest full abelian subcategory of $\MNori(k)$ containing $M$ and stable under subquotients, tensor products, and duals.
By~\cite[Prop.~2.21(a)]{deligne-milne}, the inclusion of $\langle M \rangle^{\otimes}$ into $\MNori(k)$ induces a surjection of $\GmotNo(k)$ onto the Tannaka dual of $\langle M \rangle^{\otimes}$.
The situation is particularly simple in the case of Artin motives:
\begin{ex}\label{ex:ArtinMot_k}
	Let $k'/k$ be a finite Galois extension, and consider the Artin motive $\Q_{k'/k}$ from Example~\ref{ex:Artin-motive}.
	By~\cite[\S\S~9.4-9.5]{HMS17}, the Tannakian subcategory $\langle \Q_{k'/k} \rangle^{\otimes} \subset \MNori(k)$ is equivalent to $\Rep_{\Q}(\Gal(k'/k))$, with
	the generator $\Q_{k'/k}$ corresponding to the regular representation;
	this equivalence depends on the choice of an element $\sigma' \in \Hom_{\sigma}(k',\C)$.
	This determines a surjection between Tannaka dual groups
	\begin{equation}\label{eq:Gmot(k)_to_Gal(k'/k)}
		\GmotNo(k) \twoheadrightarrow \Gal(k'/k),
	\end{equation}
	where $\Gal(k'/k)$ is regarded as a finite algebraic group over $\Q$.
	
	Following~\cite[Cor.~8.1.17]{HMS17}, the starting point is to compute the centralizer $E$ of $\End_{\MNori(k)}(\Q_{k'/k})$ inside $\End_{\Q}(\iota_k(\Q_{k'/k}))$.
	To this end, consider the canonical $\Q$-algebra isomorphism
	\begin{equation*}
		\Q[\Gal(k'/k)^\rom{op}] \xrightarrow{\sim} \End_{\MNori(k)}(\Q_{k'/k})
	\end{equation*}
	obtained in Example~\ref{ex:Artin-motive}.
	The vector space $\iota_k(\Q_{k'/k}) = H^0(\Spec(k')^{\sigma};\Q)$ is a free $\Q[\Gal(k'/k)^\rom{op}]$-module of rank $1$ (because the $\Gal(k'/k)$-action on $\Hom_{\sigma}(k',\C)$ is simply transitive).
	This implies that $E$ is isomorphic to the opposite algebra $\Q[\Gal(k'/k)^\rom{op}]^\rom{op} = \Q[\Gal(k'/k)]$:
	more precisely, the choice of an element $\sigma'_0 \in \Hom_{\sigma}(k',\C)$ induces a $\Q[\Gal(k'/k)^\rom{op}]$-module isomorphism
	\begin{equation*}
		\iota_k(\Q_{k'/k}) \simeq \Q[\Gal(k'/k)^\rom{op}],
	\end{equation*}
	and the centralizer for the left-multiplication action of $\Q[\Gal(k'/k)^\rom{op}]$ on itself is canonically isomorphic to the opposite algebra.
	From now on, fix such an element $\sigma'_0$.
	
	Consider the linear dual $E^{\lor}$ with its natural bialgebra structure. 
	The isomorphism $E \simeq \Q[\Gal(k'/k)]$ defined by $\sigma'_0$ induces an isomorphism
	\begin{equation*}
		\Spec(E^{\lor}) \simeq \Spec(\Q[\Gal(k'/k)]^{\lor}) = \Gal(k'/k).
	\end{equation*}
	Note that the forgetful functor $\iota_k \colon \langle \Q_{k'/k} \rangle^{\otimes} \to \vect_{\Q}$ factors through a monoidal functor
	\begin{equation*}
		\langle \Q_{k'/k} \rangle^{\otimes} \to \Rep_{\Q}(\Spec(E^{\lor}))
	\end{equation*}
	sending the generator $\Q_{k'/k}$ to the regular representation $\Q^{\Hom_{\sigma}(k',\C)}$.
	We claim that the latter functor is an equivalence.
	Since the abelian category $\Rep_{\Q}(\Gal(k'/k))$ is semisimple, this is the case as soon as the map
	\begin{equation*}
		\Hom_{\MNori(k)}(\Q_{k'/k}^{\otimes n_1},\Q_{k'/k}^{\otimes n_2}) \to \Hom_{\Q}^{\Gal(k'/k)}((\Q^{\Hom_{\sigma}(k,\C)})^{\otimes n_1},(\Q^{\Hom_{\sigma}(k,\C)})^{\otimes n_2})
	\end{equation*}
	is bijective for all $n_1, n_2 \in \Z$.
	In the case when $n_1 = 1 = n_2$, the computation in Example~\ref{ex:Artin-motive} shows that this condition is indeed satisfied.
	In order to conclude that the same holds in general, it suffices to prove the formula
	\begin{equation*}
		\Q_{k'/k}^{\otimes 2} \simeq \Q_{k'/k}^{\oplus [k'\colon k]}.
	\end{equation*}
	To this end, start from the canonical isomorphism
	\begin{align*}
		\Q_{k'/k} \otimes \Q_{k'/k} &:= e_* \Q_{k'} \otimes \Q_{k'/k} \\
		&= e_* (\Q_{k'} \otimes e^* \Q_{k'/k}) && \textup{(by the projection formula)} \\
		&= e_* e^* \Q_{k'/k}.
	\end{align*}
	Now form the Cartesian square
	\begin{equation*}
		\begin{tikzcd}
			\Spec(k' \otimes_k k') \arrow{rr}{e'_1} \arrow{d}{e'_2} && \Spec(k') \arrow{d}{e} \\
			\Spec(k') \arrow{rr}{e} && \Spec(k),
		\end{tikzcd}
	\end{equation*}
	and regard $\Spec(k' \otimes_k k')$ as a $k'$-variety via the second projection $e'_2$.
	Since $k'/k$ is Galois, the scheme $\Spec(k' \otimes_k k')$ decomposes as the disjoint union of $[k'\colon k]$ copies of $\Spec(k')$, each of which maps isomorphically to $\Spec(k')$ via the first projection $e'_1$.
	Hence, we have an isomorphism in $\MNori(\Spec(k'))$
	\begin{equation}\label{eq:iso_Spec(k'_k_k')}
		e_{1,*} \Q_{k' \otimes_k k'} \simeq \Q_{k'}^{\oplus [k'\colon k]}.
	\end{equation}
	We deduce the isomorphism in $\MNori(\Spec(k'))$
	\begin{align*}
		e^* \Q_{k'/k} &= e^* e_* \Q_{k'} \\
		&= e'_{1,*} {e'_2}^* \Q_{k'} && \textup{(by proper base-change)} \\
		&= e'_{1,*} \Q_{k' \otimes_k k'} && \textup{(as ${e'_2}^* \Q_{k'} = \Q_{k' \otimes_k k'}$)} \\
		&\simeq \Q_{k'}^{\oplus [k'\colon k]}, && \textup{(by \eqref{eq:iso_Spec(k'_k_k')})}
	\end{align*}
	which implies the sought-after formula.
	\qed
\end{ex}

\begin{rem}\label{rem:ArtinMot_two-Gal-actions}
	We have defined two distinct $\Gal(k'/k)$-actions on the $\Q$-vector space $\iota_k(\Q_{k'/k})$:
	one determined by the $\Gal(k'/k)$-action on the motive $\Q_{k'/k}$ (in Example~\ref{ex:Artin-motive}), and one defined by Tannakian formalism (in Example~\ref{ex:ArtinMot_k}).
	Recall that the first action is canonical, whereas the second one depends on the choice of an element $\sigma' \in \Hom_{\sigma}(k',\C)$. 
	Under the isomorphism between $\iota_k(\Q_{k'/k}) = \Q^{\Hom_{\sigma}(k',\C)}$ and $\Q^{\Gal(k'/k)}$ determined by such a choice, they are induced by the left-multiplication and the right-multiplication actions of $\Gal(k'/k)$ on $\Q[\Gal(k'/k)]$, respectively.
	In particular, they commute with one another.
\end{rem}

The above discussion leads one to study the difference between the motivic Galois groups of $k$ and $k'$ in terms of classical Galois theory.
This requires some care, since the natural way to define the motivic Galois group of $k'$ is by regarding $k'$ rather than $k$ as the base field of the theory.

\begin{nota}\label{nota:MNori_emb}
	For sake of clarity, we write $\MNori_{\sigma}(X)$ in place of $\MNori(X)$ when we want to stress its dependence on the base field $k$ (and on the chosen complex embedding $\sigma$).
\end{nota}

Until the end of the present subsection, let $k'/k$ be a finite extension, and fix a complex embedding $\sigma' \colon k' \hookrightarrow \C$ extending $\sigma$.

\begin{lem}\label{lem:MNori_X'/X}
	Let $X$ be a $k$-variety; 
	set $X' := X \times_k k'$, and write $e_X \colon X' \rightarrow X$ for the corresponding finite étale morphism.
	Then:
	\begin{enumerate}
		\item The abelian categories $\MNori_{\sigma'}(X')$ and $\MNori_{\sigma}(X')$ (defined by regarding $X'$ as a $k'$-variety or as a $k$-variety, respectively) are canonically equivalent.
		\item If $k'/k$ is Galois, the inverse image functor $e_X^* \colon \MNori_{\sigma}(X) \rightarrow \MNori_{\sigma}(X')$ induces a canonical equivalence
		\begin{equation*}
			\MNori_{\sigma}(X) \xrightarrow{\sim} \MNori_{\sigma}(X')^{\Gal(k'/k)},
		\end{equation*}
		where the action of $\Gal(k'/k)$ on $\MNori_{\sigma}(X')$ is induced by its action on $X'$ via $k$-automorphisms.
	\end{enumerate}
\end{lem}
\begin{proof}
	The first statement is a particular case of~\cite[Prop.~6.11]{IM19} 
	(see also~\cite[Prop.~1.15]{TerenziNori} for a related result on the level of generic points).
	
	The second statement follows formally from the fact that perverse Nori motives form a stack for the étale topology, as noted in~\cite[Prop.~2.7]{IM19}:
	it suffices to repeat the argument for Lemma~\ref{lem:Perv_X'/X}(2) word-by-word.
\end{proof}

For the moment, we are interested in using this result only in the case when $X = \Spec(k)$.
As part of the structure of a six functor formalism, the inverse image functor
\begin{equation*}
	e^* \colon \MNori(k) \to \MNori(k')
\end{equation*}
is canonically monoidal.
As a consequence of Remark~\ref{rem:f_finét}, the direct image functor
\begin{equation*}
	e_* \colon \MNori(k') \to \MNori(k)
\end{equation*}
is both left adjoint and right adjoint to $e^*$, and the unit natural transformations
\begin{equation*}
	\eta \colon M \to e_* e^* M, \qquad \eta \colon M' \to e^* e_* M'
\end{equation*}
are both split monomorphisms (see also~\cite[\S\S~2.4-2.5]{IM19}).
By Lemma~\ref{lem:Bti_k=Bti_k-e^*}, the diagram of monoidal functors
\begin{equation*}
\begin{tikzcd}
\DAct(k) \arrow{rr}{e^*} \ar[dr, "\Bti_{\sigma,k}^*" ']
	&& \DAct(k') \arrow{dl}{\Bti_{\sigma',k'}^*}
	\\			
& \Db(\vect_{\Q})
\end{tikzcd}
\end{equation*}
commutes up to monoidal natural isomorphism.
Composing with $H^0 \colon \Db(\vect_{\Q}) \to \vect_{\Q}$ and passing to the universal abelian factorizations, we deduce that the diagram of monoidal functors
\begin{equation*}
\begin{tikzcd}
\MNori(k) \arrow{rr}{e^*} \arrow{dr}[']{\iota_k}
	&& \MNori(k') \arrow{dl}{\iota_{k'}}
	\\
& \vect_{\Q}
\end{tikzcd}
\end{equation*}
commutes up to monoidal natural isomorphism as well.
Thus, we get a homomorphism of Tannaka dual groups
\begin{equation}\label{eq:Gmot(k')_to_Gmot(k)}
	\GmotNo(k') \rightarrow \GmotNo(k).
\end{equation}
The difference between the two sides is indeed measured by classical Galois theory:

\begin{prop}\label{prop:ses_Gmot-Gmot-Gal}
	If $k'/k$ is a Galois extension, the sequence of pro-algebraic groups
	\begin{equation}\label{eq:ses_Gmot(k'/k)}
		1 \to \GmotNo(k') \to \GmotNo(k) \to \Gal(k'/k) \to 1
	\end{equation}
	defined by the homomorphisms~\eqref{eq:Gmot(k)_to_Gal(k'/k)} and~\eqref{eq:Gmot(k')_to_Gmot(k)} is exact.
\end{prop}

\begin{proof}
	The analogous statement for the algebraic closure $\bar{k}$ in place of $k'$ is~\cite[Thm.~9.1.16]{HMS17};
	the proof of that statement (see~\cite[\S~9.5]{HMS17}) contains all the ingredients to prove the version that we have stated.
	Since that argument there is written down in terms of Nori's definition of Nori motives, let us spell it out here in terms of Ivorra--Morel's definition:
	we have to show that the homomorphism $\GmotNo(k') \rightarrow \GmotNo(k)$ is an observable closed immersion and that its image coincides with the kernel of the projection to $\Gal(k'/k)$.
	
	The first property amounts to saying that every object $M' \in \MNori(k')$ is a subobject of some object in the image of $e^*$ (see~\cite[Cor.~A.10]{DAE22}).
	To check that this condition is satisfied, one can use the unit monomorphism $\eta \colon M' \hookrightarrow e^* e_* M'$.
	The assumption that $k'/k$ is Galois does not play any role here.
	
	The second property amounts to saying that the Tannakian subcategory $\langle \Q_{k'/k} \rangle^{\otimes} \subset \MNori(k)$ contains exactly those objects of $\MNori(k)$ whose underlying $\GmotNo(k')$-representation is trivial (see~\cite[Prop.~A.13]{DAE22}).
	In one direction, it suffices to show that the object $e^* \Q_{k'/k} \in \MNori(k')$ is a trivial $\GmotNo(k')$-representation, which follows from the isomorphism 
	\begin{equation*}
		e^* \Q_{k'/k} \simeq \Q_{k'}^{\oplus [k'\colon k]}
	\end{equation*} 
	established at the end of Example~\ref{ex:ArtinMot_k}.
	In the other direction, if an object $M \in \MNori(k)$ satisfies $e^* M \simeq \Q_{k'}^{\oplus m}$ for some $m \geq 0$, then the object $e_* e^* M$ satisfies
	\begin{equation*}
			e_* e^* M \simeq e_* \Q_{k'}^{\oplus m} = \Q_{k'/k}^{\oplus m}
	\end{equation*}
	and thus belongs to $\langle \Q_{k'/k} \rangle^{\otimes}$.
	Using the unit monomorphism $\eta \colon M \hookrightarrow e_* e^* M$, we deduce that $M$ belongs to $\langle \Q_{k'/k} \rangle^{\otimes}$ as well.
\end{proof}

\subsection{Motivic local systems}\label{subsect:fundses_MNori}

In this subsection, we discuss the Tannakian aspects of the theory of Nori motives over general smooth $k$-varieties.
Just like perverse Nori motives are modelled on perverse sheaves,
motivic local systems are modelled on local systems.
Throughout, we let $X$ be a smooth, quasi-projective $k$-variety.

\begin{nota}\label{nota:NMLoc(X)}
	\ 
	\begin{itemize}
		\item We say that an object $M \in \MNori(X)$ is a \textit{motivic local system} if the perverse sheaf $\iota_X(M) \in \Perv(X)$ belongs to $\Loc(X)$.
		We let $\NMLoc(X)$ denote the full subcategory of $\MNori(X)$ spanned by the motivic local systems.
		\item Let $\Locgeo^{\rm{N}}(X)$ denote the smallest abelian subcategory of $\Loc(X)$ containing the essential image of the forgetful functor $\iota_X \colon \NMLoc(X) \to \Loc(X)$ and stable under subquotients.
	\end{itemize}
\end{nota}

Recall that $\Loc(X)$ is stable under subquotients and extensions inside $\Perv(X)$ (by Lemma~\ref{lem:Locp_stable_inside_Perv}).
This implies that $\NMLoc(X)$ is stable under subquotients and extensions inside $\MNori(X)$.
Moreover, it implies that $\Locgeo^{\rm{N}}(X)$ is stable under subquotients inside $\Pervgeo^{\rm{N}}(X)$ (see Notation~\ref{nota:PervgeoNori}).
In fact, there is more to be said:

\begin{prop}\label{prop:LocgeoNori_stable_ext}
	The abelian category $\Locgeo^{\rm{N}}(X)$ is stable under extensions inside $\Loc(X)$.
\end{prop}

For the proof, we need to understand the behaviour of $\Locgeo^{\rm{N}}(X)$ with respect to finite extensions of the base field $k$, along the lines of Lemma~\ref{lem:Locgeo_X'/X}.
Note that the inclusion $\Pervgeo^{\rm{N}}(X) \subset \Pervgeo^{\rm{A}}(X)$ restricts to an inclusion $\Locgeo^{\rm{N}}(X) \subset \Locgeo^{\rm{A}}(X)$.

\begin{lem}\label{lem:LocgeoNori_X'/X}
	Let $k'/k$ be a finite field extension; 
	set $X' := X \times_k k'$, and write $e_X \colon X' \to X$ for the corresponding finite étale morphism.
	Choose a complex embedding $\sigma' \colon k' \hookrightarrow \C$ extending $\sigma$.
	Then, there is a canonical equivalence
	\begin{equation*}
		\Locgeoemb{\sigma}^{\rm{N}}(X) \xrightarrow{\sim} \Locgeoemb{\sigma'}^{\rm{N}}(X').
	\end{equation*}
\end{lem}
\begin{proof}
	By construction,
	the equivalences
	of Lemma~\ref{lem:Perv_X'/X}(1) and Lemma~\ref{lem:MNori_X'/X}(1) fit into the commutative diagram
	\begin{equation*}
		\begin{tikzcd}
			\MNori_{\sigma}(X) \arrow{rr}{\sim} \arrow{d}{\iota_X} && \MNori_{\sigma'}(X') \arrow{d}{\iota_{X'}} \\
			\Pervgeoemb{\sigma}^{\rm{A}}(X) \arrow{rr}{\sim} && \Pervgeoemb{\sigma'}^{\rm{A}}(X').
		\end{tikzcd}
	\end{equation*}
	Restricting to local systems, we get the commutative diagram
	\begin{equation*}
		\begin{tikzcd}
			\NMLoc_{\sigma}(X') \arrow{rr}{\sim} \arrow{d}{\iota_X} && \NMLoc_{\sigma'}(X') \arrow{d}{\iota_{X'}} \\
			\Locgeoemb{\sigma}^{\rm{A}}(X) \arrow{rr}{\sim} && \Locgeoemb{\sigma'}^{\rm{A}}(X'),
		\end{tikzcd}
	\end{equation*}
	where the horizontal arrows are still equivalences (as noted in Lemma~\ref{lem:Locgeo_X'/X}(1) for the lower one, and in Lemma~\ref{lem:NMLoc_X'_X}(1) below for the upper one).
	Taking the essential images of the vertical arrows and stabilizing under subquotients, we get the conclusion.
\end{proof}

Unfortunately, at this stage we are not able to prove the analogue of Lemma~\ref{lem:Locgeo_X'/X}(2), since it is not clear from the definition that the étale prestack $X \mapsto \Locgeoemb{\sigma}^{\rm{N}}(X)$ is in fact a stack.

\begin{proof}[Proof of Proposition~\ref{prop:LocgeoNori_stable_ext}]
	The fundamental case is when $X$ is connected and admits a $k$-rational point (which implies, in particular, that it is geometrically connected over $k$):
	in this case, the result is established in~\cite[Thm.~7.7(1)]{Jacobsen}.
	In order to conclude, we reduce the case of a general (non-empty) $k$-variety $X$ to the above fundamental case.
	
	Up to considering each connected component of $X$ separately, we may assume $X$ connected.
	First, we reduce to the case when $X$ is geometrically connected over $k$.
	To this end, let $k(X)$ denote the function field of $X$, and let $k'$ be the algebraic closure of $k$ inside $k(X)$:
	it is a finite extension of $k$.
	Note that, since $X$ is normal (being smooth over $k$), all elements of $k'$ are regular functions on $X$.
	By construction, the connected components of the smooth $k'$-variety $X' := X \times_k k'$ are geometrically connected over $k'$.
	Applying Lemma~\ref{lem:LocgeoNori_X'/X}, we are allowed to replace the $k'$-variety $X$ by the $k$-variety $X'$.
    Replacing the latter by its single connected components, we may assume $X$ geometrically connected over $k$.
	
	Next, we further reduce to the case when the geometrically connected $k$-variety $X$ admits a $k$-rational point.
	To this end, fix a closed point $x \in X(\bar{k})$, and let $k'/k$ be a finite extension such that $x \in X(k')$.
	Note that the $k'$-variety $X' := X \times_k k'$ is geometrically connected over $k'$ and has a canonical $k'$-rational point $x' \in X'(k')$ lying over $x$.
	Applying Lemma~\ref{lem:LocgeoNori_X'/X}, we are allowed to replace the $k$-variety $X$ by the $k'$-variety $X'$.
	This concludes the reduction.
\end{proof}

Recall that the extension-stability of $\Pervgeo^{\rm{N}}(X)$ inside $\Pervgeo^{\rm{A}}(X)$ is still unclear at this point, and represents in fact the only possible obstruction for the inclusion $\Pervgeo^{\rm{N}}(X) \subset \Pervgeo^{\rm{A}}(X)$ to be an equivalence.
Unfortunately, knowing the extension-stability of $\Locgeo^{\rm{N}}(X)$ inside $\Locgeo^{\rm{A}}(X)$ does not imply formally that they coincide:
the issue is that $\Locgeo^{\rm{N}}(X)$ is not defined as the intersection $\Pervgeo^{\rm{N}}(X) \cap \Loc(X)$.

Until the end of the present subsection, we assume the smooth $k$-variety $X$ to be geometrically connected over $k$, say of dimension $d$, and
we let $a_X \colon X \to \Spec(k)$ denote the structural morphism.
By~\cite[Thm.~6.2]{TerenziNori}, the shifted tensor product
\begin{equation*}
	M_1 \otimes^{\dagger} M_2 := (M_1[-d] \otimes M_2[-d])[d]
\end{equation*}
turns $\NMLoc(X)$ into a neutral Tannakian category over $\Q$ with unit object given by the shifted unit motive ${^\rom{p} \Q_X} := \Q_X[d]$.
By construction, the forgetful functor
\begin{equation*}
	\iota_X \colon \NMLoc(X) \to \Loc(X)
\end{equation*}
is canonically monoidal.
Therefore, for every closed point $x \in X(\bar{k})$, the composite
\begin{equation*}
	\NMLoc(X) \xrightarrow{\iota_X} \Loc(X) \xrightarrow{x^*[-d]} \vect_{\Q}
\end{equation*}
defines a fibre functor for $\NMLoc(X)$.

\begin{nota}\label{nota:pi1geo-Gmot(X)}
	Let $x \in X(\bar{k})$ be a closed point.
	\begin{itemize}
		\item We let $\pi_1^{\rm{N}}(X,x)$ denote the Tannaka dual of $\Locgeo^{\rm{N}}(X)$ with respect to the fibre functor at $x$.
		\item We let $\GmotNo(X,x)$ denote the Tannaka dual of $\NMLoc(X)$ with respect to the fibre functor at $x$, and we call it \textit{Nori's motivic Galois group} of $X$ with base-point $x$.
	\end{itemize}
\end{nota}

\begin{rem}\label{rem:a_X^*-MNori-fullyfaithful}
	The shifted inverse image functor
	\begin{equation*}
		a_X^*[d] \colon \MNori(k) \to \MNori(X)
	\end{equation*}
	takes values in $\NMLoc(X)$ and admits a right adjoint
	\begin{equation*}
		\pH^0 \circ a_{X,*}[-d] \colon \MNori(X) \to \MNori(k).
	\end{equation*}
	As for usual local systems, the functor $a_X^*[d]$ is fully faithful, with essential image stable under subquotients.
	By~\cite[Prop.~2.21(a)]{deligne-milne}, this determines a canonical surjection between Tannaka dual groups
	\begin{equation}\label{eq:Gmot(X)-to-Gmot(k)}
		\GmotNo(X,x) \to \GmotNo(k).
	\end{equation}
    To check the above properties of $a_X^*[d]$, it suffices to translate them in terms of unit and counit morphisms, as in Remark~\ref{rem:constantLoc}:
	since the forgetful functors $\iota_k$ and $\iota_X$ are conservative, the invertibility of the relevant natural transformations in the motivic setting follows from the invertibility of the corresponding natural transformations in the topological setting. 
\end{rem}

Recall that $\Locgeo^{\rm{N}}(X)$ is defined as the full abelian subcategory of $\Loc(X)$ generated by the image of the forgetful functor $\iota_X \colon \NMLoc(X) \to \Loc(X)$ under subquotients.
By the general theory of Tannakian categories, $\Locgeo^{\rm{N}}$ is stable under tensor products and duals inside $\Loc(X)$ (see the proof of~\cite[Lem.~2.13]{Jacobsen}), hence a Tannakian subcategory.
By construction, the homomorphism of Tannaka duals 
\begin{equation}\label{eq:pi^1_geo(X)-to-Gmot(X)}
	\pi_1^{\rm{N}}(X,x) \to \GmotNo(X,x)
\end{equation}
induced by the forgetful functor $\iota_X \colon \NMLoc(X) \to \Locgeo^{\rm{N}}(X)$ is a closed immersion (see~\cite[Prop.~2.21(b)]{deligne-milne}).
This suggests that part of the complexity of the Tannakian category $\NMLoc(X)$ comes from the complexity of $X$ as an algebraic variety;
both the topological complexity of $X^{\sigma}$ and the algebraic complexity of $k$ contribute to the latter.
This can be seen clearly in the following generalization of Example~\ref{ex:ArtinMot_k}:

\begin{ex}\label{ex:ArtinMot_X}
	Let $k'/k$ be a finite Galois extension; 
	set $X' := X \times_k k'$, and write $e_X \colon X' \to X$ for the corresponding finite étale morphism.
	Since $X$ is assumed to be geometrically connected, $e_X$ is a Galois covering and the canonical homomorphism $\Gal(k'/k) \to \Gal(X'/X)$ is bijective.
	Consider the motivic local system
	\begin{equation*}
		{^\rom{p} \Q}_{X'/X} := e_{X,*} {^\rom{p} \Q}_{X'} = e_{X,*} e_X^* {^\rom{p} \Q}_X \in \NMLoc(X),
	\end{equation*}
	which we may call an \textit{Artin motivic local system}.  
	We claim that the Tannakian subcategory $\langle {^\rom{p} \Q}_{X'/X} \rangle^{\otimes} \subset \NMLoc(X)$ is canonically equivalent to $\Rep_{\Q}(\Gal(X'/X))$, with the generator ${^\rom{p} \Q}_{X'/X}$ corresponding to the regular representation.
	For any closed point $x \in X(\bar{k})$, this determines a surjection between Tannaka dual groups
	\begin{equation}\label{eq:Gmot(X)-to-Gal(X'/X)}
		\GmotNo(X,x) \twoheadrightarrow \Gal(X'/X).
	\end{equation}
	To prove the claim, consider the canonical $\Q$-algebra homomorphism
	\begin{equation*}
		\Q[\Gal(X'/X)] \to \End_{\MNori(X)}({^\rom{p} \Q}_{X'/X})
	\end{equation*}
	obtained as in Example~\ref{ex:ArtinMot_k}.
	The key point is to show that this is bijective;
	the claim then follows by repeating the argument of Example~\ref{ex:ArtinMot_k} in the relative setting.
	Note that the above $\Q$-algebra homomorphism fits into the commutative diagram
	\begin{equation*}
		\begin{tikzcd}
			\Q[\Gal(k'/k)] \arrow{rr} \arrow[equal]{d} && \End_{\MNori(k)}(\Q_{k'/k}) \arrow{d} \\
			\Q[\Gal(X'/X)] \arrow{rr} && \End_{\MNori(X)}({^\rom{p} \Q}_{X'/X}),
		\end{tikzcd}
	\end{equation*}
	where the right-most vertical arrow is induced by the shifted inverse image functor $a_X^*[d]$.
	Since the upper horizontal arrow and the right-most vertical arrow are already known to be bijective (by Example~\ref{ex:Artin-motive} and Remark~\ref{rem:a_X^*-MNori-fullyfaithful}, respectively), the same must hold for the lower horizontal arrow.
	This proves the claim.
	\qed
\end{ex}

As in Subsection~\ref{subsect:MNori(k)}, this discussion leads us to study the difference between the motivic Galois groups of $X$ and of $X'$ via classical Galois theory.
Note that $X'$ is geometrically connected over $k'$ but not over $k$.
Hence, in order to define the motivic Galois group of $X'$, we need to regard $X'$ as a $k'$-variety rather than as a $k$-variety.

\begin{nota}
	Following Notation~\ref{nota:MNori_emb}, we write $\NMLoc_{\sigma}(X)$ in place of $\NMLoc(X)$ when we want to stress its dependence on the base field $k$ (and on the chosen complex embedding $\sigma$).
\end{nota}

Fix a complex embedding $\sigma' \colon k' \hookrightarrow \C$ extending $\sigma$.
Lemma~\ref{lem:MNori_X'/X} admits the following variant for motivic local systems:
\begin{lem}\label{lem:NMLoc_X'_X}
	Keep the notation and assumptions of Lemma \ref{lem:MNori_X'/X}, and assume $X$ smooth over $k$.
	Then:
	\begin{enumerate}
		\item The abelian categories $\NMLoc_{\sigma'}(X')$ and $\NMLoc_{\sigma}(X')$ (defined by regarding $X'$ as a $k'$-variety or as a $k$-variety, respectively) are canonically equivalent.
		\item If $k'/k$ is Galois, the inverse image functor $e_X^* \colon \NMLoc_{\sigma}(X) \rightarrow \NMLoc_{\sigma}(X')$ induces a canonical equivalence
		\begin{equation*}
			\NMLoc_{\sigma}(X) \xrightarrow{\sim} \NMLoc_{\sigma}(X')^{\Gal(k'/k)},
		\end{equation*}
		where the action of $\Gal(k'/k)$ on $\NMLoc_{\sigma}(X')$ is induced by its action on $X'$ via $k$-automorphisms.
	\end{enumerate}
\end{lem}
\begin{proof}
	The two statements follow from the corresponding statements of Lemma~\ref{lem:MNori_X'/X}.
\end{proof}

By Lemma~\ref{lem:Bti_k=Bti_k-e^*}, the diagram of monoidal functors
\begin{equation*}
	\begin{tikzcd}
		\DAct(X) \arrow{rr}{e_X^*} \arrow{d}{\Bti_{\sigma,X}^*} && \DAct(X') \arrow{d}{\Bti_{\sigma',X'}^*} \\
		\Dbgeoemb{\sigma}(X) \arrow{rr}{\sim} && \Dbgeoemb{\sigma'}(X')
	\end{tikzcd}
\end{equation*}
commutes up to monoidal natural isomorphism.
Composing with the $0$th perverse cohomology,
passing to the universal abelian factorizations,
and restricting to motivic local systems,
we deduce that the diagram of monoidal functors
\begin{equation*}
	\begin{tikzcd}
		\NMLoc_{\sigma}(X) \arrow{rr}{e_X^*} \arrow{d}{\iota_X} && \NMLoc_{\sigma'}(X') \arrow{d}{\iota_{X'}} \\
		\Locgeoemb{\sigma}(X) \arrow{rr}{\sim} && \Locgeoemb{\sigma'}(X')
	\end{tikzcd}
\end{equation*}
commutes up to monoidal natural isomorphism.
This yields a canonical homomorphism between Tannaka dual groups
\begin{equation}\label{eq:Gmot(X')-to-Gmot(X)}
	\GmotNo(X',x') \to \GmotNo(X,x).
\end{equation}
We have the following generalization of Proposition~\ref{prop:ses_Gmot-Gmot-Gal}:
\begin{prop}\label{prop:ses_Gmot(X')-Gmot(X)-Gal(X'/X)}
	Keep the notation and assumptions of Example~\ref{ex:ArtinMot_X}.
	The sequence of pro-algebraic groups
	\begin{equation*}
		1 \to \GmotNo(X',x') \to \GmotNo(X,x) \to \Gal(X'/X) \to 1
		,
	\end{equation*}
	defined by the
	homomorphisms~\eqref{eq:Gmot(X)-to-Gal(X'/X)}
	and~\eqref{eq:Gmot(X')-to-Gmot(X)},
	is exact.
\end{prop}

\begin{proof}
	It suffices to rewrite the proof of Proposition~\ref{prop:ses_Gmot-Gmot-Gal} with $X$ in place of $\Spec(k)$, with Example~\ref{ex:ArtinMot_X} playing here the same role as Example~\ref{ex:ArtinMot_k} there;
	we leave the details to the interested reader.
\end{proof}

We can finally state the main result of this subsection:

\begin{thm}\label{thm:fund_ses_general}
	Let $X$ be a smooth, geometrically connected, quasi-projective $k$-variety.
	Then, for every closed point $x \in X(\bar{k})$, the sequence of pro-algebraic groups
	\begin{equation*}
		1 \rightarrow \pi_1^{\rm{N}}(X,x) \rightarrow \GmotNo(X,x) \rightarrow \GmotNo(k) \rightarrow 1
	\end{equation*}
	defined by the homomorphisms~\eqref{eq:pi^1_geo(X)-to-Gmot(X)} and~\eqref{eq:Gmot(X)-to-Gmot(k)} is exact.
\end{thm}

\begin{proof}
	We divide the argument into two main steps.
	
	Assume first that the base-point $x \in X(\bar{k})$ is in fact a $k$-rational point $x \in X(k)$.
	In this case, by~\cite[Thm.~7.7(1)]{Jacobsen}, the sequence in the statement is even split-exact;
	the section $\GmotNo(k) \rightarrow \GmotNo(X,x)$ is induced by the section $x \colon \Spec(k) \rightarrow X$ to the structural morphism.
	
	In the general case, fix a finite Galois extension $k'/k$ such that $x \in X(k')$;
	as usual, set $X' := X \times_k k'$, and write $e_X \colon X' \rightarrow X$ for the corresponding finite Galois covering.
	The $k'$-variety $X'$ is still smooth and geometrically connected, and it has a canonical point $x' \in X'(k')$ over $x$. 
	Choose a complex embedding $\sigma' \colon k' \hookrightarrow \C$ extending $\sigma$.
	The monoidal equivalence $\Locgeoemb{\sigma}^{\rm{N}}(X) \xrightarrow{\sim} \Locgeoemb{\sigma'}^{\rm{N}}(X')$ (given by Lemma~\ref{lem:LocgeoNori_X'/X}) translates into an isomorphism of Tannaka duals
	\begin{equation*}
		\pi_1^{\rm{N}}(X',x') \xrightarrow{\sim} \pi_1^{\rm{N}}(X,x).
	\end{equation*}
	All in all, we get a commutative diagram of the form
	\begin{equation*}
		\begin{tikzcd}
			& & 1 \arrow{d} & 1 \arrow{d} \\
			1 \arrow{r} & \pi_1^{\rom{N}}(X',x') \arrow{d}{\sim} \arrow{r} & \GmotNo(X',x') \arrow{d} \arrow{r} & \GmotNo(k') \arrow{d} \arrow{r} & 1 \\
			1 \arrow{r} & \pi_1^{\rom{N}}(X,x) \arrow{r} & \GmotNo(X,x) \arrow{d} \arrow{r} & \GmotNo(k) \arrow{d} \arrow{r}& 1 \\
			& & \Gal(X'/X) \arrow{d} \arrow[equal]{r} & \Gal(k'/k) \arrow{d} \\
			& & 1 & 1,
		\end{tikzcd}
	\end{equation*}
	where both columns are already known to be exact (the right-most one by Proposition~\ref{prop:ses_Gmot-Gmot-Gal}, the middle one by Proposition~\ref{prop:ses_Gmot(X')-Gmot(X)-Gal(X'/X)}).
	Since the upper row is also known to be exact (by the previous step of the proof), the exactness of the middle row follows formally by diagram-chasing. 
	This concludes the proof.
\end{proof}

\section{The fundamental sequence in Voevodsky's setting}
\label{sect:fund_ses_Voe}

In this section, we describe an alternative construction of Tannakian categories of motivic local systems, based on Ayoub's theory of sheaves of geometric origin.
Our guiding principle is that, for any reasonable notion of motivic local system over a smooth $k$-variety, the associated Tannaka dual should fit into a fundamental sequence like the one of Theorem~\ref{thm:fund_ses_general}. 
The fundamental sequence constructed in the present section uses Ayoub's motivic Galois group $\GmotAy(k)$ in place of $\GmotNo(k)$.

After reviewing the construction of Ayoub's group and its main properties, we discuss Ayoub's presentation of the motivic Galois group as a group of symmetries of constructible complexes of geometric origin.
We show that the resulting categories of equivariant local systems behave like Ivorra--Morel's motivic local systems, and in particular, satisfy the analogous fundamental sequence:
this is Theorem~\ref{thm:ayoub_fund_seq} below.
The main constructions and results of this section rely heavily on the general formalism of equivariant Tannakian categories, summarized in Appendix~\ref{sect:App}.

As in the previous section, we fix a complex embedding $\sigma \colon k \hookrightarrow \C$.

\subsection{Ayoub's motivic Galois group}\label{subsect:GmotAyoub}

Ayoub's construction of the motivic Galois group,
worked out in~\cite{AyoHopf1,AyoHopf2,AyoHopf3},
exploits the Betti realization
$\Bti_k^* \colon \DA(k) \to D(\Q)$
and, specifically, the associated Betti algebra
$\mathcal{B}_k := \Bti_{k,*}\Q \in \DA(k)$.

\begin{thm}[{\cite[Thm.~1.45, Defn.~2.9, Cor.~2.105]{AyoHopf1}}]
	\label{thm:AyoHopf}
The complex
\begin{equation*}
	\mathcal{H}_k := \Bti_k^* \mathcal{B}_k = \Bti_k^* \Bti_{k,*} \Q \in D(\Q)
\end{equation*} 
is naturally a commutative Hopf algebra object, and its cohomology is concentrated in non-positive degrees. 
\end{thm}

The object $\mathcal{H}_k$ is called the \textit{motivic Hopf algebra}:
it inherits the algebra structure from the Betti algebra, whereas, to define the co-algebra structure, one also needs to use the canonical monoidal section
\begin{equation*}
	s_k \colon D(\Q) \to \DA(k)
\end{equation*}
sending a complex in $D(\Q)$ to the associated constant motive.
While~\cite{AyoHopf1} defines $\mathcal{H}_k$ as
a Hopf algebra object in the triangulated category $D(\Q)$,
the method of~\cite{AyoHopf3} promotes it canonically to a homotopy Hopf algebra, thereby solving the potential technical issues related to the lack of functoriality of triangulated categories.
The latter description of $\mathcal{H}_k$
can be promoted to the $\infty$-category $D(\Q)$,
as done in~\cite[\S~1.3]{AyoAnab}.

The property that the cohomology of $\mathcal{H}_k$ vanishes in positive degrees is a highly non-formal result.
It implies the following:
\begin{cor}
	The $0$-th cohomology object
	\begin{equation*}
		\mathcal{H}_k^0 := H^0(\mathcal{H}_k)
	\end{equation*}
	is naturally a classical commutative Hopf algebra over $\Q$.
\end{cor} 

By~\cite[Thm.~8.3]{AyoHopf3}, the Betti realization over $k$ factors canonically as a composite of triangulated functors of the form
\begin{equation*}
	\Bti_k^* \colon \DA(k) \to \mathsf{coMod}_{\mathcal{H}_k}(D(\Q)) \xrightarrow{\for_{\mathcal{H}_k}} D(\Q),
\end{equation*}
where the triangulated structure of $\mathsf{coMod}_{\mathcal{H}_k}(D(\Q))$ is induced by the higher structure of the homotopy Hopf algebra $\mathcal{H}_k$. 
By~\cite[Prop.~1.55]{AyoHopf1}, the following universal property holds:
given a commutative bialgebra object $\mathcal{K} \in D(\Q)$ and a monoidal functor
\begin{equation*}
	\DA(k) \rightarrow \mathsf{coMod}_{\mathcal{K}}(D(\Q))
\end{equation*} 
making both halves of the diagram
\begin{equation*}
\begin{tikzcd}
D(\Q) \arrow{d}[']{s_k} \arrow{drr}
	\\
\DA(k) \arrow{rr} \arrow{drr}[']{\Bti_k^*}
	&& \mathsf{coMod}_{\mathcal{K}}(D(\Q)) \arrow{d}{\rm{for}_{\mathcal{K}}}
	\\
&& D(\Q)
\end{tikzcd}
\end{equation*} 
commute up to monoidal natural isomorphism, there exists a unique bialgebra morphism $\varphi \colon \mathcal{H}_k \rightarrow \mathcal{K}$ in $D(\Q)$ making both halves of the diagram
\begin{equation*}
\begin{tikzcd}
\DA(k) \arrow{rr} \arrow{d}
	&& \mathsf{coMod}_{\mathcal{K}}(D(\Q)) \arrow{d}{\rm{for}_{\mathcal{K}}}
	\\
\mathsf{coMod}_{\mathcal{H}_k}(D(\Q))
	\arrow{rr}[']{\rm{for}_{\mathcal{H}_k}}
	\arrow{urr}{\varphi_*}
	&& D(\Q)
\end{tikzcd}
\end{equation*}
commute up to the induced monoidal natural isomorphism.

\begin{nota}~
	\begin{itemize}
		\item We let $\Gmot(k)$ denote the spectrum of the derived Hopf algebra $\mathcal{H}_k$ in the sense of spectral algebraic geometry.
		\item We let $\GmotAy(k)$ denote the spectrum of the classical Hopf algebra $\mathcal{H}_k^0$, and we call it \textit{Ayoub's motivic Galois group} of $k$;
		we write it as $\GmotAy(k,\sigma)$ if we want to stress its dependence on $\sigma$.
	\end{itemize}
\end{nota}

The morphism of derived Hopf algebras $\mathcal{H}_k \to \mathcal{H}_k^0$ induces a canonical homomorphism of spectral groups
\begin{equation*}
	\GmotAy(k) \to \Gmot(k)
\end{equation*}
with the property that the homomorphism on $\Lambda$-valued points
\begin{equation*}
	\GmotAy(k)(\Lambda) \to \Gmot(k)(\Lambda)
\end{equation*}
is bijective whenever $\Lambda$ is a classical $\Q$-algebra.

\begin{rem}\label{rem:Gmot(k)_expected_classical}
In fact, the spectral group $\Gmot(k)$ is expected to be already classical,
hence equal to $\GmotAy(k)$:
in other words,
the complex $\mathcal{H}_k \in D(\Q)$
is expected to be already concentrated in degree $0$.
This would follow from
the existence of the motivic $t$-structure on $\DAct(k)$
(see~\cite[Prop.~3.2.9]{AyoConj}).
\end{rem}

Even if the spectral group is the most natural object
to work with in the setting of Voevodsky motives,
in this paper we only use its classical counterpart.
We need to discuss some basic properties of $\GmotAy(k)$,
some of which are only stated for $\Gmot(k)$ in the literature:
in particular, we need to study the behaviour of $\GmotAy(k)$ under finite extensions of the base field $k$, as done for Nori's motivic Galois group in Section~\ref{subsect:MNori(k)}.

So let $k'/k$ be a finite extension, and write $e \colon \Spec(k') \to \Spec(k)$ for the corresponding finite étale morphism.
Choose a complex embedding $\sigma' \colon k' \hookrightarrow \C$ extending $\sigma$.
Using Lemma~\ref{lem:Bti_k=Bti_k-e^*}, we obtain a canonical morphism of motivic Hopf algebras
\begin{equation*}
	\mathcal{H}_{\sigma,k} := \Bti_{\sigma,k}^* \Bti_{\sigma,k,*} \Q = \Bti_{\sigma',k'}^* e^* e_* \Bti_{\sigma',k',*} \Q \xrightarrow{\epsilon} \Bti_{\sigma',k'}^* \Bti_{\sigma',k',*} \Q =: \mathcal{H}_{\sigma',k'}.
\end{equation*}
Taking the $0$-th cohomology objects and passing to the associated spectra, this defines a homomorphism
\begin{equation}\label{eq:Gmot'-to-Gmot_Ayoub}
	\GmotAy(k') \to \GmotAy(k).
\end{equation}
Moreover, we have a canonical commutative algebra morphism in $\DA(k)$
\begin{equation*}
	\Q_{k'/k} := e_* e^* \Q_k
	\xrightarrow{\eta} e_* \Bti_{\sigma',k',*} \Bti_{\sigma',k'}^* e^* \Q_k
	= \Bti_{\sigma,k,*} \Bti_{\sigma,k}^* \Q_k
	= \mathcal{B}_k.
\end{equation*}
Note that the complex $\Bti_{\sigma,k}^* \Q_{k'/k} \in D(\Q)$ is concentrated in degree $0$, where it coincides with the cohomology group $H^0(\Spec(k')^{\sigma};\Q)$.
Recall that the latter can be identified with $\Q^{\Hom_{\sigma}(k',\C)}$, where $\Hom_{\sigma}(k',\C)$ denotes the set of complex embeddings of $k'$ extending $\sigma$.
	
Suppose now that the extension $k'/k$ is Galois. 
The choice of an element in $\Hom_{\sigma}(k',\C)$ defines a bijection between $\Gal(k'/k)$ and $\Hom_{\sigma}(k',\C)$, hence induces an isomorphism between $\Q^{\Hom_{\sigma}(k',\C)}$ and the dual group algebra $\Q[\Gal(k'/k)]^{\lor} = \Q^{\Gal(k'/k)}$.
If one chooses the same $\sigma'$ as above, the induced morphism of commutative algebras in $D(\Q)$ 
\begin{equation*}
	\Q[\Gal(k'/k)]^{\lor} = \Bti_{\sigma,k}^*(\Q_{k'/k}) \to \Bti_{\sigma,k}^*(\mathcal{B}_k) =: \mathcal{H}_k
\end{equation*}
is in fact a morphism of commutative Hopf algebras.
Taking the $0$-th cohomology objects and passing to the associated spectra, we get a homomorphism
\begin{equation}\label{eq:Gmot-to-Gal_Ayoub}
	\GmotAy(k) \to \Gal(k'/k).
\end{equation}
We have the following analogue of Proposition~\ref{prop:ses_Gmot-Gmot-Gal}:

\begin{prop}[{\cite[Lem.~1.4.9]{AyoAnab}}]\label{prop:ses_Gmot'-Gmot-Gal_Ayoub}
	If $k'/k$ is a Galois extension, the sequence of pro-algebraic groups
	\begin{equation*}
		1 \to \GmotAy(k') \to \GmotAy(k) \to \Gal(k'/k) \to 1
	\end{equation*}
	defined by the homomorphisms \eqref{eq:Gmot'-to-Gmot_Ayoub} and \eqref{eq:Gmot-to-Gal_Ayoub} is exact.
\end{prop}

\begin{proof}
	The argument of~\cite[Lem.~1.4.9]{AyoAnab} establishes the exactness of the analogous sequence of spectral motivic Galois groups.
	This immediately implies the version stated here, because the spectral group $\Gal(k'/k)$ is already classical.
\end{proof}

The similarity with the behaviour of Nori's motivic Galois group is by no means coincidental.
Indeed, an important result by Choudhury--Gallauer asserts that the pro-algebraic groups $\GmotAy(k)$ and $\GmotNo(k)$ are canonically isomorphic (see Theorem~\ref{thm:ChoudGal} below).
This result, which plays a crucial role in Subsection~\ref{subsect:comp-point}, is implicitly used in Example~\ref{ex:Gmot-action_unit_X'} and in Example~\ref{ex:Artin/X_Ayoub} below in order to apply results from Section~\ref{sect:IM_cats} in Ayoub's setting.
This operation is legitimate in view of Lemma~\ref{lem:equiv-point} below, which implies that the $\GmotAy(k)$-action on the underlying $\Q$-vector space of a Nori motive matches with the $\GmotNo(k)$-action defined by Tannakian formalism.
As the reader can check, the results of Subsection~\ref{subsect:comp-point} (in particular, Lemma~\ref{lem:equiv-point}) are logically independent of the new results collected in the present section, so there are no circular arguments.

\subsection{The motivic Galois action on local systems}
\label{subsect:Locgeo^Gmot}

The stable $\infty$-categories of sheaves of geometric origin, introduced in Sections~\ref{subsect:Dbgeo} and~\ref{subsect:Pervgeo}, are closely related to the theory of Voevodsky motives.
The following result describes Ayoub's motivic Galois group as the group of symmetries of these categories:
\begin{thm}[{\cite[Thm.~2.2.3, Cor.~2.2.7]{AyoAnab}}]
	\label{thm:Anabel}
	The following hold:
	\begin{enumerate}
		\item The spectral motivic Galois group $\Gmot(k)$ can be naturally identified with the group of autoequivalences of the six functor formalism $X \mapsto D_{\geo}(X)$.
		\item Under the previous identification, the classical motivic Galois group $\GmotAy(k)$ can be identified with the subgroup of exact autoequivalences:
		for every connective commutative $\Q$-algebra $\Lambda$, an element $g \in \Gmot(k)(\Lambda)$ belongs to $\GmotAy(k)(\Lambda)$ if and only if, for every $k$-variety $X$, the triangulated functor
		\begin{equation*}
			g \cdot - \colon D_{\geo}(X)_{\Lambda} \rightarrow D_{\geo}(X)_{\Lambda}
		\end{equation*}
		is $t$-exact with respect to the ordinary $t$-structure.
	\end{enumerate}
\end{thm}

Here, $\Dgeo(X)$ denotes the stable $\infty$-category of
unbounded complexes of geometric origin
(see Rmk.~\ref{rem:Dgeo}).
In the present paper, we are only interested in the motivic Galois action on $\Dbgeo(X)$ and subcategories thereof.

The construction of the $\Gmot(k)$-action on sheaves of geometric origin is based on Proposition~\ref{prop:Dbgeo=Mod_DA}.
Indeed, by~\cite[Thm.~1.3.21]{AyoAnab} one can regard $\Gmot(k)$ as the spectral group parameterizing the self-equivalences of the Betti algebra $\mathcal{B}_k$, as defined in~\cite[Defn.~1.3.13]{AyoAnab}.
A self-equivalence of $\mathcal{B}_k$ defines a self-equivalence of the Betti algebra $\mathcal{B}_X$ (via Lemma~\ref{lem:BtiAlg_defn}(2)), and the latter determines an autoequivalence of the stable $\infty$-category $\DA(X;\mathcal{B}_X)$ by transport of structure.
By construction, as $X$ varies, these assemble into an autoequivalence of the six functor formalism $X \mapsto \Dgeo(X)$.
In order to deduce the first point of Theorem~\ref{thm:Anabel}, one has to show that this construction identifies autoequivalences of the six functor formalism $X \mapsto \DA(X;\mathcal{B}_X)$ with self-equivalences of $\mathcal{B}_k$:
this is essentially a consequence of Drew--Gallauer's theorem (see Theorem~\ref{thm:Drew-Gallauer} below).

In principle, Theorem \ref{thm:Anabel} gives information about the $\Lambda$-valued points of $\Gmot(k)$ whenever $\Lambda$ is a connective commutative $\Q$-algebra.
Since we are only interested in the action of the classical group $\GmotAy(k)$, we may restrict our attention to classical $\Q$-algebras.
Let $\CAlg_{\Q}$ denote the category of classical commutative $\Q$-algebras.

In this section we only need to consider the $\GmotAy(k)$-action on local systems of geometric origin, regarded as perverse sheaves.
We start from the following observation: 
\begin{lem}
	For every smooth $k$-variety $X$, the full subcategory $\Locgeo^{\rm{A}}(X) \subset \Dgeo(X)$ is $\GmotAy(k)$-stable.
\end{lem}
\begin{proof}
	Without loss of generality, we may assume $X$ connected, say of dimension $d$.
	In this situation, for every $\Lambda \in \CAlg_{\Q}$, the category $\Locgeo^{\rm{A}}(X)_{\Lambda}$ can be identified with the full subcategory of $\Dgeo(X)_{\Lambda}$ spanned by the dualizable objects concentrated in degree $d$.
\end{proof}

\begin{rem}\label{rem:Anabel-Loc}
	In fact, Ayoub's Theorem~\ref{thm:Anabel} admits a version tailored to local systems:
	this is~\cite[Thm.~4.4.2, Cor.~4.4.15]{AyoAnab}, which identifies $\Gmot(k)$ with the autoequivalence group of the monoidal fibered category $X \mapsto \Locgeo^{\rm{A}}(X)$.
	For us, it is more convenient to use the full version from the beginning, since it allows us to use functoriality with respect to direct image functors without further comments.
\end{rem}

We want to study the $\GmotAy(k)$-action on local systems of geometric origin.
Let $X$ be a smooth, geometrically connected $k$-variety of dimension $d$, with structural morphism $a_X \colon X \to \Spec(k)$.

\begin{nota}
	We let $\Locgeo^{\rm{A}}(X)^{\GmotAy(k)}$ denote the category of $\Q$-algebraic equivariant objects in the sense of Definition~\ref{defn:htpy-fixed_linear},
	and we write
	\begin{equation}\label{eq:forget_AMLoc}
		\omega_X \colon \Locgeo^{\rm{A}}(X)^{\GmotAy(k)} \to \Locgeo^{\rm{A}}(X)
	\end{equation}
	for the associated forgetful functor $\for_{\GmotAy(k)}$ (see Construction~\ref{constr:res_htpy-fixed}).
\end{nota}

\begin{lem}
	The category $\Locgeo^{\rm{A}}(X)^{\GmotAy(k)}$ is naturally neutral Tannakian over $\Q$, and the forgetful functor~\eqref{eq:forget_AMLoc} is an exact tensor-functor.
\end{lem}

\begin{proof}
	This follows from Lemma~\ref{lem:htpy-fixed_monoidal} and Proposition~\ref{prop:htpy-fixed_Tannakian}.
\end{proof}

\begin{ex}\label{ex:trivializ-Gmot}
If $X = \Spec(k)$, the motivic Galois action is quite easy to understand.
As in Example~\ref{ex:Locgeo(k)=vect},
let us identify $\Locgeo^{\rm{A}}(\Spec(k))$ with $\vect_{\Q}$.
Any monoidal action of a pro-algebraic group on $\vect_{\Q}$
admits a trivialization (in the sense of Definition~\ref{defn:isom_actions-trivializ}), because it preserves the unit object and induces the identity map on its endomorphism ring.
Passing to the equivariant objects, we obtain a canonical equivalence of neutral Tannakian categories
\begin{align*}
\Locgeo^{\rm{A}}(\Spec(k))^{\GmotAy(k)} &= \vect_{\Q}^{\GmotAy(k)} \\
&= \Rep_{\Q}(\GmotAy(k)). && \textup{(by Example~\ref{ex:Tann_as_htpyfixed})}
\end{align*}
In particular, the Tannaka dual of $\Locgeo^{\rm{A}}(\Spec(k))^{\GmotAy(k)}$
is again $\GmotAy(k)$.
\qed
\end{ex}

This example justifies the following notation:

\begin{nota}
	Let $x \in X(\bar{k})$ be a closed point.
	\begin{itemize}
		\item We let $\pi_1^{\rm{A}}(X,x)$ denote the Tannaka dual of $\Locgeo^{\rm{A}}(X)$ with respect to the fibre functor at $x$.
		\item We let $\GmotAy(X,x)$ denote the Tannaka dual of $\Locgeo^{\rm{A}}(X)^{\GmotAy(k)}$ with respect to the fibre functor at $x$, and we call it \textit{Ayoub's motivic Galois group} of $X$ with base-point $x$. 
	\end{itemize}
\end{nota}

Under Tannaka duality, the forgetful functor~\eqref{eq:forget_AMLoc} corresponds to a homomorphism of pro-algebraic groups
\begin{equation}\label{eq:pigeo-to-Gmot(X)_Ayoub}
	\pi_1^{\rm{A}}(X,x) \rightarrow \GmotAy(X,x).
\end{equation}

\begin{rem}
	As a consequence of Proposition~\ref{prop:Dgeo^Gmot-6ff} below, the shifted inverse image functor
	\begin{equation*}
		a_X^*[d] \colon \Locgeo^{\rm{A}}(\Spec(k))^{\GmotAy(k)} \to \Locgeo^{\rm{A}}(X)^{\GmotAy(k)}
	\end{equation*}
	admits a right adjoint, defined once again by the formula
	\begin{equation*}
		\pH^0 \circ a_{X,*}[-d] \colon \Locgeo^{\rm{A}}(X)^{\GmotAy(k)} \to \Locgeo^{\rm{A}}(\Spec(k))^{\GmotAy(k)}.
	\end{equation*}
	Using the same argument as in Remark~\ref{rem:constantLoc}, we deduce that $a_X^*[d]$ is fully faithful, with essential image stable under subquotients.
	This yields a canonical surjection between Tannaka dual groups
	\begin{equation}\label{eq:Gmot(X)-Gmot(k)_Ayoub}
		\GmotAy(X,x) \twoheadrightarrow \GmotAy(k),
	\end{equation}
	where the right-hand side was computed in Example~\ref{ex:trivializ-Gmot}.
\end{rem}

By Theorem~\ref{thm:Anabel}, the motivic Galois action on constructible complexes is faithful;
in view of Remark~\ref{rem:Anabel-Loc}, this happens already on local systems.
One can thus expect that it is not straightforward to describe the $\GmotAy(k)$-orbit of a general object $L \in \Locgeo^{\rm{A}}(X)$.
The simplest non-trivial case is the following:

\begin{ex}\label{ex:Gmot-action_unit_X'}
	As in Example~\ref{ex:ArtinMot_X}, fix a finite Galois extension $k'/k$, set $X' := X \times_k k'$, and write $e_X \colon X' \to X$ for the corresponding finite étale covering.
	We regard $X'$ as a $k$-variety, and we want to analyse the $\GmotAy(k)$-action on the unit local system
	\begin{equation*}
		{^\rom{p} \Q}_{\sigma,X'} := \Q_{\sigma,X'}[d] \in \Locgeoemb{\sigma}^{\rm{A}}(X').
	\end{equation*}
	Since the $\GmotAy(k)$-action on sheaves of geometric origin is monoidal, it necessarily preserves ${^\rom{p} \Q}_{\sigma,X'}$:
	more precisely, the unit local system canonically underlies an object of $\Locgeoemb{\sigma}^{\rm{A}}(X)^{\GmotAy(k)}$ (see Corollary~\ref{cor:Bti-enriched} below).
	Under the monoidal equivalence
	\begin{equation*}
		\Locgeoemb{\sigma}^{\rm{A}}(X') = \prod_{\sigma' \in \Hom_{\sigma}(k',\C)} \Locgeoemb{\sigma'}^{\rm{A}}(X')
	\end{equation*}
	induced by Lemma~\ref{lem:Dbgeo_sigma(X')-decomp},
	the object ${^\rom p \Q}_{\sigma,X'}$ corresponds to the unit tuple
	$\oplus_{\sigma' \in \Hom_{\sigma}(k',\C)} {^\rom p \Q}_{\sigma',X'}$.
	
	Even though the $\GmotAy(k)$-action preserves ${^\rom p \Q}_{\sigma,X'}$,
	it does not preserve its direct summands individually,
	as we now explain.
	For every $\sigma' \in \Hom_{\sigma}(k',\C)$,
	the $\GmotAy(k',\sigma')$-action induced by
	the homomorphism~\eqref{eq:Gmot'-to-Gmot_Ayoub}
	necessarily preserves the corresponding summand ${^\rom{p} \Q}_{\sigma',X'}$.
	The image of $\GmotAy(k',\sigma')$ in $\GmotAy(k)$ coincides with the kernel of the projection $\GmotAy(k) \twoheadrightarrow \Gal(k'/k)$ (by Proposition~\ref{prop:ses_Gmot'-Gmot-Gal_Ayoub}) and,
	in particular, is independent of the choice of $\sigma'$.
	It follows that the $\GmotAy(k)$-action on ${^\rom{p} \Q}_{\sigma,X'}$ factors through a $\Gal(k'/k)$-action permuting its direct summands.
	
	We want to describe this permutation action explicitly.
	Recall from Example~\ref{ex:ArtinMot_k} that the identification of $\Gal(k'/k)$ with a quotient of $\GmotAy(k)$ depends on the choice of an element $\sigma'_0 \in \Hom_{\sigma}(k',\C)$.
	From now on, fix such a choice. 
	We rename the various unit summands by setting 
	\begin{equation*}
		{^\rom p \Q}_{\sigma',X'} := s {^\rom p \Q}_{\sigma'_0,X'}
	\end{equation*}
	where $s \in \Gal(k'/k)$ is the unique element
	satisfying $\sigma' = \sigma'_0 \circ s$.
	We claim that,
	with this notation,
	the $\Gal(k'/k)$-action on ${^\rom{p} \Q}_{\sigma,X'}$ satisfies the formula
	\begin{equation*}
		r \cdot (s {^\rom p \Q}_{\sigma'_0,X'}) = (rs) {^\rom p \Q}_{\sigma'_0,X'}
	\end{equation*}
	for all $r, s \in \Gal(k'/k)$.
	Using the compatibility of the $\GmotAy(k)$-action with inverse images, it suffices to establish the claim when $X = \Spec(k)$.
	In this case, using the compatibility of the $\GmotAy(k)$-action with direct images, the claim reduces
	to the computations of Example~\ref{ex:ArtinMot_k} (see also Remark~\ref{rem:ArtinMot_two-Gal-actions}).
	Here, we are implicitly invoking Theorem~\ref{thm:ChoudGal} (and Lemma~\ref{lem:equiv-point}) in order to use the input from Nori's theory.
	\qed
\end{ex}

\subsection{Proof of the fundamental sequence}\label{subsect:fundses_Ayoub}

As before, let $X$ be
a smooth, geometrically connected $k$-variety of dimension $d$
and with structural morphism $a_X \colon X \to \Spec(k)$.
The goal is to establish the analogue of Theorem~\ref{thm:fund_ses_general} for Ayoub's motivic Galois group $\GmotAy(X,x)$.
Before stating the theorem, we discuss some preliminary results, along the lines of Section~\ref{subsect:fundses_MNori}.

The easiest way to produce an object of $\Locgeo^{\rm{A}}(X)^{\GmotAy(k)}$ is by considering the Betti realization of a dualizable motivic sheaf
(this is justified by Corollary~\ref{cor:Bti-enriched} below).

\begin{ex}\label{ex:Artin/X_Ayoub}
	Let us consider again the setting of Example~\ref{ex:ArtinMot_X}:
	fix a finite Galois extension $k'/k$, set $X' \times_k k'$, and write $e_X \colon X' \to X$ for the corresponding finite étale covering.
	The local system
	\begin{equation*}
		{^\rom{p} \Q}_{X'/X} := \Bti_X^* (e_{X,*} {\Q}_{X'}[d]) \in \Locgeo^{\rm{A}}(X)
	\end{equation*}
	canonically underlies an equivariant object ${^\rom{p} \Q}_{X'/X} \in \Locgeo^{\rm{A}}(X)^{\GmotAy(k)}$.
	As in Example~\ref{ex:ArtinMot_X}, the Tannakian subcategory $\langle {^\rom{p} \Q}_{X'/X} \rangle^{\otimes} \subset \Locgeo^{\rm{A}}(X)^{\GmotAy(k)}$ is equivalent to $\Rep_{\Q}(\Gal(X'/X))$, with the generator ${^\rom{p} \Q}_{X'/X}$ corresponding to the regular representation.
	This determines a surjection between Tannaka dual groups
	\begin{equation}\label{eq:Gmot(X)-to-Gal(X'/X)_Ayoub}
		\GmotAy(X,x) \twoheadrightarrow \Gal(X'/X).
	\end{equation}
	To see this, note that the $\Gal(X'/X)$-action on
	the local system $\omega_X({^\rom{p} \Q}_{X'/X})$
	can be expressed in terms of the six operations,
	as in Example~\ref{ex:Artin-motive}.
	Since the $\GmotAy(k)$-action of Theorem~\ref{thm:Anabel} commutes with the six operations, the $\Gal(X'/X)$-action lifts to the equivariant level.
	Hence, we obtain a canonical $\Q$-algebra homomorphism
	\begin{equation*}
		\Q[\Gal(X'/X)] \to \End_{\Locgeo^{\rm{A}}(X)^{\GmotAy(k)}}({^\rom{p} \Q}_{X'/X}),
	\end{equation*}
	and, as in Example~\ref{ex:ArtinMot_X}, the key point is to show that this is an isomorphism.
	This can be achieved by repeating the argument of Example~\ref{ex:ArtinMot_X} in the setting of $\GmotAy(k)$-equivariant local systems.
	Here again, we are implicitly invoking Theorem~\ref{thm:ChoudGal} (and Lemma~\ref{lem:equiv-point}) to use the input from Nori's theory.
	\qed
\end{ex}

Now choose a complex embedding $\sigma' \colon k' \hookrightarrow \C$ extending $\sigma$, and let $\GmotAy(k')$ denote the associated motivic Galois group.
Recall that, by Proposition~\ref{prop:ses_Gmot'-Gmot-Gal_Ayoub}, we have a short exact sequence
\begin{equation*}
	1 \to \GmotAy(k') \to \GmotAy(k) \to \Gal(k'/k) \to 1.
\end{equation*}
Note that the canonical equivalence $\Locgeoemb{\sigma}^{\rm{A}}(X) \xrightarrow{\sim} \Locgeoemb{\sigma'}^{\rm{A}}(X')$ of Lemma~\ref{lem:Locgeo_X'/X}(1) is $\GmotAy(k')$-equi\-vari\-ant:
this is because, under the identifications of Corollary~\ref{cor:Dbgeo=DAct(Btialg)}, it comes from the functor
\begin{equation*}
	\DAct(X;\mathcal{B}_{\sigma,X}) \to \DAct(X';\mathcal{B}_{\sigma',X'})
\end{equation*}
induced by the isomorphism of Proposition~\ref{prop:e_*BtiAlg'}.
This leads to the following analogue of Lemma~\ref{lem:NMLoc_X'_X}:
\begin{lem}\label{lem::AMLoc_X'/X}
	Let $k'/k$ be a finite Galois extension.
	Given a smooth $k$-variety $X$, set $X' := X \times_k k'$, and write $e_X \colon X' \to X$ for the corresponding finite Galois covering.
	Then:
	\begin{enumerate}
		\item The Tannakian categories $\Locgeoemb{\sigma'}^{\rm{A}}(X')^{\GmotAy(k')}$ and $\Locgeoemb{\sigma}^{\rm{A}}(X')^{\GmotAy(k)}$ (defined by regarding $X'$ as a $k'$-variety or as a $k$-variety, respectively) are canonically equivalent.
		\item The inverse image functor $e_X^* \colon \Locgeoemb{\sigma}^{\rm{A}}(X)^{\GmotAy(k)} \rightarrow \Locgeoemb{\sigma}^{\rm{A}}(X')^{\GmotAy(k)}$ induces a canonical equivalence
		\begin{equation*}
			\Locgeoemb{\sigma}^{\rm{A}}(X)^{\GmotAy(k)} \xrightarrow{\sim} (\Locgeoemb{\sigma}^{\rm{A}}(X')^{\GmotAy(k)})^{\Gal(k'/k)},
		\end{equation*}
		where the $\Gal(k'/k)$-action on $\Locgeoemb{\sigma}^{\rm{A}}(X')^{\GmotAy(k)}$ is induced from its action on $X'$ via $k$-auto\-morph\-isms.
	\end{enumerate}
\end{lem}
\begin{proof}
	As usual, the second part follows from the fact that the fibered category $X \mapsto \Locgeoemb{\sigma}^{\rm{A}}(X)^{\GmotAy(k)}$ is a stack for the étale topology:
	same argument as for Lemma~\ref{lem:Locgeo_X'/X}(2).
	
	Let us focus on the first statement.
	For the duration of the proof, we rename the chosen complex embedding $\sigma' \colon k' \hookrightarrow \C$ as $\sigma'_0$ and let the symbol $\sigma'$ denote a generic element of $\Hom_{\sigma}(k',\C)$.
	Consider the direct product decomposition
	\begin{equation*}
		\Locgeoemb{\sigma}^{\rm{A}}(X') = \prod_{\sigma' \in \Hom_{\sigma}(k',\C)} \Locgeoemb{\sigma'}^{\rm{A}}(X')
	\end{equation*}
	induced by Lemma~\ref{lem:Dbgeo_sigma(X')-decomp}.
	By construction, the projection $\Locgeoemb{\sigma}^{\rm{A}}(X') \to \Locgeoemb{\sigma'_0}^{\rm{A}}(X')$ is $\GmotAy(k')$-equivariant.
	Passing to the equivariant objects, we get a canonical monoidal functor
	\begin{equation*}
		\Locgeoemb{\sigma}^{\rm{A}}(X')^{\GmotAy(k)} \to \Locgeoemb{\sigma}^{\rm{A}}(X')^{\GmotAy(k')} \to \Locgeoemb{\sigma'_0}^{\rm{A}}(X')^{\GmotAy(k')},
	\end{equation*}
	where the first passage is restriction under the homomorphism $\GmotAy(k') \to \GmotAy(k)$.
	We want to show that the composite functor is an equivalence.
	By faithfully flat descent, it suffices to show this after extending coefficients from $\Q$ to $\bar{\Q}$.
	We want to apply Proposition~\ref{prop:C^Q=(C^Q')^Q''-proalg} to the short exact sequence of Proposition~\ref{prop:ses_Gmot'-Gmot-Gal_Ayoub}.
	To this end, we claim that the $\GmotAy(k)$-action on $\Locgeoemb{\sigma}^{\rm{A}}(X')$ is induced from the $\GmotAy(k')$-action on $\Locgeoemb{\sigma'}^{\rm{A}}(X')$ in the sense of Example~\ref{ex:ind-rep}.
	The advantage of working with $\bar{\Q}$-coefficients is that it ensures the existence of a scheme-theoretic section to the projection $\GmotAy(k) \twoheadrightarrow \Gal(k'/k)$, which is needed to make the discussion of Example~\ref{ex:ind-rep} go through (see Remark~\ref{rem:ind-rep_proalg}).
	
	Let us prove the claim.
	Following Example~\ref{ex:Gmot-action_unit_X'}, we rename the various factors by setting
	\begin{equation*}
		\Locgeoemb{\sigma'}^{\rm{A}}(X') =: s\Locgeoemb{\sigma'_0}^{\rm{A}}(X'),
	\end{equation*}
	where $s \in \Gal(k'/k)$ is the unique element satisfying $\sigma' = \sigma'_0 \circ s$.
	By Remark~\ref{rem:ind-rep}, proving the claim reduces to checking that, given an element $g \in \GmotAy(k)$ mapping to $r \in \Gal(k'/k)$, the functor $g \cdot - \colon \Locgeoemb{\sigma}^{\rm{A}}(X') \to \Locgeoemb{\sigma}^{\rm{A}}(X')$ sends $s\Locgeoemb{\sigma'_0}^{\rm{A}}(X') \mapsto (rs)\Locgeoemb{\sigma'_0}^{\rm{A}}(X')$ for all $s \in \Gal(k'/k)$.
	Recall that the subcategory $\Locgeoemb{\sigma'}^{\rm{A}}(X') \subset \Locgeoemb{\sigma}^{\rm{A}}(X')$ can be characterized as the image of the endofunctor $\Q_{\sigma',X'} \otimes -$ (see Remark~\ref{rem:unit-decomp_X'}).
	Since the $\GmotAy(k)$-action is compatible with the tensor product, proving the claim reduces to showing that, given an element $g \in \GmotAy(k)(\bar{\Q})$ mapping to $r \in \Gal(k'/k)$, the formula
	\begin{equation*}
		g \cdot s\Q_{\sigma'_0,X'} = (rs)\Q_{\sigma'_0,X'}
	\end{equation*}
	holds for each $s \in \Gal(k'/k)$.
	But this has been already proved in Example~\ref{ex:Gmot-action_unit_X'}.
\end{proof}

Using Lemma~\ref{lem::AMLoc_X'/X}(1), we obtain a monoidal functor
\begin{equation*}
	\Locgeoemb{\sigma}^{\rm{A}}(X)^{\GmotAy(k)} \xrightarrow{e_X^*} \Locgeoemb{\sigma}^{\rm{A}}(X')^{\GmotAy(k)} = \Locgeoemb{\sigma'}^{\rm{A}}(X')^{\GmotAy(k')}
\end{equation*}
compatible with the fibre functors at any closed point $x \in X(\bar{k})$.
This induces a canonical homomorphism between Tannaka dual groups
\begin{equation}\label{eq:Gmot(X')-to-Gmot(X)_Ayoub}
	\GmotAy(X',x') \to \GmotAy(X,x),
\end{equation}
where $x' \in (X')^{\sigma'}$ denotes the point corresponding to $x \in X^{\sigma}$. 
We have the following generalization of Proposition~\ref{prop:ses_Gmot'-Gmot-Gal_Ayoub}:
\begin{prop}\label{prop:ses_Gmot(X')-Gmot(X)-Gal(X'/X)_Ayoub}
	Keep the notation and assumptions of Example~\ref{ex:Artin/X_Ayoub}.
	The sequence of pro-algebraic groups
	\begin{equation*}
		1 \to \GmotAy(X',x') \to \GmotAy(X,x) \to \Gal(X'/X) \to 1
	\end{equation*}
	defined by the homomorphisms~\eqref{eq:Gmot(X)-to-Gal(X'/X)_Ayoub} and~\eqref{eq:Gmot(X')-to-Gmot(X)_Ayoub} is exact.
\end{prop}
\begin{proof}
	Same argument as for Proposition~\ref{prop:ses_Gmot(X')-Gmot(X)-Gal(X'/X)}, using Example~\ref{ex:Artin/X_Ayoub} in place of Example~\ref{ex:ArtinMot_X}.
\end{proof}

We can finally state the main result of this section:
\begin{thm}\label{thm:ayoub_fund_seq}
	Let $X$ be a smooth, geometrically connected $k$-variety.
	Then, for every point $x \in X(\bar{k})$, the sequence of pro-algebraic groups
	\begin{equation}\label{ses:Loc-Ayoub}
		1 \to \pi_1^{\rm{A}}(X,x)
		\to \GmotAy(X,x)
		\to \GmotAy(k)
		\to 1
	\end{equation}
	defined by the homomorphisms \eqref{eq:pigeo-to-Gmot(X)_Ayoub} and \eqref{eq:Gmot(X)-Gmot(k)_Ayoub} is exact.
\end{thm}
\begin{proof}
	It suffices to treat the case when the base-point $x$ is a $k$-rational point:
	once the thesis is proved in this case, the general case follows from the same argument used in the proof of  Theorem~\ref{thm:fund_ses_general}, with Proposition~\ref{prop:ses_Gmot(X')-Gmot(X)-Gal(X'/X)_Ayoub} playing here the same role as Proposition~\ref{prop:ses_Gmot(X')-Gmot(X)-Gal(X'/X)} there.
	
	Let us focus on the case when $x$ is $k$-rational. 
	Since the sequence in the statement is obtained via Construction~\ref{constr:1KGQ1}, it suffices to check that the hypotheses of Proposition~\ref{prop:KrtimesQ} are satisfied:
	we have to show that the $\GmotAy(k)$-action on $\Locgeo^{\rm{A}}(X)$ is induced by an algebraic action of $\GmotAy(k)$ on $\pi_1^{\rm{A}}(X,x)$ (in the sense of Example~\ref{ex:Q_acts_K}).
	By Lemma~\ref{lem:concretization}, this is the case if and only if the $\GmotAy(k)$-action on $\Locgeo^{\rm{A}}(X)$ admits a concretization $\gamma$ with respect to the fibre functor at $x$ (in the sense of Definition~\ref{defn:concretization}). 
	
	To finish the proof, we exhibit such a concretization.
	For every $\Lambda \in \CAlg_{\Q}$ and every $g \in \GmotAy(k)(\Lambda)$, the natural isomorphism between functors $\Locgeo^{\rm{A}}(X)_{\Lambda} \rightarrow \modules_{\Lambda}$
	\begin{equation*}
		\gamma_{\Lambda,g} \colon x^*[-d](g \cdot -) \xrightarrow{\sim} x^*[-d]
	\end{equation*}
	is defined to be the one filling the composite diagram
	\begin{equation}\label{dia:concretization_x^*}
		\begin{tikzcd}
			\Locgeo^{\rm{A}}(X)_{\Lambda} \arrow{rr}{g \cdot -} \arrow{d}{x^*[-d]} && \Locgeo^{\rm{A}}(X)_{\Lambda} \arrow{d}{x^*[-d]} \\
			\Locgeo^{\rm{A}}(\Spec(k))_{\Lambda} \arrow{rr}{g \cdot -}\arrow[equal]{dr} && \Locgeo^{\rm{A}}(\Spec(k))_{\Lambda} \arrow[equal]{dl} \\
			& \modules_{\Lambda}.
		\end{tikzcd}
	\end{equation}
	Here, the upper rectangle witnesses the compatibility of the action of $g$ with the inverse image functor $x^*[-d]$, while the lower triangle comes from the trivialization of the $\GmotAy(k)$-action on $\Locgeo^{\rm{A}}(\Spec(k))$ discussed in Example~\ref{ex:trivializ-Gmot}.
	In order to check that the natural isomorphisms $\gamma_{\Lambda,g}$ are compatible with composition in $\GmotAy(k)$ and functorial with respect to $\Q$-algebra homomorphisms (as required in Definition~\ref{defn:concretization}), it suffices to check that the same coherence properties hold both for the upper rectangles and for the lower triangles figuring in \eqref{dia:concretization_x^*}. 
	For the upper rectangles, the two coherence properties follow from the very description of $\GmotAy(k)$ as a $\Q$-group scheme parameterizing the autoequivalences of the six functor formalism $X \mapsto \Dbgeo(X)$.
	For the lower triangles, each of the two coherence properties follows from the corresponding coherence property of the trivialization of the $\GmotAy(k)$-action on $\Locgeo^{\rm{A}}(\Spec(k))$ (as stated in Definition~\ref{defn:iso-actions_F}(1)).
\end{proof}

In the following section, we put the categories of motivic local systems just studied into a six functor formalism.
In~\cite{Jacobsen}, the extra functoriality provided by the six operations leads to a method to establish the exactness of the fundamental sequence in the case of a $k$-rational base-point.
Unfortunately, we were not able to implement this method in the proof of Theorem~\ref{thm:ayoub_fund_seq}:
the main issue is that, to do so, we would need to know in advance that the forgetful functor~\eqref{eq:forget_AMLoc} sends semisimple objects to semisimple objects.
For Ivorra--Morel's categories, this semi-simplicity condition follows from the existence of a realization into mixed Hodge modules, obtained by Tubach in~\cite{Tub23} (see~\cite[Lem.~7.6]{Jacobsen}).

\section{Ayoub's categories of Nori motivic sheaves}\label{sect:ayoub_cat}

Motivated by the results of Section~\ref{sect:fund_ses_Voe}, it is natural to look for an alternative construction of motivic perverse sheaves based on the motivic Galois action on perverse sheaves of geometric origin.
In fact, it is useful to start from the motivic Galois action on the stable $\infty$-categories of constructible complexes of geometric origin:
there is a natural $\infty$-categorical notion of homotopy-fixed points, which recovers the $1$-categorical notion of equivariant objects when applied to ordinary categories.
This allows us to study the perverse $t$-structure on the stable $\infty$-categories of homotopy-fixed points.
As for the classical categories of perverse sheaves, the associated realization functor turns out to be an equivalence.

As usual, we work over a field $k$ of characteristic $0$ endowed with a complex embedding $\sigma \colon k \hookrightarrow \C$.

\subsection{The motivic Galois action on constructible complexes}\label{subsect:Dbgeo^Gmot}

Let $X$ be a $k$-variety.
Recall that Theorem~\ref{thm:Anabel} describes a canonical action by the spectral motivic Galois group $\Gmot(k)$ on the stable $\infty$-category $\Dbgeo(X)$, such that the induced action by the underlying classical group $\GmotAy(k)$ respects the ordinary $t$-structure.
In the same spirit as in Subsection~\ref{subsect:Locgeo^Gmot}, we want to consider the associated category of equivariant objects.

However, the $1$-categorical notion of equivariant object is not adapted to the triangulated setting, due to the usual technical defects of triangulated categories:
in order to get a well-behaved triangulated equivariant category, one has to work in the setting of stable $\infty$-categories.
Let us review Ayoub's construction of the $\infty$-category of homotopy-fixed points $D_{\geo}(X)^{\GmotAy(k)}$ in a way adapted to our main applications: 
\begin{nota}
	We introduce the category $\CAlg_{B\GmotAy(k)}$ defined as follows:
	\begin{itemize}
		\item Objects are classical commutative $\Q$-algebras $\Lambda$.
		\item A morphism $\Lambda \rightarrow \Lambda'$ is a pair $(\phi,g)$ consisting of a $\Q$-algebra homomorphism $\phi \colon \Lambda \rightarrow \Lambda'$ and an element $g \in \GmotAy(k)(\Lambda')$.
		\item The composite of two morphisms $(\phi_1,g_1) \colon \Lambda \rightarrow \Lambda'$ and $(\phi_2,g_2) \colon \Lambda' \rightarrow \Lambda''$ is defined as
		\begin{equation*}
			(\phi_2,g_2) \circ (\phi_1,g_1) := (\phi_2 \circ \phi_1, g_2 \phi_1(g_1)) \colon \Lambda \rightarrow \Lambda''.
		\end{equation*}
	\end{itemize}
\end{nota}

There is a natural functor 
\begin{equation*}
	B\GmotAy(k) \colon \CAlg_{\Q} \rightarrow \mathsf{Grpd}
\end{equation*} 
associating to each $\Lambda \in \CAlg_{\Q}$ the group of $\Lambda$-valued points $\GmotAy(k)(\Lambda)$, regarded as a groupoid.
By construction, $\CAlg_{B\GmotAy(k)}$ is the $1$-category of coCartesian sections of the associated coCartesian fibration with $1$-categorical fibres
\begin{equation*}
	\int_{\CAlg_{\Q}} B\GmotAy(k) \rightarrow \CAlg_{\Q}.
\end{equation*}
Here is Ayoub's construction of homotopy-fixed points under the motivic Galois group:

\begin{constr}[{\cite[Constr.~2.3.4]{AyoAnab}}]\label{constr:Dbgeo^Gmot-infty}
	The stable $\infty$-category $\Dgeo(X)$ fits into a functor
	\begin{equation*}
		\Dgeo(X)_{?}: \CAlg_{\Q} \rightarrow \mathsf{Cat}_{\infty}, \quad \Lambda \mapsto \Dgeo(X)_{\Lambda},
	\end{equation*}
	with associated coCartesian fibration
	\begin{equation*}
		\int_{\CAlg_{\Q}} \Dgeo(X)_{?} \rightarrow \CAlg_{\Q}.
	\end{equation*}
	The $\Gmot(k)$-action on $\Dgeo(X)$ promotes the latter to a coCartesian fibration
	\begin{equation*}
		\int_{\Lambda \in \CAlg_{B \GmotAy(k)}} \Dgeo(X)_{?} \rightarrow \CAlg_{B\Gmot^A(k)},
	\end{equation*}
	and we define $\Dgeo(X)^{\GmotAy(k)}$ as the associated stable $\infty$-category of coCartesian sections.
	\qed
\end{constr}

More generally, for any full $\GmotAy(k)$-stable sub-$\infty$-category $\Category \subset \Dgeo(X)$, the same method yields an $\infty$-category of homotopy-fixed points $\Category^{\GmotAy(k)}$, which is stable as soon as $\Category$ is stable.
 
\begin{nota}
	We let
	\begin{equation*}
		\omega_X \colon \Dgeo(X)^{\GmotAy(k)} \rightarrow \Dgeo(X)
	\end{equation*}
	denote the forgetful functor defined by evaluating a coCartesian section at the initial object $\Q \in \CAlg_{\Q}$.
	We use the same notation for the forgetful functor $\Category^{\GmotAy(k)} \to \Category$ on a $\GmotAy(k)$-stable full sub-$\infty$-category $\Category \subset \Dgeo(X)$.
\end{nota}

The main example of $\GmotAy(k)$-stable sub-$\infty$-category of $\Dgeo(X)$ is given by the constructible complexes:

\begin{lem}\label{lem:Dbgeo_Gmot-stable}
	The full sub-$\infty$-category $\Dbgeo(X) \subset \Dgeo(X)$ is $\GmotAy(k)$-stable.
\end{lem}

\begin{proof}
	Indeed, for every $\Lambda \in \CAlg_{\Q}$, the stable $\infty$-category $\Dbgeo(X)_{\Lambda}$ can be identified with the full sub-$\infty$-category of $\Dgeo(X)_{\Lambda}$ spanned by the compact objects, and therefore is stable under self-equivalences.
\end{proof}

The functor $\Dbgeo(X)^{\GmotAy(k)} \rightarrow \Dgeo(X)^{\GmotAy(k)}$ identifies $\Dbgeo(X)^{\GmotAy(k)}$ with the full sub-$\infty$-category of $\Dgeo(X)^{\GmotAy(k)}$ spanned by those objects $K$ whose underlying complex $\omega_X(K) \in \Dgeo(X)$ is constructible.

\begin{prop}[{\cite[Prop.~2.3.5]{AyoAnab}}]\label{prop:Dgeo^Gmot-6ff}
	As $X$ varies among $k$-varieties, the stable $\infty$-categories $\Dgeo(X)^{\GmotAy(k)}$ inherit the six operations, and the forgetful functors 
	\begin{equation*}
		\omega_X \colon \Dgeo(X)^{\GmotAy(k)} \to \Dgeo(X)
	\end{equation*} 
	are canonically compatible with them.
\end{prop}

In order to appreciate the significance of this result, we need to recall a key property of Voevodsky motives proved by Drew--Gallauer:

\begin{thm}[{\cite[Thm.~7.14]{DG22}}]\label{thm:Drew-Gallauer}
	The six functor formalism $X \mapsto \DA(X)$ is $2$-initial among all six functor formalisms on $k$-varieties taking values in presentable stable $\infty$-categories and satisfying étale descent.
\end{thm}

The stable $\infty$-categories $\Dgeo(X)^{\GmotAy(k)}$ are presentable, since so are $\Dgeo(X)$.
As a consequence: 

\begin{cor}[{\cite[Cor.~2.3.6]{AyoAnab}}]\label{cor:Bti-enriched}
	For every $k$-variety $X$, there exists an enriched Betti realization functor
	\begin{equation*}
		\widetilde{\Bti}_X^* \colon \DA(X) \to \Dgeo(X)^{\GmotAy(k)}
	\end{equation*}
	such that the Betti realization over $X$ factors as
	\begin{equation*}
		\Bti_X^* \colon \DA(X) \xrightarrow{\widetilde{\Bti}_X^*} \Dgeo(X)^{\GmotAy(k)} \xrightarrow{\omega_X} \Dgeo(X)
	\end{equation*}
	up to canonical natural isomorphism.
	As $X$ varies, the functors $\widetilde{\Bti}_X^*$ are canonically compatible with the six operations.
\end{cor}

In the following, we are mostly interested in the restriction of the enriched Betti realization to compact motives
\begin{equation*}
	\widetilde{\Bti}_X^* \colon \DAct(X) \to \Dbgeo(X)^{\GmotAy(k)}.
\end{equation*}
Note that the sub-$\infty$-categories $\Dbgeo(X)^{\GmotAy(k)} \subset \Dgeo(X)^{\GmotAy(k)}$ are stable under the six operations, since the analogous property holds for the underlying categories of constructible complexes.
They also inherit Beilinson's gluing functors from the underlying categories $\Dbgeo(X)$.

Informally, $\Dgeo(X)^{\GmotAy(k)}$ can be thought as an $\infty$-category of $\GmotAy(k)$-representations inside $\Dbgeo(X)$. However, since the representation structure is induced by the $\GmotAy(k)$-action, not every object of $\Dbgeo(X)$ can underlie an object of $\Dbgeo(X)^{\GmotAy(k)}$:

\begin{ex}
	Consider the setting of Example~\ref{ex:Artin/X_Ayoub} in the case when $X = \Spec(k)$ and $k'/k$ is a non-trivial finite Galois extension.
	Recall that the complex-analytic space $\Spec(k')^{\sigma}$ consists of $[k' \colon k]$ points, indexed by the set $\Hom_{\sigma}(k',\C)$.
	Every constructible complex of geometric origin over it decomposes into skyscraper complexes accordingly.
	
	As a consequence of Corollary~\ref{cor:Bti-enriched}, the unit sheaf $\Q_{\Spec(k')^{\sigma}} = \Bti_{\sigma,\Spec(k')}^*(\Q_{\Spec(k')}) \in \Pervgeoemb{\sigma}^{\rm{A}}(\Spec(k))$ canonically underlies an object of $\Dbgeo(\Spec(k'))^{\GmotAy(k)}$.
	However, its restrictions to the single points of $\Spec(k')^{\sigma}$ individually do not.
	Indeed, the $\GmotAy(k)$-representation on $\Q_{\Spec(k')^{\sigma}}$ factors through $\Gal(k'/k)$ and matches with the natural $\Gal(k'/k)$-action on $\Spec(k')^{\sigma}$.
	Since the latter is transitive, we deduce that the $\GmotAy(k)$-action on $\Q_{\Spec(k')^{\sigma}}$ permutes the various restrictions transitively.
	\qed 
\end{ex}

\subsection{The perverse $t$-structure}\label{subsect:PervAyoub}

Recall that the perverse $t$-structure on $\Dbct(X)$ restricts to a $t$-structure on $\Dbgeo(X)$ (by Proposition~\ref{prop:Dbgeo_perv_t-str}), which we indicate here as $(\Dbgeo(X)^{\leq_p 0}, \Dbgeo(X)^{\geq_p 0})$.
We want to lift the perverse $t$-structure to Ayoub's category of homotopy-fixed points and describe its heart explicitly.
Recall from Subsection~\ref{subsect:Dbgeo} that we have the filtered union
	\begin{equation*}
	\Dbgeo(X) = \bigcup_{\Sigma \in \Strat_{X/k}} \Dbgeo(X,\Sigma)
\end{equation*}
indexed by the poset $\Strat_{X/k}$ of $k$-stratifications.
Following~\cite[\S~2]{BBDG}, the perverse $t$-structure on $\Dbgeo(X)$ can be recovered by first constructing the so-called $\Sigma$-perverse $t$-structure on $\Dbgeo(X,\Sigma)$ for each $\Sigma \in \Strat_{X/k}$ and then passing to the colimit, exactly as one does for $\Dbct(X)$.

We have the following stability results for the motivic Galois action:

\begin{lem}\label{lem:Gmot^A-respects-strat}
For every $\Sigma \in \Strat_{X/k}$,
the full sub-$\infty$-category
$\Dbgeo(X,\Sigma) \subset \Dbgeo(X)$
is $\GmotAy(k)$-stable.
\end{lem}

\begin{proof}
Fix $\Lambda \in \CAlg_{\Q}$. 
The triangulated category $\Dbgeo(X,\Sigma)_{\Lambda}$ can be identified with the full subcategory of $\Dbgeo(X)_{\Lambda}$ spanned by those complexes $K$ such that, for each stratum $S \in \Sigma$, with inclusion $s \colon S \hookrightarrow X$, the complex $s^* K \in \Dbgeo(S)_{\Lambda}$ is dualizable.
Since the $\GmotAy(k)$-action on constructible complexes
commutes with inverse images,
and the $\GmotAy(k)(\Lambda)$-action
on each $\Dbgeo(S)_{\Lambda}$ preserves dualizable objects
(being monoidal),
the thesis follows.
\end{proof}

\begin{cor}\label{cor:Gmot_respects_Perv}
	The subcategories $\Dbgeo(X)^{\leq_p 0}, \Dbgeo(X)^{\geq_p 0} \subset \Dbgeo(X)$ are both $\GmotAy(k)$-stable.
\end{cor}

\begin{proof}
	Since the $\GmotAy(k)$-action on $\Dbgeo(X)$ respects each $\Dbgeo(X,\Sigma)$ (by Lemma~\ref{lem:Gmot^A-respects-strat}), it suffices to show that the $\Gmot^A(k)$-action on $\Dbgeo(X,\Sigma)$ respects the $\Sigma$-perverse $t$-structure.
	But this follows from the fact that the $\GmotAy(k)$-action preserves the ordinary $t$-structure on each $\Dbgeo(S)$ (by Theorem~\ref{thm:Anabel}(2)), commutes with inverse images, and respects shifts in the triangulated category $\Dbgeo(X)$.
\end{proof}

This implies formally the existence of the sought-after perverse $t$-structure on homotopy-fixed points:

\begin{prop}\label{prop:Perv^Gmot}
	The stable $\infty$-category $\Dbgeo(X)^{\GmotAy(k)}$ carries a unique $t$-structure making the forgetful functor $\omega_X \colon \Dbgeo(X)^{\GmotAy(k)} \rightarrow \Dbgeo(X)$ $t$-exact with respect to the perverse $t$-structure on $\Dbgeo(X)$.
\end{prop}

\begin{proof}
	Since the forgetful functor $\omega_X$ is conservative, any $t$-structure as in the statement must be defined by the formula
	\begin{equation*}
		(\Dbgeo(X)^{\GmotAy(k)})^{\leq_p 0} := \omega_X^{-1}(\Dbgeo(X)^{\leq_p 0}), \quad (\Dbgeo(X)^{\GmotAy(k)})^{\geq_p 0} := \omega_X^{-1}(\Dbgeo(X)^{\geq_p 0}).
	\end{equation*}
	Hence, the sought-after $t$-structure is uniquely determined,
	provided it exists. 
	For every $\Lambda \in \CAlg_{\Q}$,
	the perverse $t$-structure on $\Dbgeo(X)$ induces
	a $t$-structure on $\Dbgeo(X)_{\Lambda}$
	(since $\Lambda$ is flat over $\Q$).
	For every morphism $(\phi,g) \colon \Lambda \to \Lambda'$ in $\CAlg_{B\Gmot^A(k)}$, the induced functor $\Dbgeo(X)_{\Lambda} \rightarrow \Dbgeo(X)_{\Lambda'}$ is $t$-exact with respect to these $t$-structures.
	We deduce that the limit stable $\infty$-category $\Dbgeo(X)^{\GmotAy(k)}$ inherits a uniquely determined compatible $t$-structure.
\end{proof}

As a consequence of Corollary~\ref{cor:Gmot_respects_Perv}, the abelian category $\Dbgeo(X)^{\leq_p 0} \cap \Dbgeo(X)^{\geq_p 0} =: \Pervgeo^{\rm{A}}(X)$ is $\GmotAy(k)$-stable.
By~\cite[Thm.~1.3.6]{BBDG}, the intersection
\begin{equation*}
	(\Dbgeo(X)^{\GmotAy(k)})^{\leq_p 0} \cap (\Dbgeo(X)^{\GmotAy(k)})^{\geq_p 0} = \omega_X^{-1}(\Pervgeo^{\rm{A}}(X))
\end{equation*} 
is an admissible abelian subcategory of $\Dbgeo(X)^{\GmotAy(k)}$, which we refer to as its \textit{perverse heart}.
By construction, it coincides with the essential image of the natural fully faithful functor on homotopy-fixed points
\begin{equation*}
	\Pervgeo^{\rm{A}}(X)^{\GmotAy(k)} \rightarrow \Dbgeo(X)^{\GmotAy(k)}.
\end{equation*}
In the rest of this paper, we tacitly identify $\Pervgeo^{\rm{A}}(X)^{\GmotAy(k)}$ with the perverse heart inside $\Dbgeo(X)^{\GmotAy(k)}$.
The following result offers a more explicit description of this abelian category: 

\begin{lem}\label{lem:Pervgeo^Gmot_naive}
	The abelian category $\Pervgeo^{\rm{A}}(X)^{\GmotAy(k)}$ (defined as in Construction~\ref{constr:Dbgeo^Gmot-infty}) is canonically equivalent to the category of $\Q$-algebraic equivariant objects of Definition~\ref{defn:htpy-fixed_linear}.
\end{lem}

\begin{proof}
	The category $\Pervgeo^{\rm{A}}(X)^{\GmotAy(k)}$ is defined by considering the $\GmotAy(k)$-action on the $1$-category $\Pervgeo^{\rm{A}}(X)$:
	that is to say, $\Pervgeo^{\rm{A}}(X)^{\GmotAy(k)}$ is the $1$-category of coCartesian sections of the coCartesian fibration with $1$-categorical fibres
	\begin{equation*}
		\int_{\Lambda \in \CAlg_{B \GmotAy(k)}} \Pervgeo^{\rm{A}}(X)_{\Lambda} \rightarrow \CAlg_{B \GmotAy(k)}.
	\end{equation*}
	Note that the associated pseudo-functor $\CAlg_{B \GmotAy(k)} \rightarrow \mathsf{Cat_1}$ is described by the formula
	\begin{equation*}
		\Lambda \mapsto \Pervgeo(X)_{\Lambda}, \quad (\phi,g) \mapsto g \cdot (- \otimes_{\Lambda,\phi} \Lambda').
	\end{equation*}
	Unwinding the construction, one sees that the $1$-category $\Pervgeo^{\rm{A}}(X)^{\GmotAy(k)}$ admits the following explicit description:
	\begin{itemize}
		\item
		Objects are pairs $(\underline{K},\underline{\alpha})$ consisting of 
		\begin{itemize}
			\item
			a family $\underline{K} = (K_{\Lambda})_{\Lambda \in \CAlg_{B\GmotAy(k)}}$ of objects $K_{\Lambda} \in \Pervgeo^{\rm{A}}(X)_{\Lambda}$,

			\item
			for every morphism $(\phi,g) \colon \Lambda \rightarrow \Lambda'$ in $\CAlg_{B \GmotAy(k)}$, an isomorphism in $\Pervgeo^{\rm{A}}(X)_{\Lambda'}$
			\begin{equation*}
				\alpha_{(\phi,g)} \colon g \cdot (K_{\Lambda} \otimes_{\Lambda,\phi} \Lambda') \xrightarrow{\sim} K_{\Lambda'}
			\end{equation*} 
			satisfying the natural cocycle condition with respect to composition in $\CAlg_{B \GmotAy(k)}$.
		\end{itemize}

		\item
		A morphism
		$(\underline{K}_1,\underline{\alpha}_1)
		\to (\underline{K}_2,\underline{\alpha}_2)$
		is the datum of
		\begin{itemize}
			\item
			for every $\Lambda \in \CAlg_{B \GmotAy(k)}$, a morphism $K_{1,\Lambda} \rightarrow K_{2,\Lambda}$ in $\Pervgeo^{\rm{A}}(X)_{\Lambda}$
		\end{itemize}
		satisfying the natural compatibility condition with respect to morphisms in $\CAlg_{B\GmotAy(k)}$.
	\end{itemize}
	We claim that the formula
	\begin{equation}\label{eq:comp_2defns_of_Pervgeo^Gmot}
		K_{\Lambda} := K \otimes \Lambda, \quad \alpha_{(\phi,g)} \colon g \cdot ((K \otimes \Lambda) \otimes_{\Lambda,\phi} \Lambda') = g \cdot (K \otimes \Lambda') \xrightarrow{\alpha_{\Lambda',g}} K \otimes \Lambda'
	\end{equation}
	defines an equivalence from the category of Definition~\ref{defn:htpy-fixed_linear} to the latter category of coCartesian sections.
	To see this, it suffices to note that every morphism $(\phi,g) \colon \Lambda \rightarrow \Lambda'$ in $\CAlg_{B\GmotAy(k)}$ can be factored as
	\begin{equation*}
		(\phi,g) \colon \Lambda \xrightarrow{(\phi,1_{\Lambda'})} \Lambda' \xrightarrow{(\id_{\Lambda'},g)} \Lambda'. 
	\end{equation*}
	Hence, by the cocycle condition, the isomorphism $\alpha_{(\phi,g)}$ associated to a coCartesian section $(\underline{K},\underline{\alpha})$ can be reconstructed from $\alpha_{(\phi,1_{\Lambda'})} \colon K_{\Lambda} \otimes_{\Lambda,\phi} \Lambda' \xrightarrow{\sim} K_{\Lambda'}$ and $\alpha_{(\id_{\Lambda'},g)} \colon g \cdot K_{\Lambda'} \xrightarrow{\sim} K_{\Lambda'}$.
	Similarly, the compatibility condition for a morphism of coCartesian sections $(\underline{K}_1,\underline{\alpha}_1) \rightarrow (\underline{K}_2,\underline{\alpha}_2)$ with respect to $(\phi,g)$ holds as soon as the compatibility conditions with respect to $(\phi,1_{\Lambda'})$ and $(\id_{\Lambda'},g)$ hold.
	This observation allows one to construct a quasi-inverse to the functor~\eqref{eq:comp_2defns_of_Pervgeo^Gmot}, thus proving the claim.
\end{proof}

Since the inclusion $\Locgeo^{\rm{A}}(X) \subset \Pervgeo^{\rm{A}}(X)$ induces a fully faithful inclusion on equivariant objects, we obtain a sequence of fully faithful embeddings
\begin{equation*}
	\Locgeo^{\rm{A}}(X)^{\GmotAy(k)} \subset \Pervgeo^{\rm{A}}(X)^{\GmotAy(k)} \subset \Dbgeo(X)^{\GmotAy(k)}
\end{equation*}
compatible with the forgetful functor $\omega_X$.
This puts the theory of equivariant local systems of Section~\ref{sect:fund_ses_Voe} into a six functor formalism.
As $X$ varies, the abelian categories $\Pervgeo^{\rm{A}}(X)^{\GmotAy(k)}$ enjoy the same functoriality as the classical categories of perverse sheaves of geometric origin:
in particular, they are respected by shifted inverse images under smooth morphisms, direct image under closed immersions, extensions by zero and direct images under affine open immersions, the external tensor product, as well as by Beilinson's gluing functor.

For future reference, let us record the behaviour of the theory under finite extensions of the base field $k$:
\begin{lem}\label{lem:PervvAyoub_X'/X}
	Let $k'/k$ be a finite Galois extension.
	Given a smooth $k$-variety $X$, set $X' := X \times_k k'$, and write $e_X \colon X' \to X$ for the corresponding finite Galois covering.
	Then:
	\begin{enumerate}
		\item The abelian categories $\Pervgeoemb{\sigma'}^{\rm{A}}(X')^{\GmotAy(k')}$ and $\Pervgeoemb{\sigma}^{\rm{A}}(X')^{\GmotAy(k)}$ (defined by regarding $X'$ as a $k'$-variety or as a $k$-variety, respectively) are canonically equivalent.
		\item The inverse image functor $e_X^* \colon \Pervgeoemb{\sigma}^{\rm{A}}(X)^{\GmotAy(k)} \rightarrow \Pervgeoemb{\sigma}^{\rm{A}}(X')^{\GmotAy(k)}$ induces a canonical equivalence
		\begin{equation*}
			\Pervgeoemb{\sigma}^{\rm{A}}(X)^{\GmotAy(k)} \xrightarrow{\sim} (\Pervgeoemb{\sigma}^{\rm{A}}(X')^{\GmotAy(k)})^{\Gal(k'/k)},
		\end{equation*}
		where the action of $\Gal(k'/k)$ on $\Pervgeoemb{\sigma}^{\rm{A}}(X')^{\GmotAy(k)}$ is induced by its action on $X'$ via $k$-automorphisms.
	\end{enumerate}
\end{lem}
\begin{proof}
	Same proof as for Lemma~\ref{lem::AMLoc_X'/X}.
\end{proof}

\subsection{Beilinson's equivalence}

In his celebrated paper~\cite{BeilinsonEquivalence}, Beilinson proved that the bounded derived category $\Db(\Perv(X))$ is canonically equivalent to $\Dbct(X)$:
the equivalence is given by the realization functor
\begin{equation*}
	\real_X \colon \Db(\Perv(X)) \rightarrow \Dbct(X),
\end{equation*}
which extends the natural inclusion $\Perv(X) \subset \Dbct(X)$.
The construction of the realization functor uses the canonical filtered enhancement of the triangulated category $\Dbct(X)$, which comes from its structure of stable $\infty$-category.
By construction, Beilinson's realization restricts to a $\GmotAy(k)$-equivariant triangulated functor
\begin{equation*}
	\real_X \colon \Db(\Pervgeo^{\rm{A}}(X)) \rightarrow \Dbgeo(X).
\end{equation*}
The latter is still an equivalence, as one can see by going through Beilinson's argument:
first one establishes a generic version of the sought-after equivalence, then one deduces the full equivalence arguing by induction.
Indeed, both steps use the six operations as well as the gluing functors, but these preserve sheaves of geometric origin (by Proposition~\ref{prop:Dbgeo-6ff}).
We refer to~\cite[Thm.~1.6.36]{AyoAnab} for more details on the proof of the generic equivalence for sheaves of geometric origin.

Beilinson's result has a natural version for equivariant objects:

\begin{thm}[{\cite[Thm.~1.3]{BeilinsonEquivalence}}]\label{thm:Bei-equiv}
	For every $k$-variety $X$, the realization functor
	\begin{equation*}
		\real_X \colon \Db(\Pervgeo^{\rm{A}}(X)^{\GmotAy(k)}) \rightarrow \Dbgeo(X)^{\GmotAy(k)}
	\end{equation*}
	is an equivalence of stable $\infty$-categories.
\end{thm}

\begin{proof}
	We ignore whether it is possible to deduce the result formally from the analogous result for classical perverse sheaves (of geometric origin).
	However, it suffices to check that Beilinson's proof goes through in the $\GmotAy(k)$-equivariant setting.
	This is the case since Beilinson's argument is based on the six operations and the gluing functors, which lift to the $\GmotAy(k)$-equivariant categories;
	we leave the details to the interested reader.
\end{proof}

We conclude the present section with a complementary result about the $\GmotAy(k)$-action on $\Dbgeo(X)$, which does not play any role in the rest of the article but is nonetheless of independent interest:

\begin{prop}\label{prop:Gmot_algebraic_Dbgeo}
	For every $k$-variety $X$, it is possible to write the stable $\infty$-category $\Dbgeo(X)$ as a filtered colimit of the form
	\begin{equation*}
		\Dbgeo(X) = \twocolim_{i \in I} \Der_i(X)
	\end{equation*} 
	where $(\Der_i(X))_{i \in I}$ is a filtered direct system of stable $\infty$-categories endowed with compatible $\GmotAy(k)$-actions such that the action of the pro-algebraic group $\GmotAy(k)$ on each $\Der_i(X)$ factors through some algebraic quotient $Q_i$.
\end{prop}

\begin{proof}
	Since Beilinson's equivalence
	\begin{equation*}
		\real_X \colon \Db(\Pervgeo^{\rm{A}}(X)) \xrightarrow{\sim} \Dbgeo(X)
	\end{equation*}
	is $\GmotAy(k)$-equivariant, it suffices to prove the thesis for the derived $\infty$-category $\Db(\Pervgeo^{\rm{A}}(X))$.
	Moreover, since the formation of bounded derived categories is compatible with filtered colimits of abelian categories (see~\cite[Lem.~2.6]{Gal21}), it suffices to prove the analogous statement for the abelian category $\Pervgeo^{\rm{A}}(X)$:
	we claim that the latter can be written as a filtered union of the form
	\begin{equation*}
		\Pervgeo^{\rm{A}}(X) = \bigcup_{i \in I} \Abelian_i(X)
	\end{equation*}
	where each $\Abelian_i$ is a $\GmotAy(k)$-stable full abelian subcategory of $\Pervgeo^{\rm{A}}(X)$ on which the $\GmotAy(k)$-action factors through some algebraic quotient $Q_i$.
	For this, we proceed as follows.
	
	First, write $\Pervgeo^{\rm{A}}(X)$ as the filtered union
	\begin{equation*}
		\Pervgeo^{\rm{A}}(X) = \bigcup_{\Sigma \in \Strat_{X/k}} \Pervgeo^{\rm{A}}(X,\Sigma).
	\end{equation*}
	Since each $\Pervgeo^{\rm{A}}(X,\Sigma) = \Dbgeo(X,\Sigma) \cap \Pervgeo^{\rm{A}}(X)$ is $\GmotAy(k)$-stable (by Lemma~\ref{lem:Dbgeo_Gmot-stable} and Corollary~\ref{cor:Gmot_respects_Perv}), it suffices to prove the claim for each $\Pervgeo^{\rm{A}}(X,\Sigma)$ individually in place of $\Pervgeo^{\rm{A}}(X)$. 
	
	So fix one stratification $\Sigma \in \Strat_{X/k}$.
	Recall that, by definition, all strata $S \in \Sigma$ are smooth over $k$ and connected.
	For each stratum $S \in \Sigma$, there is a finite field extension $k'/k$ such that the connected components of the smooth $k'$-variety $S \times_k k'$ are geometrically connected over $k'$
	(take $k'$ to be the algebraic closure of $k$ inside the function field $k(S)$).
	Clearly, for every further finite extension $k''/k'$, the $k''$-variety $S \times_k k''$ enjoys the same property with respect to $k''$.
	Hence, it is possible to choose a finite extension $k'/k$ which works for all strata $S \in \Sigma$ at once.
	Now consider the $k'$-variety $X' := X \times_k k'$.
	Note that the collection of all connected components of $S \times_k k'$ for each $S \in \Sigma$ defines a $k'$-algebraic stratification $\Sigma' \in \Strat_{X'/k'}$ in the sense of Notation~\ref{nota:Strat_X/k}:
	indeed, property (i) holds by construction, property (ii) follows from the corresponding property for $\Sigma$, and property (iii) follows from the inclusions between Zariski closures
	\begin{equation*}
		\overline{S \times_k k'} \subset \bar{S} \times_k k'
	\end{equation*} 
	inside $X'$.
	Now choose a complex embedding $\sigma' \colon k' \hookrightarrow \C$ extending $\sigma \colon k \hookrightarrow \C$.
	The $\GmotAy(k')$-equivariant equivalence of Lemma~\ref{lem:Perv_X'/X} restricts to a fully faithful inclusion 
	\begin{equation*}
		\Pervgeo^{\rm{A}}(X,\Sigma) \subset \Pervgeo^{\rm{A}}(X',\Sigma').
	\end{equation*}
	Since $\GmotAy(k')$ has finite index in $\GmotAy(k)$ (by Proposition~\ref{prop:ses_Gmot'-Gmot-Gal_Ayoub}), it suffices to prove the claim for the $\GmotAy(k')$-action on $\Pervgeo^{\rm{A}}(X',\Sigma')$.
	In other words, up to replacing the $k$-variety $X$ by the $k'$-variety $X'$ and $\Sigma$ by $\Sigma'$, we may assume that all strata in $\Sigma$ are geometrically connected over $k$.
	
	In this situation, for each stratum $S \in \Sigma$ there is a finite extension $k'/k$ such that the geometrically connected $k'$-variety $S \times_k k'$ admits a $k'$-rational point.
	As before, it is possible to choose a finite extension $k'/k$ which works for all strata $S \in \Sigma$ at once. 
	Up to replacing the $k$-variety $X$ by the $k'$-variety $X' := X \times_k k'$ and the stratification $\Sigma$ by the corresponding stratification $\Sigma'$ as in the previous step, we may assume that each stratum in $\Sigma$ admits a $k$-rational point.
	
	Since the $\GmotAy(k)$-action on sheaves of geometric origin commutes with the six operations, the $\GmotAy(k)$-action on $\Pervgeo^{\rm{A}}(X,\Sigma)$ is compatible with restriction to each stratum $S \in \Sigma$.
	Hence, it suffices to prove the claim for the abelian category $\Locgeo^{\rm{A}}(S)$ for each $S$ individually in place of $\Pervgeo^{\rm{A}}(X,\Sigma)$.
	In other words, up to replacing $X$ by $S$, we may assume that $X$ is smooth and geometrically connected over $k$, with a rational point $x \in X(k)$, and that $\Sigma = \left\{X\right\}$.
	
	Finally, in this situation, we prove the claim for the abelian category $\Locgeo^{\rm{A}}(X) = \Pervgeo^{\rm{A}}(X,\left\{X\right\})$.
	Following Notation~\ref{nota:pi1geo-Gmot(X)}, let $\pi_1^{\rm{A}}(X,x)$ denote the Tannaka dual of $\Locgeo^{\rm{A}}(X)$ at the $k$-rational point $x$. 
	As explained in the proof of Theorem~\ref{thm:ayoub_fund_seq}, the $\GmotAy(k)$-action on $\Locgeo^{\rm{A}}(X)$ is induced by an algebraic action of $\GmotAy(k)$ on $\pi_1^{\rm{A}}(X,x)$ (in the sense of Example~\ref{ex:Q_acts_K}).
	Form the resulting semi-direct product
	$\pi_1^{\rm{A}}(X,x) \rtimes \GmotAy(k)$,
	and let $\left\{G_i\right\}_{i \in I}$ denote
	the filtered poset of all its algebraic quotients;
	for each $i \in I$, let
	$K_i$ and $Q_i$
	denote the image of
	$\pi_1^{\rm{A}}(X,x)$ and $\GmotAy(k)$,
	respectively,
	in $G_i$.
	Since the algebraic group $K_i$ is a quotient of $\pi_1^{\rm{A}}(X,x)$, it is the Tannaka dual of a $\GmotAy(k)$-stable Tannakian subcategory $\Abelian_i$ of $\Locgeo^{\rm{A}}(X)$.
	By construction, the $\GmotAy(k)$-action on $\Abelian_i$ is induced by the conjugation action of $\GmotAy(k)$ on $K_i$, and the latter factors through its algebraic quotient $Q_i$.
	Lastly, since $K_i$ is algebraic, the Tannakian category $\Abelian_i$ is tensor-generated by one object $L_i \in \Locgeo^{\rm{A}}(X)$.
	But, since $\pi_1^{\rm{A}}(X,x)$ is a closed subgroup of the semi-direct product, every object of $\Locgeo^{\rm{A}}(X)$ is a subquotient of some representation of the semi-direct product (by~\cite[Prop.~2.21(b)]{deligne-milne}).
	This implies that we have the filtered union
	\begin{equation*}
		\Locgeo^{\rm{A}}(X) = \bigcup_{i \in I} \Abelian_i,
	\end{equation*}
	as wanted.
\end{proof}

\section{The comparison theorem}\label{sect:comp-thm}

The present section contains the main result of this paper,
Theorem~\ref{thm:comp} below.
That is,
we show that Ayoub's categories
of equivariant sheaves under the motivic Galois group
recover Ivorra--Morel's categories,
in a very precise sense.

Exploiting the universal property of Ivorra--Morel's categories, we easily construct a system of comparison functors to Ayoub's categories:
our main result asserts that these functors are all equivalences.
Using the six functor formalism, we deduce the result from its generic variant, asserting that our comparison functors induce equivalences at the generic points of all $k$-varieties.
As a byproduct, we show that the two notions of perverse sheaf of geometric origin introduced in the previous sections define the same category, and similarly for the two notions of local system of geometric origin.

As usual, we work over a field $k$ of characteristic $0$,
endowed with a complex embedding $\sigma \colon k \hookrightarrow \C$.

\subsection{Construction of the comparison functors}

Let $X$ be a (quasi-projective) $k$-variety.
Recall from Section~\ref{subsect:IM_cat} that Ivorra--Morel's category $\MNori(X)$ is universal among all abelian categories factoring the homological functor $\pH^0 \circ \Bti_X^* \colon \DAct(X) \to \Perv(X)$.
Since the image of the latter is contained in $\Pervgeo^{\rm{A}}(X)$, the forgetful functor $\iota_X \colon \MNori(X) \to \Perv(X)$ factors through $\Pervgeo^{\rm{A}}(X)$ as well;
for sake of simplicity, we use the same notation for the refined forgetful functor
\begin{equation*}
	\iota_X \colon \MNori(X) \to \Pervgeo^{\rm{A}}(X).
\end{equation*}
As we explain now, the latter factors through the category of equivariant objects $\Pervgeo^{\rm{A}}(X)^{\GmotAy(k)}$.
The idea is that Ayoub's construction defines a theory of enhanced perverse sheaves through which the Betti realization factors.

\begin{nota}
	For sake of simplicity, in the rest of the present section we write the category of equivariant objects $\Pervgeo^{\rm{A}}(X)^{\GmotAy(k)}$ as $\PervAyoub(X)$.
	Similarly, when $X$ is smooth over $k$, we write its full subcategory $\Locgeo^{\rm{A}}(X)^{\GmotAy(k)}$ as $\AMLoc(X)$.
\end{nota}

\begin{constr}
	Recall that Ayoub's Corollary~\ref{cor:Bti-enriched} provides a factorization of the Betti realization $\Bti_X^* \colon \DAct(X) \to \Dbct(X)$ as
	\begin{equation*}
		\Bti_X^* \colon \DAct(X) \xrightarrow{\widetilde{\Bti}_X^*} \Dbgeo(X)^{\GmotAy(k)} \xrightarrow{\omega_X} \Dbct(X).
	\end{equation*}
	We deduce that the homological functor $\pH^0 \circ \Bti_X^* \colon \DAct(X) \to \Perv(X)$ factors as
	\begin{equation*}
		\pH^0 \circ \Bti_X^* \colon \DAct(X) \xrightarrow{\pH^0 \circ \widetilde{\Bti}_X^*} \PervAyoub(X) \xrightarrow{\omega_X} \Perv(X)
	\end{equation*}
	up to natural isomorphism.
	The abelian category $\PervAyoub(X)$, together with the homological functor $\pH^0 \circ \widetilde{\Bti}_X^* \colon \DAct(X) \to \PervAyoub(X)$ and the faithful exact functor $\omega_X \colon \PervAyoub(X) \to \Perv(X)$, satisfies the assumptions on the abelian category $\Abelian(X)$ appearing in the statement of the universal property of $\MNori(X)$ in Subsection~\ref{subsect:IM_cat}; 
	the natural isomorphism between functors $\DAct(X) \to \Perv(X)$
	\begin{equation*}
		\kappa_X \colon \pH^0 \circ \Bti_X^* \xrightarrow{\sim} \omega_X \circ (\pH^0 \circ \widetilde{\Bti}_X^*) = \pH^0 \circ (\omega_X \circ \widetilde{\Bti}_X^*)
	\end{equation*} 
	mentioned in the statement is induced by the natural isomorphism $\Bti_X^* \xrightarrow{\sim} \omega_X \circ \widetilde{\Bti}_X^*$.
	Applying the universal property of $\MNori(X)$, we obtain a canonical faithful exact functor
	\begin{equation*}
		o_X \colon \MNori(X) \rightarrow \PervAyoub(X)
	\end{equation*}
	making the diagram
	\begin{equation*}
		\begin{tikzcd}
			\MNori(X) \arrow{rr}{o_X} \arrow{drr}[']{\iota_X} && \PervAyoub(X) \arrow{d}{\omega_X} \\
			&& \Perv(X)
		\end{tikzcd}
	\end{equation*}
	commute up to the induced natural isomorphism $\tilde{\kappa}_X \colon \iota_X \xrightarrow{\sim} \omega_X \circ o_X$.
	\qed
\end{constr}

\begin{rem}\label{rem:MNori_inside_PervAyoub}
	As mentioned in Section~\ref{subsect:IM_cat}, in this situation $\MNori(X)$ can be identified with the universal abelian factorization of the homological functor $\pH^0 \circ \widetilde{\Bti}_X^*$.
\end{rem}

Passing to the bounded derived categories, the functor $o_X \colon \MNori(X) \to \PervAyoub(X)$ extends to a conservative triangulated functor
\begin{equation*}
	o_X \colon \Db(\MNori(X)) \rightarrow \Db(\PervAyoub(X))
\end{equation*}
making the diagram
\begin{equation*}
	\begin{tikzcd}
		\Db(\MNori(X)) \arrow{rr}{o_X} \arrow{drr}[']{\iota_X} && \Db(\PervAyoub(X)) \arrow{d}{\omega_X} \\
		&& \Db(\Perv(X))
	\end{tikzcd}
\end{equation*}
commute up to canonical natural isomorphism. 
We can finally state our comparison result as follows:

\begin{thm}\label{thm:comp}
	For every (quasi-projective) $k$-variety $X$, the triangulated comparison functor
	\begin{equation*}
		o_X \colon \Db(\MNori(X)) \rightarrow \Db(\PervAyoub(X)) = \Dbgeo(X)^{\GmotAy(k)}
	\end{equation*}  
	is an equivalence.
\end{thm}

The equivalence $\Db(\PervAyoub(X)) = \Dbgeo(X)^{\GmotAy(k)}$ appearing in the formula is the one of Theorem~\ref{thm:Bei-equiv}.

The rest of the present section is devoted to proving the comparison theorem.
For sake of simplicity, in the following we limit ourselves to considering quasi-projective $k$-varieties;
since the assignments $X \mapsto \MNori(X)$ and $X \mapsto \PervAyoub(X)$, regarded as fibered categories over open immersions between $k$-varieties, are both stacks for the Zariski topology, the equivalence over general bases follows formally from the equivalence over quasi-projective (or even just affine) bases.
The advantage of working in the quasi-projective setting is that we can exploit the six functor formalism of Ivorra--Morel's categories systematically.

The following observation plays a crucial role in each step of the proof of the comparison theorem:

\begin{prop}\label{prop:o_S-6ff}
	As $X$ varies among (quasi-projective) $k$-varieties, the triangulated comparison functors $o_X$ are canonically compatible with the six operations, as well as with Beilinson's gluing functors.
\end{prop}

Essentially, this is just a formal consequence of how the six operations and the gluing functors on Ivorra--Morel's derived categories are constructed:
indeed, one can repeat the whole construction of~\cite{IM19} and~\cite{TerenziNori} using the description of $\MNori(X)$ as the universal abelian factorization of $\pH^0 \circ \widetilde{\Bti}_X^*$ in place of its original definition in terms of $\pH^0 \circ \Bti_X^*$ (see Remark~\ref{rem:MNori_inside_PervAyoub} above).
To make this point clear, let us discuss the case of inverse images under open immersions in detail, along the lines of Example~\ref{ex:j^*}:
\begin{ex}\label{ex:o_X-open_immersion}
	Fix an open immersion of (quasi-projective) $k$-varieties $j \colon U \hookrightarrow X$.
	The triangulated functor
	\begin{equation*}
		j^* \colon \Dbgeo(X)^{\GmotAy(k)} \rightarrow \Dbgeo(U)^{\GmotAy(k)}
	\end{equation*}
	is $t$-exact for the perverse $t$-structure defined in Section~\ref{subsect:PervAyoub} (as the underlying functor on constructible complexes is $t$-exact).
	Thus, it restricts to an exact functor
	\begin{equation*}
		j^* \colon \PervAyoub(X) \rightarrow \PervAyoub(U),
	\end{equation*}
	and in fact, by~\cite[Thm.~1]{Vologodsky}, it can be identified with the derived functor
	\begin{equation*}
		j^* \colon \Db(\PervAyoub(X)) \rightarrow \Db(\PervAyoub(U))
	\end{equation*}
	modulo the equivalences of Theorem~\ref{thm:Bei-equiv}.
	Since the upper half of the diagram
	\begin{equation}\label{dia:j^*-Bti(2)}
		\begin{tikzcd}
			\DAct(X) \arrow{rr}{j^*} \arrow{d}[']{\widetilde{\Bti}^*_X} && \DAct(U) \arrow{d}{\widetilde{\Bti}^*_U} \\
			\Dbgeo(X)^{\GmotAy(k)} \arrow{rr}{j^*} \arrow{d}[']{\pH^0} && \Dbgeo(U)^{\GmotAy(k)} \arrow{d}{\pH^0} \\
			\PervAyoub(X) \arrow{rr}{j^*} && \PervAyoub(U)
		\end{tikzcd}
	\end{equation}
	commutes up to canonical natural isomorphism (by Corollary~\ref{cor:Bti-enriched}), the same holds for the outer rectangle.
	Applying~\cite[Prop.~2.5]{TerenziFunctoriality}, one gets a canonical exact functor
	\begin{equation}\label{formula:j^*-MNori(2)}
		j^* \colon \MNori(X) \rightarrow \MNori(U)
	\end{equation}
	rendering both halves of the diagram
	\begin{equation}\label{dia:j^*-MNori(2)}
		\begin{tikzcd}
			\DAct(X) \arrow{rr}{j^*} \arrow{d}[']{h_X} && \DAct(U) \arrow{d}{h_U} \\
			\MNori(X) \arrow{rr}{j^*} \arrow{d}[']{o_X} && \MNori(U) \arrow{d}{o_U} \\
			\PervAyoub(X) \arrow{rr}{j^*} && \PervAyoub(U)
		\end{tikzcd}
	\end{equation}
	commutative up to natural isomorphism.
	More precisely, \eqref{formula:j^*-MNori(2)} is the unique exact functor which makes the upper half of~\eqref{dia:j^*-MNori(2)} commute on the nose (see Remark~\ref{rem:j^*-unique}), hence it coincides with the functor~\eqref{formula:j^*-MNori} of Example~\ref{ex:j^*}.
	The natural isomorphism filling the lower half of~\eqref{dia:j^*-MNori(2)} is then uniquely determined by its compatibility with the natural isomorphism filling~\eqref{dia:j^*-Bti(2)} (see again Remark~\ref{rem:j^*-unique}):
	this means that the composite of the natural isomorphisms filling the two halves of the diagram
	\begin{equation*}
		\begin{tikzcd}
			\MNori(X) \arrow{rr}{j^*} \arrow{d}[']{o_X} && \MNori(U) \arrow{d}{o_U} \\
			\PervAyoub(X) \arrow{rr}{j^*} \arrow{d}[']{\omega_X} && \PervAyoub(U) \arrow{d}{\omega_U} \\
			\Perv(X) \arrow{rr}{j^*} && \Perv(U)
		\end{tikzcd}
	\end{equation*}
	coincides with the natural isomorphism filling the lower half of~\eqref{dia:j^*-MNori}.
	Passing to the bounded derived categories, we obtain a similar decomposition of~\eqref{dia:j^*-iota} as
	\begin{equation*}
		\begin{tikzcd}
			\Db(\MNori(X)) \arrow{rr}{j^*} \arrow{d}[']{o_X} && \Db(\MNori(U)) \arrow{d}{o_U} \\
			\Db(\PervAyoub(X)) \arrow{d}[']{\omega_X} \arrow{rr}{j^*} && \Db(\PervAyoub(U)) \arrow{d}{\omega_U} \\
			\Db(\Perv(X)) \arrow{rr}{j^*} && \Db(\Perv(U)).
		\end{tikzcd}
	\end{equation*}
	The natural isomorphism filling the upper half of the latter diagram witnesses the compatibility of the comparison functors with the inverse image functors $j^*$.
	By construction, these natural isomorphisms are compatible with composition of open immersions, so they define a morphism of triangulated fibered categories over the category of open immersions between $k$-varieties.
	\qed
\end{ex}

\begin{proof}[Proof of Proposition~\ref{prop:o_S-6ff}]
	The argument in Example~\ref{ex:o_X-open_immersion} only uses the $t$-exactness of inverse images under open immersions.
	The same argument applies to shifted inverse images under smooth morphisms and to direct images under closed immersions:
	this yields analogous natural isomorphisms witnessing the compatibility of the comparison functors with these two classes of functors, compatibly with composition in each of the two classes separately.
	Starting from these two partial definitions, and repeating the arguments of~\cite[\S~4.2]{IM19} word-by-word, one obtains a canonical system of natural isomorphisms witnessing the compatibility of the comparison functors with inverse images under arbitrary morphisms of (quasi-projective) $k$-varieties.
	
	To this end, in view of how inverse images under closed immersions are constructed in~\cite[\S~4.1]{IM19}, one needs to know in advance that the comparison functors are compatible with Beilinson's gluing functors, in a similar way.
	But, since the gluing functors on perverse Nori motives are defined by direct application of the lifting result~\cite[Prop.~2.5]{TerenziFunctoriality}, the needed compatibility is obtained exactly as in the case of open immersions.
	Once the compatibility of the comparison functors with inverse images is established, their compatibility with the four operations of type $f^*$, $f_*$, $f_!$ and $f^!$ follows formally:
	for instance, compatibility with direct images follows from the validity of the same property after composition with the conservative functors $\omega_X \colon \Dbgeo(X)^{\GmotAy(k)} \to \Dbct(X)$. 

	The above argument for inverse images under open immersion and, more generally, for $t$-exact functors, extends naturally to the case of multilinear functors which are $t$-exact in each variable (using~\cite[\S~4]{TerenziFunctoriality}).
	In particular, for every choice of (quasi-projective) $k$-varieties $X_1$ and $X_2$, one obtains a new construction of the external tensor product functor
	\begin{equation*}
		- \boxtimes - \colon \MNori(X_1) \times \MNori(X_2) \to \MNori(X_1 \times X_2),
	\end{equation*}
	in such a way that the diagram
	\begin{equation*}
		\begin{tikzcd}
			\MNori(X_1) \times \MNori(X_2) \arrow{rr}{- \boxtimes -} \arrow{d}[']{o_{X_1} \times o_{X_2}} && \MNori(X_1 \times X_2) \arrow{d}{o_{X_1 \times X_2}}  \\
			\PervAyoub(X_1) \times \PervAyoub(X_2) \arrow{rr}{- \boxtimes -} && \PervAyoub(X_1 \times X_2)
		\end{tikzcd}
	\end{equation*}
	commutes up to natural isomorphism.
	In order to see that the natural isomorphisms just constructed are compatible with inverse image functors as well as with associativity, commutativity, and unit constraints, it now suffices to repeat the arguments of~\cite[\S\S~2,4]{TerenziNori} word-by-word.
	This shows that the triangulated comparison functors are unitary symmetric monoidal, compatibly with inverse images.
	Moreover, they automatically respect internal homomorphisms, because, again, this is true after composition with the conservative functors $\omega_X \colon \Dbgeo(X)^{\GmotAy(k)} \to \Dbgeo(X)$.
\end{proof}

\subsection{Comparison over a point}\label{subsect:comp-point}

The first step in the proof of Theorem~\ref{thm:comp} is to establish the sought-after equivalence in the case when $X = \Spec(k)$ or, more generally, when $X = \Spec(k')$ for a finite extension $k'/k$.

Recall from Example~\ref{ex:trivializ-Gmot} that the Tannakian category $\PervAyoub(k)$ is canonically equivalent to $\Rep_{\Q}(\GmotAy(k))$.
Since the comparison functor
\begin{equation*}
	o_k \colon \MNori(k) \to \PervAyoub(k)
\end{equation*}
is canonically monoidal (by Proposition~\ref{prop:o_S-6ff}) and compatible with the forgetful functors to $\Perv(\Spec(k)) = \vect_{\Q}$ (by construction), it determines a homomorphism of Tannaka dual groups
\begin{equation*}
	\GmotAy(k) \to \GmotNo(k),
\end{equation*}
and saying that $o_k$ is an equivalence amounts to saying that this homomorphism is an isomorphism.
Before proceeding further, we need a digression on the relation between Nori's and Ayoub's motivic Galois groups.
For sake of simplicity, we limit ourselves to the motivic Galois group of the base field $k$, but of course the same discussion applies to finite extensions of $k$ as well.

The starting point is the following important result:

\begin{thm}[{\cite[Thm.~10.1.1]{HMS17}}, {\cite[Prop.~7.12]{CG17}}]\label{thm:Nori_real_k}
	There exists a canonical tensor-triangulated functor
	\begin{equation*}
		\Nri_k^* \colon \DAct(k) \rightarrow \Db(\MNori(k))
	\end{equation*}
	such that the Betti realization over $k$ factors as
	\begin{equation*}
		\Bti_k^* \colon \DAct(k) \xrightarrow{\Nri_k^*} \Db(\MNori(k)) \xrightarrow{\iota_k} \Db(\vect_{\Q})
	\end{equation*}
	up to monoidal natural isomorphism.
\end{thm}

This was essentially proved by Nori, using his own construction of $\MNori(k)$.
The fact that Nori's category coincides with the universal abelian factorization of the homological functor $H^0 \circ \Bti_k^*$ (as in Ivorra--Morel's definition) is a consequence of this result.

Let $\Ocal(\GmotNo(k))$ denote the Hopf algebra of regular functions on the pro-algebraic group $\GmotNo(k)$.
By construction, Nori's realization functor extends canonically to a tensor-triangulated functor
\begin{equation*}
	\Nri_k^* \colon \DA(k) \rightarrow D(\Ind\MNori(k)) = \mathsf{coMod}_{\Ocal(\GmotNo(k))}(D(\Q))
\end{equation*}
making the two halves of the diagram
\begin{equation*}
	\begin{tikzcd}
		D(\Q)  \arrow{d}[']{s_k} \arrow{drr} \\
		\DA(k) \arrow{rr}{\Nri_k^*} \arrow{drr}[']{\Bti_k^*} && \mathsf{coMod}_{\Ocal(\GmotNo(k))}(D(\Q)) \arrow{d}{\for_{\Ocal(\GmotNo(k))}} \\
		&& D(\Q)
	\end{tikzcd}
\end{equation*}
commute up to monoidal natural isomorphism.
Recall from Section~\ref{subsect:GmotAyoub} that Ayoub's derived Hopf algebra $\mathcal{H}_k$ is universal among all Hopf algebras giving rise to a commutative diagram of this form.
Therefore, we obtain a canonical bialgebra morphism in $D(\Q)$
\begin{equation}\label{eq:H_k-to-O(Gmot^N)}
	\mathcal{H}_k \to \Ocal(\GmotNo(k))
\end{equation}
making the two halves of the diagram
\begin{equation*}
	\begin{tikzcd}
		\DA(k) \arrow{rr}{\Nri_k^*} \arrow{d} && \mathsf{coMod}_{\Ocal(\GmotNo(k))}(D(\Q)) \arrow{d}{\for_{\Ocal(\GmotNo(k))}} \\
		\mathsf{coMod}_{\mathcal{H}_k}(D(\Q)) \arrow{urr} \arrow{rr}[']{\for_{\mathcal{H}_k}} && D(\Q)
	\end{tikzcd}
\end{equation*}
commute up to monoidal natural isomorphism.
Since the complex $\Ocal(\GmotNo(k)) \in D(\Q)$ is concentrated in degree $0$, the morphism~\eqref{eq:H_k-to-O(Gmot^N)} factors through $\mathcal{H}_k^0$.
This in turn determines a canonical homomorphism of pro-algebraic groups
\begin{equation}\label{eq:Gmot^N-to-Gmot^A}
	\GmotNo(k) = \Spec(\Ocal(\GmotNo(k))) \to \Spec(\mathcal{H}_k^0) =: \GmotAy(k) .
\end{equation}
We have the following comparison result by Choudhury--Gallauer: 

\begin{thm}[{\cite[Thm.~9.1]{CG17}}]\label{thm:ChoudGal}
	The homomorphism~\eqref{eq:Gmot^N-to-Gmot^A} is an isomorphism.
\end{thm}

Following~\cite{CG17}, one can construct the inverse isomorphism
\begin{equation}\label{eq:Gmot^A-to-Gmot^N}
	\GmotAy(k) \to \GmotNo(k)
\end{equation}
explicitly, using the universal property of Nori's abelian category.
Let us explain this using Ivorra--Morel's definition of $\MNori(k)$.
Recall from Section~\ref{subsect:IM_cat} that, since the functor $H^0 \circ \Bti_k^* \colon \DAct(k) \to \vect_{\Q}$ is not monoidal, the description of $\MNori(k)$ as the universal abelian factorization of this functor is not well-suited to constructing the monoidal structure.
Instead, following~\cite[Thm.~1.12]{TerenziNori}, one has to regard $\MNori(k)$ as the universal abelian factorization of the monoidal functor $\Bti_k^* \colon \DAct(k)^0 \to \vect_{\Q}$.
The latter fits into the diagram of monoidal functors
\begin{equation*}
	\begin{tikzcd}
		\DAct(k)^0 \arrow{rr} \arrow{drr}[']{\Bti_k^*} && \Rep_{\Q}(\GmotAy(k)) \arrow{d}{\for_{\GmotAy(k)}} \\
		&& \vect_{\Q},
	\end{tikzcd}
\end{equation*}
which commutes up to monoidal natural isomorphism.
Applying the universal property of $\MNori(k)$, we obtain a canonical faithful exact monoidal functor
\begin{equation}\label{eq:MNori-to-Rep(Gmot^A)}
	\MNori(k) \to \Rep_{\Q}(\GmotAy(k))
\end{equation} 
making the whole diagram
\begin{equation*}
	\begin{tikzcd}
		\DAct(k)^0 \arrow{d}[']{h_k} \arrow{drr} \\
		\MNori(k) \arrow{rr} \arrow{drr}[']{\iota_k} && \Rep_{\Q}(\GmotAy(k)) \arrow{d}{\rom{for}_{\GmotAy(k)}} \\
		&& \vect_{\Q}
	\end{tikzcd}
\end{equation*}
commute up to monoidal natural isomorphism.
It is in this way that one gets the homomorphism \eqref{eq:Gmot^A-to-Gmot^N}.
The fact that this homomorphism is an isomorphism (by the proof of Theorem~\ref{thm:ChoudGal}) implies that the functor \eqref{eq:MNori-to-Rep(Gmot^A)} is an equivalence. 

We are now ready to address the comparison problem over fields:

\begin{lem}\label{lem:equiv-point}
	For every finite extension $k'/k$, the triangulated functor
	\begin{equation*}
		o_{\Spec(k')} \colon \Db(\MNori(\Spec(k'))) \rightarrow \Db(\PervAyoub(\Spec(k'))) = \Dbgeo(\Spec(k'))^{\GmotAy(k)}
	\end{equation*}
	is an equivalence.
\end{lem}

\begin{proof}
	We prove the equivalent statement that, for every finite extension $k'/k$, the exact functor
	\begin{equation*}
		o_{\Spec(k')} \colon \MNori(\Spec(k')) \rightarrow \PervAyoub(\Spec(k'))
	\end{equation*}
	is an equivalence.
	Let us divide the argument into three steps.
	
	First, assume that $k' = k$.
	In this case, in view of the above discussion, it suffices to make sure that the functor $o_k \colon \MNori(k) \to \PervAyoub(k)$ coincides with the functor \eqref{eq:MNori-to-Rep(Gmot^A)} modulo the canonical equivalence of Example~\ref{ex:trivializ-Gmot}.
	To this end, since both functors are obtained by applying the universal property of $\MNori(k)$, it suffices to observe that the enriched Betti realization
	\begin{equation*}
		\widetilde{\Bti}_k^* \colon \DAct(k) \to \Dbgeo(\Spec(k))^{\GmotAy(k)}
	\end{equation*}
	is nothing but the canonical monoidal functor $\DAct(X) \to \mathsf{comod}_{\mathcal{H}_k^0}(\Db(\Q))$ defined by the motivic Hopf algebra.
	
	Next, assume that $k'/k$ is a finite Galois extension.
	Choose a complex embedding $\sigma' \colon k' \hookrightarrow \C$ extending $\sigma$.
	We have a commutative diagram of the form
\begin{equation*}
\begin{tikzcd}
\MNori_{\sigma}(\Spec(k'))
	\arrow{rr}{o_{\Spec(k')}}
	\ar[d, "\sim" dsiml]
&&
\PervAyoub_{\sigma}(\Spec(k'))
	\ar[d, "\sim" dsimr]
\\
\MNori_{\sigma'}(\Spec(k'))
	\arrow{rr}{o_{k'}}
&&
\PervAyoub_{\sigma'}(\Spec(k')),
\end{tikzcd}
\end{equation*}
	where the vertical arrows witness the equivalences of
	Lemma~\ref{lem:MNori_X'/X}(1) and Lemma~\ref{lem:PervvAyoub_X'/X}(1).
	Since the lower horizontal arrow is already known to be an equivalence (by the previous step, applied to $k'$ rather than to $k$), the upper horizontal arrow must be an equivalence as well.
	
	Lastly, consider an arbitrary finite extension $k'/k$.
	Let $k''$ denote the Galois closure of $k'/k$ inside $\bar{k}$.
	The functor $o_{\Spec(k'')}$ is $\Gal(k''/k)$-equivariant with respect to the $\Gal(k''/k)$-action on $\Spec(k'')$ by $k$-automorphisms (by Proposition~\ref{prop:o_S-6ff}).
	We have a commutative diagram of the form
	\begin{equation*}
		\begin{tikzcd}
			\MNori(\Spec(k')) \arrow{rr}{o_{\Spec(k')}} \arrow{d}[dsiml]{\sim} && \PervAyoub(\Spec(k')) \arrow{d}[dsimr]{\sim} \\
			\MNori(\Spec(k''))^{\Gal(k''/k')} \arrow{rr}{o_{\Spec(k'')}} && \PervAyoub(\Spec(k''))^{\Gal(k''/k')},
		\end{tikzcd}
	\end{equation*} 
	where the vertical equivalences are obtained by applying the obvious variants of Lemma~\ref{lem:MNori_X'/X}(2) and Lemma~\ref{lem::AMLoc_X'/X}(2) to the finite Galois extension $k''/k'$ (but still with respect to $\sigma$-analytification of $k$-varieties).
	Since the lower horizontal arrow is already known to be an equivalence (by the previous step), the upper horizontal arrow must be an equivalence as well.
	This concludes the proof.
\end{proof}

\subsection{Comparison over function fields}

In this technical subsection, as an intermediate step towards the full comparison theorem, we establish the following generic variant:

\begin{prop}\label{prop:comp_generic}
	For every (quasi-projective) $k$-variety $X$, the functor
	\begin{equation*}
		\twocolim_{U \in \Open_X^\rom{op}} \Db(\MNori(U)) \to \twocolim_{U \in \Open_X^\rom{op}} \Db(\PervAyoub(U))
	\end{equation*}
	is an equivalence.
\end{prop}

We stated this result is terms of triangulated categories in order to make its relation to Theorem~\ref{thm:comp} transparent.
However, its proof takes place at the level of abelian categories.
More precisely, we want to compare the categories of motivic local systems at the generic points of $k$-varieties, using the fundamental sequences of Theorem~\ref{thm:fund_ses_general} and Theorem~\ref{thm:ayoub_fund_seq}.

We start by studying local systems of geometric origin at the generic points.
Recall that, for every $k$-variety $X$, we have an inclusion $\Pervgeo^{\rm{N}}(X) \subset \Pervgeo^{\rm{A}}(X)$ (see \S~\ref{subsect:IM_cat}), which refines to an inclusion $\Locgeo^{\rm{N}}(X) \subset \Locgeo^{\rm{A}}(X)$ if $X$ is smooth (see \S~\ref{subsect:fundses_MNori}).
Passing to the colimit over the (smooth) dense open subsets of $X$, we get the commutative diagram
\begin{equation*}
\begin{tikzcd}
\twocolim_{U \in \Open_X^\rom{op}} \Locgeo^{\rm{N}}(U)
	\arrow{r}
	\arrow{d}[dsiml]{\sim}
&
\twocolim_{U \in \Open_X^\rom{op}} \Locgeo^{\rm{A}}(U)
	\arrow{d}[dsimr]{\sim}
	\arrow{r}
&
\twocolim_{U \in \Open_X^\rom{op}} \Loc(U)
	\arrow{d}[dsimr]{\sim}
\\
\twocolim_{U \in \Open_X^\rom{op}} \Pervgeo^{\rm{N}}(X)
	\arrow{r}
&
\twocolim_{U \in \Open_X^\rom{op}} \Pervgeo^{\rm{A}}(U)
	\arrow{r}
&
\twocolim_{U \in \Open_X^\rom{op}} \Perv(U)
,
\end{tikzcd}
\end{equation*}
where all horizontal arrows are fully faithful exact functors (as filtered colimits of such).
Moreover, all vertical arrows are equivalences,
as every perverse sheaf
restricts to a local system
over some smooth dense open subset.

\begin{lem}\label{lem:comp_Locgeo_generic}
	For every (quasi-projective) $k$-variety $X$, the two functors
	\begin{equation*}
		\twocolim_{U \in \Open_X^\rom{op}} \Locgeo^{\rm{N}}(U) \to \twocolim_{U \in \Open_X^\rom{op}} \Locgeo^{\rm{A}}(U), \qquad \twocolim_{U \in \Open_X^\rom{op}} \Pervgeo^{\rm{N}}(U) \to \twocolim_{U \in \Open_X^\rom{op}} \Pervgeo^{\rm{A}}(U)
	\end{equation*}
	are equivalences.
\end{lem}
\begin{proof}
	As a consequence of the above discussion,
	the left-most functor is an equivalence
	if and only if
	the right-most functor is.
	Since fully faithfulness is already known,
	it remains to show essential surjectivity.
	
	For every $U \in \Open_X$, the subcategory $\Pervgeo^{\rm{A}}(U) \subset \Perv(U)$ coincides with the smallest abelian subcategory containing $\Pervgeo^{\rm{N}}(U)$ and stable subquotients and extensions (by Corollary~\ref{cor:PervgeoAyoub_pH^0Bti}).
	This implies that $\twocolim_{U \in \Open_X^\rom{op}} \Pervgeo^{\rm{A}}(U)$ is generated by its abelian subcategory $\twocolim_{U \in \Open_X^\rom{op}} \Pervgeo^{\rm{N}}(U)$ under subquotients and extensions inside $\twocolim_{U \in \Open_X^\rom{op}} \Perv(U)$.
	Therefore, it suffices to check that $\twocolim_{U \in \Open_X^\rom{op}} \Pervgeo^{\rm{N}}(U)$ is already stable under subquotients and extensions inside $\twocolim_{U \in \Open_X^\rom{op}} \Perv(U)$.
	This is equivalent to checking that $\twocolim_{U \in \Open_X^\rom{op}} \Locgeo^{\rm{N}}(X)$ is stable under subquotients and extensions inside $\twocolim_{U \in \Open_X^\rom{op}} \Loc(U)$.
	And this follows formally from the fact that each abelian subcategory $\Locgeo^{\rm{N}}(U) \subset \Loc(U)$ is already stable under subquotients (by definition) and under extensions (by Proposition~\ref{prop:LocgeoNori_stable_ext}).
\end{proof}

Let us now focus on the behaviour of motivic local systems at the generic points.

\begin{lem}\label{lem:j^*_fullfaith_locsys}
	For every dense open immersion $j \colon U \hookrightarrow X$ between smooth (quasi-projective) $k$-varieties, the inverse image functors
	\begin{equation*}
		j^* \colon \NMLoc(X) \to \NMLoc(U), \quad j^* \colon \AMLoc(X) \to \AMLoc(U), \quad j^* \colon \Loc(X) \to \Loc(U)
	\end{equation*}
	are fully faithful, with essential image stable under subquotients. 
\end{lem}

\begin{proof}
	For sake of simplicity, we write down the proof in the setting of local systems;
	as the reader can easily check, the argument carries over to the motivic and equivariant settings.
	
	Let us start by proving fully faithfulness.
	For this, we are allowed to replace $U$ by some smaller dense open subset $U'$.
	Hence, up to replacing $U$ by some dense affine open subset, we may assume $U$ affine, in which case $j$ is an affine open immersion.
	This allows us to use the adjunctions
	\begin{equation*}
		j_! \colon \Perv(U) \leftrightarrows \Perv(X) \colon j^*, \qquad j^* \colon \Perv(X) \leftrightarrows \Perv(U) \colon j_*
	\end{equation*}
	as well as the intermediate extension functor
	\begin{equation*}
		j_{!*} \colon \Perv(U) \to \Perv(X), \quad K \mapsto \textup{im}\left\{j_! K \to j_* K\right\}.
	\end{equation*} 
	For every $L \in \Loc(X)$, the counit morphism $\epsilon \colon j_! j^* L \to L$ (resp. the unit morphism $\eta \colon L \to j_* j^* L$) is an epimorphism (resp. a monomorphism) in $\Perv(X)$.
	In particular, $L$ is canonically isomorphic to $j_{!*}j^* L$ (see~\cite[Lem.~4.3.2]{BBDG}, and observe that the irreducibility assumption in the statement plays no role in the proof). 
	Therefore, the fully faithfulness of $j^*$ on $\Loc(X)$ follows formally from the fully faithfulness of $j_{!*}$.
	To check the latter property, fix two objects $K_1, K_2 \in \Perv(U)$, and consider the chain of maps
	\begin{equation*}
		\Hom_{\Perv(U)}(K_1,K_2) \to \Hom_{\Perv(X)}(j_{!*} K_1, j_{!*} K_2) \to \Hom_{\Perv(X)}(j_{!*} K_1, j_* K_2) \to \Hom_{\Perv(X)}(j_! K_1, j_* K_2).
	\end{equation*}
	Note that the second (resp. third) map is injective, since the morphism $j_{!*} K_2 \to j_* K_2$ (resp. $j_! K_1 \to j_{!*} K_1$) is a monomorphism (resp. an epimorphism).
	Since the composite of the chain is a bijection, with inverse given by the composite map
	\begin{equation*}
		\Hom_{\Perv(X)}(j_! K_1,j_* K_2) = \Hom_{\Perv(U)}(K_1, j^* j_* K_2) \xrightarrow{\sim} \Hom_{\Perv(U)}(K_1,K_2),
	\end{equation*}
	we conclude that all maps in the chain are in fact bijections.	
	
	Let us now prove stability under subquotients.
	Since $j^*$ is exact, it suffices to show that it is bijective on subobject lattices of local systems.
	For this, we are allowed to replace $U$ by some smaller dense open subset.
	Thus, we may again assume $U$ affine, in which case $j$ is an affine open immersion.
	In this situation, we claim that $j_{!*}$ is a two-sided inverse of $j^*$ on subobject lattices.
	Since $j^*$ is right-inverse to $j_{!*}$ as a functor of perverse sheaves (being right-inverse to both $j_!$ and $j_*$), it is also its right-inverse on subobject lattices.
	To see that $j_{!*}$ is right-inverse to $j^*$ on subobject lattices of local systems, it suffices to observe that $j_{!*}$ preserves monomorphisms (being a subfunctor of the exact functor $j_*$):
	a given monomorphism $L' \hookrightarrow L$ in $\Loc(X)$ gets identified with the monomorphism $j_{!*} j^* L' \hookrightarrow j_{!*} j^* L$ (using again~\cite[Lem.~4.3.2]{BBDG}).
\end{proof}

\begin{constr}\label{constr:limit_fund_ses}
	Let $X$ be a smooth, geometrically connected (quasi-projective) $k$-variety with function field $k(X)$.
	Following \cite[\S\S~4.4-4.5]{BT25}, one can define a family of fibre functors for the Tannakian categories $\Loc(U)$ which are compatible with restriction along open immersions in $\Open_X$;
	the usual fibre functor at a point $x \in X(\bar{k})$ does not satisfy this property, since no such point belongs to $U^{\sigma}$ for each $U \in \Open_X$.
	
	In the terminology of~\cite[Defn.~4.5, Lem.~4.6]{BT25}, one can define the sought-after fibre functor as restriction of local systems to a \textit{stable arc point} $\xi$ of $X$: 
	an equivalence class of continuous paths on $X^{\sigma}$, defined on the open interval $(0,\delta)$ for some $\delta > 0$, satisfying suitable conditions ensuring that the path has finite intersection with any closed subset of the form $Z^{\sigma}$ with $Z$ a proper Zariski closed subset of $X$.
	Thus, a stable arc point of $X$ restricts to a stable arc point of each $U \in \Open_X$.
	
	From now on, fix such a stable arc point $\xi$;
	let $\pi_1^{\rm{N}}(X,\xi)$ (resp. $\pi_1^{\rm{A}}(X,\xi)$) denote the Tannaka dual of $\Locgeo^{\rm{N}}(X)$ (resp. of $\Locgeo^{\rm{A}}(X)$) with respect to the fibre functor at $\xi$, and let $\GmotNo(X,\xi)$ (resp. $\GmotAy(X,\xi)$) denote the Tannaka dual of $\NMLoc(X)$ (resp. of $\AMLoc(X)$) with respect to the induced fibre functor.
	Note that the sequences of group homomorphisms
	\begin{equation*}
		1 \to \pi_1^{\rm{N}}(X,\xi) \to \GmotNo(X,\xi) \to \GmotNo(k) \to 1
	\end{equation*}
	and
	\begin{equation*}
		1 \to \pi_1^{\rm{A}}(X,\xi) \to \GmotAy(X,\xi) \to \GmotAy(k) \to 1
	\end{equation*}
	are exact:
	after fixing a path between $\xi$ and the constant arc point at some algebraic point $x \in X(\bar{k})$ (which always exists, by~\cite[Lem.~4.11]{BT25}), this reduces to Theorem~\ref{thm:fund_ses_general} and Theorem~\ref{thm:ayoub_fund_seq}, respectively.
	The same holds when $X$ is replaced by some $U \in \Open_X$.
	In conclusion, we obtain a cofiltered poset of short exact sequences indexed by $\Open_X$.
	
	As $U$ varies in $\Open_X$, the categories $\Loc(U)$ form a filtered family where all transition functors are fully faithful with subobject stable image (by Lemma~\ref{lem:j^*_fullfaith_locsys}), hence the same holds for the subcategories $\Locgeo^{\rm{N}}(U)$ and $\Locgeo^{\rm{A}}(U)$.
	Their Tannaka duals with respect to the fibre functors at $\xi$ thus form a cofiltered inverse system of pro-algebraic groups with surjective transition maps (by~\cite[Prop.~2.21(a)]{deligne-milne}).
	The Tannaka duals of $\twocolim_{U \in \Open_X^\rom{op}} \Locgeo^{\rm{N}}(U)$ and $\twocolim_{U \in \Open_X^\rom{op}} \Locgeo^{\rm{A}}(U)$ are given by the inverse limits
	\begin{equation*}
		\pi_1^{\rm{N}}(k(X),\xi) := \varprojlim_{U \in \Open_X} \pi_1^{\rm{N}}(U,\xi), \qquad \pi_1^{\rm{A}}(k(X),\xi) := \varprojlim_{U \in \Open_X} \pi_1^{\rm{A}}(U,\xi),
	\end{equation*}
	respectively.
	The same discussion applies to the categories $\NMLoc(U)$ and $\AMLoc(U)$ (again by Lemma~\ref{lem:j^*_fullfaith_locsys}), so the Tannaka duals of $\twocolim_{U \in \Open_X^\rom{op}} \NMLoc(U)$ and $\twocolim_{U \in \Open_X^\rom{op}} \AMLoc(U)$ are given by the inverse limits
	\begin{equation*}
		\GmotNo(k(X),\xi) := \varprojlim_{U \in \Open_X} \GmotNo(U,\xi), \qquad \GmotAy(k(X),\xi) := \varprojlim_{U \in \Open_X} \GmotAy(U,\xi),
	\end{equation*}
	respectively.
	Finally, the two limit fundamental sequences
	\begin{equation*}
		1 \to \pi_1^{\rm{N}}(k(X),\xi) \to \GmotNo(k(X),\xi) \to \GmotNo(k) \to 1
	\end{equation*}
	and
	\begin{equation*}
		1 \to \pi_1^{\rm{A}}(k(X),\xi) \to \GmotAy(k(X),\xi) \to \GmotNo(k) \to 1
	\end{equation*}
	are still exact:
	using the surjectivity of the transition homomorphisms, it suffices to show the analogous property for inverse limits of abstract groups, which is easy to check.
	\qed
\end{constr}

\begin{proof}[Proof of Proposition~\ref{prop:comp_generic}]
	Since the formation of bounded derived categories is compatible with filtered $2$-colimits of abelian categories (see~\cite[Lem.~2.6]{Gal21}), it suffices to show that the exact functor
	\begin{equation*}
		\twocolim_{U \in \Open_X^\rom{op}} \MNori(U) \to \twocolim_{U \in \Open_X^\rom{op}} \PervAyoub(U)
	\end{equation*}
	is an equivalence.
	Up to replacing the $k$-variety $X$ by some smooth dense open subset, we may assume $X$ smooth.
	In this case, we have the commutative diagram
	\begin{equation*}
		\begin{tikzcd}
			\twocolim_{U \in \Open_X^\rom{op}} \NMLoc(U) \arrow{rr} \arrow{d}[dsiml]{\sim} && \twocolim_{U \in \Open_X^\rom{op}} \AMLoc(U) \arrow{d}[dsimr]{\sim} \\
			\twocolim_{U \in \Open_X^\rom{op}} \MNori(X) \arrow{rr} && \twocolim_{U \in \Open_X^\rom{op}} \PervAyoub(X),
		\end{tikzcd}
	\end{equation*}
	where the two vertical arrows are both equivalences (by cofinality).
	Hence, it is equivalent to show that the exact functor
	\begin{equation}\label{eq:o_X-generic_locsys}
		\twocolim_{U \in \Open_X^\rom{op}} \NMLoc(U) \to \twocolim_{U \in \Open_X^\rom{op}} \AMLoc(U)
	\end{equation}
	is an equivalence.
	To prove the latter claim, we proceed as follows.
	
	Of course, we may assume the smooth $k$-variety $X$ to be non-empty.
	Up to considering each connected component of $X$ separately, we may even assume $X$ connected.
	
	We further reduce to the case when $X$ is geometrically connected over $k$.
	To this end, let $k'$ denote the algebraic closure of $k$ inside the function field $k(X)$.
	By construction, the connected components of the smooth $k'$-variety $X' := X \times_k k'$ are all geometrically connected over $k'$;
	similarly for each $U \in \Open_X$.
	Let $k''$ denote the Galois closure of $k'$ inside $\bar{k}$, and consider the smooth $k''$-variety $X'' := X \times_k k''$.
	We have a commutative diagram of the form
	\begin{equation*}
		\begin{tikzcd}
			\NMLoc(X) \arrow{rr}{\sim} \arrow{d}[']{o_X} && \NMLoc(X'')^{\Gal(k''/k)} \arrow{d}{o_{X''}} \\
			\AMLoc(X)\arrow{rr}{\sim} && \AMLoc(X'')^{\Gal(k''/k)},
		\end{tikzcd}
	\end{equation*} 
	where the horizontal arrows witness the equivalences of Lemma~\ref{lem:NMLoc_X'_X}(2) and Lemma~\ref{lem::AMLoc_X'/X}(2), respectively; 
	similarly for each $U \in \Open_X$.
	Thus, the claim holds for $X$ as soon as it holds for the $k$-variety $X''$.
	In fact, after choosing a complex embedding $\sigma'' \colon k'' \hookrightarrow \C$ extending $\sigma$, we are even allowed to regard $X''$ as a $k''$-variety (by Lemma~\ref{lem:NMLoc_X'_X}(1) and Lemma~\ref{lem::AMLoc_X'/X}(1)).
	But each connected component of $X''$ is geometrically connected over $k''$ (since $X'$ enjoys the analogous property over $k'$).
	Therefore, up to replacing the $k$-variety $X$ by the $k''$-variety $X''$ and considering each connected component of the latter separately, we may assume $X$ smooth and geometrically connected over $k$, say of dimension $d$.
	
	In this situation, the functor~\eqref{eq:o_X-generic_locsys} is a monoidal exact functor between neutral Tannakian categories.
	Hence, in order to show that it is an equivalence, it suffices to show that the corresponding homomorphism between Tannaka dual groups is an isomorphism. 
	To this end, consider the diagram of monoidal functors
	\begin{equation*}
		\begin{tikzcd}
			\Locgeo^{\rm{N}}(X) \arrow{d} 
			&& \NMLoc(X) \arrow{ll}[']{\iota_X} \arrow{d}{o_X} 
			&& \MNori(k) \arrow{ll}[']{a_X^*[d]} \arrow{d}{o_k} \\
			\Locgeo^{\rm{A}}(X) 
			&& \AMLoc(X) \arrow{ll}{\omega_X} 
			&& \PervAyoub(k), \arrow{ll}{a_X^*[d]}
		\end{tikzcd}
	\end{equation*}
	where $a_X \colon X \to \Spec(k)$ denotes the structural morphism.
	Both squares are commutative up to monoidal natural isomorphism (the left-most one by construction, the right-most one by Proposition~\ref{prop:o_S-6ff}), and the right-most vertical arrow is already known to be an equivalence (by Lemma~\ref{lem:equiv-point}).
	Taking the Tannaka dual groups with respect to the fibre functors at some chosen stable arc point $\xi$ of $X$ (see Construction~\ref{constr:limit_fund_ses}), we obtain the commutative diagram of pro-algebraic groups
	\begin{equation*}
		\begin{tikzcd}
			1 \arrow{r} & \pi_1^{\rm{N}}(X,\xi) \arrow{r} & \GmotNo(X,\xi) \arrow{r} & \GmotNo(k) \arrow{r} & 1 \\
			1 \arrow{r} & \pi_1^{\rm{A}}(X,\xi) \arrow{r} \arrow{u} & \GmotAy(X,\xi) \arrow{r} \arrow{u} & \GmotAy(k) \arrow{r} \arrow{u}[dsimr]{\sim} & 1,
		\end{tikzcd}
	\end{equation*}
	and similarly for each $U \in \Open_X$.
	Passing to the inverse limit, we obtain the commutative diagram
	\begin{equation*}
		\begin{tikzcd}
			1 \arrow{r} & \pi_1^{\rm{N}}(k(X),\xi) \arrow{r} & \GmotNo(k(X),\xi) \arrow{r} & \GmotNo(k) \arrow{r} & 1 \\
			1 \arrow{r} & \pi_1^{\rm{A}}(k(X),\xi) \arrow{r} \arrow{u}& \GmotAy(k(X),\xi) \arrow{r} \arrow{u} & \GmotAy(k) \arrow{r} \arrow{u}[dsimr]{\sim} & 1,
		\end{tikzcd}
	\end{equation*}
	where both rows are exact (as explained at the end of Construction~\ref{constr:limit_fund_ses}).
	This corresponds to the colimit diagram of Tannakian categories
	\begin{equation*}
		\begin{tikzcd}
			\twocolim_{U \in \Open_X^\rom{op}} \Locgeo^{\rm{N}}(U) \arrow{d} 
			&& \twocolim_{U \in \Open_X^\rom{op}} \NMLoc(U) \arrow{ll} \arrow{d}
			&& \MNori(k) \arrow{ll} \arrow{d}{o_k} \\
			\twocolim_{U \in \Open_X^\rom{op}} \Locgeo^{\rm{A}}(U) 
			&& \twocolim_{U \in \Open_X^\rom{op}} \AMLoc(U) \arrow{ll} 
			&& \PervAyoub(k), \arrow{ll}
		\end{tikzcd}
	\end{equation*}
	where the left-most vertical arrow is already known to be an equivalence (by Lemma~\ref{lem:comp_Locgeo_generic}).
	Hence, the left-most vertical homomorphism in the previous diagram is an isomorphism. 
	But then the middle vertical homomorphism in the same diagram must be an isomorphism as well.
\end{proof}

\subsection{Proof of the comparison theorem}

The final step in the proof of Theorem~\ref{thm:comp} is to promote the generic equivalence of Proposition~\ref{prop:comp_generic} to an integral result.
Before proving the full comparison theorem, let us settle the case of motivic local systems:

\begin{thm}\label{thm:comp_motlocsys}
	For every smooth (quasi-projective) $k$-variety $X$, the exact functor
	\begin{equation}\label{eq:comp_motlocsys}
		o_X \colon \NMLoc(X) \to \AMLoc(X)
	\end{equation}
	is an equivalence.
\end{thm}

\begin{proof}
	Arguing as in the proof of Proposition~\ref{prop:comp_generic}, we readily reduce to the case when $X$ is geometrically connected over $k$.
	
	In this situation, fix a closed point $x \in X(\bar{k})$, and consider the associated commutative diagram with exact rows
	\begin{equation*}
		\begin{tikzcd}
			1 \arrow{r} & \pi_1^{\rm{N}}(X,x) \arrow{r} & \GmotNo(X,x) \arrow{r} & \GmotNo(k) \arrow{r} & 1 \\
			1 \arrow{r} & \pi_1^{\rm{A}}(X,x) \arrow{r} \arrow{u} & \GmotAy(X,x) \arrow{r} \arrow{u} & \GmotAy(k) \arrow{r} \arrow{u}[dsimr]{\sim} & 1,
		\end{tikzcd}
	\end{equation*}
	obtained as in the proof of Proposition~\ref{prop:comp_generic}.
	Since $\Locgeo^{\rm{N}}(X)$ is a Tannakian subcategory of $\Locgeo^{\rm{A}}(X)$ (by construction), the left-most vertical arrow is an epimorphism.
	By diagram-chasing, we deduce that the middle vertical arrow is an epimorphism as well:
	this means that $o_X$ identifies $\NMLoc(X)$ with a Tannakian subcategory of $\AMLoc(X)$, in particular it is fully faithful (see~\cite[Prop.~2.21(a)]{deligne-milne}).
	In order to conclude, it remains to show that it is essentially surjective.
	
	To this end, fix an object $L \in \AMLoc(X)$.
	As a consequence of Proposition~\ref{prop:comp_generic}, there exists a dense open immersion $j \colon U \hookrightarrow X$ such that the object $j^* L \in \AMLoc(U)$ is isomorphic to $o_U(M)$ for some $M \in \NMLoc(U)$;
	without loss of generality we may assume $U$ affine, in which case $j$ is an affine morphism.
	In this situation, we have the chain of isomorphisms in $\PervAyoub(X)$
	\begin{align*}
		L &= j_{!*} j^* L && \textup{(by the proof of Lemma \ref{lem:j^*_fullfaith_locsys})}\\
		&\simeq j_{!*} o_U(M) \\
		&= o_X(j_{!*} M), && \textup{(by Proposition \ref{prop:o_S-6ff})}
	\end{align*}
	which shows that $L$ belongs to the essential image of $o_X$.
	This concludes the proof.
\end{proof}

Thus, there is no longer any ambiguity about the notion of motivic local system over $X$.
Consequently, the same happens for the notion of local system of geometric origin:

\begin{cor}\label{cor:LocNori=LocAyoub}
	For every smooth (quasi-projective) $k$-variety $X$, the inclusion
	\begin{equation*}
		\Locgeo^{\rm{N}}(X) \subset \Locgeo^{\rm{A}}(X)
	\end{equation*}
	is in fact an equivalence.
\end{cor}

\begin{proof}
	Indeed, as a consequence of Theorem~\ref{thm:comp_motlocsys}, both categories in question coincide with the smallest abelian subcategory of $\Loc(X)$ containing all local systems underlying motivic local systems and stable under subquotients.
\end{proof}

\begin{rem}
	In particular, this implies that $\Locgeo^{\rm{N}}(X)$ equals the intersection $\Pervgeo^{\rm{N}}(X) \cap \Loc(X)$:
	if a local system on $X$ can be written as a subquotient of the realization of some motivic perverse sheaf, it can in fact be written as a subquotient of the realization of some motivic local system.
\end{rem}

We are ready to complete the proof of our main result.
Throughout, we systematically use the compatibility of our comparison functors with the six functor formalism (as stated in Proposition~\ref{prop:o_S-6ff}).
Our argument is formally the same as the one used by Ayoub in the proof of~\cite[Thm.~1.93]{AyoAnab}.

\begin{proof}[Proof of Theorem~\ref{thm:comp}]
	By Lemma~\ref{lem:equiv-point}, the triangulated comparison functor $o_X$ is an equivalence in the case when $X = \Spec(k')$ for some finite extension $k'/k$.
	In order to prove the same over a general (quasi-projective) $k$-variety $X$, we establish fully faithfulness and then essential surjectivity.
	
	Firstly, we show that $o_X$ is fully faithful for every $X$.
	To this end, fix two objects $M_1^{\bullet}, M_2^{\bullet} \in \Db(\MNori(X))$.
	Writing $a_X \colon X \to \Spec(k)$ for the structural morphism, we have the usual identification
	\begin{equation*}
		\begin{aligned}
			\Hom_{\Db(\MNori(X))}(M_1^{\bullet},M_2^{\bullet}) = & \Hom_{\Db(\MNori(X))}(\Q_X,\Homint_X(M_1^{\bullet},M_2^{\bullet})) = \\
			& \Hom_{\Db(\MNori(X))}(a_X^* \Q_k,\Homint_X(M_1^{\bullet},M_2^{\bullet})) = \\
			& \Hom_{\Db(\MNori(k))}(\Q_k,a_{X,*} \Homint_X(M_1^{\bullet},M_2^{\bullet})).
		\end{aligned}
	\end{equation*}
	Similarly, we have
	\begin{equation*}
		\begin{aligned}
			\Hom_{\Db(\PervAyoub(X))}(o_X M_1^{\bullet},o_X M_2^{\bullet}) = & \Hom_{\Db(\PervAyoub(X))}(\Q_X,\Homint_S(o_X M_1^{\bullet},o_X M_2^{\bullet})) = \\
			& \Hom_{\Db(\PervAyoub(X))}(a_X^* \Q_k,\Homint_X(o_X M_1^{\bullet},o_X M_2^{\bullet})) = \\
			& \Hom_{\Db(\PervAyoub(k))}(\Q_k,a_{X,*} \Homint_X(o_X M_1^{\bullet},o_X M_2^{\bullet})) = \\
			& \Hom_{\Db(\PervAyoub(k))}(\Q_k,a_{X,*} o_X \Homint_X(M_1^{\bullet},M_2^{\bullet})) = \\
			& \Hom_{\Db(\PervAyoub(k))}(o_k \Q_k,o_k a_{X,*} \Homint_X(M_1^{\bullet},M_2^{\bullet})),
		\end{aligned}
	\end{equation*}
	where the last two passages witness the compatibility of the comparison functors with the six operations (given by Proposition~\ref{prop:o_S-6ff}).
	Under these identifications, the map
	\begin{equation*}
		\Hom_{\Db(\MNori(X))}(M_1^{\bullet},M_2^{\bullet}) \rightarrow \Hom_{\Db(\PervAyoub(X))}(o_X M_1^{\bullet},o_X M_2^{\bullet})
	\end{equation*}
	defined by $o_X$ corresponds to the map
	\begin{equation*}
		\Hom_{\Db(\MNori(k))}(\Q_k, a_{X,*} \Homint_X(M_1^{\bullet},M_2^{\bullet})) \rightarrow \Hom_{\Db(\PervAyoub(k))}(o_k \Q_k ,o_k a_{X,*} \Homint_X(M_1^{\bullet},M_2^{\bullet}))
	\end{equation*}
	defined by $o_k$, which is already known to be a bijection (by Lemma~\ref{lem:equiv-point}).
	This shows that $o_X$ is fully faithful, as wanted.
	
	Secondly, we show that $o_X$ is essentially surjective for every $X$.
	We proceed by Noetherian induction on $X$.
	The base step is when $\dim(X) = 0$, in which case $X$ is a disjoint union of spectra of finite extensions of $k$, so that the result is already known (essentially by Lemma~\ref{lem:equiv-point}).
	For the inductive step, assume that $\dim(X) > 0$ and that the result is known to hold over any proper closed subvariety $Z$ of $X$.
	Fix an object $N^{\bullet} \in \Db(\PervAyoub(X))$.
	As a consequence of Proposition~\ref{prop:comp_generic}, there exist a dense open immersion $j \colon U \hookrightarrow X$ and an object $M_U^{\bullet} \in \Db(\MNori(U))$ with an isomorphism in $\Db(\PervAyoub(U))$
	\begin{equation*}
		o_U (M_U^{\bullet}) \xrightarrow{\sim} j^* N^{\bullet}.
	\end{equation*}
	Let $i\colon Z \hookrightarrow X$ denote the complementary closed immersion.
	By inductive hypothesis, there exists an object $M_Z^{\bullet} \in \Db(\MNori(Z))$ with an isomorphism in $\Db(\PervAyoub(Z))$
	\begin{equation*}
		o_Z (M_Z^{\bullet}) \xrightarrow{\sim} i^* N^{\bullet}.
	\end{equation*}
	Now consider the localization triangle in $\Db(\MNori(X))$
	\begin{equation*}
		j_! j^* N^{\bullet} \rightarrow N^{\bullet} \rightarrow i_* i^* N^{\bullet} \xrightarrow{\delta} j_! j^* N^{\bullet}[1].
	\end{equation*}
	Since we have already shown that $o_X$ is fully faithful, there is a unique morphism in $\Db(\MNori(X))$
	\begin{equation}\label{formula:delta-hat}
		\hat{\delta}\colon i_* M_Z^{\bullet} \rightarrow j_! M_U^{\bullet}[1]		
	\end{equation}
	making the diagram in $\Db(\PervAyoub(X))$
	\begin{equation}\label{dia:delta}
		\begin{tikzcd}
			o_X(i_* M_Z^{\bullet}) \arrow{d}{\hat{\delta}} \arrow[equal]{r} & i_* o_Z(M_Z^{\bullet}) \arrow{r}{\sim} & i_* i^* N^{\bullet} \arrow{d}{\delta} \\
			o_X(j_! M_U^{\bullet})[1] \arrow[equal]{r} & j_! o_U(M_U^{\bullet})[1] \arrow{r}{\sim} & j_! j^* N^{\bullet}[1]
		\end{tikzcd}
	\end{equation}
	commute.
	Let us complete the morphism~\eqref{formula:delta-hat} to a distinguished triangle in $\Db(\MNori(X))$
	\begin{equation*}
		j_! M_U^{\bullet} \rightarrow M_X^{\bullet} \rightarrow i_* M_Z^{\bullet} \xrightarrow{\hat{\delta}} j_! M_U^{\bullet}[1].
	\end{equation*}
	It is then possible to complete the commutative diagram \eqref{dia:delta} to a morphism of triangles in $\Db(\PervAyoub(X))$
	\begin{equation*}
		\begin{tikzcd}
			o_X(j_! M_U^{\bullet}) \arrow{r} \arrow[equal]{d} & o_X (M_X^{\bullet}) \arrow{r} \arrow[dashed]{dd} & o_X(i_* M_Z^{\bullet}) \arrow{r}{\hat{\delta}} \arrow[equal]{d} & o_X(j_! M_U^{\bullet})[1] \arrow[equal]{d} \\
			j_! o_U(M_U^{\bullet}) \arrow{d}[dsiml]{\sim} && i_* o_Z(M_Z^{\bullet}) \arrow{d}[dsimr]{\sim} & j_! o_U(M_U^{\bullet})[1] \arrow{d}[dsimr]{\sim} \\
			j_! j^* N^{\bullet} \arrow{r} & N^{\bullet} \arrow{r} & i_* i^* N^{\bullet} \arrow{r}{\delta} & j_! j^* N^{\bullet}[1].
		\end{tikzcd}
	\end{equation*}
	Since any arrow $o_X (M_X^{\bullet}) \rightarrow N^{\bullet}$ filling the latter diagram is automatically an isomorphism (because so are the other two vertical arrows), we see that $N^{\bullet}$ lies in the essential image of $o_X$.
	This shows that $o_X$ is essentially surjective, thereby completing the proof.
\end{proof}

To conclude the present section,
we get rid of the last ambiguity
regarding perverse sheaves of geometric origin.

\begin{prop}\label{prop:PervNori=PervAyoub}
	For every (quasi-projective) $k$-variety $X$, the inclusion
	\begin{equation*}
		\Pervgeo^{\rm{N}}(X) \subset \Pervgeo^{\rm{A}}(X)
	\end{equation*}
	is in fact an equivalence.
\end{prop}

\begin{proof}
	We want to use the comparison criterion provided by~\cite[Prop.~1.9]{TerenziNori}.
	Note that, while this criterion is stated for subcategories of perverse Nori motives, both the formulation and the proof rely solely on the functoriality of perverse sheaves. 
	
	We already know that, for every $k$-variety $X$, the inclusions $\Pervgeo^{\rm{N}}(U) \subset \Pervgeo^{\rm{A}}(U)$ as $U$ varies in $\Open_X$ induce an equivalence in the colimit (by Lemma~\ref{lem:comp_Locgeo_generic}).
	Therefore, in order to apply~\cite[Prop.~1.9]{TerenziNori}, it suffices to check that, as $X$ varies, the subcategories $\Pervgeo^{\rm{N}}(X) \subset \Pervgeo^{\rm{A}}(X)$ are stable under 
	inverse images under open immersions, extensions by zero and direct images under affine open immersions, direct images under closed immersions, Tate twists, as well as Beilinson's gluing functors.
	This follows formally from the fact that all these functors are $t$-exact for the perverse $t$-structures and lift to the categories of perverse Nori motives.
\end{proof}

\begin{rem}
	This implies, in particular, that $\Pervgeo^{\rm{N}}(X)$ is already stable under extensions inside $\Perv(X)$:
	any extension between perverse sheaves which can be written as subquotients of the realization of some motivic perverse sheaves can be itself written as such a subquotient.
\end{rem}

\begin{rem}
	The comparison criterion of~\cite[Prop.~1.9]{TerenziNori} also offers an alternative way to deduce Theorem~\ref{thm:comp} from Proposition~\ref{prop:comp_generic}:
	instead of comparing the derived categories via localization triangles, one compares the perverse hearts via Beilinson's gluing functors (which are compatible with the comparison functors, by Proposition~\ref{prop:o_S-6ff}).
\end{rem}

\section{Applications and complements}\label{sect:appli_compl}

In this final section, we establish some interesting consequences of our comparison theorem.
Most notably, we obtain an $\infty$-categorical enhancement for the six functor formalism of Nori motivic sheaves.
This yields a canonical system of realization functors of Voevodsky motivic sheaves into Nori motivic sheaves, which greatly extends Nori's results over the base field.
We deduce a rigidity property of Nori motivic sheaves as a six functor formalism.
In the other direction, our comparison result implies strong independence properties of Ayoub's categories of homotopy-fixed points from the chosen complex embedding $\sigma \colon k \hookrightarrow \C$.

\subsection{A new construction of the Nori realization}\label{subsec:real}

The construction of the six operations on Ivorra--Morel's categories of Nori motivic sheaves is quite involved, for example due to the lack of $t$-exactness of general inverse image functors and of the tensor product.
In particular, it is not clear at first sight whether the resulting monoidal fibered category over $k$-varieties admits an $\infty$-categorical enhancement in the sense of~\cite{DG22}.

This issue was solved by Tubach in~\cite[\S\S~1,2]{Tub23}:
using a motivic version of an argument due to Nori, he showed that the triangulated category $\Db(\MNori(X))$ is equivalent to the bounded derived category of its so-called constructible heart.
This yields the sought-after enhancement, because inverse images and the tensor product respect the ordinary $t$-structure.
Our comparison theorem gives an alternative way to obtain the $\infty$-categorical enhancement:
\begin{thm}\label{thm:infty}
	The six functor formalism $X \mapsto \Db(\MNori(X))$ admits a canonical enhancement to a functor
	\begin{equation*}
		\Db(\MNori(-)) \colon \Var_k^{op} \rightarrow \CAlg(\mathsf{Cat}_{\infty}^{st}), \quad X \mapsto \Db(\MNori(X))
	\end{equation*}
	with values in the $\infty$-category of stably symmetric monoidal $\infty$-categories.
\end{thm}
\begin{proof}
	As recalled in Section~\ref{subsect:Dbgeo^Gmot}, the six functor formalism $X \mapsto \Dbgeo(X)^{\GmotAy(k)}$ comes equipped with a canonical $\infty$-categorical enhancement as in the statement. 
	Therefore, the thesis follows from Theorem~\ref{thm:comp}.
\end{proof}

\begin{rem}
	The $\infty$-categorical enhancement described in Theorem~\ref{thm:infty} coincides with that of~\cite{Tub23}.
	This follows from Ayoub's study of sheaves of geometric origin in~\cite[\S~1.6]{AyoAnab}:
	in particular, he showed in~\cite[Thm.~1.6.32]{AyoAnab} that the stable $\infty$-category $\Dbgeo(X)$ is equivalent to the bounded derived category of its constructible heart, again by adapting Nori's argument.
\end{rem}

The $\infty$-categorical enhancement of the six operations is a significant technical improvement, since it allows one to define coefficient categories coherently over diagrams of $k$-varieties. 
This opens the way to a study of nearby and vanishing cycles within the axiomatic framework of~\cite[\S~3]{AyoThesis}.
Furthermore, as explained in~\cite[Prop.~A.4]{Tub23}, this provides a canonical extension of the six functor formalism $X \mapsto \Db(\MNori(X))$ to non-necessarily quasi-projective $k$-varieties.

In~\cite[Thm.~4.5]{Tub23}, the $\infty$-categorical enhancement is used to extend Nori's Theorem~\ref{thm:Nori_real_k} to motivic sheaves, as an application of Drew--Gallauer's Theorem~\ref{thm:Drew-Gallauer}.
Our comparison theorem yields a different perspective on this important result:
\begin{thm}\label{thm:real}
	For every $k$-variety $X$, there exists a canonical triangulated realization functor
	\begin{equation*}
		\Nri_X^* \colon \DAct(X) \rightarrow \Db(\MNori(X))
	\end{equation*}
	such that the Betti realization over $X$ factors as
	\begin{equation*}
		\Bti_X^* \colon \DAct(X) \xrightarrow{\Nri_X^*} \Db(\MNori(X)) \xrightarrow{\iota_X} \Db(\Perv(X)) = \Dbct(X)
	\end{equation*}
	up to canonical natural isomorphism.
	As $X$ varies, the functors $\Nri_X^*$ are compatible with the six operations, as well as with Beilinson's gluing functors.
\end{thm}
\begin{proof}
	Thanks to Theorem~\ref{thm:comp}, this follows from Ayoub's Corollary~\ref{cor:Bti-enriched}.
\end{proof}

This result shows the power of $\infty$-categorical methods in the study of Nori motives:
it provides all realization functors $\Nri_X^*$ at once, without need to construct them individually.
In fact, at present, no direct construction of $\Nri_X^*$ for a single $k$-variety $X$ with $\dim(X) > 0$ is available:
to the authors' knowledge, the only candidate construction (over smooth $X$) was proposed by Ivorra in~\cite{Ivo16};
however, its compatibility with the Betti realization and with the six operations is only partially understood.

\begin{rem}\label{rem:Nri-real_uniqueness}
	Recall that Ayoub's Corollary~\ref{cor:Bti-enriched}, and thus also our Theorem~\ref{thm:real}, is essentially a consequence of Drew--Gallauer's Theorem~\ref{thm:Drew-Gallauer}. 
	By the uniqueness part in the latter result, the Nori realization is the unique possible morphism of six functor formalisms between Voevodsky and Nori motivic sheaves up to $2$-isomorphism (in the sense of Definition~\ref{defn:mor-6ff}(2) below). 
\end{rem}

The existence of the Nori realization over bases has one important consequence for the conjectural motivic picture.
Recall that, for every $k$-variety $X$, the triangulated category $\DAct(X)$ is expected to carry a motivic perverse $t$-structure, characterized by its compatibility with the perverse $t$-structure on $\Dbct(X)$ under the Betti realization.
As noted in Remark~\ref{rem:DA(X)^heart=MNori(X)}, if this is the case, then the perverse heart of $\DAct(X)$ is exactly $\MNori(X)$.
In fact, as proved by Tubach in~\cite[Thm.~4.22]{Tub23}, the existence of the motivic perverse $t$-structures implies that the functors $\Nri_X^*$ are all equivalences;
the proof uses the universal property of Ivorra--Morel's categories, and again Drew--Gallauer's Theorem~\ref{thm:Drew-Gallauer}.
Combining Tubach's result with our comparison theorem, we deduce an interesting formula for Voevodsky motivic sheaves:
\begin{cor}
	Suppose that, for every $k$-variety $X$, the conjectural motivic perverse $t$-structure on $\DAct(X)$ exists.
	Then there is a canonical system of equivalences
	\begin{equation*}
		\DAct(X) \xrightarrow{\sim} \DAct(X;\clg{B}_X)^{\Gmot(k)}
	\end{equation*}
	compatible with the six operations.
\end{cor}
\begin{proof}
	If the conjectural motivic $t$-structure on $\DAct(k)$ exists, then by~\cite[Prop.~3.2.9]{AyoConj} the spectral motivic Galois group $\Gmot(k)$ is already classical, thus equal to $\GmotAy(k)$.
	Using Corollary~\ref{cor:Dbgeo=DAct(Btialg)}, we get canonical equivalences
	\begin{equation*}
		\DAct(X;\clg{B}_X)^{\Gmot(k)} \xrightarrow{\sim} \Dbgeo(X)^{\GmotAy(k)},
	\end{equation*}
	compatibly with the six operations.
	If the motivic perverse $t$-structure exists on every $k$-variety $X$, then by~\cite[Thm.~4.22]{Tub23} and Theorem~\ref{thm:comp} we get canonical equivalences
	\begin{equation*}
		\DAct(X) \xrightarrow{\sim} \Db(\MNori(X)) \xrightarrow{\sim} \Dbgeo(X)^{\GmotAy(X)},
	\end{equation*}
	again compatibly with the six operations.
	This implies the thesis. 
\end{proof}

We conclude by discussing the compatibility of the Nori realization with weights.
The triangulated categories $\DAct(X)$ are endowed with the \textit{Chow weight structure}, constructed by Bondarko in~\cite{Bon14}.
As shown in~\cite[Cor.~6.27]{IM19}, the derived categories $\Db(\MNori(X))$ carry a canonical weight structure which is \textit{transversal} to the perverse $t$-structure in the sense of~\cite[Defn.~1.2.2]{BondarkoWeight}:
in particular, objects of $\MNori(X)$ possess a canonical weight filtration so that morphisms in $\MNori(X)$ are strictly filtered (see also~\cite[Cor.~6.16, Prop.~6.17]{IM19}).

\begin{prop}\label{prop:Nri-weights}
	For every $k$-variety $X$, the Nori realization functor $\Nri_X^* \colon \DAct(X) \to \Db(\MNori(X))$ is weight-exact.
\end{prop}

This result was proved in~\cite[Prop.~4.9]{Tub23} using Hodge-theoretic methods.
Here, we offer a different proof of arithmetic nature, based on the formalism of mixed $\ell$-adic perverse sheaves studied by Morel in~\cite{Mor18}.

\begin{nota}
	Let $X$ be a $k$-variety.
	\begin{itemize}
		\item Let $\Db_{\ell}(X)$ denote the triangulated category of \textit{constructible $\ell$-adic complexes} on $X$, as defined in~\cite{Ekedahl} or in~\cite{Bhatt-Scholze}.
		\item Let $\Perv_{\ell}(X) \subset \Db_{\ell}(X)$ denote the abelian category of $\ell$-adic perverse sheaves over $X$.
		\item Let $\Perv_{\ell}^\rom{mf}(X) \subset \Perv_{\ell}(X)$ denote the full abelian subcategory of \textit{mixed $\ell$-adic perverse sheaves admitting a weight filtration}, as defined in~\cite[Defn.~2.6.2]{Mor18}.
	\end{itemize}  
\end{nota}

In~\cite{AyoEtaleReal}, Ayoub constructed a system of $\ell$-adic étale realization functors
\begin{equation*}
	\mathsf{R}_{\ell,X} \colon \DAct(X) \rightarrow \Db_{\ell}(X)
\end{equation*}
compatible with the six operations as well as with Beilinson's gluing functors (see \cite[\S~3.2]{IM19} for the definition of the gluing functors on Voevodsky motivic sheaves).
By~\cite[Thm.~1.3]{BeilinsonEquivalence}, the realization functor
\begin{equation*}
	\Db(\Perv_{\ell}(X)) \rightarrow \Db_{\ell}(X)
\end{equation*}
is an equivalence.
By~\cite[Thm.~3.2.4, Thm.~6.2.2, Prop.~8.2, Prop.~8.3]{Mor18}, as $X$ varies, the derived categories $\Db(\Perv_{\ell}^\rom{mf}(X))$ are endowed with the six operations, as well as with Beilinson's gluing functors, and the forgetful functors
\begin{equation*}
	\Db(\Perv_{\ell}^\rom{mf}(X)) \rightarrow \Db(\Perv_{\ell}(X))
\end{equation*}
commute with them.
However, it is not clear a priori whether the six operations on mixed perverse sheaves lift to the $\infty$-categorical level:
this makes it difficult to show that Ayoub's $\ell$-adic realization functor over $X$ factors through $\Db(\Perv_{\ell}^\rom{mf}(X))$.
However,~\cite[Prop.~6.11(2)]{IM19} allows us to identify $\MNori(X)$ with the universal abelian factorization of the homological functor
\begin{equation*}
	\DAct(X) \xrightarrow{\mathsf{R}_{\ell,X}} \Db_{\ell}(X) \xrightarrow{\pH^0} \Perv_{\ell}(X).
\end{equation*}
Consider the associated $\ell$-adic forgetful functor
\begin{equation*}
	\iota_{\ell,X} \colon \MNori(X) \to \Perv_{\ell}(X),
\end{equation*}
and extend the latter to a conservative triangulated functor
\begin{equation*}
	\iota_{\ell,X} \colon \Db(\MNori(X)) \to \Db(\Perv(X)) = \Db_{\ell}(X).
\end{equation*}
The construction of the six operations of~\cite{IM19} and~\cite{TerenziNori} can be rewritten using the $\ell$-adic realization in place of the Betti realization everywhere:
this means that the triangulated $\ell$-adic forgetful functors commute with the six operations and with the gluing functors.
Lastly,~\cite[Prop.~6.17]{IM19} implies that the $\ell$-adic forgetful functor factors through a faithful exact functor
\begin{equation*}
	\iota_{\ell,X}^\rom{mf} \colon \MNori(X) \to \Perv_{\ell}^\rom{mf}(X).
\end{equation*}
Combining this with Theorem~\ref{thm:real}, we obtain the following result:

\begin{prop}
	For every $k$-variety $X$, there exists a canonical triangulated realization functor
	\begin{equation*}
		\mathsf{R}^\rom{mf}_{\ell,X} \colon \DAct(X) \rightarrow \Db(\Perv_{\ell}^\rom{mf}(X))
	\end{equation*}
	such that the $\ell$-adic realization over $X$ factors as
	\begin{equation*}
		\mathsf{R}_{\ell,X} \colon \DAct(X) \xrightarrow{\mathsf{R}^\rom{mf}_{\ell,X}} \Db(\Perv_{\ell}^\rom{mf}(X)) \rightarrow \Db(\Perv_{\ell}(X)) \xrightarrow{\sim} \Db_{\ell}(X) 
	\end{equation*}
	up to canonical natural isomorphism.
	As $X$ varies, the functors $\mathsf{R}^\rom{mf}_{\ell,X}$ are compatible with the six operations, as well as with Beilinson's gluing functors.
\end{prop}

\begin{proof}
	For every $k$-variety $X$, the functor $\mathsf{R}^\rom{mf}_{\ell,X}$ is defined as the composite
	\begin{equation*}
		\mathsf{R}_{\ell,X}^\rom{mf} \colon \DAct(X) \xrightarrow{\Nri_X^*} \Db(\MNori(X)) \xrightarrow{\iota^\rom{mf}_{\ell,X}} \Db(\Perv_{\ell}^\rom{mf}(X)).
	\end{equation*}
	The compatibility with the six operations and with the gluing functors follows from the analogous compatibilities for the Nori realization and for the $\ell$-adic forgetful functors individually.
	Therefore, the fact that the construction recovers Ayoub's $\ell$-adic realization follows from the uniqueness part of Drew--Gallauer's Theorem~\ref{thm:Drew-Gallauer}.
\end{proof}

\begin{proof}[Proof of Proposition~\ref{prop:Nri-weights}]
	Since weights in $\MNori(X)$ are uniquely determined by their compatibility with weights in $\Perv_{\ell}^\rom{mf}(X)$ (by definition), it suffices to show that the refined $\ell$-adic realization $\mathsf{R}^\rom{mf}_{\ell,X}$ is weight-exact.
	In view of the very definition of the Chow weight structure on $\DAct(X)$, this amounts to checking that, for every proper morphism $p \colon W \to X$ with $W$ a smooth $k$-variety, the object $p_! \Q_W(n)[2n] \in \Db(\Perv_{\ell}^\rom{mf}(X))$ is pure of weight $0$ for every $n \in \Z$.
	By definition of weights on mixed $\ell$-adic complexes, this follows from Deligne's proof of the Weil Conjecture in~\cite{Del80}.
\end{proof}	

\subsection{Autoequivalences of Nori motivic sheaves}\label{subsec:autoeq}

In this section, we use the Nori realization to study autoequivalences of Nori motivic sheaves as a six functor formalism.
We start by recalling the following general notions:

\begin{defn}\label{defn:mor-6ff}
	Let $X \mapsto \Der_1(X)$ and $X \mapsto \Der_2(X)$ be two six functor formalisms over (quasi-projective) $k$-varieties.
	\begin{enumerate}
		\item A \textit{morphism of six functor formalisms} $R \colon \Der_1(-) \rightarrow \Der_2(-)$ is the datum of
		\begin{itemize}
			\item for every $k$-variety $X$, a monoidal triangulated functor
			\begin{equation*}
				R_X \colon \Der_1(X) \rightarrow \Der_2(X),
			\end{equation*}
			\item for every morphism $f \colon X \rightarrow Y$, a natural isomorphism of functors $\Der_1(Y) \rightarrow \Der_2(X)$
			\begin{equation}\label{eq:autoeq-f^*}
				f^* \circ R_Y \xrightarrow{\sim} R_X \circ f^*
			\end{equation}
		\end{itemize}
		satisfying the natural compatibility conditions stated in~\cite{AyoBettiReal}.
		Such a morphism $R$ is called \textit{strong} if the natural transformation
		\begin{equation*}
			\Homint_X(R_X(-),R_X(-)) \rightarrow R_X \Homint_X(-,-)
		\end{equation*}
		obtained from the monoidality of $R_X$
		by adjunction in the second variable,
		and the induced natural transformation 
		\begin{equation*}
			R_Y \circ f_* \rightarrow f_* \circ R_X,
		\end{equation*}
		are both invertible for every choice of $X$ and $f \colon X \to Y$.
		\item Given two morphisms $R_1, R_2 \colon \Der_1(-) \rightarrow \Der_2(-)$, a \textit{$2$-morphism} $\alpha \colon R_1 \rightarrow R_2$ is the datum of
		\begin{itemize}
			\item for every $k$-variety $X$, a natural transformation of functors $\Der_1(X) \rightarrow \Der_2(X)$
			\begin{equation*}
				\alpha_X \colon R_{1,X} \rightarrow R_{2,X}
			\end{equation*}
		\end{itemize}
		compatible with the structure of monoidal fibered category in the following sense:
		\begin{enumerate}
			\item[(i)] For every $k$-variety $X$, the diagram of functors $\Der_1(X) \times \Der_1(X) \rightarrow \Der_2(X)$
			\begin{equation*}
				\begin{tikzcd}
					R_{1,X}(-) \otimes R_{1,X}(-) \arrow{d}[']{\alpha_X \otimes \alpha_X} \arrow{rr}{\sim} && R_{1,X}(- \otimes -) \arrow{d}{\alpha_X(- \otimes -)} \\
					R_{2,X}(-) \otimes R_{2,X}(-) \arrow{rr}{\sim} && R_{2,X}(- \otimes -)
				\end{tikzcd}
			\end{equation*}
			commutes.
			\item[(ii)] For every morphism of $k$-varieties $f \colon X \rightarrow Y$, the diagram of functors $\Der_1(Y) \rightarrow \Der_2(X)$
			\begin{equation*}
				\begin{tikzcd}
					f^* \circ R_{1,Y} \arrow{d}{f^* \alpha_Y} \arrow{rr}{\sim} && R_{1,X} \circ f^* \arrow{d}{\alpha_X f^*} \\
					f^* \circ R_{2,Y} \arrow{rr}{\sim} && R_{2,X} \circ f^*
				\end{tikzcd}
			\end{equation*}
			commutes.
		\end{enumerate}
		Such a $2$-morphism $\alpha$ is called a \textit{$2$-isomorphism} if the natural transformation $\alpha_X$ is invertible for every $k$-variety $X$;
		in this case, we say that $R_1$ and $R_2$ are \textit{isomorphic}.
	\end{enumerate}
\end{defn}

Strong morphisms are automatically compatible with the entire six functor formalism.
We are particularly interested in the case where $\Der_1 = \Der_2$.

\begin{defn}
	Let $X \mapsto \Der(X)$ be a six functor formalism over (quasi-projective) $k$-varieties, and let $R \colon \Der(-) \to \Der(-)$ be a self-morphism.
	\begin{enumerate}
		\item We say that $R$ is an \textit{autoequivalence} if the functor $R_X \colon \Der(X) \to \Der(X)$ is an equivalence for every $k$-variety $X$.
		\item We say that $R$ is a \textit{trivial autoequivalence} if it is $2$-isomorphic to the identity autoequivalence $\id_{\Der}$ in the sense of Definition~\ref{defn:mor-6ff}(2).
		In this case, we call \textit{trivialization} of $R$ every $2$-isomorphism $\gamma \colon R \xrightarrow{\sim} \id_{\Der}$. 
	\end{enumerate}
\end{defn}

Note that any self-morphism $R \colon \Der(-) \to \Der(-)$ which is $2$-isomorphic to an autoequivalence is itself an autoequivalence, and that every autoequivalence is automatically a strong self-morphism.
Autoequivalences of $X \mapsto \Der(X)$ form a $2$-group under composition, and trivial autoequivalences form a normal sub-$2$-group of the latter.

\begin{nota}
	We let $\Auteq(X \mapsto \Der(X))$ denote the quotient of the $2$-group of autoequivalences of $X \mapsto \Der(X)$ by the sub-$2$-group of trivial autoequivalences, and we call it the \textit{autoequivalence $2$-group} of the six functor formalism $X \mapsto \Der(X)$.
\end{nota}

The six functor formalism of motivic sheaves should possess no interesting autoequivalences.
For Voevodsky motivic sheaves, this can be proved without difficulty:

\begin{prop}
	The $2$-group $\Auteq(X \mapsto \DAct(X))$ is trivial.
\end{prop}

\begin{proof}
	Note that every autoequivalence of $X \mapsto \DA(X)$ restricts to an autoequivalence of $X \mapsto \DAct(X)$ and that, conversely, every autoequivalence of the latter extends uniquely to an autoequivalence of the former;
	since this correspondence is compatible with $2$-isomorphisms, it preserves trivial autoequivalences.
	Hence, we have a canonical isomorphism
	\begin{equation*}
		\Auteq(X \mapsto \DAct(X)) = \Auteq(X \mapsto \DA(X)),
	\end{equation*}
	and the latter is trivial as a consequence of Drew--Gallauer's Theorem~\ref{thm:Drew-Gallauer}.
\end{proof}

If the Nori realization functors $\Nri_X^*$ were known to be equivalences, this would imply the triviality of the $2$-group $\Auteq(X \mapsto \Db(\MNori(X)))$.
Unfortunately, we are not able to confirm this expectation in full.
Nevertheless, as a consequence of Theorem~\ref{thm:comp}, we can at least establish a weaker variant of the expected result.
To this end, let us say that an autoequivalence $R$ of the six functor formalism $X \mapsto \Db(\MNori(X))$ is \textit{exact} if, for every $k$-variety $X$, the triangulated functor $R_X \colon \Db(\MNori(X)) \to \Db(\MNori(X))$ is $t$-exact for the perverse $t$-structure.
Since all trivial autoequivalences are obviously exact, we can define the sub-$2$-group
\begin{equation*}
	\Auteq^{ex}(X \mapsto \Db(\MNori)) \subset \Auteq(X \mapsto \Db(\MNori))
\end{equation*}
collecting the exact autoequivalences of $X \mapsto \Db(\MNori(X))$ modulo the trivial ones.
Our result can then be stated as follows:

\begin{thm}\label{thm:exact-Auteq}
	The $2$-group $\Auteq^{ex}(X \mapsto \Db(\MNori(X)))$ is trivial.
\end{thm}

\begin{proof}
	Fix an exact autoequivalence $R$ of the coefficient system $X \mapsto \Db(\MNori(X))$, and let us construct a trivialization $\gamma \colon R \xrightarrow{\sim} \id_{\Db(\MNori)}$.
	For every $k$-variety $X$, the $t$-exact functor
	\begin{equation*}
		R_X: \Db(\MNori(X)) \xrightarrow{\sim} \Db(\MNori(X))
	\end{equation*}
	induces an equivalence of abelian categories
	\begin{equation*}
		R_X^0 \colon \MNori(X) \xrightarrow{\sim} \MNori(X).
	\end{equation*}
	In fact, since the triangulated functor $R_X$ admits a dg-enhancement,~\cite[Thm.~1]{Vologodsky} implies that it is naturally isomorphic to the trivial derived functor of $R_X^0$.
	We claim that, as $X$ varies, these identifications can be chosen compatibly with the structure of monoidal fibered category of $X \mapsto \Db(\MNori(X))$.
	To see this, it suffices to consider the functoriality on quasi-projective $k$-varieties.
	In the quasi-projective setting, using the method of~\cite{TerenziNori}, we are reduced to showing that the identifications in question respect (shifted) inverse images under smooth morphisms, direct images under closed immersions, and the external tensor product functors. 
	Since these functors are all $t$-exact and dg-enhanced, by~\cite[Thm.~1]{Vologodsky} it suffices to check the required compatibilities on the level of abelian categories, where they are obvious.

	Suppose for a moment that we had defined the sought-after trivialization $\gamma$ of $R$. 
	For each $k$-variety $X$, the natural isomorphism between functors $\Db(\MNori(X)) \rightarrow \Db(\MNori(X))$
	\begin{equation*}
		\gamma_X \colon R_X \xrightarrow{\sim} \id_{\Db(\MNori(X))}
	\end{equation*}
	would restrict to a natural isomorphism between exact functors $\MNori(X) \rightarrow \MNori(X)$
	\begin{equation*}
		\gamma_X^0 \colon R_X^0 \xrightarrow{\sim} \id_{\MNori(X)},
	\end{equation*}
	and, in fact,~\cite[Thm.~2]{Vologodsky} would imply that $\gamma_X$ equals the natural isomorphism induced by $\gamma_X^0$.
	We are left to construct the natural isomorphisms $\gamma_X^0$.
	The key point is that, by Remark~\ref{rem:Nri-real_uniqueness}, there is a $2$-isomorphism of six functor formalisms 
	\begin{equation*}
		\beta \colon R \circ \Nri^* \xrightarrow{\sim} \Nri^*.
	\end{equation*}
	For every fixed $k$-variety $X$, we obtain a natural isomorphism between functors $\DAct(X) \rightarrow \MNori(X)$
	\begin{equation}\label{formula:triv_H^0-Nri}
		\beta_X^0 \colon R_X^0 \circ \pH^0 \circ \Nri_X^* = \pH^0 \circ R_X \circ \Nri_X^* \xrightarrow{\beta_X} \pH^0 \circ \Nri_X^*.
	\end{equation}
	By the very construction of the Nori realization, the composite functor $\pH^0\circ \Nri_X^* \colon \DAct(X) \rightarrow \MNori(X)$ coincides with the homological functor $h_X^0$.
	The trick now is to identify $\MNori(X)$ with the universal abelian factorization of the latter:
	this allows us to obtain $\gamma_X^0$ directly from the lifting principles of universal abelian factorizations.
    In detail, since the diagram
    \begin{equation*}
    	\begin{tikzcd}
    		\DAct(X) \arrow{rr}{\id} \arrow{d}{h_X} && \DAct(X) \arrow{d}{h_X} \\
    		\MNori(X) \arrow{rr}{R_X^0} && \MNori(X) 
    	\end{tikzcd}
    \end{equation*}
    is commutative up to the natural isomorphism $\beta_X^0$, we get an exact functor
    \begin{equation*}
    	F_X \colon \MNori(X) \to \MNori(X) 
    \end{equation*}
    making the diagram
    \begin{equation*}
    	\begin{tikzcd}
    		\DAct(X) \arrow{rr}{\id} \arrow{d}{h_X} && \DAct(X) \arrow{d}{h_X} \\
    		\MNori(X) \arrow{rr}{F_X} && \MNori(X)
    	\end{tikzcd}
    \end{equation*} 
    commute strictly, together with a natural isomorphism 
    \begin{equation*}
    	\tilde{\beta}_X^0 \colon R_X^0 \xrightarrow{\sim} F_X.
    \end{equation*}
    Since $F_X$ is uniquely determined by the strict commutativity requirement (see Remark~\ref{rem:j^*-unique}), we must have $F_X = \id_{\MNori(X)}$. 
    Thus, we can set $\gamma_X^0 := \tilde{\beta}_X^0$.
	
	In order to make sure that, as $X$ varies, the resulting natural isomorphisms $\gamma_X$ define a trivialization as wanted, one needs to prove that they are compatible with inverse image functors as well as with the tensor product in the quasi-projective setting.
	Using the method of~\cite{TerenziTensorFibCat,TerenziConstructing}, it suffices to show that they are compatible with (shifted) inverse images under smooth morphisms, direct images under closed immersions, and the external tensor product functors.
	Since these are all $t$-exact and dg-enhanced, one can use~\cite{Vologodsky} to check the required compatibilities on the abelian level, where they are obvious.
	This concludes the proof.
\end{proof}

\begin{rem}
	In view of Theorem~\ref{thm:exact-Auteq}, showing the triviality of the whole group $\Auteq(X \mapsto \Db(\MNori(X)))$ amounts to showing that any autoequivalence of Nori motivic sheaves must be exact.
	It is not clear to us how hard this would be (unconditionally on the existence of the motivic perverse $t$-structures).
	Using Drew--Gallauer's Theorem~\ref{thm:Drew-Gallauer} as in the proof of Theorem~\ref{thm:exact-Auteq}, one sees that all objects of $\Db(\MNori(X))$ lying in the image of $\Nri_X^*$ must stay fixed under any autoequivalence, and~\cite[Thm.~1.12]{TerenziNori} implies that the abelian category $\MNori(X)$ is generated by  objects of this kind under (co)kernels.
	However, this does not allow us to conclude formally that any autoequivalence must keep the whole of $\MNori(X)$ fixed, precisely because one does not know in advance that it commutes with (co)kernels but only with cones.
\end{rem}

\subsection{Independence properties of Ayoub's construction}\label{subsect:indep}

Recall that the very construction of Nori motivic sheaves over $k$-varieties, both in Ivorra--Morel's and in Ayoub's sense, depends on the choice of a complex embedding $\sigma \colon k \hookrightarrow \C$.
In this subsection, we discuss this aspect more closely, leading to some interesting independence-of-$\sigma$ results.
Throughout, we restore the more precise Notation~\ref{nota:MNori_emb} for Ivorra--Morel's categories.

The starting point is the following strong independence result in Ivorra--Morel's setting:

\begin{prop}[{\cite[Prop.~6.11]{IM19}}]\label{prop:MNori_sigma1=MNori_sigma2}
	Let $X$ be a (quasi-projective) $k$-variety.
	For every choice of two complex embeddings $\sigma_1, \sigma_2 \colon k \hookrightarrow \C$, the abelian categories $\MNori_{\sigma_1}(X)$ and $\MNori_{\sigma_2}(X)$ are canonically equivalent.
\end{prop}

\begin{proof}
	Indeed, the abelian category $\MNori_{\sigma}(X)$ is defined in~\cite[\S~2.1]{IM19} as the Serre quotient of the abelian hull $\mathbf{A}(\DAct(X))$ by the kernel of the homological functor $\pH^0 \circ \Bti_{\sigma,X}^* \colon \DAct(k) \to \Perv(X)$.
	Therefore, it suffices to show that the kernels associated to $\sigma_1$ and $\sigma_2$ coincide.
	This ultimately follows from the existence of comparison isomorphisms between the corresponding Betti cohomology theories after scalar extension:
	for example, one can use the comparison isomorphism between Betti and de Rham cohomology.
\end{proof}

We deduce the following result in Ayoub's setting:

\begin{thm}\label{thm:Dbgeo^Gmot_sigma1=Dbgeo^Gmot_sigma2}
	Let $X$ be a (quasi-projective) $k$-variety.
	For every choice of two complex embeddings $\sigma_1, \sigma_2 \colon k \hookrightarrow \C$, there exists a canonical equivalence of stable $\infty$-categories
	\begin{equation}\label{formula:Dbgeo^Gmot_sigma1=Dbgeo^Gmot_sigma2}
		\Dbgeoemb{\sigma_1}(X)^{\GmotAy(k,\sigma_1)} = \Dbgeoemb{\sigma_2}(X)^{\GmotAy(k,\sigma_2)}
	\end{equation}
	satisfying the natural cocycle condition for triples of embeddings.
	As $X$ varies, the equivalences \eqref{formula:Dbgeo^Gmot_sigma1=Dbgeo^Gmot_sigma2} are compatible with the six operations, as well as with Beilinson's gluing functors.
\end{thm}

\begin{proof}
	We define the equivalence \eqref{formula:Dbgeo^Gmot_sigma1=Dbgeo^Gmot_sigma2} as the composite
\begin{equation*}
\begin{tikzcd}
\Dbgeoemb{\sigma_1}(X)^{\GmotAy(k,\sigma_1)}
&&[-1em]&
\Dbgeoemb{\sigma_2}(X)^{\GmotAy(k,\sigma_2)}
\\
\Db(\Pervgeoemb{\sigma_1}^{\rm{A}}(X)^{\GmotAy(k,\sigma_1)})
	\arrow{u}[dsiml]{\sim}
&
\Db(\MNori_{\sigma_1}(X)) \arrow{l}[']{\sim} \arrow[equal]{r}
&
\Db(\MNori_{\sigma_2}(X)) \arrow{r}{\sim}
&
\Db(\Pervgeoemb{\sigma_2}^{\rm{A}}(X)^{\GmotAy(k,\sigma_2)})
	\arrow{u}[dsimr]{\sim}
\end{tikzcd}
\end{equation*}
	where the central equivalence is given by Proposition~\ref{prop:MNori_sigma1=MNori_sigma2} while the other equivalences are provided by Theorem~\ref{thm:Bei-equiv} and Theorem~\ref{thm:comp};
	by construction, these are all equivalences of stable $\infty$-categories.
	It is easy to see that this definition satisfies the cocycle condition for triples of complex embeddings.
	The compatibility with the six operations and with Beilinson's gluing functors follows from Proposition~\ref{prop:o_S-6ff}.
\end{proof}

One could wonder whether the equivalence \eqref{formula:Dbgeo^Gmot_sigma1=Dbgeo^Gmot_sigma2} comes from an equivalence
\begin{equation}\label{formula:Dbgeo_sigma1=Dbgeo_sigma2}
	\Dbgeoemb{\sigma_1}(X) = \Dbgeoemb{\sigma_2}(X)
\end{equation}
making the diagram
\begin{equation}\label{dia:Dbgeo^Gmot-to-Dbgeo_sigma1,2}
	\begin{tikzcd}
		\Dbgeoemb{\sigma_1}(X)^{\GmotAy(k,\sigma_1)} \arrow[equal]{rr} \arrow{d}[']{\omega_{\sigma_1,X}} && \Dbgeoemb{\sigma_2}(X)^{\GmotAy(k,\sigma_2)} \arrow{d}{\omega_{\sigma_2,X}} \\
		\Dbgeoemb{\sigma_1}(X) \arrow[equal]{rr} && \Dbgeoemb{\sigma_2}(X)
	\end{tikzcd}
\end{equation}
commute up to natural isomorphism.
What would certainly be too much to ask for is a system of equivalences of the form~\eqref{formula:Dbgeo_sigma1=Dbgeo_sigma2} for all (quasi-projective) $k$-varieties $X$ compatible with the six operations.
Indeed, using compatibility with direct images and with the monoidal structure, this would yield functorial isomorphisms of Betti cohomology algebras
\begin{equation}\label{eq:H^*_sigma1=H^*_sigma2}
	H_{\sigma_1}^*(X,\Q) \simeq H_{\sigma_2}^*(X,\Q).
\end{equation}
But it is known that such an isomorphism cannot exist in general.
For instance,
when $k$ is imaginary quadratic,
Charles' work~\cite{Charles} provides examples of
smooth projective $k$-varieties $X$
whose Betti cohomology algebras under $\sigma_1$ and $\sigma_2$
are not isomorphic (not even with $\R$-coefficients).
The closest one can have in general is a canonical isomorphism
\begin{equation*}
	H_{\sigma_1}^*(X,\C) = H_{\sigma_2}^*(X,\C),
\end{equation*}
induced by the comparison isomorphism between Betti and de Rham cohomology.

In the same vein, one can ask how Nori's motivic Galois groups $\Gmot(k,\sigma_1)$ and $\Gmot(k,\sigma_2)$ compare to one another.
As a particular case of Proposition~\ref{prop:MNori_sigma1=MNori_sigma2}, the abelian categories $\MNori_{\sigma_1}(k)$ and $\MNori_{\sigma_2}(k)$ are canonically equivalent, compatibly with the monoidal structures.
Therefore, the Tannaka duals $\Gmot(k,\sigma_1)$ and $\Gmot(k,\sigma_2)$ get identified as pro-algebraic groups over $\Q$.
However, this identification is not induced by a natural isomorphism between the corresponding fibre functors
\begin{equation*}
	\iota_{k,\sigma_1}, \; \iota_{k,\sigma_2} \colon \MNori(k) \to \vect_{\Q}.
\end{equation*}
Indeed, such a natural isomorphism would yield isomorphisms of Betti cohomology algebras as in \eqref{eq:H^*_sigma1=H^*_sigma2}.
In other words, even if the derived categories $\Dbgeoemb{\sigma_1}(\Spec(k))$ and $\Dbgeoemb{\sigma_2}(\Spec(k))$ are canonically equivalent (being both canonically equivalent to $\Db(\vect_{\Q})$), this equivalence does not make the diagram~\eqref{dia:Dbgeo^Gmot-to-Dbgeo_sigma1,2} commute.

In conclusion, the significance of Theorem~\ref{thm:Dbgeo^Gmot_sigma1=Dbgeo^Gmot_sigma2} is that, even though the category $\Dbgeoemb{\sigma}(X)$ and the $\GmotAy(k,\sigma)$-action do depend on the choice of $\sigma: k \hookrightarrow \C$, such ambiguities cancel out when passing to the homotopy-fixed points $\Dbgeoemb{\sigma}(X)^{\GmotAy(k,\sigma)}$.

\begin{rem}
	It would be interesting to see if one can construct the equivalences~\eqref{formula:Dbgeo^Gmot_sigma1=Dbgeo^Gmot_sigma2} without passing through Ivorra--Morel's categories.
\end{rem}

\appendix

\section{Equivariant objects under pro-algebraic groups}\label{sect:App}

In this appendix, we collect general results about actions of pro-algebraic groups on ordinary categories and their associated categories of equivariant objects.
We work systematically with explicit definitions, which allows us to check many facts by direct computations.

For sake of simplicity, we start by discussing the basic constructions in the setting of abstract group actions, then we explain how they can be adapted to the pro-algebraic setting.
In the final part, we specialize to neutral Tannakian categories:
the most important result is Proposition~\ref{prop:KrtimesQ}, in which we compute the Tannaka dual of equivariant Tannakian categories under suitable assumptions.

\subsection{Generalities on categories of equivariant objects}\label{sect:App_abstract-grps}

In this subsection, we consider actions of abstract groups on ordinary categories by self-equivalences.

\begin{defn}\label{defn:action-Q-Cat}
	Let $\Category$ be a category, and let $Q$ be an abstract group.
	By an \textit{action} of $Q$ on $\Category$ we mean the datum of
	\begin{itemize}
		\item for every $q \in Q$, a functor
		\begin{equation*}
			q \cdot - \colon \Category \rightarrow \Category, \quad C \mapsto q \cdot C,
		\end{equation*}
		\item a natural isomorphism of functors $\Category \to \Category$
		\begin{equation}\label{eq:app-action_idQ}
			1_Q \cdot - \xrightarrow{\sim} \id_{\Category},
		\end{equation}
		\item for every $q_1, q_2 \in Q$, a natural isomorphism of functors $\Category \to \Category$
		\begin{equation}\label{eq:app-action_q2q1}
			q_2 \cdot (q_1 \cdot -) \xrightarrow{\sim} (q_2 q_1) \cdot -
		\end{equation}
	\end{itemize}
	satisfying the following coherence conditions:
	for every $q \in Q$, the two diagrams of functors $\Category \to \Category$
	\begin{equation*}
		\begin{tikzcd}
			1_Q \cdot (q \cdot -) \arrow{rr}{\sim} \arrow{drr}{\sim} && (1_Q q) \cdot - \arrow[equal]{d} \\
			&& q \cdot -
		\end{tikzcd}
		\qquad
		\begin{tikzcd}
			(q 1_Q) \cdot - \arrow[equal]{d} && q \cdot (1_Q \cdot -) \arrow{ll}{\sim} \arrow{dll}{\sim} \\
			q \cdot - 
		\end{tikzcd}
	\end{equation*}
	commute and, for every $q_1, q_2, q_3 \in Q$, the diagram of functors $\Category \rightarrow \Category$
	\begin{equation*}
		\begin{tikzcd}
			q_3 \cdot (q_2 \cdot (q_1 \cdot -)) \arrow{rr}{\sim} \arrow{d}{\sim} && q_3 \cdot (q_2 q_1 \cdot -) \arrow{d}{\sim} \\
			q_3 q_2 \cdot (q_1 \cdot -) \arrow{rr}{\sim} && q_3 q_2 q_1 \cdot -
		\end{tikzcd}
	\end{equation*}
	commutes.
\end{defn}

The coherence conditions imply that, for every $q \in Q$, the functor $q \cdot - \colon \Category \to \Category$ is an equivalence with canonical quasi-inverse $q^{-1} \cdot -$. 
In practice, we are allowed to pretend that $Q$ acts strictly on $\Category$ by self-automorphisms, with $1_Q$ acting as the identity.
In order not to overload the notation, from now on we systematically write the additional natural isomorphisms~\eqref{eq:app-action_idQ} and~\eqref{eq:app-action_q2q1} as equalities.

\begin{defn}\label{defn:hom-fixed_points}
	Let $Q$ be an abstract group acting on a category $\Category$ as in Definition~\ref{defn:action-Q-Cat}.
	The associated category of \textit{equivariant objects} $\Category^Q$ is defined as follows:
	\begin{itemize}
		\item Objects are pairs $(C,\alpha)$ consisting of an object $C \in \Category$ and a collection $\alpha = (\alpha_q)_{q \in Q}$ of isomorphisms in $\Category$
		\begin{equation*}
			\alpha_q \colon q \cdot C \xrightarrow{\sim} C
		\end{equation*}
		such that the isomorphism $\alpha_{1_Q}$ is induced by the natural isomorphism \eqref{eq:app-action_idQ}, and the following cocycle condition is satisfied: 
		for every $q_1, q_2 \in Q$, the diagram
		\begin{equation*}
			\begin{tikzcd}
				q_2 (q_1 \cdot C) \arrow{rr}{q_2 \cdot \alpha_{q_1}} \arrow[equal]{d} && q_2 \cdot C \arrow{d}{\alpha_{q_2}} \\
				q_2 q_1 \cdot C \arrow{rr}{\alpha_{q_2 q_1}} && C
			\end{tikzcd}
		\end{equation*}
		commutes.
		\item A morphism $\phi \colon (C_1, \alpha_1) \rightarrow (C_2, \alpha_2)$ is the datum of a morphism $\phi \colon C_1 \rightarrow C_2$ in $\Category$ such that, for every $q \in Q$, the diagram in $\Category$
		\begin{equation*}
			\begin{tikzcd}
				q \cdot C_1 \arrow{rr}{q\cdot\phi} \arrow{d}{\alpha_{1,q}} && q \cdot C_2 \arrow{d}{\alpha_{2,q}} \\
				C_1 \arrow{rr}{\phi} && C_2
			\end{tikzcd}
		\end{equation*}
		commutes.
		\item Composition in $\Category^Q$ is induced by composition in $\Category$.
	\end{itemize}
\end{defn}

An elementary yet useful example is the following:
\begin{ex}\label{ex:htpy_permut}
	Let $\Category$ be a category, and let $Q$ be an abstract group.
	Suppose that, for every $q \in Q$, we are given a copy $q \Category$ of $\Category$ and an equivalence $\epsilon_q \colon \Category \xrightarrow{\sim} q \Category$, and assume for simplicity that $1_Q \Category = \Category$ and $\epsilon_{1_Q} = \id_{\Category}$.
	For every $q_0 \in Q$, consider the functor
	\begin{equation*}
		q_0 \colon - \prod_{q \in Q} q \Category \to \prod_{q \in Q} q_0 q \Category
	\end{equation*}
	defined by the equivalences $q \Category \xleftarrow{\epsilon_q} \Category \xrightarrow{\epsilon_{q_0 q}} q_0 q \Category$ as $q$ varies in $Q$.
	As the reader can check, this canonically defines a $Q$-action on $\prod_{q \in Q} q \Category$ in the sense of Definition \ref{defn:action-Q-Cat};
	it is determined by the left-translation action of $Q$ on itself. 
	The associated category of equivariant objects
	\begin{equation*}
		(\prod_{q \in Q} q \Category)^Q
	\end{equation*}
	is canonically equivalent to $\Category$ via the projection onto the identity factor.
	\qed
\end{ex}

Equivariant objects behave functorially with respect to group homomorphisms:
\begin{constr}\label{constr:res_htpy-fixed}
	Let $f \colon Q_1 \to Q_2$ be a morphism of abstract groups.
	Suppose that we are given a $Q_2$-action on $\Category$, and consider the $Q_1$-action on $\Category$ deduced via $f$.
	There is a natural \textit{restriction functor}
	\begin{equation*}
		\res_f = \res_{Q_2}^{Q_1} \colon \Category^{Q_2} \rightarrow \Category^{Q_1}, \quad (C,\alpha) \mapsto (C,\alpha^f),
	\end{equation*}
	where, for every $q \in Q_1$, the isomorphism $\alpha^f_q \colon q \cdot C \xrightarrow{\sim} C$ is defined as
	\begin{equation*}
		q \cdot C := f(q) \cdot C \xrightarrow{\alpha_{f(q)}} C.
	\end{equation*}
	In the case of the trivial homomorphism $1 \to Q$, we obtain in this way the \textit{forgetful functor}
	\begin{equation*}
		\for_Q \colon \Category^Q \to \Category, \quad (C,\alpha) \mapsto C,
	\end{equation*}
	where we identify $\Category^{1}$ with $\Category$ in the obvious way.
	\qed
\end{constr}

Restriction functors are clearly compatible with composition of group homomorphisms.
As a consequence, they enjoy the basic properties of forgetful functors:
\begin{lem}\label{lem:res_faith_full}
Keep the notation of
Construction~\ref{constr:res_htpy-fixed}.
Then:
\begin{enumerate}
\item The functor $\res_{Q_2}^{Q_1}$ is
faithful and conservative.
\item If $f \colon Q_1 \rightarrow Q_2$ is surjective,
the functor $\res_{Q_2}^{Q_1}$ is also full.
\end{enumerate}
\end{lem}
\begin{proof}
	For the first statement, the result is clear if $Q_1 = 1$.
	In the general case, consider the commutative diagram
	\begin{equation*}\begin{tikzcd}
			\Category^{Q_2}
			\arrow{rr}{\res_{Q_2}^{Q_1}}
			\ar[dr, bend right, "\for_{Q_2}"']
			&& \Category^{Q_1} \arrow[bend left]{dl}{\for_{Q_1}} \\
			& \Category.
	\end{tikzcd}\end{equation*}
	Since the functor $\for_{Q_2}$, which is already known to be faithful (resp. conservative), factors through $\res_{Q_2}^{Q_1}$, the latter must be faithful (resp. conservative) as well.
	
	For the second statement, fix two objects $(C_1,\alpha_1), (C_2,\alpha_2) \in \Category^{Q_2}$.
	Since $f$ is surjective, saying that a morphism $C_1 \rightarrow C_2$ in $\Category$ is compatible with $\alpha^f_{1,q}$ and $\alpha^f_{2,q}$ for all $q \in Q_1$ is the same as saying that it is compatible with $\alpha_{1,q}$ and $\alpha_{2,q}$ for all $q \in Q_2$.
\end{proof}

\begin{nota}
	When an abstract group $Q$ acts on a category $\Category$ and $Q' \leq Q$ is a subgroup, we write the effect of the restriction functor along the inclusion homomorphism as
	\begin{equation*}
		\res_{Q}^{Q'} \colon (C,\alpha) \mapsto (C,\alpha|_{Q'}).
	\end{equation*}
\end{nota}

Inclusions of normal subgroups have an additional feature: 
\begin{constr}\label{constr:Q_acts_C^Q'}
	Let $Q$ be an abstract group acting on a category $\Category$ (as in Definition~\ref{defn:action-Q-Cat}), and let $Q' \leq Q$ be a normal subgroup.
	Then $Q$ acts naturally on $\Category^{Q'}$ by the formula
	\begin{equation*}
		q \cdot (C,\alpha) = (q \cdot C,{^q \alpha}),
	\end{equation*}
	where, for every $q' \in Q'$, the isomorphism ${^q \alpha_{q'}} \colon q' \cdot (q \cdot C) \xrightarrow{\sim} q \cdot C$ is defined as
	\begin{equation*}
		q' \cdot (q \cdot C) = q \cdot (q^{-1} q' q \cdot C) \xrightarrow{q \cdot \alpha_{q^{-1} q' q}} q \cdot C.
	\end{equation*}
	Note that ${^q \alpha_{q'}}$ is well-defined, since the element $q^{-1} q' q \in Q$ belongs to $Q'$ (as $Q'$ is normal in $Q$ by hypothesis).
	In order to make sure that the pair $(q \cdot C, {^q \alpha})$ really defines an object of $\Category^{Q'}$, we have to check that the cocycle condition is satisfied:
	this amounts to showing that, for every $q'_1, q'_2 \in Q'$, the diagram in $\Category$
	\begin{equation*}
		\begin{tikzcd}
			q'_2 \cdot (q'_1 \cdot (q \cdot C)) \arrow{rr}{q'_2 \cdot {^q \alpha_{q'_1}}} \arrow[equal]{d} && q'_2 \cdot (q \cdot C) \arrow{d}{{^q \alpha_{q'_2}}} \\
			q'_2 q'_1 \cdot (q \cdot C) \arrow{rr}{{^q \alpha_{q_2 q_1}}} && q \cdot C
		\end{tikzcd}
	\end{equation*}
	commutes.
	This follows from the cocycle condition for the $Q$-action on $\Category$;
	we leave the details to the interested reader.
	By construction, the forgetful functor $\for_{Q'} \colon \Category^{Q'} \rightarrow \Category$ is $Q$-equivariant.
	\qed
\end{constr}

\begin{nota}
	For sake of simplicity, we write $((C,\alpha'),\alpha)$ for a typical object of the category of equivariant objects $(\Category^{Q'})^Q$ for the $Q$-action on $\Category^{Q'}$ discussed in Construction~\ref{constr:Q_acts_C^Q'}:
	here $(C,\alpha') \in \Category^{Q'}$, $(C,\alpha) \in \Category^Q$ and, for every $q \in Q$, the isomorphism $\alpha_q \colon q \cdot C \xrightarrow{\sim} C$ defines a morphism $q \cdot (C,\alpha') \xrightarrow{\sim} (C,\alpha')$ in $\Category^{Q'}$ in the sense of Definition~\ref{defn:hom-fixed_points}.
\end{nota}

\begin{lem}\label{lem:sect_normal-sbgrp}
	Keep the notation and assumptions of Construction~\ref{constr:Q_acts_C^Q'}.
	Then the functor
	\begin{equation*}
		(\for_{Q'})^Q \colon (\Category^{Q'})^Q \rightarrow \Category^Q
	\end{equation*}
	induced by the $Q$-equivariant functor $\for_{Q'} \colon \Category^{Q'} \to \Category$ admits a canonical fully faithful section
	\begin{equation*}
		\sect^Q_{Q'} \colon \Category^Q \rightarrow (\Category^{Q'})^Q, \quad (C,\alpha) \mapsto ((C,\alpha|_{Q'}),\alpha).
	\end{equation*}
\end{lem}
\begin{proof}
	In the first place, let us check that the formula in the statement really defines a functor $\Category^Q \rightarrow (\Category^{Q'})^Q$.
	We have to show that, for every object $(C,\alpha) \in \Category^Q$ and every element $q \in Q$, the isomorphism $\alpha_q \colon q \cdot C \xrightarrow{\sim} C$ in $\Category$ satisfies the compatibility condition of Definition~\ref{defn:hom-fixed_points} with respect to the $Q'$-action on $\Category$:
	this amounts to saying that, for every $q' \in Q'$, the diagram in $\Category$
	\begin{equation*}
		\begin{tikzcd}
			q' \cdot (q \cdot C) \arrow{rr}{q' \cdot \alpha_q} \arrow{d}{{^q \alpha_{q'}}} && q' \cdot C \arrow{d}{\alpha_{q'}} \\
			q \cdot C \arrow{rr}{\alpha_q} && C
		\end{tikzcd}
	\end{equation*}
	commutes.
	This follows from the cocycle condition for the $Q$-action on $\Category$;
	we leave the details to the interested reader.
    It is clear that the composite
	\begin{equation*}
		\Category^Q \xrightarrow{\sect_{Q'}^Q} (\Category^{Q'})^Q \xrightarrow{(\for_{Q'})^Q} \Category^Q
	\end{equation*}
	is the identity functor, so $\sect_{Q'}^Q$ is a section to $(\for_{Q'})^Q$ as claimed.
	This implies, in particular, that $\sect_{Q'}^Q$ is faithful.
	It is also full, because giving a morphism $\phi \colon ((C_1,\alpha'_1),\alpha_1) \rightarrow ((C_2,\alpha'_2),\alpha_2)$ in $(\Category^{Q'})^Q$ amounts to giving a morphism $\phi \colon C_1 \rightarrow C_2$ in $\Category$ which defines both a morphism $(C_1,\alpha'_1) \rightarrow (C_2,\alpha'_2)$ in $\Category^{Q'}$ and a morphism $(C_1,\alpha_1) \rightarrow (C_2,\alpha_2)$ in $\Category^Q$.
\end{proof}

\begin{rem}
	In general, the functor $\sect_{Q'}^Q$ is not induced by a functor $\Category \rightarrow \Category^{Q'}$: 
	in fact, there is no natural way to define such a functor.
\end{rem}

\subsection{Compatibility with short exact sequences}

Throughout this subsection, we fix a category $\Category$ endowed with an action by an abstract group $Q$ fitting into a short exact sequence of the form
\begin{equation*}
	1 \rightarrow Q' \rightarrow Q \rightarrow Q'' \rightarrow 1.
\end{equation*}
For notational simplicity, we identify $Q'$ with its image in $Q$, which is a normal subgroup.
We want to relate the equivariant objects under $Q$ with those under $Q'$ by means of $Q''$.
Recall the natural $Q$-action on $\Category^{Q'}$ described in Construction \ref{constr:Q_acts_C^Q'}.
Intuitively, the $Q'$-action on $\Category^{Q'}$ should be trivial, so the $Q$-action on $\Category^{Q'}$ should factor through the quotient $Q''$ and induce an equivalence
\begin{equation*}
	\Category^Q = (\Category^{Q'})^{Q''}.
\end{equation*}
This is not literary true in general, but it becomes so under additional assumptions.
A first example is given by the following result:

\begin{prop}\label{prop:C^Q=(C^Q')^Q''-1}
	Suppose that the projection $Q \rightarrow Q''$ admits a splitting;
	use it to view $Q''$ as a subgroup of $Q$, and identify $Q$ with the semi-direct product $Q' \rtimes Q''$.
	Then the composite functor
	\begin{equation*}
		\Category^Q \xrightarrow{\sect^Q_{Q'}} (\Category^{Q'})^{Q} \xrightarrow{\res_Q^{Q''}} (\Category^{Q'})^{Q''}
	\end{equation*}
	is an equivalence.
\end{prop}
\begin{proof}
	By construction, the composite in the statement sends
	\begin{equation*}
		(C, (\alpha_q)_{q \in Q}) \mapsto ((C, (\alpha_{q'})_{q' \in Q'}), (\alpha_{q''})_{q'' \in Q''}).
	\end{equation*}
	Every given element $q \in Q$ can be written in a unique way as a product $q' q''$ with $q' \in Q'$ and $q'' \in Q''$.
	Hence, for every object $(C,(\alpha_q)_{q \in Q}) \in \Category^Q$, the cocycle condition for the $Q$-action on $\Category$ asserts that the diagram in $\Category$
	\begin{equation*}
		\begin{tikzcd}
			q' \cdot (q'' \cdot C) \arrow{rr}{q' \cdot \alpha_{q''}} \arrow[equal]{d} && q' \cdot C \arrow{d}{\alpha_{q'}} \\
			q \cdot C \arrow{rr}{\alpha_q} && C
		\end{tikzcd}
	\end{equation*}
	commutes, which implies that $\alpha_q$ is uniquely determined by $\alpha_{q'}$ and $\alpha_{q''}$.
	This observation allows one to construct a quasi-inverse functor explicitly;
	we  leave the details to the interested reader.
\end{proof}

If one drops the assumption that the projection $Q \rightarrow Q''$ admits a section, the expression $(\Category^{Q'})^{Q''}$ becomes problematic, since there is no natural way to define a $Q''$-action on $\Category^{Q'}$.
In order to make sense of this residual action, one has to make it precise in which sense $Q'$ acts trivially on $\Category^{Q'}$.
This is based on the following notion:
\begin{defn}\label{defn:isom_actions-trivializ}
	Let $\Category$ be a category, and let $Q$ be an abstract group.
	\begin{enumerate}
		\item Suppose that we are given two actions of $Q$ on $\Category$;
		for every $q \in Q$, we write $q \cdot_A -$ and $q \cdot_B -$ for the associated functors $\Category \rightarrow \Category$ in these two actions.
		By an \textit{isomorphism} $\gamma$ between the two actions of $Q$ of $\Category$ we mean the datum of
		\begin{itemize}
			\item for every $q \in Q$, a natural isomorphism between functors $\Category \rightarrow \Category$
			\begin{equation*}
				\gamma_q \colon q \cdot_A - \xrightarrow{\sim} q \cdot_B -
			\end{equation*}
		\end{itemize}
		satisfying the following coherence conditions:
		the diagram of functors $\Category \to \Category$
		\begin{equation*}
			\begin{tikzcd}
				1_Q \cdot_A - \arrow{rr}{\gamma_{1_Q}} \arrow[equal]{dr} && 1_Q \cdot_B - \arrow[equal]{dl} \\
				& \id_{\Category}
			\end{tikzcd}
		\end{equation*}
		commutes and, for every $q_1, q_2 \in Q$, the diagram of functors $\Category \to \Category$
		\begin{equation*}
			\begin{tikzcd}
				(q_2 q_1) \cdot_A - \arrow{d}{\gamma_{q_2 q_1}} \arrow[equal]{r} &  q_2 \cdot_A (q_1 \cdot_A -) \arrow{rr}{q_2 \cdot_A \gamma_{q_1}} && q_2 \cdot_A (q_1 \cdot_B -) \arrow{d}{\gamma{q_2}} \\
				(q_2 q_1) \cdot_B - \arrow[equal]{rrr} &&& q_2 \cdot_B (q_1 \cdot_B -)
			\end{tikzcd}
		\end{equation*}
		commutes.
		\item By a \textit{trivialization} of a given action of $Q$ on $\Category$ we mean an isomorphism between that action and the trivial action where $q \cdot - = \id_{\Category}$ for every $q \in Q$.
	\end{enumerate}
\end{defn}

\begin{ex}\label{ex:trivializ}
	For every action of an abstract group $Q$ on a category $\Category$, the induced $Q$-action on $\Category^Q$ (defined as in Construction \ref{constr:Q_acts_C^Q'}) admits a canonical trivialization.
    Indeed, fix an element $q_0 \in Q$.
	Given $(C,\alpha) \in \Category^Q$, the isomorphism $\alpha_{q_0} \colon q_0 \cdot C \xrightarrow{\sim} C$ in $\Category$ induces an isomorphism in $\Category^Q$
	\begin{equation}\label{eq:trivializ}
		\alpha_{q_0} \colon q_0 \cdot (C,\alpha) \xrightarrow{\sim} (C,\alpha).
	\end{equation}
	To make sure that this is the case, we only need to check that, for every $q \in Q$, the diagram in $\Category$
	\begin{equation*}
		\begin{tikzcd}
			q q_0 \cdot C \arrow{rr}{q\cdot\alpha_{q_0}} \arrow{d}{{^{q_0} \alpha_q}} && q \cdot C \arrow{d}{\alpha_q} \\
			q_0 \cdot C \arrow{rr}{\alpha_{q_0}} && C
		\end{tikzcd}
	\end{equation*}
	commutes, which follows from the cocycle condition for the $Q$-action on $\Category$.
	As $(C,\alpha)$ varies in $\Category^Q$, the isomorphisms \eqref{eq:trivializ} define a natural isomorphism between functors $\Category^Q \rightarrow \Category^Q$
	\begin{equation}\label{eq:triviali-nat}
		\alpha_{q_0} \colon q_0 \cdot - \xrightarrow{\sim} \id_{\Category^Q}.
	\end{equation}
	As $q_0$ varies in $Q$, the natural isomorphisms \eqref{eq:triviali-nat} satisfy the coherence conditions of Definition~\ref{defn:isom_actions-trivializ}, again by the cocycle condition.
	Hence, they define a trivialization as claimed.
	\qed
\end{ex}

Isomorphisms of actions have the expected effect on equivariant objects:
\begin{constr}\label{constr:C^Q_A=C^Q_B}
	Suppose that we are given two actions of $Q$ on $\Category$ as well as an isomorphism between them (in the sense of Definition~\ref{defn:isom_actions-trivializ}(1)).
	This canonically induces an isomorphism between the associated categories of equivariant objects.
	To see this, following the notation of Definition~\ref{defn:isom_actions-trivializ}(1), write
	\begin{equation*}
		\gamma_q \colon q \cdot_A - \xrightarrow{\sim} q \cdot_B -
	\end{equation*}
	for the natural isomorphisms between the two endofunctors of $\Category$ defined by a given element $q \in Q$;
	write $\Category^Q_{(A)}$ and $\Category^Q_{(B)}$ for the categories of equivariant objects associated to the two actions.
	We have a functor
	\begin{equation*}
		\Category^Q_{(A)} \rightarrow \Category^Q_{(B)}, \quad (C,(\alpha_q)_{q \in Q}) \mapsto (C,(\alpha_q \circ \gamma_q^{-1})_{q \in Q}).
	\end{equation*}
	To make sure that this is well-defined, we only need to check that, for every object $(C,\alpha) \in \Category^Q_{(A)}$, the isomorphisms $\alpha_q \circ \gamma_q^{-1} \colon q \cdot_B C \xrightarrow{\sim} C$ satisfy the cocycle condition for the second $Q$-action on $\Category$.
	Using the compatibility of the isomorphisms $\gamma_q$ with respect to composition, this follows from the cocycle condition for the first $Q$-action on $\Category$.
	It is clear that the specular construction
	\begin{equation*}
		\Category^Q_{(B)} \rightarrow \Category^Q_{(A)}, \quad (C,(\beta_q)_{q \in Q}) \mapsto (C,(\beta_q \circ \gamma_q)_{q \in Q})
	\end{equation*}
	defines an inverse to the previous functor.
	Hence, we obtain an isomorphism of categories as stated.
	\qed
\end{constr}

The extra flexibility provided by isomorphic actions allows us to make sense of the formula $\Category^Q = (\Category^{Q'})^{Q''}$ even without assuming the existence of a splitting:
\begin{prop}\label{prop:C^Q=(C^Q')^Q''-2}
	Let $\Category$ be a category endowed with the action of an abstract group $Q$ which fits into the short exact sequence
	\begin{equation*}
		1 \rightarrow Q' \rightarrow Q \rightarrow Q'' \rightarrow 1.
	\end{equation*}
	Assume that we are given, in addition, a $Q''$-action on $\Category^{Q'}$, and consider the two resulting $Q$-actions on $\Category^{Q'}$:
	the one induced by the original $Q$-action on $\Category$ (as in Construction~\ref{constr:Q_acts_C^Q'}), and the one deduced from the additional $Q''$-action on $\Category^{Q'}$.
    Suppose that we are given an isomorphism $\gamma$ between these two $Q$-actions on $\Category^{Q'}$ which, when restricted to $Q'$, coincides with the canonical trivialization for the canonical $Q'$-action on $\Category^{Q'}$ (as described in Example~\ref{ex:trivializ}).
	Then we get a canonical equivalence
	\begin{equation*}
		\Category^Q = (\Category^{Q'})^{Q''}.
	\end{equation*}
\end{prop}
\begin{proof}
	For sake of clarity, given an element $q \in Q$ and object $(C,\alpha) \in \Category^{Q'}$, we write $q \cdot_A (C,\alpha)$ for the action induced by the original $Q$-action on $\Category$, and $q \cdot_B (C,\alpha)$ for the action induced by the additional $Q''$-action on $\Category^{Q'}$;
	similarly for the categories of equivariant objects (as in Construction~\ref{constr:C^Q_A=C^Q_B} above).
    Consider the two functors
	\begin{equation}\label{eq:C^Q_to_(C^Q')^Q}
		\sect_Q^{Q'} \colon \Category^Q \rightarrow (\Category^{Q'})^Q_{(A)}
	\end{equation}
	and
	\begin{equation}\label{eq:(C^Q')^Q''_to_(C^Q')^Q}
		\res_{Q''}^Q \colon (\Category^{Q'})^{Q''} \rightarrow (\Category^{Q'})^Q_{(B)},
	\end{equation}
	which are both known to be fully faithful (by Lemma~\ref{lem:sect_normal-sbgrp} and Lemma~\ref{lem:res_faith_full}(2), respectively).
	By construction, the image of \eqref{eq:C^Q_to_(C^Q')^Q} consists of the objects $((C,\alpha'),\alpha) \in (\Category^{Q'})^Q_{(A)}$ with $\alpha' = \alpha|_{Q'}$, while the image of \eqref{eq:(C^Q')^Q''_to_(C^Q')^Q} consists of the objects $((C,\alpha'),\beta) \in (\Category^{Q'})^Q_{(B)}$ satisfying $\beta_{q'} = \id_C$ for every $q' \in Q$.
    
    Now consider the isomorphism of actions $\gamma$ in the hypothesis: 
    its compatibility with the canonical trivialization over $Q'$ means precisely that $\gamma_{q'} = \alpha_{q'}$ for every $q' \in Q'$.
    Clearly, this implies that the images of the functors \eqref{eq:C^Q_to_(C^Q')^Q} and \eqref{eq:(C^Q')^Q''_to_(C^Q')^Q} correspond to one another under the canonical equivalence
    \begin{equation*}
    	(\Category^{Q'})^Q_{(A)} = (\Category^{Q'})^Q_{(B)}
    \end{equation*}
    obtained as in Construction~\ref{constr:C^Q_A=C^Q_B}.
	This proves the thesis.
\end{proof}

\begin{rem}
	The existence of a genuine $Q''$-action on $\Category^{Q'}$ compatible with the action of $Q$ is the key hypothesis in Proposition~\ref{prop:C^Q=(C^Q')^Q''-2}.
    If the projection $Q \rightarrow Q''$ admits a splitting, then this hypothesis is automatically satisfied, and we recover the result of Proposition~\ref{prop:C^Q=(C^Q')^Q''-1}.
\end{rem}

To conclude this subsection, we discuss a specific situation where Proposition \ref{prop:C^Q=(C^Q')^Q''-2} can be applied.
This is based on a categorical analogue of the notion of induced representation:
\begin{ex}\label{ex:ind-rep}
	Suppose that we are given a $Q'$-action on a category $\Category'$.
	Choose a set-theoretic splitting 
	\begin{equation*}
		Q'' \to Q, \quad r \mapsto q_r
	\end{equation*}
	to the projection $Q \to Q''$, and assume for simplicity that $q_{1_{Q''}} = 1_Q$.
	Suppose that, for each $r \in Q''$, we are given a category $q_r \Category'$ together with an equivalence
	\begin{equation*}
		\epsilon_r \colon \Category' \xrightarrow{\sim} q_r \Category',
	\end{equation*}
	and assume for simplicity that $1_{Q''} \Category' = \Category'$ and $\epsilon_{1_{Q''}} = \id_{\Category'}$.
	Form the direct product category
	\begin{equation*}
		\Category := \prod_{s \in Q''} q_s \Category'.
	\end{equation*}
	Given an element $q \in Q$ mapping to $r \in Q''$, we define a functor $q \cdot - \colon \Category \to \Category$ as follows:
	for every $s \in Q''$, we let $q \cdot - \colon q_s \Category' \to q_{rs} \Category'$ be the composite
	\begin{equation*}
		q_s \Category' \xleftarrow{\epsilon_s} \Category' \xrightarrow{q_{rs}^{-1} q q_s} \Category' \xrightarrow{\epsilon_{rs}} q_{rs} \Category',
	\end{equation*}
	and we define $q \cdot - \colon \prod_{s \in Q''} q_s \Category' \to \prod_{s \in Q''} q_{rs} \Category'$ consequently.
	This defines a $Q$-action on $\Category$ with the property that, for every $q' \in Q'$, the functor $q' \cdot - \colon \Category \to \Category$ is the direct product of the functors
	\begin{equation*}
		q_s \Category' \xleftarrow{\epsilon_s} \Category' \xrightarrow{q_s^{-1} q' q_s} \Category' \xrightarrow{\epsilon_s} q_s \Category.
	\end{equation*}
	In particular, the $Q'$-action on the identity component $\Category'$ is nothing but the original $Q'$-action.
	Moreover, for every $r \in Q''$, the functor $q_r \cdot - \colon \Category \to \Category$ acts on the identity component as $\epsilon_r \colon \Category' \rightarrow q_r \Category'$.
	
	We claim that this $Q$-action on $\Category$ satisfies the hypotheses of Proposition \ref{prop:C^Q=(C^Q')^Q''-2}.
	To this end, note that we have
	\begin{equation*}
		\Category^{Q'} = \prod_{s \in Q''} (q_s \Category')^{Q'},
	\end{equation*}
	where the $Q'$-action on each $q_s \Category'$ is twisted by $q_s$ as above.
	Consider the canonical $Q$-action on $\Category^{Q'}$ (as described in Construction \ref{constr:Q_acts_C^Q'}):
	for every $r \in Q''$, the equivalence $q_r \cdot - \colon \Category^{Q'} \to \Category^{Q'}$ induces an equivalence
	\begin{equation*}
		q_r \colon - \colon (\Category')^{Q'} \xrightarrow{\sim} (q_r \Category')^{Q'}
	\end{equation*}
	making the diagram
	\begin{equation*}
		\begin{tikzcd}
			(\Category')^{Q'} \arrow{rr}{q_r} \arrow{d}{\for_{Q'}}  && (q_r \Category')^{Q'} \arrow{d}{\for_{Q'}} \\
			\Category' \arrow{rr}{q_r} && q_r \Category'
		\end{tikzcd}
	\end{equation*} 
	commute.
	This allows us to define a $Q''$-action on $\Category^{Q'}$ as follows:
	given $r \in Q''$, for every $s \in Q''$ we let $r \cdot - \colon (q_s \Category')^{Q'} \to (q_{rs} \Category')^{Q'}$ be the composite
	\begin{equation}\label{eq:residualaction_indrep-new}
		(q_s \Category')^{Q'} \xleftarrow{q_s \cdot -} (\Category')^{Q'} \xrightarrow{q_{rs} \cdot -} (q_{rs} \Category')^{Q'},
	\end{equation}
	and we define $r \cdot - \colon \prod_{s \in Q''} (q_s \Category')^{Q'} \to \prod_{s \in Q''} (q_{rs} \Category')^{Q'}$ consequently.
	
	To prove the claim, we have to construct an isomorphism between the two $Q$-actions on $\Category^{Q'}$ extending the canonical trivialization over $Q'$.
	The crucial observation is the following:
	given an element $q \in Q$ mapping to $r \in Q''$, the action of $q$ on $\Category^{Q'}$ induced by the $Q$-action on $\Category$ is defined by the functors
	\begin{equation}\label{eq:residualaction_indrep-old}
		(q_s \Category')^{Q'} \xleftarrow{q_s \cdot -} (\Category')^{Q'} \xrightarrow{q_{rs}^{-1} q q_s \cdot -} (\Category')^{Q'} \xrightarrow{q_{rs} \cdot -} (q_{rs} \Category')^{Q'}.
	\end{equation}
	Now consider the canonical trivialization for the $Q'$-action on $(\Category')^{Q'}$ (as discussed in Example \ref{ex:trivializ}): 
	the natural isomorphism of functors $(\Category')^{Q'} \to (\Category')^{Q'}$
	\begin{equation*}
		q_{rs}^{-1} q q_s \cdot - \xrightarrow{\sim} \id_{(\Category')^{Q'}}
	\end{equation*}
	induces a natural isomorphism between the functors \eqref{eq:residualaction_indrep-old} and \eqref{eq:residualaction_indrep-new};
	taking the product over all $s \in Q''$, this defines a natural isomorphism between the two actions of $q$ on $\Category^{Q'}$.
	By construction, these natural isomorphisms assemble into an isomorphism of $Q$-actions with the required properties;
	we leave the details to the interested reader.
	Finally, applying Proposition \ref{prop:C^Q=(C^Q')^Q''-2}, we get a canonical equivalence
	\begin{equation*}
		\Category^Q = (\Category^{Q'})^{Q''} = (\Category')^{Q'},
	\end{equation*}
	where the second passage is induced by the projection onto the identity factor (as in Example \ref{ex:htpy_permut}).
	\qed 
\end{ex}

\begin{rem}\label{rem:ind-rep}
	There is a simple way to recognize categorical induced representations in practice.
	As in Example \ref{ex:ind-rep}, suppose that we are given a $Q'$-action on a category $\Category'$;
	suppose that, for every $r \in Q''$, we are given a category $r\Category'$, and assume that $1_{Q''} \Category' = \Category'$.
	Form the direct product category
	\begin{equation*}
		\Category := \prod_{s \in Q''} s\Category',
	\end{equation*}
	and suppose that we are given a $Q$-action on $\Category$ with the property that, given an element $q \in Q$ mapping to $r \in Q''$, the functor $q \cdot - \colon \Category \to \Category$ is the direct product of suitable functors 
	\begin{equation*}
		q \cdot - \colon s\Category' \to (rs)\Category' \qquad (s \in Q'').
	\end{equation*}
	This means that, in particular, the $Q'$-action on $\Category$ obtained by restriction is the direct product of suitable $Q'$-actions on the single components $s\Category$.
	We claim that, up to isomorphism, the $Q$-action on $\Category$ is induced by the original $Q'$-action on $\Category'$ in the sense of Example \ref{ex:ind-rep}.
	To see this, fix a set-theoretic splitting
	\begin{equation*}
		Q'' \to Q, \quad r \mapsto q_r
	\end{equation*}
	satisfying $q_{1_{Q''}} = 1_Q$.
	Given an element $q \in Q$ mapping to $r \in Q''$, the diagram
	\begin{equation*}
		\begin{tikzcd}
			s\Category' \arrow{rr}{q \cdot -} && (rs)\Category' \\
			\Category' \arrow{u}{q_s \cdot -} \arrow{rr}{q_{rs}^{-1} q q_s \cdot -} && \Category' \arrow{u}{q_{rs} \cdot -}
		\end{tikzcd}
	\end{equation*}
	commutes up to natural isomorphism for every $s \in Q''$.
	Hence, defining the equivalences
	\begin{equation*}
		\epsilon_r \colon \Category' \xrightarrow{q_r \cdot -} r\Category'
	\end{equation*}
	and renaming $r\Category' =: q_r \Category'$ for all $r \in Q''$, we are exactly in the situation of Example \ref{ex:ind-rep}, as wanted.
\end{rem}

\subsection{Equivariant objects under pro-algebraic groups}\label{sect:App_proalg-grps}

From now henceforth, we fix a field $F$ of characteristic $0$.
All categories and functors considered are $F$-linear;
in particular, all actions of groups are defined by $F$-linear functors.
We let $\CAlg_F$ denote the category of classical commutative $F$-algebras.

\begin{nota}\label{nota:Cat_Lambda}
	Let $\Category$ be an $F$-linear category.
	For every $\Lambda \in \CAlg_F$, we let $\Category_{\Lambda}$ denote the category with the same objects as $\Category$ and with homomorphisms defined as
	\begin{equation*}
		\Hom_{\Category_{\Lambda}}(C_1,C_2) = \Hom_{\Category}(C_1,C_2) \otimes_F \Lambda.
	\end{equation*}
	We usually write $C \otimes \Lambda$ for the object of $\Category_{\Lambda}$ corresponding to a given object $C \in \Category$.
\end{nota}

\begin{rem}\label{rem:C_Lambda=fppf-stack}
	Given an $F$-linear category $\Category$, the categories $\Category_{\Lambda}$ as $\Lambda$ varies in $\CAlg_F$ form a fibered category over $\CAlg_F^{op}$:
	for every $F$-algebra homomorphism $\Lambda \rightarrow \Lambda'$, the transition functor $\Category_{\Lambda} \rightarrow \Category_{\Lambda'}$ sends $C \otimes \Lambda \mapsto C \otimes \Lambda'$ for every $C \in \Category$.
	As a consequence of faithfully flat descent for modules, this fibered category is a stack for the fppf topology.
\end{rem}

The natural notion of pro-algebraic group action is based on the associated functor of points, as follows:
\begin{defn}\label{defn:action-linear}
	Let $\Category$ be an $F$-linear category, and let $Q$ be a pro-algebraic group over $F$.
	By an \textit{$F$-algebraic action} of $Q$ on $\Category$ we mean the datum of
	\begin{itemize}
		\item for every $\Lambda \in \CAlg_F$, a $\Lambda$-linear action of the abstract group $Q(\Lambda)$ on $\Category_{\Lambda}$ (in the sense of Definition~\ref{defn:action-Q-Cat})
	\end{itemize}
	functorially with respect to $\Lambda \in \CAlg_F$ in the following sense:
	for every $F$-algebra homomorphism $\phi \colon \Lambda \rightarrow \Lambda'$ and every $q \in Q(\Lambda)$, the diagram of $\Lambda$-linear functors
	\begin{equation*}
		\begin{tikzcd}
			\Category_{\Lambda} \arrow{rr}{q \cdot -} \arrow{d} && \Category_{\Lambda} \arrow{d} \\
			\Category_{\Lambda'} \arrow{rr}{\phi(q) \cdot -} && \Category_{\Lambda'}
		\end{tikzcd}
	\end{equation*}
	commutes.
\end{defn}

The natural notion of equivariant objects in the pro-algebraic setting follows the same lines:
\begin{defn}\label{defn:htpy-fixed_linear}
	Let $Q$ be a pro-algebraic group acting $F$-algebraically on $\Category$ as in Definition~\ref{defn:action-linear}.
	The associated category of \textit{$F$-algebraic equivariant objects} $\Category^Q$ is defined as follows:
	\begin{itemize}
		\item Objects are pairs $(C,\underline{\alpha})$ consisting of an object $C \in \Category$ and a family $\underline{\alpha} = (\alpha_{\Lambda})_{\Lambda \in \CAlg_F}$ where, for each $\Lambda \in \CAlg_F$, $\alpha_{\Lambda} = (\alpha_{\Lambda,q})_{q \in Q(\Lambda)}$ is a collection of isomorphisms
		\begin{equation*}
			\alpha_{\Lambda,q} \colon q \cdot (C \otimes \Lambda) \xrightarrow{\sim} C \otimes \Lambda
		\end{equation*}
		in $\Category_{\Lambda}$ such that the pair $(C \otimes \Lambda,\alpha_{\Lambda})$ defines a homotopy-fixed point for the action of the abstract group $Q(\Lambda)$ on $\Category_{\Lambda}$ (in the sense of Definition~\ref{defn:hom-fixed_points}), functorially with respect to $\Lambda \in \CAlg_F$ in the following sense:
		for every $F$-algebra homomorphism $\phi \colon \Lambda \rightarrow \Lambda'$ and every $q \in Q(\Lambda)$, the $\Lambda'$-module isomorphism
		\begin{equation*}
			\begin{tikzcd}
				\Hom_{\Category_{\Lambda}}(q \cdot (C \otimes \Lambda), C \otimes \Lambda) \otimes_{\Lambda,\phi} \Lambda' \xrightarrow{\sim} \Hom_{\Category_{\Lambda'}}(q \cdot (C \otimes \Lambda'), C \otimes \Lambda')
			\end{tikzcd}
		\end{equation*}
		sends $\alpha_{\Lambda,q} \otimes 1 \mapsto \alpha_{\Lambda',\phi(q)}$.
		\item A morphism $f \colon (C_1,\underline{\alpha}_1) \rightarrow (C_2,\underline{\alpha}_2)$ is the datum of a morphism $f \colon C_1 \rightarrow C_2$ in $\Category$ such that, for every $\Lambda \in \CAlg_F$ and every $q \in Q(\Lambda)$, the diagram in $\Category_{\Lambda}$
		\begin{equation*}
			\begin{tikzcd}
				q \cdot (C_1 \otimes \Lambda) \arrow{rr}{q \cdot f} \arrow{d}{\alpha_{1,\Lambda,q}} && q \cdot (C_2 \otimes \Lambda) \arrow{d}{\alpha_{2,\Lambda,q}} \\
				C_1 \otimes \Lambda \arrow{rr}{f} && C_2 \otimes \Lambda
			\end{tikzcd}
		\end{equation*}
		commutes.
		\item Composition in $\Category^Q$ is induced by composition in $\Category$.
	\end{itemize}
\end{defn}

Note that, given an $F$-algebraic action of a pro-algebraic group $Q$ on an $F$-linear category $\Category$, there is always a canonical faithful functor from the $F$-algebraic equivariant objects to the equivariant objects associated to the abstract action of $Q(F)$ on $\Category$ (as in Definition~\ref{defn:hom-fixed_points}): 
namely, the functor
\begin{equation}\label{eq:algh-tpy_to_abst-htpy}
	\Category^Q \to \Category^{Q(F)}, \quad (C,\underline{\alpha}) \mapsto (C,\alpha_F).
\end{equation}
The two notions match in the case of finite groups:
\begin{ex}\label{ex:htpy-fingrp}
	Let $Q$ be a finite group, and regard it as a finite constant algebraic group over $F$.
	Then, for any $F$-algebraic action of $Q$ on an $F$-linear category $\Category$, the functor \eqref{eq:algh-tpy_to_abst-htpy} is an equivalence.
	Indeed, fix an object $(C,\underline{\alpha}) \in \Category^Q$.
	For every $\Lambda \in \CAlg_F$, we have a canonical bijection $Q(\Lambda) = Q = Q(F)$.
	Hence, the isomorphism $\alpha_{\Lambda,q}$ for a chosen $q \in Q(\Lambda)$ is uniquely determined by the isomorphism $\alpha_{F,q}$ for the corresponding $q \in Q(F)$.
	\qed   
\end{ex}

In general, equivariant objects in the $F$-algebraic setting enjoy stronger finiteness properties than those in the abstract setting.
The difference is apparent for categories in which all Hom groups are finite-dimensional over $F$.

\begin{ex}\label{ex:Tann_as_htpyfixed}
	Let $\Tannakian$ be a neutral Tannakian category over $F$, and let $K$ denote its Tannaka dual with respect to a fixed fibre functor $\omega \colon \Tannakian \to \vect_F$.
	With respect to the trivial $K$-action on $\vect_F$, the category of $F$-algebraic equivariant objects $\vect_F^K$ is just the category $\Rep_F(K)$ of $F$-algebraic representations of $K$.
	Via Tannaka duality, this allows us to interpret $\Tannakian$ as a category of $F$-algebraic equivariant objects. 
	\qed
\end{ex}

The constructions and results collected
in Subsection~\ref{sect:App_abstract-grps}
for abstract group actions
admit natural analogues in the $F$-algebraic setting.
We content ourselves of giving an overview,
modifying the arguments of
Subsection~\ref{sect:App_abstract-grps} when needed.

To begin with, given a morphism of pro-algebraic groups $Q_1 \rightarrow Q_2$ and an $F$-algebraic $Q_2$-action on $\Category$, there is a restriction functor 
\begin{equation*}
	\res_{Q_2}^{Q_1} \colon \Category^{Q_2} \rightarrow \Category^{Q_1},
\end{equation*} 
analogously to Construction~\ref{constr:res_htpy-fixed}.
The conclusions of Lemma~\ref{lem:res_faith_full} hold in the $F$-algebraic setting as well:
\begin{lem}\label{lem:res_faith_full-linear}
	For every morphism $f: Q_1 \rightarrow Q_2$ of pro-algebraic groups over $F$, the following hold:
	\begin{enumerate}
		\item The functor $\res_{Q_2}^{Q_1}$ is faithful and conservative.
		\item If $f$ is an epimorphism, the functor $\res_{Q_2}^{Q_1}$ is also full.
	\end{enumerate}
\end{lem}
\begin{proof}
	The first statement follows by applying Lemma~\ref{lem:res_faith_full}(1) to the group homomorphism $Q_1(\Lambda) \rightarrow Q_2(\Lambda)$ for each $\Lambda \in \CAlg_F$.
	
	For the second statement, fix two objects $(C_1,\underline{\alpha}_1), (C_2,\underline{\alpha}_2) \in \Category^{Q_2}$, and choose a morphism $\phi \colon C_1 \rightarrow C_2$ which is compatible with $\alpha_{1,\Lambda,f(q_1)}$ and $\alpha_{2,\Lambda,f(q_1)}$ for every $q_1 \in Q_1(\Lambda)$ and every $\Lambda \in \CAlg_F$.
	Since the categories $\Category_{\Lambda}$ form an fppf stack over $\CAlg_F$ and the morphism $f \colon Q_1 \rightarrow Q_2$ is an fppf epimorphism, it follows that $\phi$ is automatically compatible with $\alpha_{1,\Lambda,q_2}$ and $\alpha_{2,\Lambda,q_2}$ for every $q_2 \in Q_2(\Lambda)$ and every $\Lambda \in \CAlg_F$.
\end{proof}

If $Q$ acts algebraically on $\Category$ and $Q' \leq Q$ is a closed normal subgroup, the functor $(\for_{Q'})^Q \colon (\Category^{Q'})^Q \rightarrow \Category^Q$ admits a fully faithful section
\begin{equation*}
	\sect_{Q'}^Q \colon \Category^Q \rightarrow (\Category^{Q'})^Q,
\end{equation*}
analogously to Lemma~\ref{lem:sect_normal-sbgrp}.
If moreover $Q$ fits into a split short exact sequence of pro-algebraic groups
\begin{equation*}
	1 \rightarrow Q' \rightarrow Q \rightarrow Q'' \rightarrow 1,
\end{equation*}
we have a canonical equivalence
\begin{equation*}
	\Category^Q = (\Category^{Q'})^{Q''},
\end{equation*}
analogously to Proposition~\ref{prop:C^Q=(C^Q')^Q''-1}.

In order to state the $F$-algebraic analogue of Proposition~\ref{prop:C^Q=(C^Q')^Q''-2}, we need to spell out the precise notion of isomorphism between $F$-algebraic actions:
\begin{defn}\label{defn:iso-actions_F}
	Let $\Category$ be an $F$-linear category, and let $Q$ be a pro-algebraic group over $F$.
	\begin{enumerate}
		\item Suppose that we are given two $F$-algebraic actions of $Q$ on $\Category$;
		for every $\Lambda \in \CAlg_F$ and every $q \in Q(\Lambda)$, we write $q \cdot_A -$ and $q \cdot_B -$ for the associated functors $\Category_{\Lambda} \rightarrow \Category_{\Lambda}$ in these two actions.
		By an \textit{isomorphism} between the two actions of $Q$ on $\Category$ we mean the datum of
		\begin{itemize}
			\item for every $\Lambda \in \CAlg_F$, a $\Lambda$-linear isomorphism $\gamma_{\Lambda}$ between the two actions of the abstract group $Q(\Lambda)$ on $\Category_{\Lambda}$ (in the sense of Definition~\ref{defn:isom_actions-trivializ})
		\end{itemize}
		functorially with respect to $\Lambda \in \CAlg_F$ in the following sense:
		for every $F$-algebra homomorphism $\phi \colon \Lambda \rightarrow \Lambda'$ and every $q \in Q(\Lambda)$, the diagram of functors $\Category_{\Lambda} \rightarrow \Category_{\Lambda'}$
		\begin{equation*}
			\begin{tikzcd}
				(q \cdot_A -) \otimes_{\Lambda} \Lambda' \arrow[equal]{d} \arrow{rr}{\gamma_{\Lambda,q}} && (q \cdot_B -) \otimes_{\Lambda} \Lambda' \arrow[equal]{d} \\
				\phi(q) \cdot_A (- \otimes_{\Lambda,\phi} \Lambda') \arrow{rr}{\gamma_{\Lambda',\phi(q)}} && \phi(q) \cdot_B (- \otimes_{\Lambda,\phi} \Lambda')
			\end{tikzcd}
		\end{equation*}
		commutes.
		\item By a \textit{trivialization} of a given action of $Q$ on $\Category$ we mean an isomorphism between that action and the trivial action where $q \cdot - = \id_{\Category_{\Lambda}}$ for every $q \in Q(\Lambda)$ and every $\Lambda \in \CAlg_F$.
	\end{enumerate}
\end{defn}

For every $F$-algebraic action of $Q$ on $\Category$, the induced $Q$-action on $\Category^Q$ admits a canonical trivialization, analogously to Example~\ref{ex:trivializ}.
Moreover, an isomorphism between two actions of $Q$ on $\Category$ canonically induces an equivalence between the associated categories of $F$-algebraic homotopy-fixed points, analogously to Construction~\ref{constr:C^Q_A=C^Q_B}.

Here is the $F$-algebraic analogue of Proposition~\ref{prop:C^Q=(C^Q')^Q''-2}: 
\begin{prop}\label{prop:C^Q=(C^Q')^Q''-proalg}
	Let $Q$ be a pro-algebraic group acting on an $F$-linear category $\Category$ and fitting into the short exact sequence
	\begin{equation*}
		1 \rightarrow Q' \rightarrow Q \rightarrow Q'' \rightarrow 1.
	\end{equation*}
	Assume that we are given, in addition, a $Q''$-action on $\Category^{Q'}$, and consider the two resulting $Q$-actions  on $\Category^{Q'}$.
	Suppose that we are given an isomorphism between these two $Q$-actions which, when restricted to $Q'$, coincides with the canonical trivialization for the $Q'$-action on $\Category^{Q'}$.
	Then we get a canonical equivalence
	\begin{equation*}
		\Category^Q = (\Category^{Q'})^{Q''}.
	\end{equation*}
\end{prop}
\begin{proof}
This follows by adapting the proof of
Proposition~\ref{prop:C^Q=(C^Q')^Q''-2}
to the $F$-algebraic setting.
The only delicate point concerns the fully faithfulness of
\begin{equation*}
\res_{Q''}^Q
\colon (\Category^{Q'})^{Q''}
\rightarrow (\Category^{Q'})^Q_{(B)},
\end{equation*}
which follows from Lemma~\ref{lem:res_faith_full-linear}(2).
\end{proof}

\begin{rem}\label{rem:ind-rep_proalg}
	As in the case of abstract group actions, Proposition~\ref{prop:C^Q=(C^Q')^Q''-proalg} can be applied to the categorical induced representations.
	Note that, in order to make the discussion of Example~\ref{ex:ind-rep} work in the pro-algebraic setting, one needs to require the existence of a scheme-theoretic splitting to the projection $Q \to Q''$ respecting the unit sections. 
	This needs not exist in general, but it always exists when $Q''$ is a finite group and the ground field $F$ is algebraically closed.
\end{rem}

\subsection{The abstract fundamental sequence}\label{sect:App_Tannaka}

We further specialize the discussion to the setting of monoidal $F$-linear categories.

\begin{nota}
	Let $\Category$ be a (unitary, symmetric) monoidal $F$-linear category;
	given two objects $C_1, C_2 \in \Category$, we let $C_1 \otimes C_2$ denote their tensor product.
	For every $\Lambda \in \CAlg_F$, we regard the category $\Category_{\Lambda}$ introduced in Notation~\ref{nota:Cat_Lambda} as a (unitary, symmetric) monoidal $\Lambda$-linear category in the natural way:
	for every $C_1, C_2 \in \Category$, we set
	\begin{equation*}
		(C_1 \otimes \Lambda) \otimes (C_2 \otimes \Lambda) := (C_1 \otimes C_2) \otimes \Lambda.
	\end{equation*}
\end{nota}

\begin{defn}\label{defn:action-monoidal}
	Let $\Category$ be a (unitary, symmetric) monoidal $F$-linear category, and let $Q$ be a pro-algebraic group over $F$.
	By a \textit{(unitary, symmetric) monoidal $F$-algebraic action} of $Q$ on $\Category$ we mean the datum of
	\begin{itemize}
		\item for every $\Lambda \in \CAlg_F$, a (unitary, symmetric) monoidal $\Lambda$-linear action of the abstract group $Q(\Lambda)$ on $\Category_{\Lambda}$ (in the sense of Definition~\ref{defn:action-Q-Cat})
	\end{itemize}
	functorially with respect to $F$-algebra homomorphisms as in Definition~\ref{defn:action-linear}.
\end{defn}

There is no need to introduce another notion of equivariant objects in the monoidal setting.
Indeed:
\begin{lem}\label{lem:htpy-fixed_monoidal}
	Let $\Category$ be a (unitary, symmetric) monoidal $F$-linear category, and let $Q$ be a pro-algebraic group over $F$ acting on $\Category$ as in Definition~\ref{defn:action-monoidal}.
	Then the category of $F$-algebraic equivariant objects $\Category^Q$ from Definition~\ref{defn:htpy-fixed_linear} carries a canonical (unitary, symmetric) monoidal structure making the forgetful functor $\for_Q \colon \Category^Q \rightarrow \Category$ (unitary, symmetric) monoidal.
\end{lem}
\begin{proof}
	One defines the tensor product on $\Category^Q$ be the formula
	\begin{equation*}
		(C_1,\underline{\alpha}_1) \otimes (C_2,\underline{\alpha}_2) := (C_1 \otimes C_2, \underline{\alpha}_1 \otimes \underline{\alpha}_2)
	\end{equation*}
	where, for every $\Lambda \in \CAlg_F$ and every $q \in Q(\Lambda)$, the isomorphism
	\begin{equation*}
		(\alpha_1 \otimes \alpha_2)_{\Lambda,q} \colon q \cdot ((C_1 \otimes C_2) \otimes \Lambda) = q \cdot ((C_1 \otimes \Lambda) \otimes (C_2 \otimes \Lambda)) = q \cdot (C_1 \otimes \Lambda) \otimes q \cdot (C_2 \otimes \Lambda) \xrightarrow{\sim} (C_1 \otimes \Lambda) \otimes (C_2 \otimes \Lambda)
	\end{equation*}
	is defined as any of the two composites in the commutative diagram
	\begin{equation*}
		\begin{tikzcd}
			q \cdot (C_1 \otimes \Lambda) \otimes q \cdot (C_2 \otimes \Lambda) \arrow{d}{\id_{q \cdot C_1} \otimes \alpha_{2,\Lambda,q}} \arrow{rrr}{\alpha_{1,\Lambda,q} \otimes \id_{q \cdot C_2}} &&& (C_1 \otimes \Lambda) \otimes q \cdot (C_2 \otimes \Lambda) \arrow{d}{\id_{C_1} \otimes \alpha_{2,\Lambda,q}} \\
			q \cdot (C_1 \otimes \Lambda) \otimes (C_2 \otimes \Lambda) \arrow{rrr}{\alpha_{1,\Lambda,q} \otimes \id_{C_2}} &&& (C_1 \otimes \Lambda) \otimes (C_2 \otimes \Lambda).
		\end{tikzcd}
	\end{equation*}
	It is straightforward to check that this indeed gives a well-defined (unitary, symmetric) monoidal structure on $\Category^Q$;
	we leave the details to the interested reader.
	It is also clear that the forgetful functor canonically becomes (unitary, symmetric) monoidal in this way.
\end{proof}

We are particularly interested in monoidal actions on neutral Tannakian categories over $F$.

\begin{nota}\label{nota:Rep(K)_Lambda}
	Let $K$ be a pro-algebraic group over $F$, and let $\Rep_F(K)$ denote the (unitary, symmetric) monoidal $F$-linear category of finite-dimensional $F$-algebraic representations of $K$.
	We write a typical object $V \in \Rep_F(K)$ as a pair $(W,\rho)$ consisting of an object $W \in \vect_F$ and a morphism of pro-algebraic groups
	\begin{equation*}
		\rho \colon K \rightarrow \underline{\GL}_F(W).
	\end{equation*}
	For every $\Lambda \in \CAlg_F$, we write the corresponding object $V \otimes \Lambda \in \Rep_F(K)_{\Lambda}$ as the pair $(W \otimes \Lambda, \rho_{\Lambda})$, where
	\begin{equation*}
		\rho_{\Lambda} \colon K_{\Lambda} \rightarrow \underline{\GL}_{\Lambda}(W \otimes \Lambda)
	\end{equation*}
	denotes the morphism of group schemes over $\Lambda$ obtained by base-change from $\rho$.
\end{nota}

\begin{ex}\label{ex:Q_acts_K}
	Let $K$ be a pro-algebraic group over $F$.
	By an \textit{$F$-algebraic (right) action} of $Q$ on $K$ we mean a morphism of $F$-schemes $a \colon K \times Q \rightarrow K$ such that, for every $\Lambda \in \CAlg_F$, the map on $\Lambda$-valued points
	\begin{equation*}
		a \colon K(\Lambda) \times Q(\Lambda) \rightarrow K(\Lambda)
	\end{equation*}
	defines a right action of $Q(\Lambda)$ on $K(\Lambda)$ by group automorphisms.
	Note that, for every $\Lambda \in \CAlg_F$ and every $q \in Q(\Lambda)$, the action of $q$ canonically extends to a morphism of $\Lambda$-group schemes
	\begin{equation*}
		a_q \colon K_{\Lambda} \rightarrow K_{\Lambda}.
	\end{equation*}
	An $F$-algebraic action of $Q$ on $K$ canonically induces a (unitary, symmetric) monoidal $F$-algebraic action of $Q$ on $\Rep_F(K)$:
	for every $\Lambda \in \CAlg_F$, the action of an element $q \in Q(\Lambda)$ on the category $\Rep_F(K)_{\Lambda}$ is defined by the formula
	\begin{equation*}
		q \cdot (W \otimes \Lambda, \rho_{\Lambda}) = (W \otimes \Lambda, \rho_{\Lambda}^q),
	\end{equation*}
	where $\rho_{\Lambda}^q \colon K_{\Lambda} \rightarrow \underline{\GL}_{\Lambda}(W \otimes \Lambda)$ denotes the composite morphism $K_{\Lambda} \xrightarrow{a_q} K_{\Lambda} \xrightarrow{\rho_{\Lambda}} \underline{\GL}_{\Lambda}(W \otimes \Lambda)$.
	\qed
\end{ex}

\begin{prop}\label{prop:htpy-fixed_Tannakian}
	Let $\Tannakian$ be a neutral Tannakian category over $F$, and let $Q$ be a pro-algebraic group over $F$ acting monoidally on $\Tannakian$ as in Definition~\ref{defn:action-monoidal}.
	Then the (unitary, symmetric) monoidal category of $F$-linear equivariant objects $\Tannakian^Q$ is neutral Tannakian over $F$.
\end{prop}
\begin{proof}
	Every morphism $f \colon (V_1,\underline{\alpha}_1) \rightarrow (V_2,\underline{\alpha}_2)$ in $\Tannakian^Q$ admits a (co)kernel, obtained by equipping the (co)kernel of $f \colon V_1 \rightarrow V_2$ in $\Tannakian$ with the unique structure of equivariant object compatible with $\underline{\alpha}_1$ and $\underline{\alpha}_2$.
	This implies at once that the additive category $\Tannakian^Q$ is abelian and that the forgetful functor $\Tannakian^Q \rightarrow \Tannakian$ is exact.
	It is also clear that the monoidal structure obtained in Lemma~\ref{lem:htpy-fixed_monoidal} makes $\Tannakian^Q$ an abelian tensor category.
	Moreover, any object $(V,\underline{\alpha}) \in \Tannakian^Q$ admits a strong dual $(V^{\lor},\underline{\alpha}^{\lor})$, where $V^{\lor}$ denotes the strong dual of $V$ in $\Tannakian$ and, for every $\Lambda \in \CAlg_F$ and every $q \in Q(\Lambda)$, the isomorphism
	\begin{equation*}
		\alpha^{\lor}_{\Lambda,q} \colon q \cdot (V^{\lor} \otimes \Lambda) = q \cdot (V \otimes \Lambda)^{\lor} = (q \cdot (V \otimes \Lambda))^{\lor} \xrightarrow{\sim} (V \otimes \Lambda)^{\lor} = V^{\lor} \otimes \Lambda
	\end{equation*}
	is defined as the inverse-transpose of $\alpha_{\Lambda,q}$.
	Finally, one obtains a fibre functor for $\Tannakian^Q$ by composing the forgetful functor $\for_Q \colon \Tannakian^Q \to \Tannakian$ with any given fibre functor for $\Tannakian$.
\end{proof}

\begin{ex}\label{ex:Q_acts_vectF}
	Let $Q$ be any pro-algebraic group over $F$, and let it act trivially on the neutral Tannakian category $\vect_F$;
	this is induced by the trivial action of $Q$ on its Tannaka dual $\Spec(F)$ via Example~\ref{ex:Q_acts_K}.
	Then the associated category of $F$-algebraic equivariant objects $\vect_F^Q$ is canonically equivalent to $\Rep_F(Q)$ as an abelian tensor category.
	\qed
\end{ex}

The construction of $F$-algebraic equivariant objects under a given pro-algebraic group $Q$ is clearly compatible with $Q$-equivariant $F$-linear functors.
In the Tannakian setting, this observation translates as follows:
\begin{constr}\label{constr:Rep(Q)_to_Tann^Q}
	Any $Q$-equivariant $F$-linear tensor functor between neutral Tannakian categories $t \colon \Tannakian' \rightarrow \Tannakian$ induces an $F$-linear tensor functor
	\begin{equation*}
		t^Q \colon {\Tannakian'}^Q \rightarrow \Tannakian^Q
	\end{equation*}
	making the diagram
	\begin{equation*}
		\begin{tikzcd}
			{\Tannakian'}^Q \arrow{rr}{t^Q} \arrow{d}{\for_Q} && \Tannakian^Q \arrow{d}{\for_Q} \\
			\Tannakian' \arrow{rr}{t} && \Tannakian
		\end{tikzcd}
	\end{equation*}
	commute.
	Applying this to the case where $\Tannakian' = \vect_F$ (with trivial action by $Q$, as in Example~\ref{ex:Q_acts_vectF}) and $t$ is the unique $F$-linear tensor functor $\vect_F \rightarrow \Tannakian$, we obtain a canonical functor
	\begin{equation*}
		\Rep_F(Q) = \vect_F^Q \rightarrow \Tannakian^Q.
	\end{equation*}
    Note that its essential image is stable under subobjects, since the same holds for the underlying functor $\vect_F \rightarrow \Tannakian$.
    As noted in~\cite[Prop.~2.11]{Jacobsen}, this automatically implies that it is fully faithful.
    \qed
\end{constr}

We move on to the last main result of the appendix.

\begin{constr}\label{constr:1KGQ1}
	Let $\Tannakian$ be a neutral Tannakian category over $F$;
	fix a fibre functor $\omega \colon \Tannakian \rightarrow \vect_F$, and let $K$ denote the associated Tannaka dual group.
	Let $Q$ be a pro-algebraic group over $F$ acting on $\Tannakian$ (as in Definition~\ref{defn:action-monoidal}), and regard $\Tannakian^Q$ as a neutral Tannakian category as in Proposition~\ref{prop:htpy-fixed_Tannakian};
	let $G$ denote its Tannaka dual group with respect to the fibre functor $\omega \circ \for_Q \colon \Tannakian^Q \rightarrow \vect_F$.
	Under Tannaka duality, the forgetful functor $\for_Q \colon \Tannakian^Q \rightarrow \Tannakian$ corresponds to a morphism of pro-algebraic groups
	\begin{equation*}
		K \rightarrow G.
	\end{equation*}
	Similarly, the tensor functor $t^Q \colon \Rep_F(Q) \rightarrow \Tannakian^Q$ described in Construction~\ref{constr:Rep(Q)_to_Tann^Q} corresponds to a morphism of pro-algebraic groups
	\begin{equation*}
		G \rightarrow Q,
	\end{equation*}
    which is faithfully flat by~\cite[Prop.~2.21(b)]{deligne-milne}.
   In fact, $t^Q$ identifies $\Rep_F(Q)$ with the full Tannakian subcategory of $\Tannakian^Q$ consisting of those objects $V = (W,\underline{\alpha})$ whose underlying object $W \in \Tannakian$ is a trivial $K$-representation.
Under Tannaka duality, this means that the composite homomorphism
\begin{equation*}
K \to G \to Q
\end{equation*}
is trivial.
\qed
\end{constr}

The above discussion leads one to wonder about the exactness properties of the sequence
\begin{equation}\label{ses:1KGQ1}
	1 \rightarrow K \rightarrow G \rightarrow Q \rightarrow 1.
\end{equation}
This seems a delicate question in general, especially for what concerns the injectivity of $K \rightarrow G$.
The following particular case suffices for our main applications:
\begin{prop}\label{prop:KrtimesQ}
	Keep the notation and assumptions of Construction~\ref{constr:1KGQ1}.
	Suppose that, under the equivalence $\Tannakian = \Rep_F(K)$ defined by $\omega$, the $F$-algebraic $Q$-action on $\Tannakian$ is induced by an $F$-algebraic right action of $Q$ on $K$ (in the sense of Example~\ref{ex:Q_acts_K}); 
	form the associated semi-direct product $K \rtimes Q$.
	Then there exists a canonical isomorphism of pro-algebraic groups
	\begin{equation*}
		G \xrightarrow{\sim} K \rtimes Q
	\end{equation*}
	under which the sequence \eqref{ses:1KGQ1} gets identified with the split short exact sequence
	\begin{equation*}
		1 \rightarrow K \rightarrow K \rtimes Q \rightarrow Q \rightarrow 1.
	\end{equation*}
\end{prop}
\begin{proof}
	For notational clarity, throughout the proof we identify $\Tannakian$ with $\Rep_F(K)$, and we write $\Rep_F(G)$ as $\Rep_F(K)^Q$.
	By Tannaka duality, in order to prove the thesis it suffices to construct an equivalence of neutral Tannakian categories
	\begin{equation*}
		\Rep_F(K \rtimes Q) = \Rep_F(K)^Q
	\end{equation*}
	making the diagram
	\begin{equation}\label{dia:Rep(K)^Q-G}
		\begin{tikzcd}
			&& \Rep_F(K)^Q \arrow{drr}{\for_Q} \arrow[equal]{dd} \\
			\Rep_F(Q) \arrow{urr}{t^Q} \arrow{drr}{\res_Q^{K \rtimes Q}} &&&& \Rep_F(K) \\
			&& \Rep_F(K \rtimes Q) \arrow{urr}{\res_{K \rtimes Q}^K}
		\end{tikzcd}
	\end{equation}
	commute.
	Using the canonical equivalences provided by Example~\ref{ex:Q_acts_vectF}, we can write the sought-after equivalence in the form
	\begin{equation*}
		\vect_F^{K \rtimes Q} = (\vect_F^K)^Q,
	\end{equation*}
	which makes it easier to construct it explicitly.
	Indeed, consider the canonical action of $K \rtimes Q$ on $\vect_F^K$ (defined as in Construction~\ref{constr:Q_acts_C^Q'}) and the associated functor
	\begin{equation*}
		\sect_K^{K \rtimes Q} \colon \vect_F^K \rightarrow (\vect_F^K)^{K \rtimes Q}.
	\end{equation*}
	By the $F$-algebraic version of Proposition~\ref{prop:C^Q=(C^Q')^Q''-1}, we know that the composite functor
	\begin{equation*}
		\vect_F^{K \rtimes Q} \xrightarrow{\sect_K^{K \rtimes Q}} (\vect_F^K)^{K \rtimes Q} \xrightarrow{\res_{K \rtimes Q}^Q} (\vect_F^K)^Q
	\end{equation*}
	is an equivalence.
    It is easy to check that this functor indeed makes the diagram~\eqref{dia:Rep(K)^Q-G} commute and, in fact, it is characterized by this property;
    we leave the details to the interested reader.
\end{proof}

\begin{rem}\label{rem:Q_acts_K_conjugation}
	In the setting of Proposition~\ref{prop:KrtimesQ}, the original $F$-algebraic right action of $Q$ on $K$ can be recovered as the conjugation action induced by the short exact sequence~\eqref{ses:1KGQ1}.
\end{rem}

\begin{rem}
If one knows that every object of $\Tannakian$
is a subobject of some object in the image of $\Tannakian^Q$,
then one can deduce the exactness of
the entire sequence~\eqref{ses:1KGQ1},
even without the technical assumption
of Proposition~\ref{prop:KrtimesQ},
as an application of~\cite[Prop.~A.13]{DAE22}.
\end{rem}

We conclude by providing a useful criterion in the direction of Proposition~\ref{prop:KrtimesQ}.
In a nutshell, a given monoidal $F$-algebraic action of $Q$ on $\Rep_F(K)$ is induced by an $F$-algebraic action of $Q$ on $K$ precisely when it is an action within the $2$-category of neutralized (as opposed to neutral) Tannakian categories - that is, when it is compatible with the natural fibre functor of $\Rep_F(K)$.
In the language of $F$-algebraic actions employed here, this feature comes down to the following notion:
\begin{defn}\label{defn:concretization}
	Let $\Tannakian$ be a neutral Tannakian category over $F$;
	fix a fibre functor $\omega \colon \Tannakian \rightarrow \vect_F$, and let $K$ denote the associated Tannaka dual group.
	Let $Q$ be a pro-algebraic group over $F$ acting monoidally on $\Tannakian$ (as in Definition~\ref{defn:action-monoidal}).
	By a \textit{concretization} $\gamma$ for the $Q$-action on $\Tannakian$ with respect to $\omega$ we mean the datum of
	\begin{itemize}
		\item for every $\Lambda \in \CAlg_F$ and every $q \in Q(\Lambda)$, a natural isomorphism of functors $\Rep_F(K)_{\Lambda} \rightarrow \modules_{\Lambda}$
		\begin{equation*}
			\gamma_q \colon \omega_{\Lambda} \circ (q \cdot -) \xrightarrow{\sim} \omega_{\Lambda}
		\end{equation*}
	\end{itemize}
	compatibly with composition in $Q$ and functorially with respect to $F$-algebra homomorphism, in the following sense:
	for every $\Lambda \in \CAlg_F$ and every $q_1, q_2 \in Q(\Lambda)$, the diagram of functors $\Rep_F(K)_{\Lambda} \rightarrow \modules_{\Lambda}$
	\begin{equation*}
		\begin{tikzcd}
			\omega_{\Lambda} \circ (q_1 \cdot (q_2 \cdot -)) \arrow{rr}{\gamma_{q_1}} \arrow[equal]{d} && \omega_{\Lambda} \circ (q_2 \cdot -) \arrow{d}{\gamma_{q_2}} \\
			\omega_{\Lambda} \circ (q_1 q_2 \cdot -) \arrow{rr}{\gamma_{q_1 q_2}} && \omega_{\Lambda}
		\end{tikzcd}
	\end{equation*}
	commutes and, for every $F$-algebra homomorphism $\phi \colon \Lambda \rightarrow \Lambda'$ and every $q \in Q(\Lambda)$, the diagram of functors $\Rep_F(K)_{\Lambda} \rightarrow \modules_{\Lambda'}$
	\begin{equation*}
		\begin{tikzcd}
			\omega_{\Lambda'} \circ ((q \cdot -) \otimes_{\Lambda} \Lambda') \arrow[equal]{rr} \arrow[equal]{d} && (\omega_{\Lambda} \circ (q \cdot -)) \otimes_{\Lambda} \Lambda' \arrow{d}{\gamma_q \otimes_{\Lambda} \Lambda'} \\
			\omega_{\Lambda'} \circ (\phi(q) \cdot (- \otimes_{\Lambda} \Lambda')) \arrow{rr}{\gamma_{\phi(q)}} && \omega_{\Lambda'} \circ (- \otimes_{\Lambda} \Lambda')
		\end{tikzcd}
	\end{equation*}
	commutes.
\end{defn}

\begin{rem}
	Suppose that we are given two $F$-algebraic actions of $Q$ on $\Tannakian$ as well as an isomorphism $\gamma$ between them (as in Definition~\ref{defn:iso-actions_F}(1));
	in addition, suppose that we are given a concretization $\gamma_A$ for the first action.
	Then we obtain a concretization $\gamma_B$ for the second action by defining $\gamma_{B,\Lambda,q}$ as the composite
	\begin{equation*}
		\omega_{\Lambda} \circ (q \cdot_B -) \xrightarrow{\omega_{\Lambda}(\gamma_q^{-1})} \omega_{\Lambda} \circ (q \cdot_A -) \xrightarrow{\gamma_{A,\Lambda,q}} \omega_{\Lambda}
	\end{equation*}
	for every $\Lambda \in \CAlg_F$ and every $q \in Q(\Lambda)$:
	indeed, the compatibility of $\gamma_B$ with composition in $Q$ and $F$-algebra homomorphisms follows from the corresponding compatibility conditions for $\gamma_A$ and for $\gamma$.
\end{rem}

Here is the promised criterion:
\begin{lem}\label{lem:concretization}
	Keep the notation and assumptions of Construction~\ref{constr:1KGQ1}.
	The $Q$-action on $\Tannakian$ admits a concretization if and only if, up to isomorphism of actions (in the sense of Definition~\ref{defn:iso-actions_F}(1)), it is induced by an $F$-algebraic right action of $Q$ on $K$ (in the sense of Example~\ref{ex:Q_acts_K}).
\end{lem}
\begin{proof}
	Suppose that the $Q$-action on $\Tannakian$ is induced by an $F$-algebraic right action of $Q$ on $K$.
	Then one can trivially define a concretization $\gamma$ by taking $\gamma_q$ to be the identity natural transformation for every $q \in Q(\Lambda)$ and every $\Lambda \in \CAlg_F$:
	this is legitimate in view of the explicit formula for the $Q$-action on $\Rep_F(K)$ given in Example~\ref{ex:Q_acts_K}.

	Conversely, suppose that the action of $Q$ on $\Tannakian$ admits a concretization $\gamma$.
	By~\cite[Prop.~2.8]{deligne-milne}, for every $\Lambda \in \CAlg_F$, the group $K(\Lambda)$ is naturally identified with the group of tensor-automorphisms of the fibre functor $\omega_{\Lambda} \colon \Tannakian_{\Lambda} \rightarrow \modules_{\Lambda}$, functorially with respect to $\Lambda \in \CAlg_F$.
	For every $\Lambda \in \CAlg_F$, we define a map
	\begin{equation}\label{eq:a_{Lambda,q}-concretization}
		a_{\Lambda} \colon K(\Lambda) \times Q(\Lambda) \rightarrow K(\Lambda), \quad (k,q) \mapsto a_{\Lambda,q}(k)
	\end{equation}
	by the formula
	\begin{equation*}
		a_{\Lambda,q}(k) := \gamma_q \circ (k (q \cdot -)) \circ \gamma_q^{-1},
	\end{equation*}
	where $k (q \cdot -)$ denotes the tensor-automorphism of the composite functor $\omega \circ (q \cdot -)$ induced by $k$.
	We claim that the maps~\eqref{eq:a_{Lambda,q}-concretization} define an $F$-algebraic right action $a \colon K \times Q \rightarrow K$ as in Example~\ref{ex:Q_acts_K}.
	To this end, we need to check that each map~\eqref{eq:a_{Lambda,q}-concretization} defines an action of the abstract group $Q(\Lambda)$ on $K(\Lambda)$ and that they are functorial with respect to $\Lambda \in \CAlg_F$.
	But these two facts follow immediately from the two coherence conditions in Definition~\ref{defn:concretization}.
	
	Now we have two distinct $Q$-actions on $\Tannakian$:
	the original one, and the one induced by the $Q$-action of $K$ via Example~\ref{ex:Q_acts_K};
	for every $\Lambda \in \CAlg_F$ and every $q \in Q(\Lambda)$, let us write $q \cdot_A -$ and $q \cdot_B -$ for the corresponding functors $\Category_{\Lambda} \to \Category_{\Lambda}$ in these two actions, respectively.
	For every $\Lambda \in \CAlg_F$ and every $q \in Q(\Lambda)$, consider the isomorphism of functors $\Tannakian_{\Lambda} \to \modules_{\Lambda}$
	\begin{equation*}
		\gamma_{\Lambda,q} \colon \omega_{\Lambda} \circ (q \cdot_A -) \xrightarrow{\sim} \omega_{\Lambda} = \omega_{\Lambda} \circ (q \cdot_B -),
	\end{equation*}
	where the last equality comes from the explicit description of the second $Q$-action on $\Tannakian$ provided in Example~\ref{ex:Q_acts_K}.
	We claim that, for every $V \in \Tannakian$, the isomorphism in $\mod_{\Lambda}$
	\begin{equation*}
		\gamma_{\Lambda,q,V} \colon \omega_{\Lambda}(q \cdot_A (V \otimes \Lambda)) \xrightarrow{\sim} \omega_{\Lambda}(q \cdot_B (V \otimes \Lambda))
	\end{equation*}  
	comes from an isomorphism in $\Tannakian_{\Lambda}$
	\begin{equation}\label{eq:iso_actions-concretization}
		q \cdot_A (V \otimes \Lambda) \xrightarrow{\sim} q \cdot_B (V \otimes \Lambda).
	\end{equation}
	To see this, it is convenient to identify $\Tannakian$ with $\Rep_F(K)$ via the fibre functor $\omega \colon \Tannakian \rightarrow \vect_F$, which allows us to write $V = (W,\rho)$ as in Notation~\ref{nota:Rep(K)_Lambda};
	for sake of simplicity, let us also write $q \cdot_A (V \otimes \Lambda) = (W' \otimes \Lambda, \rho'_{\Lambda})$ for some $W' \in \vect_F$.
	Then our claim amounts to the fact that, for every $F$-algebra homomorphism $\phi \colon \Lambda \rightarrow \Lambda'$ and every $k \in K(\Lambda')$, the diagram in $\modules_{\Lambda'}$
	\begin{equation*}
		\begin{tikzcd}
			W' \otimes \Lambda' \arrow{d}{\rho'_{\Lambda'}(k)} \arrow{rr}{\gamma_{\Lambda',\phi(q),V}} && W \otimes \Lambda' \arrow{d}{\rho_{\Lambda'}^{\phi(q)}(k)} \\
			W' \otimes \Lambda' \arrow{rr}{\gamma_{\Lambda',\phi(q),V}} && W \otimes \Lambda'
		\end{tikzcd}
	\end{equation*}
	commutes, which follows from the very definition of the homomorphism $\rho_{\Lambda}^q \colon K_{\Lambda} \rightarrow \underline{\GL}_{\Lambda}(W \otimes \Lambda)$.
	By construction, the isomorphisms \eqref{eq:iso_actions-concretization} are coherent with composition in $Q$ and with $F$-algebra homomorphisms, hence they define the sought-after isomorphism of actions. 
\end{proof}

\begin{rem}
	A general $F$-algebraic action of $Q$ on $\Tannakian$ needs not admit a concretization, not even when $F$ is algebraically closed.
    In fact, if $F$ is algebraically closed, then for every $q \in Q(F)$ the two fibre functors $\omega \circ (q \cdot -)$ and $\omega$ are isomorphic, but not canonically so in general.
	In particular, it is not clear that one can choose the various isomorphisms defining a concretization compatibly with composition in $Q(F)$, as required in Definition~\ref{defn:concretization}.
\end{rem}

\printbibliography[heading=bibintoc]

\end{document}